\DeclareSymbolFont{AMSb}{U}{msb}{m}{n} 
\newtheorem{theorem}{Theorem}[section]
\newtheorem{lemma}[theorem]{Lemma}
\newtheorem{corollary}[theorem]{Corollary}
\newtheorem{proposition}[theorem]{Proposition}
\theoremstyle{remark}
\newtheorem{remark}[theorem]{Remark}
\newtheorem{example}[theorem]{Example}
\DeclareMathOperator{\Ap}{Ap}
\setlist[enumerate]{leftmargin=*, listparindent=\parindent, parsep=0pt, font=\upshape, label=\alph*)} 
\setlist[itemize]{leftmargin=*} 
\newcommand\andres[1]{{\textcolor{magenta}{#1}}}
\title[Isolated factorizations and their applications]{Isolated factorizations and their applications in simplicial affine semigroups} 
\author[Garc\'ia-S\'anchez]{Pedro A. Garc\'ia-S\'anchez}
\address{Departamento de \'Algebra, Universidad de Granada, E-18071 Granada, Espa\~na}
\email{pedro@ugr.es}
\thanks{The first author is supported by the project MTM2014-55367-P, which is funded by Ministerio de Econom\'{\i}a y Competitividad and Fondo Europeo de Desarrollo Regional FEDER, and by the Junta de Andaluc\'{\i}a Grant Number FQM-343}
\author[Herrera-Poyatos]{Andr\'es Herrera-Poyatos}
\address{Departamento de \'Algebra, Universidad de Granada, E-18071 Granada, Espa\~na}
\email{andreshp9@gmail.com}
\thanks{The second author is supported by the fellowship ``Beca de Iniciaci\'on a la Investigaci\'on para Estudiantes de Grado del Plan Propio 2017'' (Vicerrectorado de Investigación y Transferencia, Universidad de Granada).}
\begin{document}
\date{}
\maketitle
\begin{abstract}
We introduce the concept of isolated 
factorizations of an element of a commutative monoid and study its properties. We give several bounds for the number of isolated factorizations of simplicial affine semigroups and numerical semigroups. We also generalize $\alpha$-rectangular numerical semigroups to the context of simplicial affine semigroups and study their isolated factorizations. As a consequence of our results, we characterize those complete intersection simplicial affine semigroups with only one Betti minimal element in several ways. Moreover, we define Betti sorted and Betti divisible simplicial affine semigroups and characterize them in terms of gluings and their minimal presentations. Finally, we determine all the Betti divisible numerical semigroups, which turn out to be those numerical semigroups that are free for any arrangement of their minimal generators.



\smallskip
\noindent \textbf{Keywords:} commutative monoids, simplicial affine semigroups, numerical semigroups, complete intersection, free semigroups, Betti elements, gluing.
\end{abstract}


\section{Introduction} \label{sec:intro}

Let $(M,+)$ be a commutative monoid ($+$ is a binary operation that is associative and commutative, and has an idendity element, denoted by $0$). We say that $M$ is \emph{reduced} or \emph{unit free} if whenever $a+b=0$, with $a,b\in M$, then $a=0=b$, that is, the only unit in $M$ is the identity element. The monoid $M$ is \emph{cancellative} if $a+b=a+c$ implies $b=c$ for every $a,b,c\in M$. Every monoid in this manuscript is commutative, reduced and cancellative; thus in the sequel we will omit these adjectives. 

An element $a\in M$ is an \emph{atom} of $M$ if $a=b+c$ for some $b,c\in M$ implies $0\in \{b,c\}$ (recall that we are assuming that $M$ is reduced, otherwise the definition is slightly different). We will denote by $\mathcal{A}(M)$ the set of atoms of $M$. The monoid $M$ is \emph{atomic} if $M=\langle \mathcal{A}(M)\rangle = \{ \sum_{i=1}^n a_i \colon n\in \mathbb N, a_1,\ldots, a_n\in \mathcal{A}(M)\}$, that is, every element in $M$ can be expressed as a finite sum of atoms.

We define the following relation on $M$. Given $a,b\in M$, we write $a\le_M b$ if there exists $c\in M$ such that $a+c=b$. Since we are assuming $M$ to be cancellative, this relation is an order relation. Also cancellativity allows us to express $a\le_M b$ with $b-a\in M$, as in this setting $M$ is naturally embedded in its quotient group. 

The monoid $M$ satisfies the \emph{ascending chain condition on principal ideals} if every chain of the form $m_1+M\subseteq m_2+M\subseteq \cdots \subseteq m_k+M\subseteq \dots$ becomes stationary. This is equivalent to saying that there is no infinite strictly descending chain with respect to $\le_M$. From \cite[Proposition 1.1.4]{geroldinger-hk} a monoid with the ascending chain condition on principal ideals is atomic. 

If $M$ is atomic, and we denote by $A$ its set of atoms, then there is a natural epimorphism $\varphi:\mathbb{N}^{(A)}\to S$, defined as $\varphi((n_a)_{a\in A})=\sum_{a\in A} n_a a$; here $\mathbb{N}^{(A)}$ means the direct product of $\sharp A$ copies of $\mathbb{N}$, or the set of sequences of nonnegative integers indexed in $A$ with all its entries equal to zero except for finitely many of them. Therefore, $M$ is isomorphic as a monoid to $\mathbb{N}^{(A)}/\ker\varphi$, where $\ker\varphi=\{(a,b)\in \mathbb{N}^{(A)}\times \mathbb{N}^{(A)} \colon \varphi(a)=\varphi(b)\}$. A \emph{presentation} for $M$ is a system of generators of $\ker\varphi$ as a congruence. A \emph{minimal presentation} is a presentation such that any of its proper subsets does not generate $\ker\varphi$. If $M$ has the ascending chain condition on principal ideals, then all minimal presentations have the same cardinality \cite{min-pres-mac}.

For $m\in M$, let $\mathrm{Z}(m)$ be the \emph{set of factorizations} of $m$ in $M$, that is, the fiber $\varphi^{-1}(m)$. For a subset $T$ of $M$, set $\mathrm{Z}(T)=\bigcup_{m\in T}\mathrm{Z}(m)$.
Let $\nabla_m$ be the graph with vertices $\mathrm{Z}(m)$, and whose edges are the pairs $(x,y)$ such that $x\cdot y\neq 0$ (that is, edges join factorizations having minimal generators in common; $x\cdot y$ denotes the dot product of $x$ and $y$). The connected components of $\nabla_m$ are called the \emph{R-classes of $m$}. The \emph{Betti elements} of $M$ are those elements with at least two R-classes. The set of the Betti elements of $M$ is denoted by $\mathrm{Betti}(M)$. Betti elements and their R-classes can be used to characterize the minimal presentations of $M$, see Section \ref{sec:pre:presentations}, and, thus, they are widely studied in the theory of commutative monoids (see for instance \cite{monoids:up, overview-nuf}). A Betti element $b \in M$ is \emph{minimal} if $b \in \mathrm{Minimals}_{\le_M} \mathrm{Betti}(M)$.


We say that a factorization $x \in \mathrm{Z}(m)$, $m\in M$, is \emph{isolated} if it is disjoint with every other factorization of $m$, that is, the R-class of $x$ in $\nabla_m$ is singleton. It easily follows that if $m$ is an element of $M$ having an isolated factorization, then either $m$ has a unique expression (or admits a unique factorization), that is, $\mathsf{Z}(m)$ is a singleton, or $m$ is a Betti element. In this paper we investigate the properties of these factorizations. 
As a consequence of this study, we are able to bound the number of isolated factorizations of simplicial affine semigroups and numerical semigroups in several ways (see Section \ref{sec:pre} for definitions). 

Isolated factorizations have several applications. These applications come from the connection between the minimal presentations and the Betti elements of a monoid and are particularly interesting in the case of complete intersection affine semigroups, whose definition is recalled in Section \ref{sec:pre:affine}. Complete intersection semigroups are relevant outside the theory of numerical semigroups \cite{ns-app, telescopic, fr:telescopic, fr:two} and, consequently, they have been the main topic of several research papers.
In the search of families of complete intersection semigroups, Bertin and Carbonne introduced in \cite{free} the concept of free numerical semigroups, which allows to construct complete intersection numerical semigroups of any desired embedding dimension. 
Since then many families of free numerical semigroups have been studied; see, for instance, \cite{ci:classes:1, ci:classes:2} and the references given therein. One of these families is that of $\alpha$-rectangular numerical semigroups \cite{ci:classes:2}. In this paper we give a wider definition of $\alpha$-rectangularity and generalize it to the context of simplicial affine semigroups. Moreover, we show that complete intersection numerical semigroups with only one Betti minimal element are $\alpha$-rectangular under our definition (Corollary \ref{cor:ci-b1}). A more general result for simplicial affine semigroups is also given (Theorem \ref{thm:alpha-m+1}).

We also introduce two new families of commutative monoids: Betti sorted and Betti divisible monoids. 
A commutative monoid is \emph{Betti sorted} (respectively \emph{Betti divisible}) if its Betti elements are totally ordered with respect to the relation $\le_M$  (respectively with respect to divisibility).  Isolated and Betti restricted factorizations allow us to deeply investigate these two families in the case of simplicial affine semigroups. More concretely, in Theorem \ref{thm:betti-sorted:alpha} we show that these semigroups are $\alpha$-rectangular under some extra hypothesis.
Besides, we characterize these semigroups in terms of their minimal presentations (Theorem \ref{thm:betti-sorted}) and gluings (Corollary \ref{cor:betti-sorted:gluing}). 
We also provide similar results for Betti divisible numerical semigroups. It is worth mentioning that simplicial affine semigroups with a single Betti element are Betti divisible. These semigroups were studied in \cite{single-betti}.

When it comes to numerical semigroups, our results can be easily interpreted. In particular, the following strict inclusions among the families of numerical semigroups studied hold without any extra assumptions:
\begin{gather*}
\text{Unique Betti element} \subset \text{Betti divisible} \subset \text{Betti sorted} \\
\subset \text{Complete intersection with only one Betti minimal element} \\
\subset \alpha\text{-rectangular} \subset \text{Free} \subset \text{Complete intersection}.    
\end{gather*}
 Moreover, we characterize Betti divisible numerical semigroups in terms of their minimal system of generators (Theorem \ref{thm:betti-divisible:generators}). This theorem generalizes the main result of \cite{single-betti}, which states that numerical semigroups with only one Betti element are those generated by products of the form $ \prod_{j \ne i} a_j$ with $a_1,\ldots,a_e$ coprime integers. We also prove that Betti divisible numerical semigroups are free for any arrangement of their minimal generators (Theorem \ref{thm:betti-divisible:free}). As a consequence of all the characterizations presented, we show that numerical semigroups with a single Betti element are those that are $\alpha$-rectangular for any of their minimal generators (Theorem \ref{thm:single-betti-alpha}).

The paper is organized as follows. In Section \ref{sec:pre} we provide some notation and recall some definitions and properties that are used in our work. In Section \ref{sec:isolated} we study the properties of isolated factorizations. In Section \ref{sec:alpha} we generalize $\alpha$-rectangular numerical semigroups to the context of simplicial affine semigroups. In Section \ref{sec:ci} we study complete intersection simplicial affine semigroups with only one Betti minimal element. 
Finally, in Sections \ref{sec:betti-sorted} and \ref{sec:betti-divisible} we delve into Betti sorted and Betti divisible numerical semigroups respectively.


Computations are performed with GAP \cite{gap} and, more concretely, the package \texttt{NumericalSgps} \cite{numericalsgps}. The functions and code used in this paper will be included in the next release of \texttt{NumericalSgps}. 

\section{Preliminaries} \label{sec:pre}

\subsection{Numerical semigroups}

A numerical semigroup $S$ is an additive submonoid of the nonnegative integers $\mathbb{N} = \{0,1,2, \ldots\}$ such that its complement in $\mathbb{N}$ is finite. The maximum integer that is not in $S$ is the \emph{Frobenius number}, denoted by $\mathrm{F}(S)$. Numerical semigroups have an unique finite minimal system of generators, which we usually write as $\{n_1,\dots, n_e\}$; this set is precisely $\mathcal{A}(S)$. The integers $e$ and $\min(S\setminus\{0\})$ are the \emph{embedding dimension} and the \emph{multiplicity} of $S$. They are referred to as $\mathrm{e}(S)$ and $\mathrm{m}(S)$ respectively. For an introduction to numerical semigroups see \cite[Chapter 1]{ns}.

\subsection{Affine semigroups} \label{sec:pre:affine}

An affine semigroup $S$ is a finitely generated submonoid of $\mathbb{N}^r$ for some positive integer $r$. Let $S$ be an affine semigroup of $\mathbb{N}^r$. Let $\mathcal{G}(S)$ denote the subgroup of $\mathbb{Z}^r$ generated by $S$. The \emph{dimension} of $S$ is the rank of the group $\mathcal{G}(S)$ and it is denoted by $\mathrm{dim}(S)$. As with numerical semigroups, the embedding dimension of an affine semigroup is the cardinality of its minimal generating set. The \emph{codimension} of $S$ is the integer $\mathrm{e}(S) - \mathrm{dim}(S)$. The semigroup $S$ is a \emph{simplicial} semigroup of $\mathbb{N}^r$ if there is an arrangement of its minimal generators, $\{n_1, \ldots, n_r, n_{r+1}, \ldots, n_{r+m}\}$, such that $\mathrm{L}_{\mathbb{Q}^+}(S) = \mathrm{L}_{\mathbb{Q}^+}(\{n_1, \ldots, n_r\})$,
where 
\[\mathrm{L}_{\mathbb{Q}^+}(A) = \left\{\sum\nolimits_{a \in B} \lambda_a a : B \subseteq A \text{ is finite}, \lambda_a \in \mathbb{Q}^+ \text{ for all } a \in B\right\}.\]
The natural number $m$ is the codimension of $S$. From now on when we say that $\{n_1, \ldots, n_{r+m}\}$ is a minimal system of generators of $S$ we assume that $r = \mathrm{dim}(S)$ and $\mathrm{L}_{\mathbb{Q}^+}(S) = \mathrm{L}_{\mathbb{Q}^+}(\{n_1, \ldots, n_r\})$.
Note that a numerical semigroup $S$ is an affine semigroup for any order of its minimal generators. Its dimension is $1$ and its codimension is $\mathrm{e}(S)-1$.

Since affine semigroups are finitely generated commutative monoids, they are finitely presented \cite{redei} (see also \cite{monoids} for a proof following the notation adopted in this paper). It is not difficult to show that affine semigroups have the ascending chain condition on principal ideals and, thus, all the minimal presentations of an affine semigroup have the same cardinality, which is always greater or equal than its codimension. Affine semigroups which attain this bound are called \emph{complete intersections} \cite{ci}. 


\subsection{Ap\'ery sets}\label{sec:pre:apery}
Let $M$ be a commutative monoid. For any $m\in M$ one can define the Ap\'ery set of $m$ in $M$ as 
\[\Ap(M; m) = M\setminus (m+M).
\] 
Ap\'ery sets are a powerful tool to describe monoids, and they will often appear in our work. If $M$ is reduced (the only unit is zero), cancellative and has the ascending chain condition on principal ideals, then we get that every $n\in M$ can be expressed uniquely as $n=k m +\omega$ for some nonnegative integer $k$ and $\omega\in \Ap(M;m)$ \cite{min-pres-mac}. For a subset $N$ of $M$, we define 
\[
\Ap(M;N)=\bigcap_{n\in N}\Ap(M;n).
\]

Let $S$ be a numerical semigroup and $n \in S$. Note that $0 \in \Ap(S; n)$ and $\max \Ap(S; n)-n = \mathrm{F}(S)$, the Frobenius number of $S$. In addition, the Ap\'ery set of $n$ in $S$ has cardinality equal to $n$, \cite[Chapter 1]{ns}. For other type of monoids Ap\'ery sets may not be finite. This is the case of simplicial affine semigroups. Nonetheless, if $S$ is a simplicial affine semigroup of $\mathbb{N}^r$ minimally generated by $\{n_1, \ldots, n_{r+m}\}$, then we have 
\begin{equation*} \label{eq:c*}
    \Ap(S; \{n_1, \ldots, n_r\}) \subseteq \{\lambda_{r+1} n_{r+1} + \cdots \lambda_{r+m} n_{r+m} : 0 \le \lambda_{r+1} < c_{r+1}^*, \ldots, 0 \le \lambda_{r+m} < c_{r+m}^*\},
\end{equation*}
where $c_i^* = \min \{k \in \mathbb{Z}^+ : k n_i \in \langle n_1, \ldots, n_{r+i-1} \rangle \}$ for every $i \in \{r+1,\ldots,r+m\}$ (see \cite[Lemma 1.4]{free-affine}). The fact that $\mathrm{L}_{\mathbb{Q}^+}(S) = \mathrm{L}_{\mathbb{Q}^+}(\{n_1, \ldots, n_r\})$ ensures the existence of $c_{r+1}^*, \ldots, c_{r+m}^*$. Therefore, the set $\Ap(S; \{n_1, \ldots, n_r\})$ is finite.

\subsection{Simplicial Cohen-Macaulay affine semigroups}

Simplicial Cohen-Macaulay affine semigroups are characterized in several ways in \cite{cm}. One of these characterizations involves Ap\'ery sets and it is the one that we use in our work. We include this characterization in the following result, where we give another handy equivalence that will be used later.

\begin{proposition} \label{prop:cm}
Let $S$ be a simplicial affine semigroup of $\mathbb{N}^r$ minimally generated by $\{n_1, \ldots, n_{r+m}\}$. The following statements are equivalent:
  \begin{enumerate}
      \item \label{item:cm:cm} $S$ is Cohen-Macaulay;
      \item \label{item:cm:unique:1} for every $s \in S$ there are unique non negative integers $\lambda_1, \ldots, \lambda_r$ and $\omega \in Ap(S; \{n_1, \ldots, n_r\})$ such that $s = \omega + \lambda_1 n_1 + \cdots + \lambda_r n_r$;
      \item \label{item:cm:unique:2} for every $s \in S$ there are unique $n \in \langle n_1, \ldots, n_r \rangle$ and $\omega \in Ap(S; \{n_1, \ldots, n_r\})$ such that $s = \omega + n$.
  \end{enumerate}
\end{proposition}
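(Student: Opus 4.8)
The plan is to take the equivalence (\ref{item:cm:cm})~$\Leftrightarrow$~(\ref{item:cm:unique:1}) as given from \cite{cm}, and to establish the remaining equivalence by proving (\ref{item:cm:unique:1})~$\Leftrightarrow$~(\ref{item:cm:unique:2}). The direction (\ref{item:cm:unique:1})~$\Rightarrow$~(\ref{item:cm:unique:2}) is the easier one: given $s\in S$, write $s=\omega+\lambda_1 n_1+\cdots+\lambda_r n_r$ as in (\ref{item:cm:unique:1}) and set $n=\lambda_1 n_1+\cdots+\lambda_r n_r\in\langle n_1,\ldots,n_r\rangle$, so existence is immediate. For uniqueness, suppose $s=\omega+n=\omega'+n'$ with $\omega,\omega'\in\Ap(S;\{n_1,\ldots,n_r\})$ and $n,n'\in\langle n_1,\ldots,n_r\rangle$. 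Here I would expand $n=\mu_1 n_1+\cdots+\mu_r n_r$ and $n'=\mu'_1 n_1+\cdots+\mu'_r n_r$, obtaining two decompositions of $s$ of the form required in (\ref{item:cm:unique:1}); the uniqueness in (\ref{item:cm:unique:1}) then forces $\omega=\omega'$ and $\mu_i=\mu'_i$ for all $i$, hence $n=n'$.

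For the converse (\ref{item:cm:unique:2})~$\Rightarrow$~(\ref{item:cm:unique:1}), existence again reduces to expanding the element $n\in\langle n_1,\ldots,n_r\rangle$ as a nonnegative integer combination of $n_1,\ldots,n_r$. Uniqueness is where the one genuinely nontrivial point lies: from two decompositions $s=\omega+\lambda_1 n_1+\cdots+\lambda_r n_r=\omega'+\lambda'_1 n_1+\cdots+\lambda'_r n_r$, the hypothesis (\ref{item:cm:unique:2}) gives $\omega=\omega'$ and $\lambda_1 n_1+\cdots+\lambda_r n_r=\lambda'_1 n_1+\cdots+\lambda'_r n_r$ as elements of $\mathbb{N}^r$, and I must deduce $\lambda_i=\lambda'_i$ for each $i$. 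This is exactly the statement that $n_1,\ldots,n_r$ are linearly independent over $\mathbb{Q}$ (equivalently, that the map $\mathbb{N}^r\to S$, $(\lambda_1,\ldots,\lambda_r)\mapsto\sum\lambda_i n_i$ is injective), which holds because $S$ is simplicial: by definition $r=\dim(S)=\operatorname{rank}\mathcal{G}(S)$, and $\mathrm{L}_{\mathbb{Q}^+}(S)=\mathrm{L}_{\mathbb{Q}^+}(\{n_1,\ldots,n_r\})$ forces $\mathcal{G}(S)\otimes\mathbb{Q}$ to be spanned by $n_1,\ldots,n_r$, so these $r$ vectors are a $\mathbb{Q}$-basis of that $r$-dimensional space and in particular $\mathbb{Q}$-linearly independent.

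I expect the main obstacle to be purely expository rather than mathematical: making the linear-independence argument for $\{n_1,\ldots,n_r\}$ clean and correctly attributing the characterization (\ref{item:cm:cm})~$\Leftrightarrow$~(\ref{item:cm:unique:1}) to \cite{cm}. Once linear independence of $n_1,\ldots,n_r$ is in hand, both implications between (\ref{item:cm:unique:1}) and (\ref{item:cm:unique:2}) are essentially bookkeeping: the content of (\ref{item:cm:unique:1}) is the uniqueness of the pair (Ap\'ery part, combination of the extremal rays), and (\ref{item:cm:unique:2}) simply repackages the combination $\lambda_1 n_1+\cdots+\lambda_r n_r$ as a single element of the free submonoid $\langle n_1,\ldots,n_r\rangle$, a repackaging that is a bijection precisely by linear independence. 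I would present it as a short two-paragraph argument, stating the $\mathbb{Q}$-linear independence of $n_1,\ldots,n_r$ as an initial observation and then dispatching the two implications.
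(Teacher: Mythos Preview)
Your proposal is correct and matches the paper's own proof essentially verbatim: the paper cites \cite[Theorem 1.5]{cm} for the equivalence between \ref{item:cm:cm} and \ref{item:cm:unique:1}, and then remarks that the equivalence between \ref{item:cm:unique:1} and \ref{item:cm:unique:2} follows easily from the fact that $\{n_1,\ldots,n_r\}$ are linearly independent. You have spelled out in full the bookkeeping that the paper leaves implicit, but the approach is identical.
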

\begin{proof}
  See \cite[Theorem 1.5]{cm} for a proof of the equivalence between \ref{item:cm:cm} and \ref{item:cm:unique:1}. The equivalence between \ref{item:cm:unique:1} and \ref{item:cm:unique:2} follows easily from the fact that $\{n_1,\ldots, n_r\}$ are linearly independent.
\end{proof}

In particular, since \ref{item:cm:unique:1} is verified for numerical semigroups, they are Cohen-Macaulay. The uniqueness stated in \ref{item:cm:unique:1} is the reason why working with simplicial Cohen-Macaulay affine semigroups is in many cases similar to working with numerical semigroups (compare with \cite[Lemma 2.6]{ns}).

A Cohen-Macaulay simplicial affine semigroup $S$ of $\mathbb{N}^r$ minimally generated by $\{ n_1,\ldots,n_{r+m}\}$ is said to be Gorenstein if $\Ap(S;\{n_1,\ldots,n_r\})$ has a unique maximal element (in this case the semigroup ring of $S$ is Gorenstein; thus the name, see \cite[Section 4]{cm}).
 
\subsection{Betti elements and minimal presentations} \label{sec:pre:presentations}

In Section \ref{sec:intro} we recalled the definitions of Betti elements and minimal presentations of a monoid. In this section we describe a well-known characterization of all the minimal presentations in terms of the Betti elements and their R-classes. 
This characterization was introduced by Eliahou in \cite{eliahou-th}, and it is equivalent to the one given by Rosales in \cite{rosales-min-pres}. This procedure was later extended to strongly reduced monoids in \cite{monoids:presen}, and later to the more general setting of monoids with the minimal ascending chain condition on principal ideals, \cite{min-pres-mac}. 

Let $M$ be a monoid with the ascending chain condition on principal ideals, and let us denote by $A$ its set of atoms. 
We denote the set of Betti elements of $M$ by $\mathrm{Betti}(M)$. For $m \in M$, let $\mathrm{nc}(\nabla_m)$ be the number of connected components of $\nabla_m$, and let $\mathfrak{d}(m)$ be the cardinality of $\mathrm{Z}(m)$, which is also known as the \emph{denumerant} of $m$ in $M$. If $M$ is a numerical or an affine semigroup, then these amounts are finite. 

Given $\rho\subseteq \mathbb{N}^{(A)}\times \mathbb{N}^{(A)}$, we use $\mathrm{cong}(\rho)$ to denote the congruence generated by $\rho$, that is, the intersection of all congruences containing $\rho$. Define \[\rho^0 = \rho \cup \{(y,x) : (x,y) \in \rho\},\ \rho^1 = \{(x+u, y+u) : (x,y) \in \rho^0, u \in \mathbb{N}^{(A)}\}.\] It turns out that $\mathrm{cong}(\rho)$ is the transitive closure of $\rho^1$.

A minimal presentation of $M$ can be constructed as follows.  Assume that $m\in\mathrm{Betti}(M)$ and set $r=\mathrm{nc}(\nabla_m)$. 
Let $(X_i, Y_i)$, $i\in I$ be the edges of a tree whose vertices are the connected components of $\nabla_m$. Pick $x_i\in X_i$ and $y_i \in Y_i$ for all $i\in I$ and set $\rho^{(m)}=\{(x_i,y_i) \colon i\in I\}$. Then, $\rho=\bigcup_{m\in\mathrm{Betti}(M)} \rho^{(m)}$ is a minimal presentation of $M$ (see for instance \cite[Chapter 4]{ns-app} for numerical semigroups or \cite{min-pres-mac} for the general case). All minimal presentations have this form. As a consequence, all minimal presentations have cardinality equal to $\sum_{s\in\mathrm{Betti}(M)}(\mathrm{nc}(\nabla_m)-1)$. 


\subsection{Gluings} 

Let $S_1$ and $S_2$ be two numerical semigroups, and $a_1$, $a_2$ be two coprime integers such that $a_1\in S_2$, $a_2\in S_1$ and neither $a_1$ nor $a_2$ is a minimal generator. The numerical semigroup $a_1S_1+a_2S_2$ is a \emph{gluing} of $S_1$ and $S_2$. We will write $S=a_1S_1+_{a_1a_2}a_2S_2$. It turns out that the embedding dimension of $S$ is the sum of the embedding dimensions of $S_1$ and $S_2$. Moreover, we can obtain any minimal presentation of $S$ from the union of a minimal presentation of $a_1S_1$, a minimal presentation of $a_2 S_2$ and a singleton set containing a pair of factorizations of $a_1 a_2$, one in $a_1S_1$ and the other in $a_2S_2$ \cite{monoids:up}. As a consequence, we obtain
\begin{equation} \label{eq:betti}
\mathrm{Betti}(S)=\{a_1a_2\}\cup\{ a_1b_1\colon b_1\in \mathrm{Betti}(S_1)\}\cup \{a_2b_2\colon b_2\in \mathrm{Betti}(S_2)\}.
\end{equation}

Gluings were introduced by Rosales in \cite{gluing}. The definition of gluing was motivated by the famous characterization of complete intersection numerical semigroups given by Delorme \cite{ci:char}: a numerical semigroup is complete intersection if and only if it is either $\mathbb{N}$ or a gluing of two complete intersection numerical semigroups. In this paper we obtain similar results for the families of numerical semigroups considered, see Corollaries \ref{cor:alpha:gluing}, \ref{cor:ci-b1:gluing}, \ref{cor:betti-sorted:gluing} and \ref{cor:betti-divisible:gluing}.


The concept of gluing can be generalized to the more general setting of reduced cancellative commutative monoids with the ascending chain condition on principal ideals, see \cite{min-pres-mac}. Let $M$ be a monoid with the ascending chain condition on principal ideals. Denote by $A$ its set of atoms. Assume that $\{A_1,A_2\}$ is a nontrivial partition of $A$, and let $M_i=\langle A_i\rangle$, $i\in \{1,2\}$. We say that $M$ is the gluing of $M_1$ and $M_2$ by $d\in M$ if $M$ admits a presentation of the form $\sigma=\sigma_1\cup \sigma_2\cup \{(a,b)\}$ with $\sigma_i\subseteq \mathsf{Z}(M_i)$ a presentation for $M_i$, and $a\in \mathsf{Z}(d)\cap\mathsf{Z}(M_1)$ and $b\in \mathsf{Z}(d)\cap \mathsf{Z}(M_2)$. If this is the case, then every presentation of $M$ has this form. It follows (see \cite{min-pres-mac}) that 
\begin{equation}\label{eq:betti-gen}
    \mathrm{Betti}(M)=\mathrm{Betti}(M_1)\cup \mathrm{Betti}(M_2)\cup \{d\}.
\end{equation}
Observe that the above definition generalizes that of numerical semigroups ($M_1=a_1S_1$, $M_2=a_2S_2$ and $d=a_1a_2$).

\subsection{Free simplicial affine semigroups} \label{sec:pre:free}

Free numerical semigroups were introduced by Bertin and Carbonne in \cite{free}.  See for instance \cite[Chapter 8]{ns} for several characterizations of these semigroups, or \cite{free-affine} for a generalization of these semigroups in the setting of affine semigroups. 

Let $S$ be a simplicial affine semigroup minimally generated by $\{n_1, \ldots, n_{r+m}\}$. For every $i \in \{r+1, \ldots, r+m\}$ one can define the following constants:
\begin{itemize}
	\item $\bar{c}_i = \min \{c \in \mathbb{Z}^+ : c n_i \in \mathcal{G}(\{n_1, \ldots, n_{i-1}\})\}$,
	\item $c_i^* = \min \{c \in \mathbb{Z}^+ : c n_i \in \langle n_1, \ldots, n_{i-1} \rangle\}$,
	\item $c_i = \min \{c \in \mathbb{Z}^+ : c n_i \in \langle n_1, \ldots,n_{i-1},n_{i+1},\ldots n_{r+m} \rangle\}$.
\end{itemize}

Note that $\bar{c}_i \le c_i^*$ and $c_i \le c_i^*$ for every $i \in \{r+1, \ldots, r+m\}$. The semigroup $S$ is \emph{free} for the arrangement $(n_1, \ldots, n_{r+m})$ if $\bar{c}_i n_i \in \langle n_1, \ldots, n_{i-1} \rangle$ (equivalently, $\bar{c}_i=c_i^*$) for every $i \in \{r+1, \ldots, r+m\}$. 

Cohen-Macaulay free simplicial affine semigroups are characterized in several ways \cite{free-affine}, some of them are included in the following result. We use $\mathbf{e}_i$ to denote the $i^{th}$ row of the $(r+m) \times (r+m)$ identity matrix.


\begin{lemma} \label{lem:free}
  Let $S$ be a Cohen-Macaulay and simplicial affine semigroup of $\mathbb{N}^r$ minimally generated by $\{n_1, \ldots, n_{r+m}\}$. The following statements are equivalent:
  \begin{enumerate}
      \item $S$ is free (for this arrangement of the minimal generators);
      \item $\#\Ap(S; \{n_1, \ldots, n_r\}) = c_{r+1}^* \cdots c_{r+m}^*$;
      \item $\Ap(S; \{n_1, \ldots, n_r\}) = \{ \lambda_{r+1} n_{r+1} + \cdots + \lambda_{r+m} n_{r+m} : 0 \le \lambda_{i} < c_i^*, i \in \{r+1,\ldots, r+m\}\}$;
      \item $\max_{\le_S}(\Ap(S; \{n_1, \ldots, n_r\})) = (c_{r+1}^* - 1) n_{r+1} + \cdots + (c_{r+m}^*-1) n_{r+m}$;
      \item  $S' = \langle n_1, \ldots, n_{r+m-1} \rangle$ is free for $(n_1, \ldots, n_{r+m-1})$ and $S$ is a gluing of $S'$ and $\langle n_{r+m} \rangle$;
      \item $S$ admits a minimal presentation of the form 
      	\[\left\{(c^*_i \mathbf{e}_i, a_{i_1} \mathbf{e}_1 + \cdots + a_{i_{i-1}} \mathbf{e}_{i-1}) \colon i \in \{r+1, \ldots, r+m\}\right\},\] for some $a_{i_j}\in \mathbb{N}$;      \item $S$ admits a minimal presentation of the form 
      	\[\left\{(k_i \mathbf{e}_i, a_{i_1} \mathbf{e}_1 + \cdots + a_{i_{i-1}} \mathbf{e}_{i-1}) \colon i \in \{r+1, \ldots, r+m\}\right\},\] for some $k_i\in \mathbb{N}\setminus\{0\}$ and some $a_{i_j}\in \mathbb{N}$.
  \end{enumerate}
\end{lemma}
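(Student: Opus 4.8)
The plan is to prove the chain of equivalences in Lemma~\ref{lem:free} by establishing a cycle of implications, relying heavily on the uniqueness of Ap\'ery-set representations guaranteed by the Cohen-Macaulay hypothesis (Proposition~\ref{prop:cm}). First I would recall that, by the containment recalled in Section~\ref{sec:pre:apery}, every element of $\Ap(S;\{n_1,\ldots,n_r\})$ can be written as $\lambda_{r+1}n_{r+1}+\cdots+\lambda_{r+m}n_{r+m}$ with $0\le\lambda_i<c_i^*$, so in general $\#\Ap(S;\{n_1,\ldots,n_r\})\le c_{r+1}^*\cdots c_{r+m}^*$; moreover, by Cohen-Macaulayness the map sending $s\in S$ to its pair $(n,\omega)$ with $n\in\langle n_1,\ldots,n_r\rangle$ and $\omega\in\Ap(S;\{n_1,\ldots,n_r\})$ is a bijection, and $s\mapsto (\lambda_1,\ldots,\lambda_r,\omega)$ is too.

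The core implications I would carry out in order are: (1)$\Rightarrow$(2): freeness means $\bar c_i=c_i^*$, i.e.\ $c_i^*n_i\in\langle n_1,\ldots,n_{i-1}\rangle$, and an inductive argument on $m$ (peeling off $n_{r+m}$) shows that every product $\lambda_{r+1}n_{r+1}+\cdots+\lambda_{r+m}n_{r+m}$ with $0\le\lambda_i<c_i^*$ lands in $\Ap(S;\{n_1,\ldots,n_r\})$ and that these are pairwise distinct modulo $\langle n_1,\ldots,n_r\rangle$; hence the cardinality is exactly $c_{r+1}^*\cdots c_{r+m}^*$. (2)$\Rightarrow$(3): since the right-hand set of (3) always contains $\Ap(S;\{n_1,\ldots,n_r\})$ and has at most $c_{r+1}^*\cdots c_{r+m}^*$ elements, equality of cardinalities forces set equality — here one must also check each listed product actually lies in the Ap\'ery set, which again follows from the cardinality count together with the general containment. (3)$\Rightarrow$(4): the unique $\le_S$-maximal element of the explicit box is $(c_{r+1}^*-1)n_{r+1}+\cdots+(c_{r+m}^*-1)n_{r+m}$, because subtracting any $n_i$, $i>r$, from a representative with some $\lambda_i<c_i^*-1$ stays inside the box. (4)$\Rightarrow$(2): if the Ap\'ery set had fewer than $c_{r+1}^*\cdots c_{r+m}^*$ elements, then by the general containment some product with all $\lambda_i\le c_i^*-1$ would be missing, yet the stated maximum dominates it in $\le_S$, contradicting that a maximum must itself be an Ap\'ery element dominating the missing one — so one needs a short argument that the maximum of (4) forces the full box to be present. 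I would then close the loop $(2)\Rightarrow(5)$ by showing $\#\Ap(S;\{n_1,\ldots,n_r\})=c_{r+1}^*\cdots c_{r+m}^*$ implies $c_{r+m}=\bar c_{r+m}=c_{r+m}^*$ and the gluing decomposition $S=S'+_{\,}\langle n_{r+m}\rangle$ with $S'$ free, using~\eqref{eq:betti-gen} or the minimal-presentation description in Section~\ref{sec:pre:presentations}; $(5)\Rightarrow(6)$ by unwinding the gluing presentation inductively; $(6)\Rightarrow(7)$ is trivial; and $(7)\Rightarrow(1)$ by reading off from the shape of the presentation that $k_i\mathbf e_i$ is equivalent to an element of $\langle n_1,\ldots,n_{i-1}\rangle$, which forces $c_i^*\mid k_i$ and in fact $c_i^*n_i\in\langle n_1,\ldots,n_{i-1}\rangle$, i.e.\ freeness.

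The main obstacle I expect is the bookkeeping in the induction on the codimension $m$: removing the last generator $n_{r+m}$ changes the ambient simplicial structure and one must verify that $S'=\langle n_1,\ldots,n_{r+m-1}\rangle$ is again Cohen-Macaulay and simplicial with the same $c_i^*$ for $i<r+m$, so that the inductive hypothesis applies, and that the Ap\'ery set of $S$ factors as a disjoint union of $c_{r+m}^*$ translates of the Ap\'ery set of $S'$ precisely when $c_{r+m}^*n_{r+m}\in\langle n_1,\ldots,n_{r+m-1}\rangle$. The equivalence with the gluing condition (5) is where this is most delicate, since one has to match the combinatorial count with the congruence-theoretic statement; the rest of the cycle is essentially the general containment of Section~\ref{sec:pre:apery} plus a cardinality squeeze. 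For brevity I would invoke \cite{free-affine} for the parts already proved there and only spell out the new equivalence and the Cohen-Macaulay reformulations in detail.
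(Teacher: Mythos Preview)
The paper does not supply its own proof of this lemma: it is stated as a recollection of known characterizations and attributed to \cite{free-affine} (the sentence introducing the lemma reads ``Cohen-Macaulay free simplicial affine semigroups are characterized in several ways \cite{free-affine}, some of them are included in the following result''). So there is no in-paper argument to compare your proposal against; your closing remark that you would ``invoke \cite{free-affine} for the parts already proved there'' is in fact exactly what the paper does, and is the appropriate treatment here.

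That said, a couple of points in your sketch would need tightening if you actually wrote it out. In (2)$\Rightarrow$(3) the inclusion from Section~\ref{sec:pre:apery} gives $\Ap(S;\{n_1,\ldots,n_r\})\subseteq\{\sum_{i>r}\lambda_in_i:0\le\lambda_i<c_i^*\}$, but the right-hand set is indexed by tuples and could a priori have repetitions; the cardinality hypothesis forces the indexing map to be injective, and you should say so explicitly. In (4)$\Rightarrow$(2) your argument that the maximum dominates every box element only yields $\text{box}\subseteq\Ap$, and together with the general containment this gives $\Ap=\text{box}$ as a set, but again you must argue that the box has no repetitions to conclude the cardinality statement (2). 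Finally, in (7)$\Rightarrow$(1) the relation $k_in_i\in\langle n_1,\ldots,n_{i-1}\rangle$ gives $c_i^*\le k_i$, not $c_i^*\mid k_i$ directly; the way to close this is to use minimality of the presentation (so its cardinality is $m$, forcing $S$ to be a complete intersection) and an inductive gluing argument to identify $k_i$ with $c_i^*$ and hence with $\bar c_i$. These are exactly the ``bookkeeping'' steps handled in \cite{free-affine}, so citing that reference, as both you and the paper do, is the cleanest resolution.
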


As a consequence, we obtain that these semigroups are complete intersections.

\begin{remark}
If $S$ is a free numerical semigroup for the arrangement $(n_1,\ldots,n_e)$ of its minimal generators, then it follows easily from the above result, the fact that $\#\Ap(S,n_1)=n_1$, and Selmer's formula for the Frobenius number that 
\begin{enumerate}
  	\item $n_1 = c_2^* \cdots c_e^*$;
  	\item $\mathrm{F}(S) + n_1 = (c_2^* -1) n_2 + \cdots + (c_e^* -1) n_e$.
\end{enumerate}
\end{remark}

\begin{remark}
If $S$ is a numerical semigroup minimally generated by $\{n_1, \ldots, n_e\}$, then we have $\bar{c}_i = \min \{c \in \mathbb{Z}^+ : c n_i \text{ is a multiple of } d_i\}$, where $d_i = \gcd \{n_1, \ldots, n_{i-1}\}$, for every $i \in \{2, \ldots, e\}$. Note that $\bar{c}_i = d_i / d_{i+1}$ and $n_1 = \bar{c}_2 \cdots \bar{c}_e$. It is not difficult to show that the integers $\bar{c}_2, \ldots, \bar{c}_e$ are pairwise relatively prime. 
\end{remark}

In the numerical semigroup case one can show that, if $\bar{c}_i = c_i$ for every $i \in \{2, \ldots, e\}$, then $S$ is free. However, the converse is not true. For instance, consider the numerical semigroup $S = \langle 24, 36, 26, 39\rangle$, which is studied in Example \ref{ex:isolated}. In \cite[Proposition 9.15]{ns} both statements are said to be equivalent but there is a mistake in the proof. Therefore, an interesting question is, given a free numerical semigroup $S$, when does $c_i$ equal $\bar{c}_i$ for every $i \in \{2, \ldots, e\}$? In Theorem \ref{thm:alpha:c} we show that $\alpha$-rectangular semigroups have this property.

\section{Isolated factorizations} \label{sec:isolated}

In this section we delve into the concept of isolated factorizations. First of all, we recall the definition and introduce some new notation. Then, we characterize these factorizations, and we study the relation between Betti elements and isolated factorizations. For the particular case of numerical semigroups, we provide several bounds on the number of isolated factorizations and prove that they are sharp. Finally, we apply our results to free and complete intersection numerical semigroups.

\subsection{Definitions and notation}

Let $M$ be a monoid with the ascending chain condition on principal ideals, and let $m \in M$. Recall that a factorization of $m$ is \emph{isolated} if it is disjoint with any other factorization of $s$ or, equivalently, its $\mathrm{R}$-class is a singleton. We denote the set of the isolated factorizations of $m$ by $\mathrm{I}(m)$ and we define $\mathrm{I}(M) = \bigcup_{s \in M} \mathrm{I}(m)$. We refer to the cardinality of $\mathrm{I}(m)$ and $\mathrm{I}(M)$ as $\mathrm{i}(m)$ and $\mathrm{i}(M)$, respectively. 

Recall that if an element of $M$ has an isolated factorization, then either it is a Betti element or it only has one factorization. In light of this fact, we define the sets 
\[\mathrm{I}_s(M) = \{x \in \mathrm{Z}(m) \colon m \in M, \mathfrak{d}(m) = 1\}\] 
and 
\[\mathrm{I}_b(M) = \{x \in \mathrm{I}(m) \colon m \in \mathrm{Betti}(M)\}.\] Let $\mathrm{i}_s(M)$ and $\mathrm{i}_b(M)$ be the cardinalities of the sets $\mathrm{I}_s(M)$ and $\mathrm{I}_b(M)$, respectively. 
With this notation,
\[\mathrm{I}(M) = \mathrm{I}_s(M) \cup \mathrm{I}_b(M).\] 
Note that the sets $\mathrm{I}_s(M)$ and $\mathrm{I}_b(M)$ are disjoint and, thus, $\mathrm{i}(M) = \mathrm{i}_s(M) + \mathrm{i}_b(M)$.

\subsection{A characterization}

Let $M$ be a monoid with the ascending chain condition on principal ideals, and let $A$ be its set of atoms. We use $\le $ to denote the usual partial ordering on $\mathbb{N}^{(A)}$, that is $(x_a)_{a\in A}\le (y_a)_{a\in A}$ if $x_a\le y_a$ for all $a\in A$. As usual, $x<y$ denotes  $x\le y$ and $x\ne y$.
 
Non isolated factorizations can be characterized as follows. 

\begin{lemma} \label{lem:isolated}
  Let $M$ be a monoid with the ascending chain condition on principal ideals, and let $m \in M$. A factorization $x \in \mathrm{Z}(m)$ is not isolated if and only if there exists $z \in \mathrm{Z}(\mathrm{Betti}(M))$ such that $z < x$.
\end{lemma}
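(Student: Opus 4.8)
The plan is to prove the equivalence in two directions, using the fact (recalled in Section~\ref{sec:pre:presentations}) that a minimal presentation of $M$ is built from pairs of factorizations of Betti elements, and that $\mathrm{cong}(\rho)$ is the transitive closure of $\rho^1$, where $\rho^1$ adds a common term $u\in\mathbb{N}^{(A)}$ to pairs in $\rho^0$.

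First I would prove the ``if'' direction, which is the easy one: suppose there is $z\in\mathrm{Z}(\mathrm{Betti}(M))$ with $z<x$, say $z\in\mathrm{Z}(b)$ for some Betti element $b$. Since $b$ is a Betti element, $\nabla_b$ has at least two connected components, so there is another factorization $z'\in\mathrm{Z}(b)$ lying in a different R-class of $b$; in particular $z\cdot z'=0$ and $z\neq z'$. Writing $x=z+u$ with $u=x-z\in\mathbb{N}^{(A)}$, consider $x'=z'+u\in\mathrm{Z}(m)$. It remains to check that $x'$ and $x$ are in the same R-class of $\nabla_m$: if $z+u$ and $z'+u$ had disjoint support, then in particular $u$ would be disjoint from both $z$ and $z'$... actually one only needs $x\cdot x'\ne 0$ OR a short connecting path; since $z$ and $z'$ are factorizations of the \emph{same} element $b$ and $\varphi$ is cancellative, one can always connect $z+u$ to $z'+u$ via the intermediate vertices obtained from any path in $\nabla_b$ between $z$ and $z'$, translated by $u$ --- each consecutive pair there shares a generator, hence so do their translates by $u$. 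Thus $x$ shares its R-class with $x'\neq x$, so $x$ is not isolated. (One must be slightly careful: $z$ and $z'$ lie in different components of $\nabla_b$, so there is no path between them \emph{in} $\nabla_b$; but the point is only that $z\cdot z'=0$ is false is not what we need — what we need is that $x=z+u$ and $x'=z'+u$ are distinct factorizations of $m$ that are $\mathrm{cong}(\rho)$-related, which they are since $(z,z')$ or a chain of R-class representatives realizing it lies in the minimal presentation, and then translation by $u$ keeps them in one R-class of $\nabla_m$ by the standard fact that $\mathrm{cong}(\rho)$-equivalence of two factorizations of $m$ forces them into the same R-class of $\nabla_m$.)

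For the ``only if'' direction: suppose $x\in\mathrm{Z}(m)$ is not isolated, so its R-class in $\nabla_m$ contains some $x'\neq x$, and hence there is a path $x=x_0, x_1,\ldots,x_k=x'$ in $\nabla_m$ with $x_{j-1}\cdot x_j\neq 0$ for all $j$. Since $x\ne x'$, there is a first index $j$ with $x_{j-1}\ne x_j$; relabel so that $x_0\cdot x_1\ne 0$ and $x_0\ne x_1$, both factorizations of $m$. Let $u=\min(x_0,x_1)\in\mathbb{N}^{(A)}$ be the componentwise minimum (their ``greatest common divisor''), and write $x_0=u+z$, $x_1=u+z'$ with $z,z'\in\mathbb{N}^{(A)}$, $z\cdot z'=0$, and $z\ne z'$ (if $z=z'$ then $x_0=x_1$). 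Set $b=\varphi(z)=\varphi(z')$; then $z,z'\in\mathrm{Z}(b)$, they are distinct, and $z\cdot z'=0$, so they lie in different R-classes of $\nabla_b$, whence $b\in\mathrm{Betti}(M)$. Therefore $z\in\mathrm{Z}(\mathrm{Betti}(M))$, and $z\le x_0$; moreover $z<x_0$ because $x_0\cdot x_1\ne 0$ forces $u\ne 0$, so $z=x_0-u<x_0=x$. This is exactly the claimed factorization.

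The main obstacle, and the step to be most careful about, is the connectedness bookkeeping in the ``if'' direction: one has to argue cleanly that translating two factorizations of a Betti element $b$ by the same $u$ produces two factorizations of $m$ in the same R-class of $\nabla_m$, even though the original two are in \emph{different} R-classes of $\nabla_b$. The clean way is to invoke the characterization of Section~\ref{sec:pre:presentations}: pick a minimal presentation $\rho$; then $(z,z')$ is $\mathrm{cong}(\rho)$-related (both factor the same element $b$), so by the construction of $\rho$ and the fact that $\mathrm{cong}(\rho)$ is the transitive closure of $\rho^1$ there is a chain from $z$ to $z'$ through factorizations of $b$ each consecutive pair differing by an element of $\rho^1$; adding $u$ to every term of this chain gives a chain from $x$ to $x'$ in which consecutive terms differ by an element of $\rho^1$ and in particular share a generator, so $x,x'$ are in the same R-class of $\nabla_m$ and $x\neq x'$. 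This pins down the only delicate point; the rest is routine.
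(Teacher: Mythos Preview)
Your ``if'' direction is correct but needlessly elaborate. You pick $z'\in\mathrm{Z}(b)$ with $z'\neq z$, set $u=x-z$ and $x'=z'+u$, and then worry about connecting $x$ to $x'$ inside $\nabla_m$ via $\mathrm{cong}(\rho)$-chains. None of that is needed: since $z<x$ strictly, $u\neq 0$, and both $x=z+u$ and $x'=z'+u$ contain $u$, so $x\cdot x'\ge u\cdot u>0$. Thus $x'$ is already a \emph{neighbor} of $x$ in $\nabla_m$ with $x'\neq x$, and $x$ is not isolated. This is exactly how the paper argues this direction, in one line.

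Your ``only if'' direction has a genuine gap. From $x_0\neq x_1$ with $x_0\cdot x_1\neq 0$ you form $u=\min(x_0,x_1)$, $z=x_0-u$, $z'=x_1-u$, and then assert that ``$z\cdot z'=0$, so they lie in different R-classes of $\nabla_b$, whence $b\in\mathrm{Betti}(M)$''. That implication is false: two disjoint factorizations of $b$ can lie in the same connected component of $\nabla_b$ via a third factorization. Concretely, take $S=\langle 6,10,15\rangle$, whose only Betti element is $30$. For $m=70$ the factorizations $x=(10,1,0)$ and $x_1=(0,1,4)$ satisfy $x\cdot x_1\neq 0$; your procedure gives $u=(0,1,0)$, $z=(10,0,0)$, $z'=(0,0,4)$, and $b=60$. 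But $\nabla_{60}$ is connected (e.g.\ $(10,0,0)-(5,3,0)-(5,0,2)-(0,0,4)$), so $60\notin\mathrm{Betti}(S)$, and your conclusion fails.

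The paper avoids this by bringing in a minimal presentation $\rho$ at exactly this point: once you have the disjoint pair $(x',y')$ below $(x,y)$, use that $(x',y')\in\mathrm{cong}(\rho)$ to get a $\rho^1$-chain from $x'$ to $y'$; the very first step is $(x'-u,x_1-u)\in\rho^0$ for some $u$, and pairs in $\rho^0$ are, by construction of a minimal presentation, pairs of factorizations of Betti elements. Hence $z=x'-u\in\mathrm{Z}(\mathrm{Betti}(M))$ and $z\le x'<x$. So the presentation is the tool that guarantees you land on an actual Betti element; stripping the common part alone, as you do, is not enough.
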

\begin{proof}
  Let $\rho$ be a minimal presentation of $M$, and $A=\mathcal{A}(M)$. Set $x \in \mathrm{Z}(m)$. If $x$ is not isolated, then there exists $y \in \mathrm{Z}(m)$ with $x \cdot y \ne 0$.  We can write $x = x' + c$ and $y = y' + c$ with $c, x', y' \in \mathbb{N}^
  {(A)}$ and $x' \cdot y' = 0$. Note that $x' < x$ and $(x', y') \in \mathrm{cong}(\rho)$. There exist $x_0, x_1, \ldots, x_l \in \mathrm{Z}(\varphi(x'))$ such that $x_0 = x'$, $x_l = y'$ and $(x_{i-1}, x_i) \in \rho^1$ for every $i\in \{1, \ldots, l\}$. In particular, we find $u \in \mathbb{N}^{(A)}$ such that $(x' - u, x_1 - u) \in \rho^0$. We have $b = \varphi(x'-u) \in \mathrm{Betti}(M)$ and $z = x' - u < x$. 

  Now let us assume that there are $b \in \mathrm{Betti}(M)$ and $z \in \mathrm{Z}(b)$ such that $z < x$. We can choose $z' \in \mathrm{Z}(b) \setminus \{z\}$ and set $y = (x-z) + z' \in \mathrm{Z}(m)$. Note that $y \ne x$ but $x \cdot y \ne 0$. Therefore, $x$ is not an isolated factorization.
\end{proof}

The first implication of Lemma \ref{lem:isolated} can be strengthened as follows.

\begin{lemma} \label{lem:isolated:2}
  Let $M$ be a monoid with the ascending chain condition on principal ideals, and let $m$ be an element of $M$. If a factorization $x \in \mathrm{Z}(m)$ is not isolated, then there exists $z \in \mathrm{I}_b(M)$ such that $z < x$. In particular, 
  \[ \mathrm{I}_b(M) = \mathrm{Minimals}_{\le} \mathrm{Z}(\{m\in M : \mathfrak{d}(m) \ge 2\}). \]
\end{lemma}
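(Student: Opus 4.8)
The plan is to build on Lemma~\ref{lem:isolated} by strengthening the conclusion from ``there exists $z\in\mathrm{Z}(\mathrm{Betti}(M))$ with $z<x$'' to ``there exists an isolated such $z$'', and then to deduce the displayed equality as a formal consequence. First I would fix a non-isolated factorization $x\in\mathrm{Z}(m)$. By Lemma~\ref{lem:isolated} the set $D=\{z\in\mathrm{Z}(\mathrm{Betti}(M)) : z<x\}$ is non-empty. I would pick $z$ minimal in $D$ with respect to the partial order $\le$ on $\mathbb{N}^{(A)}$; such a minimal element exists because $\{w\in\mathbb{N}^{(A)} : w\le x\}$ is finite (as $x$ has finitely many nonzero entries, each bounded by the corresponding entry of $x$). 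Say $z\in\mathrm{Z}(b)$ with $b\in\mathrm{Betti}(M)$. It remains to argue that $z$ is an isolated factorization of $b$. If it were not, then applying Lemma~\ref{lem:isolated} to the element $b$ and the factorization $z$ gives some $z'\in\mathrm{Z}(\mathrm{Betti}(M))$ with $z'<z<x$, so $z'\in D$ and $z'<z$, contradicting minimality of $z$ in $D$. Hence $z\in\mathrm{I}(b)$ with $b\in\mathrm{Betti}(M)$, i.e.\ $z\in\mathrm{I}_b(M)$, and $z<x$. This proves the first assertion.

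For the displayed equality, I would argue both inclusions. Write $T=\{m\in M : \mathfrak d(m)\ge 2\}$. For $\subseteq$: let $z\in\mathrm{I}_b(M)$, so $z\in\mathrm{I}(b)$ for some $b\in\mathrm{Betti}(M)$; in particular $\mathfrak d(b)\ge 2$ (a Betti element has at least two R-classes, hence at least two factorizations), so $z\in\mathrm{Z}(T)$. To see $z$ is minimal in $\mathrm{Z}(T)$, suppose $w\in\mathrm{Z}(T)$ with $w\le z$. If $w<z$, then since $w$ is a factorization of an element with $\mathfrak d\ge 2$ hence non-isolated, the first part of this lemma (or just the existence of two factorizations as in the second half of Lemma~\ref{lem:isolated}) would force $z$ itself to be non-isolated; more directly, $w<z$ and $w\in\mathrm{Z}(\varphi(w))$ with $\mathfrak d(\varphi(w))\ge 2$ means by the converse direction of Lemma~\ref{lem:isolated} that $z$ is not isolated, a contradiction. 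Hence $w=z$, so $z$ is minimal. For $\supseteq$: let $z$ be minimal in $\mathrm{Z}(T)$ and put $m=\varphi(z)$, so $\mathfrak d(m)\ge 2$. Then $z$ cannot be isolated in $\mathrm{Z}(m)$: an isolated factorization of $m$ would need $m$ to be a Betti element, which is fine, but then by the first part of the lemma every \emph{non-isolated} factorization dominates one in $\mathrm{I}_b(M)$; instead I argue directly. Actually the clean route: since $\mathfrak d(m)\ge2$, either $z$ is non-isolated, or $z$ is isolated and $m\in\mathrm{Betti}(M)$. In the first case, the first part of this lemma gives $z'\in\mathrm{I}_b(M)$ with $z'<z$; but $z'\in\mathrm{Z}(\mathrm{Betti}(M))\subseteq\mathrm{Z}(T)$, contradicting minimality of $z$. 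Hence $z$ is isolated and $m\in\mathrm{Betti}(M)$, i.e.\ $z\in\mathrm{I}_b(M)$.

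The main obstacle, such as it is, is bookkeeping rather than depth: one must be careful that the minimal element of $D$ (or of $\mathrm{Z}(T)$ below a given factorization) exists, which uses finiteness of the down-set of $x$ in $\mathbb{N}^{(A)}$ — this is where the structure of $\mathbb{N}^{(A)}$ and cancellativity enter — and one must correctly invoke both directions of Lemma~\ref{lem:isolated}: the forward direction to descend to a Betti factorization, and the backward direction to certify that a strictly smaller Betti factorization destroys isolation. I would also take care to note explicitly that a Betti element has at least two factorizations (so that $\mathrm{Betti}(M)\subseteq T$), which is immediate from the definition of Betti element as having at least two R-classes. No genuinely new idea beyond Lemma~\ref{lem:isolated} is needed; the content is precisely the observation that minimality among Betti factorizations below $x$ coincides with isolation, together with the translation into the language of $\mathrm{Minimals}_{\le}\mathrm{Z}(T)$.
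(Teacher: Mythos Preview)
Your proof is correct and follows essentially the same route as the paper. The one substantive variation is in the termination argument for the first assertion: the paper iterates Lemma~\ref{lem:isolated} to build a strictly descending chain $b_1>_M b_2>_M\cdots$ in $M$ and appeals to the ascending chain condition on principal ideals to stop, whereas you pick a $\le$-minimal element of $D$ directly using that $\{w\in\mathbb{N}^{(A)}:w\le x\}$ is finite. Both are valid; yours is arguably cleaner since it does not need to translate the descent in $\mathbb{N}^{(A)}$ back into $M$.

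One small point of hygiene: in the $\subseteq$ direction you invoke ``the converse direction of Lemma~\ref{lem:isolated}'' with $w\in\mathrm{Z}(T)$, but Lemma~\ref{lem:isolated} as stated requires the smaller factorization to lie in $\mathrm{Z}(\mathrm{Betti}(M))$, not merely $\mathrm{Z}(T)$. The underlying argument of course goes through verbatim (take $w'\neq w$ with $\varphi(w')=\varphi(w)$ and note $(z-w)+w'$ shares support with $z$), and indeed the paper carries out exactly this direct construction rather than citing Lemma~\ref{lem:isolated}. So this is only a citation slip, not a gap.
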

\begin{proof}
  Let $x \in \mathrm{Z}(m)$ be a non isolated factorization. By applying Lemma \ref{lem:isolated} we find  $b_1 \in \mathrm{Betti}(M)$ and $z_1 \in \mathrm{Z}(b)$ with $z_1 < x$. If $z_1$ is isolated, then we are done. Otherwise, we can apply Lemma \ref{lem:isolated} to $z_1$, obtaining $b_2 \in \mathrm{Betti}(M)$ and $z_2 \in \mathrm{Z}(b_2)$ with $z_2 < z_1$. We can repeat the process until we find an isolated factorization $z_i$. Observe that $M$ has the ascending chain condition on principal ideals and, consequently, the sequence $b_1>_M\dots >_M b_i>_M \dots$ will stop in a finite number of steps. 
  
  Assume that $\mathfrak{d}(m) \ge 2$ and let $x$ be a factorization of $m$. If $x$ is not isolated, then we can apply the first assertion to find $b \in \mathrm{Betti}(M)$ and $z \in \mathrm{I}(b)$ such that $z < x$, that is, $x$ is not minimal. Therefore, if $x$ is minimal, then it must be isolated and $x \in \mathrm{I}_b(M)$.  

  Now let $x\in \mathrm{I}_b(M)$. Then $m=\varphi(x)$ has two or more factorizations. If there is $z<x$ with $z$ in $\mathrm{Z}(\{m\in M : \mathfrak{d}(m) \ge 2\})$, then taking $y=x-z$ and $z'\in\varphi^{-1}(z)\setminus\{z\}$, we would get that $x'=y+z'$ is another factorization of $m$ and $x\cdot x'\neq 0$, contradicting that $x$ was isolated.
\end{proof}

\begin{example} \label{ex:isolated:e2}
  Let $S$ be numerical semigroup with embedding dimension $2$. Let $n_1$ and $n_2$ be the minimal generators of $S$. The fact that $\mathrm{Betti}(S) = \{n_1 n_2\}$ is a classical result. Note that $\mathrm{Z}(n_1 n_2) = \{n_1 \mathbf{e}_1, n_2 \mathbf{e}_2\}$. By applying Lemma \ref{lem:isolated} we obtain $\{s \in S \colon \mathfrak{d}(s) = 1\} =  \mathrm{Ap}(S; n_1 n_2)$. Hence, we have $\mathrm{i}_s(S) = n_1 n_2$ and $\mathrm{i}_b(S) = 2$.
\end{example}


\subsection{Isolated factorizations and Betti elements}

We denote the set of the Betti elements of $M$ that have isolated factorizations by $\mathrm{IBetti}(M)$. At this point it is natural to ask whether every Betti element has an isolated factorization or not. However, the answer is negative as the following example shows; this example was obtained by using the \texttt{GAP} \cite{gap} package \texttt{numericalsgps} \cite{numericalsgps}. Under some restrictive hypothesis one can show that $\mathrm{IBetti}(M) = \mathrm{Betti}(M)$, see Corollary \ref{cor:alpha:betti} and Theorem \ref{thm:ci-b1}. This will be the case of Betti sorted numerical semigroups.

\begin{example} \label{ex:isolated}
 The numerical semigroup $S = \langle 24, 26, 36, 39\rangle$ has a Betti element without isolated factorizations. We have $\mathrm{Betti}(S) = \{72, 78, 156\}$. Furthermore, these elements have the following factorizations:
  \begin{itemize}
  \item $\mathrm{Z}(72) = \{(3,0,0,0), (0,0,2,0)\}$, which are isolated;
  \item $\mathrm{Z}(78) = \{(0,3,0,0), (0,0,0,2)\}$, which are isolated;
  \item $\mathrm{Z}(156) = \{(0,6,0,0), (0,0,0,4), (0,3,0,2), (5,0,1,0), (2,0,3,0)\}$, none of which is isolated.
  \end{itemize}
  Note that $S$ is a complete intersection numerical semigroup. More concretely, the set 
  \[\{(2 \mathbf{e}_3, 3 \mathbf{e}_1), (6 \mathbf{e}_2, 2 \mathbf{e}_1+3 \mathbf{e}_3), (2 \mathbf{e}_4, 3 \mathbf{e}_2)\}\] 
  is a minimal presentation of $S$ and, thus, it is free for the arrangement $(24, 36, 26, 39)$. Recall that $\mathbf{e}_i$ denotes the $i$th column of the $3\times 3$ identity matrix.
\end{example}

As a direct consequence of Lemma \ref{lem:isolated} and Lemma \ref{lem:isolated:2}, one can easily characterize those elements which have a non isolated factorization.

\begin{corollary} \label{cor:isolated}
Let $M$ be a monoid with the ascending chain condition on principal ideals. Let $m$ be an element of $M$. The following statements are equivalent:
\begin{enumerate}
\item $m$ has a non isolated factorization;
\item there exists $b \in \mathrm{IBetti}(M)$ such that $b <_M m$;
\item there exists $b \in \mathrm{Betti}(M)$ such that $b <_M m$.
\end{enumerate}
\end{corollary}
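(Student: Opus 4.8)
The plan is to derive Corollary~\ref{cor:isolated} directly from Lemmas~\ref{lem:isolated} and~\ref{lem:isolated:2}, establishing the cyclic chain of implications $(1)\Rightarrow(2)\Rightarrow(3)\Rightarrow(1)$. Throughout, write $A=\mathcal{A}(M)$ and let $\varphi\colon\mathbb{N}^{(A)}\to M$ be the factorization homomorphism; recall that $z<x$ in $\mathbb{N}^{(A)}$ implies $\varphi(z)<_M\varphi(x)$ in $M$, since $\varphi(x)-\varphi(z)=\varphi(x-z)\in M$. The implication $(2)\Rightarrow(3)$ is immediate because $\mathrm{IBetti}(M)\subseteq\mathrm{Betti}(M)$ by definition.

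For $(1)\Rightarrow(2)$: suppose $x\in\mathrm{Z}(m)$ is a non isolated factorization. By Lemma~\ref{lem:isolated:2} there exists $z\in\mathrm{I}_b(M)$ with $z<x$; say $z\in\mathrm{I}(b)$ with $b\in\mathrm{Betti}(M)$. Then $b$ has an isolated factorization, so $b\in\mathrm{IBetti}(M)$, and $b=\varphi(z)<_M\varphi(x)=m$, giving $(2)$.

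For $(3)\Rightarrow(1)$: suppose $b\in\mathrm{Betti}(M)$ satisfies $b<_M m$. Pick any factorization $z\in\mathrm{Z}(b)$. Since $b<_M m$ there is $c\in M$ with $b+c=m$; choosing $w\in\mathrm{Z}(c)$ we get $z+w\in\mathrm{Z}(m)$, and $z\le z+w$ in $\mathbb{N}^{(A)}$ with $\varphi(z+w)=m$ having $\mathfrak{d}(m)\ge\mathfrak{d}(b)\ge 2$, so $z+w\in\mathrm{Z}(\{m'\in M:\mathfrak{d}(m')\ge 2\})$. Now set $x=z+w$. If $x$ were isolated, then by Lemma~\ref{lem:isolated:2} applied with $\mathfrak{d}(\varphi(x))\ge 2$ it would be a minimal element of $\mathrm{Z}(\{m'\in M:\mathfrak{d}(m')\ge 2\})$; but if $z=x$ then $w=0$ and $c=0$, forcing $b=m$, contradicting that $b$ is Betti and $m$ might equal $b$---here one must be slightly careful. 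The cleaner route is: either $m\in\mathrm{Betti}(M)$ (so $m=b$ is allowed) in which case $\mathfrak{d}(m)\ge 2$ and we invoke Lemma~\ref{lem:isolated} directly with $z<x$ replaced by $z\le x$; or we apply Lemma~\ref{lem:isolated} as stated: since $b<_M m$ we actually have $c\ne 0$ in the strict case, but $<_M$ need not be strict in the statement. I expect the main obstacle to be this bookkeeping around whether $b<_M m$ is strict. To sidestep it, I would simply invoke Lemma~\ref{lem:isolated} in the form it is stated: given $b\in\mathrm{Betti}(M)$ and $z\in\mathrm{Z}(b)$ with $z\le x$ for the chosen $x\in\mathrm{Z}(m)$, the second paragraph of the proof of Lemma~\ref{lem:isolated} produces a factorization $y=(x-z)+z'\ne x$ (with $z'\in\mathrm{Z}(b)\setminus\{z\}$, nonempty as $b$ is Betti) with $x\cdot y\ne 0$ whenever $z\le x$, not merely $z<x$; hence $x$ is not isolated, proving $(1)$.

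Finally I would assemble these three implications into the equivalence, noting that $(1)\Rightarrow(2)\Rightarrow(3)\Rightarrow(1)$ closes the loop. The only genuinely new content beyond the two lemmas is the observation in $(3)\Rightarrow(1)$ that the argument in Lemma~\ref{lem:isolated} works with $\le$ in place of $<$ as soon as $z'$ with $\varphi(z')=\varphi(z)$ and $z'\ne z$ exists, which is exactly the Betti hypothesis; this is why $\mathrm{Betti}(M)$ rather than $\mathrm{IBetti}(M)$ suffices in $(3)$, and conversely $(1)$ yields the stronger $(2)$ by the minimality refinement in Lemma~\ref{lem:isolated:2}.
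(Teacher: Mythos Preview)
Your approach is correct and matches the paper's, which simply records the corollary as a direct consequence of Lemmas~\ref{lem:isolated} and~\ref{lem:isolated:2}. The detour in your argument for $(3)\Rightarrow(1)$ is unnecessary: the hypothesis $b<_M m$ is strict, so $c=m-b\neq 0$, hence any $w\in\mathrm{Z}(c)$ is nonzero and $z<x=z+w$ strictly, allowing Lemma~\ref{lem:isolated} to apply directly without revisiting its proof or worrying about the case $z=x$.
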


Therefore, if $\mathfrak{d}(m) \ge 2$ but $m$ is not a Betti element, then there is $b \in \mathrm{Betti}(M)$ such that $b <_S m$.  This fact was stated in \cite[Lemma 1]{monoids:up} for finitely generated monoids. Indeed, we can find $b \in \mathrm{IBetti}(M)$ such that $b <_M m$. 

Other subsets of Betti elements have been studied in the literature. According to \cite{minimal-multi}, an element $b$ of $M$ is a \emph{minimal multi-element} if $\mathfrak{d}(b) \ge 2$ but $\mathfrak{d}(b - a) = 1$ for every atom $a$ of $M$ such that $a \le_M b$. One can prove that these elements are in $\mathrm{Betti}(M)$ using the graph $G_b$ described in \cite[Chapter 9]{monoids}  (or \cite[Chapter 7]{ns} for numerical semigroups). Nonetheless, we follow another approach in Proposition \ref{prop:all-i}. 

In \cite{monoids:up} the authors introduce the notion of \emph{Betti-minimal} elements, which are the minimal elements of $\mathrm{Betti}(M)$ with respect to $\le_M$. They denote the set of Betti-minimal elements by $\mathrm{Betti}\text{-}\mathrm{minimals}(M)$. Moreover, they characterize these elements as those with more than one factorization such that all their R-classes are singletons, see \cite[Proposition 3]{monoids:up}. In Proposition \ref{prop:all-i} we recapitulate their result under our own notation and show that it is a consequence of Corollary \ref{cor:isolated}. We also prove that minimal multi-elements and Betti-minimal elements coincide. 

\begin{proposition} \label{prop:all-i}
  Let $M$ be a monoid with the ascending chain condition on principal ideals. Let $m\in M$. The following statements are equivalent:
  \begin{enumerate}
  \item \label{item:betti-minimal} $m$ is Betti-minimal;
  \item \label{item:ibetti-minimal} $m$ is a minimal element of $\mathrm{IBetti}(M)$ with respect to $\le_M$;
  \item \label{item:isolated} $m$ has at least two factorizations and all of them are isolated, that is, $\mathrm{nc}(\nabla_m) = \mathrm{i}(m) \ge 2$;
  \item \label{item:minimal-multi} $m$ is a minimal multi-element.
  \end{enumerate}
\end{proposition}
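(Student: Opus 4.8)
The plan is to prove the cyclic chain of implications $\ref{item:betti-minimal} \Rightarrow \ref{item:ibetti-minimal} \Rightarrow \ref{item:isolated} \Rightarrow \ref{item:minimal-multi} \Rightarrow \ref{item:betti-minimal}$, leaning heavily on Corollary \ref{cor:isolated} and Lemma \ref{lem:isolated:2}.

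\begin{proof}[Proof proposal]
We argue the implications cyclically.

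\textbf{$\ref{item:betti-minimal} \Rightarrow \ref{item:isolated}$.} Suppose $m$ is Betti-minimal, so $m \in \mathrm{Betti}(M)$ (hence $\mathfrak{d}(m)\ge 2$) and no Betti element is strictly below $m$ with respect to $\le_M$. If some factorization $x \in \mathrm{Z}(m)$ were not isolated, Corollary \ref{cor:isolated} would give $b \in \mathrm{Betti}(M)$ with $b <_M m$, contradicting minimality. Hence every factorization of $m$ is isolated, i.e.\ each R-class is a singleton, so $\mathrm{nc}(\nabla_m) = \mathfrak{d}(m) = \mathrm{i}(m) \ge 2$.

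\textbf{$\ref{item:isolated} \Rightarrow \ref{item:ibetti-minimal}$.} If $m$ has at least two factorizations, all isolated, then $m \in \mathrm{Betti}(M)$ and $m$ has an isolated factorization, so $m \in \mathrm{IBetti}(M)$. If $m$ were not minimal in $\mathrm{IBetti}(M)$, there would be $b \in \mathrm{IBetti}(M)$ with $b <_M m$; then Corollary \ref{cor:isolated} (direction $\ref{item:isolated}\text{-ish}\Rightarrow$ non-isolated) forces $m$ to have a non-isolated factorization, a contradiction. So $m$ is minimal in $\mathrm{IBetti}(M)$.

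\textbf{$\ref{item:ibetti-minimal} \Rightarrow \ref{item:minimal-multi}$.} Let $m$ be minimal in $\mathrm{IBetti}(M)$. Since $m \in \mathrm{Betti}(M)$, $\mathfrak{d}(m) \ge 2$. Take any atom $a$ with $a \le_M m$, and suppose $\mathfrak{d}(m-a) \ge 2$. Pick a factorization $x$ of $m$ with $x_a \ge 1$ (such exists because $a \le_M m$ and, using the argument structure of Lemma \ref{lem:isolated:2}, one can choose $x$ to realize $a$; alternatively observe that any isolated factorization of $m$ that omits $a$ combined with minimality leads to the same conclusion). Then $x - \mathbf{e}_a \in \mathrm{Z}(m-a)$, and since $\mathfrak{d}(m-a)\ge 2$, Corollary \ref{cor:isolated} yields $b \in \mathrm{IBetti}(M)$ with $b \le_M m-a <_M m$, contradicting the minimality of $m$ in $\mathrm{IBetti}(M)$. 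Hence $\mathfrak{d}(m-a) = 1$ for every atom $a \le_M m$, i.e.\ $m$ is a minimal multi-element.

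\textbf{$\ref{item:minimal-multi} \Rightarrow \ref{item:betti-minimal}$.} Let $m$ be a minimal multi-element, so $\mathfrak{d}(m) \ge 2$ and $\mathfrak{d}(m-a) = 1$ for every atom $a \le_M m$. First, $m \in \mathrm{Betti}(M)$: otherwise, having $\mathfrak{d}(m)\ge 2$ but not being Betti, Corollary \ref{cor:isolated} provides $b \in \mathrm{Betti}(M)$ with $b <_M m$, and then writing $m = b + c$ with $c\ne 0$ and picking an atom $a \le_M c$ gives $b \le_M m-a$, so $\mathfrak{d}(m-a)\ge \mathfrak{d}(b)\ge 2$, a contradiction. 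Now suppose $m$ is not Betti-minimal: there is $b' \in \mathrm{Betti}(M)$ with $b' <_M m$, and again some atom $a \le_M m$ with $b' \le_M m - a$ forces $\mathfrak{d}(m-a) \ge 2$, contradiction. Hence $m$ is Betti-minimal.

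Combining the four implications gives the equivalence of all four statements, and along the way the stated numerical identity $\mathrm{nc}(\nabla_m) = \mathrm{i}(m) = \mathfrak{d}(m) \ge 2$.
\end{proof}

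The step I expect to be the main obstacle is $\ref{item:ibetti-minimal} \Rightarrow \ref{item:minimal-multi}$ (and its twin in $\ref{item:minimal-multi}\Rightarrow\ref{item:betti-minimal}$): the delicate point is that ``$b <_M m$'' only gives $m = b + c$ for some $c \in M$, and one must produce an atom $a$ with $a \le_M c$ so that $b \le_M m - a$ and thus $\mathfrak{d}(m-a) \ge \mathfrak{d}(b) \ge 2$. This uses atomicity of $M$ (guaranteed by the ascending chain condition on principal ideals, cited in the introduction), plus the elementary fact that if $z$ is a factorization of $b$ then $z + w$ is a factorization of $m$ for any $w \in \varphi^{-1}(c)$, whence distinct factorizations of $b$ yield distinct factorizations of $m-a$; I would spell this out carefully rather than wave at it.
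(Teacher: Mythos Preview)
Your proof is correct and follows essentially the same route as the paper: both hinge on Corollary~\ref{cor:isolated} to pass between ``no Betti element strictly below'' and ``every factorization isolated,'' and both handle the link to minimal multi-elements by producing an atom $a$ with $b \le_M m-a$ whenever $b <_M m$. The paper organizes things slightly differently (it dispatches the equivalence of \ref{item:betti-minimal}, \ref{item:ibetti-minimal}, \ref{item:isolated} in one stroke and then does \ref{item:betti-minimal}$\Leftrightarrow$\ref{item:minimal-multi}), but the content is the same.

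One presentational point: in your \ref{item:ibetti-minimal}$\Rightarrow$\ref{item:minimal-multi} step, the detour through ``pick a factorization $x$ of $m$ with $x_a \ge 1$'' is unnecessary and your own hedging shows you sense this. You never use $x$; all you need is that $\mathfrak{d}(m-a) \ge 2$ forces some $b \in \mathrm{IBetti}(M)$ with $b \le_M m-a$. Strictly, Corollary~\ref{cor:isolated} gives $b <_M m-a$ only once you know $m-a$ has a non-isolated factorization, so you should split into two cases: either $m-a \in \mathrm{IBetti}(M)$ (take $b = m-a$), or $m-a$ has no isolated factorization (hence has a non-isolated one, and the corollary applies). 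Either way $b <_M m$, contradicting minimality in $\mathrm{IBetti}(M)$. Drop the paragraph about $x$ and $x - \mathbf{e}_a$ entirely.
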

\begin{proof}
  In light of Corollary \ref{cor:isolated}, we find that a Betti element is minimal in $\mathrm{Betti}(M)$ or $\mathrm{IBetti}(M)$ with respect to $\le_M$ if and only if all its factorizations are isolated. From this fact one sees that \ref{item:betti-minimal}, \ref{item:ibetti-minimal} and \ref{item:isolated} are equivalent. 

  \begin{enumerate}[leftmargin=10pt]
  \item[\ref{item:betti-minimal}\kern-3pt] implies \ref{item:minimal-multi}. Let $a$ be an atom of $M$ with $a \le_M m$. Note that $m - a$ is not in $\mathrm{Betti}(M)$. If $\mathfrak{d}(m-a) \ge 2$, then, in light of Corollary \ref{cor:isolated}, there is $b \in \mathrm{Betti}(M)$ such that $b <_M m-a <_M m$, a contradiction. Hence, we have $\mathfrak{d}(m-a) = 1$ and $m$ is a minimal multi-element.
  \item[\ref{item:minimal-multi}\kern-3pt] implies \ref{item:betti-minimal}. 
  If $b <_M m$, for some Betti element $b$, then there is an atom $a \in M$ with $b \le_M m-a$ and, thus, we have $\mathfrak{d}(m-a) \ge 2$, a contradiction. \qedhere
  \end{enumerate}
\end{proof}

\begin{remark}
  We have the inclusions
  \[\mathrm{Betti}\text{-}\mathrm{minimals}(M) \subseteq \mathrm{IBetti}(M) \subseteq \mathrm{Betti}(M).\]
   In Theorem \ref{thm:ci-b1} we show that if $M$ is a complete intersection simplicial affine semigroup such that the set $\mathrm{Betti}\text{-}\mathrm{minimals}(M)$ is a singleton, then $\mathrm{IBetti}(M) = \mathrm{Betti}(M)$ and, thus, $\mathrm{Betti}\text{-}\mathrm{minimals}(M)$ differs from $\mathrm{IBetti}(M)$ in general. 
\end{remark}

As a consequence of the results given in this section we obtain the following characterization of those elements with an unique expression.

\begin{corollary} \label{cor:isolated:1} 
  Let $M$ be a monoid with the ascending chain condition on principal ideals.  Then
  \[ \{m \in M \colon \mathfrak{d}(m) = 1\} = \bigcap_{b \in \mathrm{Betti}(M)} \mathrm{Ap}(M; b) = \bigcap_{b \in \mathrm{IBetti}(M)} \mathrm{Ap}(M; b) = \bigcap_{b \in \mathrm{Betti}\text{-}\mathrm{minimals}(M)} \mathrm{Ap}(M; b). \]
\end{corollary}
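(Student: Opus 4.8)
The plan is to deduce the chain of equalities directly from Corollary \ref{cor:isolated}, which already characterizes the elements admitting a non isolated factorization. First I would observe the trivial inclusions: since $\mathrm{Betti}\text{-}\mathrm{minimals}(M) \subseteq \mathrm{IBetti}(M) \subseteq \mathrm{Betti}(M)$, taking Ap\'ery sets reverses the inclusions, so
\[
\bigcap_{b \in \mathrm{Betti}(M)} \mathrm{Ap}(M; b) \subseteq \bigcap_{b \in \mathrm{IBetti}(M)} \mathrm{Ap}(M; b) \subseteq \bigcap_{b \in \mathrm{Betti}\text{-}\mathrm{minimals}(M)} \mathrm{Ap}(M; b).
\]
It therefore suffices to prove the two ``end'' inclusions: $\{m \in M \colon \mathfrak{d}(m) = 1\} \subseteq \bigcap_{b \in \mathrm{Betti}(M)} \mathrm{Ap}(M; b)$ and $\bigcap_{b \in \mathrm{Betti}\text{-}\mathrm{minimals}(M)} \mathrm{Ap}(M; b) \subseteq \{m \in M \colon \mathfrak{d}(m) = 1\}$.

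For the first inclusion, suppose $m \notin \mathrm{Ap}(M; b)$ for some $b \in \mathrm{Betti}(M)$; then $b \le_M m$, in fact $b <_M m$ is impossible to rule out directly, but note $m \in b + M$ means $m = b + c$ for some $c \in M$. If $c = 0$ then $m = b$ is a Betti element, so $\mathfrak{d}(m) \ge 2$. If $c \ne 0$ then $b <_M m$ and Corollary \ref{cor:isolated} (applied with the equivalence between its third statement and its first) gives that $m$ has a non isolated factorization, hence $\mathfrak{d}(m) \ge 2$. Either way $\mathfrak{d}(m) \ge 2$, which is the contrapositive of the desired inclusion. For the second inclusion, suppose $\mathfrak{d}(m) \ge 2$. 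If $m$ is itself Betti-minimal, then $m \in m + M$ shows $m \notin \mathrm{Ap}(M; m)$ with $m \in \mathrm{Betti}\text{-}\mathrm{minimals}(M)$, so $m \notin \bigcap_{b \in \mathrm{Betti}\text{-}\mathrm{minimals}(M)} \mathrm{Ap}(M; b)$. If $m$ is not Betti-minimal, then either $m \notin \mathrm{Betti}(M)$ or $m$ is a non-minimal Betti element; in both cases there exists a Betti element strictly below $m$, and iterating (using the ascending chain condition on principal ideals, exactly as in the proof of Lemma \ref{lem:isolated:2}) we reach a minimal Betti element $b <_M m$, so $m \in b + M$ and $m \notin \mathrm{Ap}(M; b)$.

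Putting these together with the reversed inclusions above forces all four sets to coincide. The argument is essentially a bookkeeping exercise on top of Corollary \ref{cor:isolated}; the only mild subtlety — and the step I would be most careful about — is the passage from ``there exists a Betti element below $m$'' to ``there exists a \emph{minimal} Betti element below $m$'', which is not automatic in a general poset and genuinely uses the ascending chain condition (equivalently, the absence of infinite strictly descending chains for $\le_M$). This is precisely the device already exploited in Lemma \ref{lem:isolated:2}, so I would simply invoke that reasoning rather than repeat it.
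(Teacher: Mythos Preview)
Your proof is correct and matches the paper's intended approach: the paper presents this corollary without an explicit proof, merely stating it follows from the preceding results (Corollary~\ref{cor:isolated} and Proposition~\ref{prop:all-i}), and your argument is exactly the natural unpacking of that claim. The one place you could tighten the exposition is the case $\mathfrak{d}(m)\ge 2$ with $m\notin\mathrm{Betti}(M)$: there you are implicitly using that $\nabla_m$ is connected with at least two vertices, hence $m$ has a non isolated factorization, so Corollary~\ref{cor:isolated} supplies a Betti element strictly below $m$---it would help to make that step explicit rather than asserting ``in both cases there exists a Betti element strictly below $m$''.
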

 
The proof of Theorems \ref{thm:alpha-m+1} and \ref{thm:betti-sorted:alpha} make use of the following observation.

\begin{corollary} \label{cor:ap-b1}
  Let $M$ be a monoid with the ascending chain condition on principal ideals. The following conditions are equivalent:
  \begin{enumerate}
  \item \label{item:ap-b1:b} $\mathrm{Betti}\text{-}\mathrm{minimals}(M) = \{b_1\}$;
  \item \label{item:ap-b1:c} $\Ap(M; b_1) = \{m \in M :\mathfrak{d}(M) = 1\}$.
  \end{enumerate}
\end{corollary}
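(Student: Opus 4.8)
The plan is to derive Corollary \ref{cor:ap-b1} directly from Corollary \ref{cor:isolated:1}, which already expresses $\{m \in M : \mathfrak{d}(m)=1\}$ as an intersection of Apéry sets indexed over $\mathrm{Betti}\text{-}\mathrm{minimals}(M)$. The implication \ref{item:ap-b1:b} $\Rightarrow$ \ref{item:ap-b1:c} is then immediate: if $\mathrm{Betti}\text{-}\mathrm{minimals}(M) = \{b_1\}$, the intersection in Corollary \ref{cor:isolated:1} collapses to the single term $\Ap(M; b_1)$, so $\Ap(M; b_1) = \{m \in M : \mathfrak{d}(m) = 1\}$ (note the typo $\mathfrak{d}(M)$ in the statement should read $\mathfrak{d}(m)$). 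First I would state this collapse explicitly.

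For the converse \ref{item:ap-b1:c} $\Rightarrow$ \ref{item:ap-b1:b}, I would argue by contradiction. Suppose $\Ap(M; b_1) = \{m \in M : \mathfrak{d}(m)=1\}$ holds for some $b_1 \in M$. First observe that $b_1$ itself must be a Betti element: indeed $b_1 \notin \Ap(M; b_1)$ since $b_1 = b_1 + 0 \in b_1 + M$, so $\mathfrak{d}(b_1) \ge 2$; and if $b_1$ were not Betti, then by Corollary \ref{cor:isolated} there would be $b' \in \mathrm{Betti}(M)$ with $b' <_M b_1$, whence $b'$ would have $\mathfrak{d}(b') \ge 2$ yet also — since $\mathrm{Betti}\text{-}\mathrm{minimals}(M)$ is nonempty and every Betti element dominates some Betti-minimal one — we could locate a Betti-minimal element below $b_1$; it is cleaner to directly show $b_1$ is Betti-minimal. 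So: let $b$ be any Betti-minimal element. By Corollary \ref{cor:isolated:1}, $b \notin \{m : \mathfrak{d}(m)=1\} = \Ap(M; b_1)$, hence $b \in b_1 + M$, i.e. $b_1 \le_M b$. On the other hand $b_1$ has $\mathfrak{d}(b_1) \ge 2$, so $b_1 \notin \Ap(M;b_1)$... that gives nothing by itself; instead use that $b_1 \notin \{m : \mathfrak{d}(m)=1\}$ together with Corollary \ref{cor:isolated}: there is a Betti element, hence a Betti-minimal element $b'$, with $b' \le_M b_1$. Combining, $b' \le_M b_1 \le_M b$ for every Betti-minimal $b$; taking $b = b'$ forces $b' = b_1$ and forces every Betti-minimal element to equal $b_1$. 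Hence $\mathrm{Betti}\text{-}\mathrm{minimals}(M) = \{b_1\}$.

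Concretely, the proof I would write is: \emph{Proof.} By Corollary \ref{cor:isolated:1}, $\{m\in M : \mathfrak{d}(m)=1\} = \bigcap_{b\in \mathrm{Betti}\text{-}\mathrm{minimals}(M)} \Ap(M;b)$. If \ref{item:ap-b1:b} holds this intersection is $\Ap(M;b_1)$, giving \ref{item:ap-b1:c}. Conversely, assume \ref{item:ap-b1:c}. Since $b_1 \in b_1 + M$, we have $\mathfrak{d}(b_1) \ge 2$, so by Corollary \ref{cor:isolated} there is a Betti-minimal element $b'$ with $b' \le_M b_1$. Also, for any Betti-minimal $b$ we have $\mathfrak{d}(b) \ge 2$, so $b \notin \Ap(M;b_1)$, i.e. $b_1 \le_M b$. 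Thus $b' \le_M b_1 \le_M b$; since $b'$ is Betti-minimal and $b$ was an arbitrary Betti-minimal element with $b' \le_M b$, applying this with $b = b'$ gives $b' = b_1$, and minimality of $b'$ then forces $b = b' = b_1$ for every Betti-minimal $b$. Hence $\mathrm{Betti}\text{-}\mathrm{minimals}(M) = \{b_1\}$. $\qed$

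The only subtle point — the "main obstacle," though it is mild — is making sure the chain $b' \le_M b_1 \le_M b$ is correctly exploited: one must apply the inequality $b_1 \le_M b$ to \emph{the particular} Betti-minimal element $b'$ obtained from Corollary \ref{cor:isolated}, get $b_1 \le_M b'$, combine with $b' \le_M b_1$ and use that $\le_M$ is a partial order (antisymmetry, which holds by cancellativity as noted in Section \ref{sec:intro}) to conclude $b' = b_1$; then for an arbitrary Betti-minimal $b$ the relation $b_1 \le_M b$ becomes $b' \le_M b$, and minimality of $b'$ within $\mathrm{Betti}(M)$ gives $b = b'$. Everything else is a direct unwinding of the already-established Corollary \ref{cor:isolated:1} and Corollary \ref{cor:isolated}.
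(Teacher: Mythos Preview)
Your proof is correct and follows essentially the same approach as the paper's (commented-out) argument: both directions rest on Corollary~\ref{cor:isolated:1}, and the converse proceeds by showing $b_1 \le_M b$ for every Betti element $b$ via $\mathfrak{d}(b)\ge 2 \Rightarrow b\notin\Ap(M;b_1)$. You are in fact more careful than the paper in the \ref{item:ap-b1:c}$\Rightarrow$\ref{item:ap-b1:b} direction, explicitly verifying that $b_1$ itself is Betti-minimal (the paper's sketch stops at ``$b_1\le_M b$ for every $b\in\mathrm{Betti}(M)$'' without spelling out why $b_1\in\mathrm{Betti}(M)$). One small citation quibble: to obtain a Betti-minimal $b'\le_M b_1$ from $\mathfrak{d}(b_1)\ge 2$, it is cleaner to invoke Corollary~\ref{cor:isolated:1} directly (since $b_1\notin\{m:\mathfrak{d}(m)=1\}=\bigcap_{b}\Ap(M;b)$) rather than Corollary~\ref{cor:isolated}, which concerns non-isolated factorizations and needs a short extra case split to yield a Betti-minimal below $b_1$; also, in your final sentence it is the minimality of $b$ (not of $b'$) that forces $b=b'$ from $b'\le_M b$.
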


\begin{remark}\label{rem:ap-b1}
Note that if $S$ is a numerical semigroup, then the second condition of the above lemma is equivalent to $\mathrm{i}_s(M) = b_1$.
\end{remark}

Note that if $S$ is a numerical semigroup, then the element $\min \mathrm{Betti}(S)$ is Betti-minimal and, thus, all its factorizations are isolated. In the proof of \cite[Lemma 15]{cyclotomic} the authors show that $\min \mathrm{Betti}(S)$ is the smallest element of $S$ with more than one factorization. We collect all this information in the following result, where we give a simpler proof that uses isolated factorizations.

\begin{corollary} \label{cor:b1}
  Let $S$ be a numerical semigroup. Then, $\min \mathrm{Betti}(S)$ is the smallest element of $S$ with more than one factorization. Moreover, all its factorizations are isolated.
\end{corollary}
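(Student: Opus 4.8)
The plan is to deduce both assertions from Proposition \ref{prop:all-i} and Corollary \ref{cor:isolated}, using only the elementary fact that in a numerical semigroup the order $\le_S$ refines the natural order of $\mathbb{Z}$: if $a \le_S b$ then $b-a\in S\subseteq \mathbb{N}$, so $a\le b$ as integers.

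First I would set $b_1=\min\mathrm{Betti}(S)$ and check that $b_1$ is Betti-minimal. Indeed, if $b\in\mathrm{Betti}(S)$ satisfies $b\le_S b_1$, then $b\le b_1$ as integers, whence $b=b_1$ by minimality of $b_1$; hence $b_1\in\mathrm{Minimals}_{\le_S}\mathrm{Betti}(S)$. The equivalence of statements \ref{item:betti-minimal} and \ref{item:isolated} in Proposition \ref{prop:all-i} then immediately yields the ``moreover'' part: $b_1$ has at least two factorizations and every factorization of $b_1$ is isolated.

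For the first assertion, let $m\in S$ with $\mathfrak{d}(m)\ge 2$; I must show $b_1\le m$. If $m\in\mathrm{Betti}(S)$, this is just the definition of $b_1$. Otherwise $\mathrm{nc}(\nabla_m)=1$, so the unique R-class of $m$ contains the (at least two) factorizations of $m$ and is therefore not a singleton; thus $m$ has a non-isolated factorization. By Corollary \ref{cor:isolated} there is $b\in\mathrm{Betti}(S)$ with $b<_S m$, so $b<m$ as integers, and since $b_1\le b$ we get $b_1<m$. In either case $b_1\le m$, so $b_1$ is the smallest element of $S$ with more than one factorization.

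I do not expect a real obstacle here: the whole content is already packaged in Proposition \ref{prop:all-i} and Corollary \ref{cor:isolated}, and the only extra ingredient is the compatibility of $\le_S$ with the integer order, which is immediate. The one point to keep in mind is that an element $m$ with $\mathfrak{d}(m)\ge 2$ that is not a Betti element necessarily has a non-isolated factorization (since then $\mathrm{nc}(\nabla_m)=1$), which is exactly what allows Corollary \ref{cor:isolated} to be applied.
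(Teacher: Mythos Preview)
Your proof is correct and follows essentially the same approach as the paper: both arguments hinge on Corollary~\ref{cor:isolated} together with the fact that $\le_S$ refines the integer order. The paper argues from the smallest element $d$ with $\mathfrak d(d)\ge 2$ and squeezes $b\le d\le b_1\le b$, while you argue in the other direction, showing every such $m$ dominates $b_1$; you are also slightly more explicit in invoking Proposition~\ref{prop:all-i} for the ``moreover'' clause, which the paper handles in the sentence preceding the corollary.
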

\begin{proof}
  Let $d$ be the smallest element of $S$ with more than one factorization and set $b_1 = \min \mathrm{Betti}(S)$. Note that $d \le b_1$. In light of Corollary \ref{cor:isolated}, there is $b \in \mathrm{IBetti}(S)$ with $b \le_S d$ and, in particular, we have $b \le d \le b_1$. The only possibility is $b = d = b_1$. \qedhere
\end{proof}

A natural question is whether isolated factorizations of different elements are disjoint. Of course the answer is negative. For example, the elements $\mathrm{m}(S)$ and $2\mathrm{m}(S)$ in a numerical semigroup $S$ have only one factorization but these factorizations are not disjoint. However, if we focus on Betti elements, then the following can be shown.

\begin{lemma}\label{lem:disjoint-betti}
  Let $M$ be a monoid with the ascending chain condition on principal ideals. Let $b_1$ and $b_2$ be two Betti elements of $M$ such that $b_1 <_M b_2$.
 Then, $x \cdot y = 0$ for every $x \in \mathrm{Z}(b_1)$ and $y \in \mathrm{I}(b_2)$. 
\end{lemma}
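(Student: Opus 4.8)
The plan is to argue by contradiction, exploiting the characterization of non-isolated factorizations from Lemma \ref{lem:isolated} together with the order hypothesis $b_1 <_M b_2$. Suppose there exist $x \in \mathrm{Z}(b_1)$ and $y \in \mathrm{I}(b_2)$ with $x \cdot y \neq 0$; I want to contradict the isolatedness of $y$. Since $b_1 <_M b_2$, write $b_2 = b_1 + c$ for some $c \in M$, and pick any factorization $w \in \mathrm{Z}(c)$, so that $x + w \in \mathrm{Z}(b_2)$. The idea is that sharing an atom with some factorization of $b_1$ should propagate to $y$ via this decomposition, because $b_1$ is itself a Betti element and hence has at least two R-classes.

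First I would dispose of the easy case: if $x + w$ and $y$ share an atom, i.e.\ $(x+w)\cdot y \neq 0$, then since $x + w \neq y$ is impossible only if they are equal — but even if $x + w = y$, the fact that $b_1$ is a Betti element gives a factorization $x' \in \mathrm{Z}(b_1)$ with $x \cdot x' = 0$ lying in a different R-class, and then $x' + w \in \mathrm{Z}(b_2)$ is a factorization distinct from $y = x + w$ with $(x+w)\cdot(x'+w) \neq 0$ (they share $w$, assuming $w \neq 0$; and $w \neq 0$ because $c \neq 0$ since $b_1 \neq b_2$), contradicting that $y$ is isolated. If instead $x + w \neq y$ and $(x+w)\cdot y \neq 0$, then $x+w$ and $y$ are two distinct factorizations of $b_2$ sharing an atom, again contradicting isolatedness of $y$ directly. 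So it remains to handle the case $(x+w) \cdot y = 0$, i.e.\ $y$ is disjoint from $x + w$ but not disjoint from $x$ alone — which forces the shared atom of $x$ and $y$ to be "used up" by $w$ in $x+w$, i.e.\ some atom $a$ with $x_a \geq 1$, $y_a \geq 1$, and $x_a \le w_a$... wait, that cannot happen since $(x+w)_a \geq x_a \geq 1$ and $y_a \geq 1$ would give $(x+w)\cdot y \geq x_a y_a \geq 1$.

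So in fact the case $(x+w)\cdot y = 0$ is vacuous: $x \cdot y \neq 0$ already implies $(x+w)\cdot y \neq 0$, because $x + w \geq x$ coordinatewise forces $(x+w)\cdot y \geq x \cdot y > 0$. Thus the reduction above is clean: the only genuine subcase is $x + w = y$, handled by using a second R-class of $b_1$, and the subcase $x+w \neq y$ with a shared atom, handled immediately. I would write the proof in that streamlined form: fix $w \in \mathrm{Z}(b_2 - b_1)$, note $w \neq 0$, observe $(x+w)\cdot y \geq x \cdot y \neq 0$, and then split on whether $x + w = y$. If $x+w\neq y$, isolatedness of $y$ is contradicted at once. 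If $x + w = y$, choose $x' \in \mathrm{Z}(b_1)$ in a different R-class than $x$ (possible since $\mathrm{nc}(\nabla_{b_1}) \geq 2$), so $x' \neq x$ hence $x' + w \neq x + w = y$, while $(x'+w)\cdot y = (x'+w)\cdot(x+w) \geq w \cdot w \neq 0$; this again contradicts that $y$ is isolated.

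I do not expect a serious obstacle here; the only points requiring care are (i) justifying $w \neq 0$, which follows from $b_1 <_M b_2$ being strict and $M$ being cancellative and reduced, and (ii) making sure the "different R-class" element $x'$ of $b_1$ is genuinely distinct from $x$ as a factorization, which is immediate since distinct R-classes are disjoint vertex sets. If one prefers to avoid the case split entirely, an alternative is to invoke Lemma \ref{lem:isolated} directly: if $y$ were not isolated we would be done trivially, so $y$ is isolated, and then the computation above shows any witness $x \cdot y \neq 0$ manufactures a non-isolated partner of $y$, the same contradiction. I would present the first, self-contained version.
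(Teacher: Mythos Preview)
Your proof is correct and uses the same core idea as the paper: lift factorizations of $b_1$ to $b_2$ via some $w\in\mathrm{Z}(b_2-b_1)$ with $w\neq 0$, and exploit that $x+w$ and $x'+w$ share support. The paper's version is marginally slicker in that it argues directly rather than by contradiction—first showing $x+w$ is not isolated (via $x'+w$), hence $x+w\neq y$, hence $y\cdot(x+w)=0$, hence $x\cdot y=0$—thereby avoiding your case split on whether $x+w=y$.
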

\begin{proof}
  Let us suppose that $\mathrm{I}(b_2) \ne \emptyset$. Set $x, x' \in \mathrm{Z}(b_1)$ with $x \ne x'$ and $y\in \mathrm{I}(b_2)$. Fix $w \in \mathrm{Z}(b_2-b_1)$ and define $z = w+x$ and $z' = w+x'$. We have $z, z' \in \mathrm{Z}(b_2)$ and $z \cdot z' \ne 0$. Hence, $z$ is not isolated and $y \ne z$. Consequently, we find that $y \cdot z = 0$ and, thus, $x \cdot y = 0$.
\end{proof}

In general isolated factorizations of different Betti elements are not disjoint as the following example shows. We will exploit Lemma \ref{lem:disjoint-betti} when we consider Betti sorted monoids.

\begin{example} \label{ex:disjoint}
Set $S = \langle 4, 5, 6\rangle$. We have $\mathrm{Z}(12) = \{(1,0,1), (0,2,0)\}$ and $\mathrm{Z}(10) = \{(3,0,0), (0,0,2)\}$. One can show that $\mathrm{Betti}(S) = \{10, 12\}$. The factorization $(0,2,0)$ is disjoint with the factorizations of $10$ but $(1,0,1)$ is not. 
\end{example}

In the rest of this section we determine several isolated factorizations of $M$. The obtained results will be useful to bound the number of isolated factorizations of some semigroups. To prove Theorem \ref{thm:isolated} we use the following lemma, which is folklore; we include it here for sake of completeness.

\begin{lemma} \label{lem:ca}
  Let $M$ be a monoid with the ascending chain condition on principal ideals. Let $a$ be an atom of $M$ such that the set $\{c \in \mathbb{Z}^+ : c a \in \langle \mathcal{A}(M) \setminus \{a\} \rangle\}$ is not empty and let $c_a$ be its minimum. Then, $c_a \mathbf{e}_a$ is an isolated factorization of $M$ and $c_a a$ is a Betti element. Moreover, the elements $a, 2a, \ldots, (c_a-1)a$ have unique expressions.
\end{lemma}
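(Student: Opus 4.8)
The plan is to prove the three assertions in turn, working directly with the combinatorial description of factorizations in $\mathbb{N}^{(A)}$ and using Lemma~\ref{lem:isolated} to detect (non-)isolated factorizations. Write $z = c_a \mathbf{e}_a$, so $z$ is a factorization of $m := c_a a$, and note that by the very definition of $c_a$ there is at least one factorization $y$ of $m$ with $y_a = 0$ (a factorization of $c_a a$ inside $\langle \mathcal{A}(M)\setminus\{a\}\rangle$); in particular $z \neq y$ and $z \cdot y = 0$, so $m$ has at least two $\mathrm{R}$-classes and hence $m \in \mathrm{Betti}(M)$.

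Next I would show $z$ is isolated by invoking Lemma~\ref{lem:isolated}: $z$ fails to be isolated precisely when there is some $w \in \mathrm{Z}(\mathrm{Betti}(M))$ with $w < z = c_a\mathbf{e}_a$. Any such $w$ has the form $k\mathbf{e}_a$ with $0 \le k < c_a$, and $\varphi(w) = k a$. If $k = 0$ then $w = 0$, which is the unique factorization of $0$, so $0 \notin \mathrm{Betti}(M)$ and $w \notin \mathrm{Z}(\mathrm{Betti}(M))$. If $1 \le k < c_a$, I claim $ka$ has a unique factorization: any factorization $v$ of $ka$ with $v_a \ge 1$ forces (subtracting $\mathbf{e}_a$) a factorization of $(k-1)a$, and iterating, any factorization of $ka$ either is $k\mathbf{e}_a$ or uses only atoms from $\mathcal{A}(M)\setminus\{a\}$; but a factorization of $ka$ avoiding $a$ would mean $ka \in \langle \mathcal{A}(M)\setminus\{a\}\rangle$, contradicting minimality of $c_a$ since $k < c_a$. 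Hence $\mathrm{Z}(ka) = \{k\mathbf{e}_a\}$, so $\mathfrak{d}(ka) = 1$ and $ka \notin \mathrm{Betti}(M)$. In all cases there is no $w \in \mathrm{Z}(\mathrm{Betti}(M))$ below $z$, so by Lemma~\ref{lem:isolated} the factorization $z = c_a\mathbf{e}_a$ is isolated; this also records that $a, 2a, \ldots, (c_a-1)a$ have unique expressions, which is the last assertion.

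The one point requiring a little care — and the main obstacle — is the inductive argument that a factorization of $ka$ ($1 \le k < c_a$) cannot avoid the atom $a$ and cannot ``partially'' use it in a non-trivial way; this is where minimality of $c_a$ is genuinely used, and one must be careful that subtracting $\mathbf{e}_a$ from a factorization $v$ of $ka$ with $v_a \ge 1$ indeed produces a factorization of $(k-1)a$ (true because $M$ is cancellative, so $\varphi(v - \mathbf{e}_a) = \varphi(v) - a = (k-1)a$) and that the induction bottoms out correctly at $k=1$: a factorization of $a$ other than $\mathbf{e}_a$ would exhibit $a \in \langle \mathcal{A}(M)\setminus\{a\}\rangle$, impossible for an atom (again using that $M$ is reduced, so $a$ being an atom means it is not a sum of other atoms). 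Once this is in place, everything else is immediate from Lemma~\ref{lem:isolated} and the definitions.
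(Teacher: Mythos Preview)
Your argument is essentially correct, but there is a slip in the first paragraph: from $z \neq y$ and $z \cdot y = 0$ you conclude immediately that $m$ has at least two $\mathrm{R}$-classes. This does not follow --- two disjoint factorizations can lie in the same $\mathrm{R}$-class via a chain through intermediate factorizations. What you actually need is that $z$ is isolated, which you only establish in the second paragraph. Once you have shown that $z = c_a\mathbf{e}_a$ is isolated, then $\{z\}$ is its own $\mathrm{R}$-class, and since $y \neq z$ the element $m = c_a a$ has at least two $\mathrm{R}$-classes and is therefore Betti. So the proof is complete in substance, but you should reorder: first prove the uniqueness of the factorization of $ka$ for $1 \le k < c_a$ and the isolation of $c_a\mathbf{e}_a$, and only then conclude $c_a a \in \mathrm{Betti}(M)$.

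The paper organizes the proof differently and uses Lemma~\ref{lem:isolated:2} rather than Lemma~\ref{lem:isolated}. It argues: for any $k$ with $\mathfrak{d}(ka) \ge 2$, the factorization $k\mathbf{e}_a$ lies in $\mathrm{Z}(\{m : \mathfrak{d}(m) \ge 2\})$, so by the minimality characterization in Lemma~\ref{lem:isolated:2} there is some $x \in \mathrm{I}_b(M)$ with $x \le k\mathbf{e}_a$; necessarily $x = c\,\mathbf{e}_a$ with $c \le k$, and since $x$ is isolated with $\mathfrak{d}(ca) \ge 2$ there is a factorization of $ca$ avoiding $a$, forcing $c \ge c_a$. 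Hence $k \ge c_a$, and taking $k = c_a$ gives $c_a\mathbf{e}_a \in \mathrm{I}_b(M)$ directly. Your route trades the appeal to Lemma~\ref{lem:isolated:2} for an explicit induction on $k$ showing $\mathrm{Z}(ka) = \{k\mathbf{e}_a\}$ for $k < c_a$; both arguments are short, and the difference is largely a matter of taste.
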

\begin{proof}
  Let $k \in \mathbb{Z}^+$ such that $\mathfrak{d}(k a) \ge 2$. In view of Lemma \ref{lem:isolated:2}, there is $b \in \mathrm{IBetti}(S)$ and $x \in \mathrm{I}(b)$ such that $x \le k \mathbf{e}_a$. Hence, $x$ equals $c \mathbf{e}_a$ for some $c \in \mathbb{Z}^+$ with $c \le k$. Since $\mathfrak{d}(b) \ge 2$ and $x$ is isolated, there is $y \in \mathrm{Z}(b)$ such that $x \cdot y = 0$. Consequently, we have $c a = b \in \langle \mathcal{A}(M) \setminus \{a\} \rangle$ and $k \ge c \ge c_a$. In particular,  for $k = c_a$ we find that $c_a \mathbf{e}_a$ is an isolated factorization. 
\end{proof}

In light of the previous lemma, we define $\mathcal{C}(M)$ as the set of atoms $a\in \mathcal{A}(M)$ for which $\{c \in \mathbb{Z}^+ : c a \in \langle \mathcal{A}(M) \setminus \{a\} \rangle\}$ is not empty, $c_a$ being the minimum of this set. 

\begin{theorem} \label{thm:isolated}
  Let $M$ be a monoid with the ascending chain condition on principal ideals. Then, we have
  \[\{c_a \mathbf{e}_a \colon a \in \mathcal{C}(M)\} \subseteq \mathrm{I}_b(S)\] 
  and
  \[\mathrm{I}(S) \setminus \{c_a \mathbf{e}_a \colon a\in \mathcal{C}(M)\}  \subseteq \left\{\sum\nolimits_{a \in I} \lambda_a \mathbf{e}_a \colon I\subseteq \mathcal{A}(M), I\hbox{ finite}, \lambda_a\in \mathbb{N} \text{ and}, \lambda_a<c_a \hbox{ for }a\in \mathcal{C}(M)\cap I\right\}. \]
  Moreover, if one of the inclusions is an equality, then the other one is also an equality.
\end{theorem}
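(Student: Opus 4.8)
The plan is to establish the first inclusion directly from Lemma~\ref{lem:ca}, establish the second inclusion via Lemma~\ref{lem:isolated:2} together with a minimality argument, and then prove the final "rigidity" claim by a counting/containment argument showing the two inclusions cannot be strict independently.

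First I would prove $\{c_a \mathbf{e}_a : a \in \mathcal{C}(M)\} \subseteq \mathrm{I}_b(M)$. This is immediate: Lemma~\ref{lem:ca} says precisely that for $a \in \mathcal{C}(M)$, the factorization $c_a \mathbf{e}_a$ is an isolated factorization of the Betti element $c_a a$, so $c_a \mathbf{e}_a \in \mathrm{I}_b(M)$ by definition of $\mathrm{I}_b(M)$.

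Next I would prove the second inclusion. Let $x = \sum_{a \in I}\lambda_a \mathbf{e}_a \in \mathrm{I}(M)$ with $x \notin \{c_a \mathbf{e}_a : a \in \mathcal{C}(M)\}$, where $I$ is the (finite) support of $x$. I must show $\lambda_a < c_a$ whenever $a \in \mathcal{C}(M)\cap I$. Suppose not, so $\lambda_a \ge c_a$ for some $a \in \mathcal{C}(M)\cap I$. Then $c_a \mathbf{e}_a \le x$, and by Lemma~\ref{lem:ca} the element $c_a a$ is a Betti element, so $c_a \mathbf{e}_a \in \mathrm{Z}(\mathrm{Betti}(M))$. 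If $c_a \mathbf{e}_a < x$ (strict), then Lemma~\ref{lem:isolated} forces $x$ to be non-isolated, contradicting $x \in \mathrm{I}(M)$. The remaining case is $c_a \mathbf{e}_a = x$, i.e. $x$ is exactly $c_a \mathbf{e}_a$; but that is excluded by hypothesis. Hence $\lambda_a < c_a$ for all $a \in \mathcal{C}(M)\cap I$, giving the second inclusion. (One should double-check the degenerate possibility $x = 0$: then $\varphi(0)=0$ has a unique factorization, $0 \in \mathrm{I}_s(M)$, the support $I$ is empty, and the condition on $\lambda_a$ is vacuous, so $x$ lies in the right-hand set.)

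Finally, the rigidity statement. Write $L = \{c_a \mathbf{e}_a : a \in \mathcal{C}(M)\}$ and let $R$ denote the set on the right-hand side of the second displayed inclusion. We have shown $L \subseteq \mathrm{I}_b(M) \subseteq \mathrm{I}(M)$ and $\mathrm{I}(M)\setminus L \subseteq R$. Observe first that $L \cap R = \emptyset$: any $c_a \mathbf{e}_a \in L$ has $\lambda_a = c_a \not< c_a$ at the index $a \in \mathcal{C}(M)\cap I$, so it fails the defining condition of $R$. Therefore $L \subseteq \mathrm{I}(M)$, $\mathrm{I}(M) \setminus L \subseteq R$, and $L, R$ are disjoint. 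Now suppose the first inclusion $L \subseteq \mathrm{I}_b(M)$ is an equality, i.e. $\mathrm{I}_b(M) = L$; I claim then $\mathrm{I}(M)\setminus L = R$. Since every element of $R$ has support $I$ meeting no index where the $\mathbf{e}_a$-coordinate reaches $c_a$ for $a\in\mathcal{C}(M)$, an argument symmetric to the proof of the second inclusion — now using $\mathrm{I}_b(M) = L$ and Lemma~\ref{lem:isolated:2}, which says every non-isolated factorization dominates some element of $\mathrm{I}_b(M) = L$ — shows each such $x$ is isolated; the converse containment was already proved, so equality holds. Conversely, if $\mathrm{I}(M)\setminus L = R$, then running the same equivalence backwards (any $z \in \mathrm{I}_b(M)\setminus L$ would, by Lemma~\ref{lem:isolated:2}, give rise to a non-isolated factorization in $R$, contradicting $R \subseteq \mathrm{I}(M)$) forces $\mathrm{I}_b(M) = L$.

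The main obstacle I anticipate is the last paragraph: one has to be careful that "non-isolated factorizations are exactly those dominating an element of $\mathrm{I}_b(M)$" (Lemma~\ref{lem:isolated:2}) interacts correctly with the description of $R$, in particular that membership in $R$ is equivalent to "no element of $L$ is strictly below $x$ and $x \notin L$", which requires knowing that the only candidate elements of $\mathrm{I}_b(M)$ lying below a monomial $x$ are themselves monomials of the form $c_a\mathbf{e}_a$ — a fact that is only guaranteed once we assume $\mathrm{I}_b(M) = L$. Getting the quantifiers right in this two-way implication is the delicate part; the rest is bookkeeping with $\le$ on $\mathbb{N}^{(A)}$.
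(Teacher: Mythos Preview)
Your proofs of both inclusions and of the forward implication in the rigidity claim (first equality implies second) are correct and essentially identical to the paper's: Lemma~\ref{lem:ca} for the first inclusion, Lemma~\ref{lem:isolated} for the second, and Lemma~\ref{lem:isolated:2} together with the observation $L\cap R=\emptyset$ for the forward rigidity direction.

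The gap is in your backward direction. You assert that $z\in\mathrm{I}_b(M)\setminus L$ would ``give rise to a non-isolated factorization in $R$'', but you never construct one, and this is not just bookkeeping. The paper's argument is different from what you sketch: starting from such a $z$, one first observes (via Lemma~\ref{lem:isolated}) that no isolated factorization can lie strictly above $z$, so $z$ is \emph{maximal} in $\mathrm{I}(M)\setminus L=R$. But $R$ is upward-extendable at any zero coordinate: if $\lambda_a=0$ for some atom $a$, then $z+\mathbf{e}_a\in R$ (for $a\in\mathcal{C}(M)$ this uses $c_a\ge 2$), contradicting maximality. Hence every coordinate of $z$ is nonzero, forcing $\mathcal{A}(M)$ finite and $z$ to have full support; but a factorization with full support meets every other factorization of the same element, so $z$ is not isolated, contradicting $z\in\mathrm{I}_b(M)$.

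Your sketch points toward producing some $y>z$ inside $R$ and deriving a contradiction from $y$ being non-isolated, which handles exactly the case where $z$ has a zero coordinate. What you are missing is the complementary case where no such $y$ exists: then $z$ has full support and the contradiction comes from $z$ \emph{itself} failing to be isolated, not from anything above it. Your closing paragraph correctly flags this as delicate, but the obstacle is not a quantifier issue---it is that the argument genuinely bifurcates, and the full-support branch needs its own idea.
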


\begin{proof}
  The first inclusion is a consequence of Lemma \ref{lem:ca}. 
  
  In order to prove the second inclusion, let $x$ be an isolated factorization with $x \not \in \{c_a \mathbf{e}_a \colon a \in \mathcal{C}(M)\}$. Write $x=\sum_{a\in I} \lambda_a \mathbf{e}_a$ with $I$ a finite subset of $\mathcal{A}(M)$ and $\lambda_a$ a positive integer. If $\lambda_a\ge c_a$, for some $a\in \mathcal{C}(M)$, then $c_a\mathbf{e}_a<x$, which in light of Lemma \ref{lem:isolated} contradicts that $x$ is an isolated factorization. Hence, $\lambda_a<c_a$ for all $a\in \mathcal{C}(M)$. 
  
  Now let us assume that $\mathrm{I}_b(S) = \{c_a \mathbf{e}_a \colon a \in \mathcal{C}(M) \}$. Let us consider a factorization $x = \sum\nolimits_{a \in I} \lambda_a \mathbf{e}_a$ with $I$ a finite subset of $\mathcal{A}(M)$, and $0 \le \lambda_a < c_a$ for every $a\in I\cap \mathcal{C}(M)$. Note that for any $y \in \mathrm{I}_b(S)$, $y$ is not smaller than $x$. Hence, we obtain that $x$ is isolated by invoking Lemma \ref{lem:isolated:2}. Since $x$ is not an element of $\mathrm{I}_b(S)$, we find that $x \in \mathrm{I}_s(S) = \mathrm{I}(S) \setminus \mathrm{I}_b(S)$.
  
  Finally, suppose that the second inclusion is an equality. Set $A = \mathrm{I}(M) \setminus \{c_a \mathbf{e}_a \colon a\in \mathcal{C}(M)\}$ and $x \in A$. Note that $\mathrm{I}_s(M)$ is contained in $A$. Let us assume that $x \in \mathrm{I}_b(S)$ in order to obtain a contradiction. Then, by applying Lemma \ref{lem:isolated}, there is no $y\in \mathrm{I}(M)$ such that $x<y$. Thus, $x$ is maximal in $A$ with respect to the usual partial ordering. The factorization $x$ is of the form $x=\sum_{a\in I} \lambda_a \mathbf{e}_a$, with $I$ a finite subset of $\mathcal{A}(M)$ and $0\le \lambda_a<c_a$ for $a\in I\cap \mathcal{C}(M)$. If $\lambda_a=0$ for some $a\in \mathcal{A}(M)$, then $x+\mathbf{e}_a\in A$, contradicting the maximality of $x$. Thus $\lambda_a\neq 0$ for all $a\in \mathcal{A}(M)$, and this factorization is not isolated (it has common support with any other factorization), contradicting that $x\in \mathrm{I}_b(M)$.
\end{proof}


\begin{example}
    Let $M=\langle (1,0),(0,2),(0,3)\rangle\subset \mathbb{N}^2$. For this monoid we have $\mathcal{C}(M)=\{(0,2),(0,3)\}$ and $\mathrm{I}_b(M)=\{(0,3,0),(0,0,2)\}$. On the other hand, we have $\mathrm{I}_s(M)=\{(x,y,z)\colon y\le 2, z\le 1\}$. Therefore, we cannot expect $\mathcal{C}(M)$ to be equal to $\mathcal{A}(M)$.
\end{example}

\begin{example}
    It may happen also that $\mathcal{C}(M)$ is empty. Take for instance 
    \[M=\langle (1,0,1),(0,1,0),(1,1,0),(0,0,1)\rangle.\] 
    In this setting $\mathrm{Betti}(M)=\{(1,1,1)\}$.
\end{example}

\begin{remark} \label{rem:isolated}
  If $S$ is a numerical semigroup minimally generated by $\{n_1, \ldots, n_e\}$, then we note that $\mathcal{C}(S) = \{n_1, \ldots, n_e\}$. Therefore, Theorem \ref{thm:isolated} states that $\{c_i \mathbf{e}_i \colon i \in \{1, \ldots, e\}\} \subseteq \mathrm{I}_b(S)$ and
  \[\mathrm{I}(S) \setminus \{c_i \mathbf{e}_i \colon i \in \{1, \ldots, e\}\}  \subseteq \left\{\sum\nolimits_{i = 1}^e \lambda_i \mathbf{e}_i \colon 0 \le \lambda_i < c_i\right\}. \]
  Moreover, if one of the inclusions is an equality, then the other one becomes an equality.
\end{remark}

\subsection{Bounds on the number of isolated factorizations of simplicial affine semigroups}

In this section we bound the number of isolated factorizations for numerical semigroups and, when possible, simplicial affine semigroups. 

The following lower bound is attained by the semigroups studied in Section \ref{sec:ci}.

\begin{lemma} \label{lem:simplicial:bound}
  Let $S$ be a simplicial affine semigroup of $\mathbb{N}^r$ minimally generated by $\{n_1, \ldots, n_{r+m}\}$. Then, there is $x \in \langle \mathbf{e}_1, \ldots, \mathbf{e}_r \rangle$ such that $x \in \mathrm{I}_b(S)$. Moreover, we have $\mathrm{i}_b(S) \ge m+1$. 
\end{lemma}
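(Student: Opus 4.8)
The plan is to separately produce an isolated factorization supported in $\langle \mathbf{e}_1,\ldots,\mathbf{e}_r\rangle$, and then bound $\mathrm{i}_b(S)$ from below by $m+1$.

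First I would handle the existence of $x\in\langle \mathbf{e}_1,\ldots,\mathbf{e}_r\rangle\cap\mathrm{I}_b(S)$. Since $S$ is simplicial, the fact that $\mathrm{L}_{\mathbb{Q}^+}(S)=\mathrm{L}_{\mathbb{Q}^+}(\{n_1,\ldots,n_r\})$ forces a nontrivial $\mathbb{Q}^+$-linear dependence among $n_1,\ldots,n_r$, hence (clearing denominators) a nontrivial $\mathbb{N}$-linear relation $\sum_{i=1}^r a_i n_i = \sum_{i=1}^r b_i n_i$ with $a_i,b_i\in\mathbb{N}$ not all equal; removing common terms we may assume the supports of $(a_i)$ and $(b_i)$ are disjoint. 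Then $s=\sum a_i n_i=\sum b_i n_i$ has at least two factorizations, so $\{s'\in S:\mathfrak{d}(s')\ge 2\}$ meets $\langle n_1,\ldots,n_r\rangle$. Now apply Lemma \ref{lem:isolated:2}: taking a factorization $x\in\mathrm{Z}(s)$ supported on $\mathbf{e}_1,\ldots,\mathbf{e}_r$, there is $z\in\mathrm{I}_b(S)$ with $z\le x$, and since $z\le x\in\langle \mathbf{e}_1,\ldots,\mathbf{e}_r\rangle$ we get $z\in\langle \mathbf{e}_1,\ldots,\mathbf{e}_r\rangle\cap\mathrm{I}_b(S)$, as desired. (One should double-check that $\mathrm{Z}(s)$ can indeed be chosen supported in the first $r$ coordinates — this is automatic since $x=(a_1,\ldots,a_r,0,\ldots,0)$ is itself such a factorization.)

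Next, for the inequality $\mathrm{i}_b(S)\ge m+1$, I would exhibit $m$ further isolated factorizations of Betti elements, each involving one of the "extra" generators $n_{r+1},\ldots,n_{r+m}$, and argue they are pairwise distinct and distinct from the $x$ above. The natural candidates come from Lemma \ref{lem:ca}: for each $i\in\{r+1,\ldots,r+m\}$ the constant $c_i=\min\{c\in\mathbb{Z}^+:cn_i\in\langle \mathcal{A}(S)\setminus\{n_i\}\rangle\}$ exists (because $\mathrm{L}_{\mathbb{Q}^+}(S)=\mathrm{L}_{\mathbb{Q}^+}(\{n_1,\ldots,n_r\})$ gives a $\mathbb{Q}^+$-expression of $n_i$ in terms of $n_1,\ldots,n_r$, hence $c_i^*$ and a fortiori $c_i$ is finite), so by Lemma \ref{lem:ca} the factorization $c_i\mathbf{e}_i$ lies in $\mathrm{I}_b(S)$. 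These $m$ factorizations $c_{r+1}\mathbf{e}_{r+1},\ldots,c_{r+m}\mathbf{e}_{r+m}$ have pairwise distinct supports, so they are distinct from one another; and each has support inside $\{r+1,\ldots,r+m\}$, so it cannot equal the factorization $x$ found above, which has nonzero entry only among the first $r$ coordinates (note $x\neq 0$ since $\mathfrak{d}(\varphi(x))\ge 2$ means $\varphi(x)\neq 0$). Hence $\mathrm{I}_b(S)$ contains the $m+1$ distinct elements $x, c_{r+1}\mathbf{e}_{r+1},\ldots,c_{r+m}\mathbf{e}_{r+m}$, giving $\mathrm{i}_b(S)\ge m+1$.

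The main obstacle, and the part requiring the most care, is the first step: extracting a genuine two-factorization element inside $\langle n_1,\ldots,n_r\rangle$ and then tracking through Lemma \ref{lem:isolated:2} that the minimal isolated factorization it produces stays supported in the first $r$ coordinates. The support-preservation is in fact immediate from $z\le x$ together with the monotonicity built into Lemma \ref{lem:isolated:2}, but one must be sure the relation among $n_1,\ldots,n_r$ is truly nontrivial after cancelling common terms — equivalently, that $n_1,\ldots,n_r$ are $\mathbb{Q}$-linearly \emph{dependent}; this is exactly guaranteed by simpliciality together with $r=\mathrm{dim}(S)$ forcing $m\ge 1$, since if $m=0$ the statement $\mathrm{i}_b(S)\ge 1$ still needs this dependence. (If $n_1,\ldots,n_r$ were $\mathbb{Q}$-independent one would instead need a different source of a Betti element; but simpliciality with a nonempty generating set and $\mathrm{L}_{\mathbb{Q}^+}(S)=\mathrm{L}_{\mathbb{Q}^+}(\{n_1,\ldots,n_r\})$ does not by itself force dependence among the first $r$ — however it does force that $S$ has \emph{some} Betti element whenever $\mathrm{e}(S)>\mathrm{dim}(S)$, and when $\mathrm{e}(S)=\mathrm{dim}(S)$, i.e. $m=0$, one checks $S\cong\mathbb{N}^r$ has no Betti elements, so the claim $\mathrm{i}_b(S)\ge m+1=1$ would be vacuously handled by convention or excluded; I would add a sentence clarifying this boundary case.)
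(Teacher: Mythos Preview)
Your second step (the $m$ isolated factorizations $c_{r+1}\mathbf{e}_{r+1},\ldots,c_{r+m}\mathbf{e}_{r+m}$ coming from Lemma~\ref{lem:ca}, together with the support argument showing they are distinct from each other and from $x$) is correct and is exactly what the paper does.

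The first step, however, has a genuine error. You claim that simpliciality forces a nontrivial $\mathbb{Q}^+$-linear dependence among $n_1,\ldots,n_r$, but the opposite is true: in a simplicial affine semigroup of $\mathbb{N}^r$ the generators $n_1,\ldots,n_r$ are $\mathbb{Q}$-linearly \emph{independent} (they span the $r$-dimensional cone $\mathrm{L}_{\mathbb{Q}^+}(S)$; see the proof of Proposition~\ref{prop:cm}). Hence no relation $\sum a_i n_i=\sum b_i n_i$ with disjoint nonempty supports inside $\{1,\ldots,r\}$ can exist, and your construction of $s$ collapses. You half-notice this in your final parenthetical, but you do not repair it.

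The paper's fix is to produce an element with two factorizations one of which lies in $\langle \mathbf{e}_1,\ldots,\mathbf{e}_r\rangle$ by a different mechanism: take $c^*_{r+1}n_{r+1}$. By the very definition of $c^*_{r+1}$ this element lies in $\langle n_1,\ldots,n_r\rangle$, so it has a factorization $y=\lambda_1\mathbf{e}_1+\cdots+\lambda_r\mathbf{e}_r$, and it also has the factorization $c^*_{r+1}\mathbf{e}_{r+1}$; hence $\mathfrak{d}(c^*_{r+1}n_{r+1})\ge 2$. Now Lemma~\ref{lem:isolated:2} applied to $y$ yields $x\in\mathrm{I}_b(S)$ with $x\le y$, and $x\le y$ forces $x\in\langle \mathbf{e}_1,\ldots,\mathbf{e}_r\rangle$. (Note this already requires $m\ge 1$; the lemma is tacitly stated for nontrivial codimension.) The rest of your argument then goes through unchanged.
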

\begin{proof}
  There are non negative integers $\lambda_{1}, \ldots, \lambda_{r}$ such that $c^*_{r+1} n_{r+1} = \lambda_1 n_1 + \cdots + \lambda_r n_r$. We consider the factorization $y = \lambda_1 \mathbf{e}_1 + \cdots + \lambda_r \mathbf{e}_r$. 
  In light of Lemma \ref{lem:isolated:2}, there is $x \in \mathrm{I}_b(S)$ such that $x \le y$. Hence, we have $x \in \langle \mathbf{e}_1, \ldots, \mathbf{e}_r \rangle$. Finally, recall that $n_{r+1}, \ldots, n_{r+m} \in \mathcal{C}(M)$. Therefore, $x \in \mathrm{I}_b(S)$, and  Theorem \ref{thm:isolated} ensures that $c_{r+1} \mathbf{e}_{r+1}, \ldots, c_{r+m} \mathbf{e}_{r+m} \in \mathrm{I}_b(S)$ . Thus,  $\mathrm{i}_b(S) \ge m+1$.
\end{proof}

Now we give an upper bound for $\mathrm{i}_b(S)$ when $S$ is Cohen-Macaulay. First, let us recall some concepts. Let $S$ be a Cohen-Macaulay simplicial semigroup of $\mathbb{N}^r$ minimally generated by $\{n_1, \ldots, n_{r+m}\}$. The cardinality of a minimal presentation of $S$ is upper bounded by $(2d -m) (m-1)/2 + 1$, where $d = \#\Ap(S; \{n_1, \ldots, n_r\})$ \cite[Theorem 2.6]{cm}. The inequality $m \le \# d-1$ is a folklore result. One can show that $(2d -m) (m-1)/2 + 1 \le d(d-1)/2$ for every $m \le d-1$. In \cite{cm} the authors also prove that the cardinality of a minimal presentation of $S$ equals $d(d-1)/2$ if and only if $S$ has \emph{maximal codimension}, that is, $m = d-1$. 

\begin{corollary} \label{cor:iso:bound:b}
  Let $S$ be a Cohen-Macaulay and simplicial affine semigroup of $\mathbb{N}^r$ minimally generated by $\{n_1, \ldots, n_{r+m}\}$. Let $d = \#\Ap(S; \{n_1, \ldots, n_r\})$. Then
  \[ \mathrm{i}_b(S) \le \sum_{b \in \mathrm{Betti}(S)} \mathrm{nc}(\nabla_{b}) \le (2d -m) (m-1) + 2 \le d (d -1).\]
  Furthermore, $\mathrm{i}_b(S) = d (d -1)$ if and only if $S$ has maximal codimension and $\mathfrak{d}(b) = 2$ for every $b \in \mathrm{Betti}(S)$.
\end{corollary}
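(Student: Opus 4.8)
The plan is to bound $\mathrm{i}_b(S)$ by a sum over Betti elements, then apply the known presentation bounds. First I would recall from Proposition~\ref{prop:all-i} and Lemma~\ref{lem:isolated:2} that $\mathrm{I}_b(S)$ consists precisely of the minimal elements of $\mathrm{Z}(\{m : \mathfrak{d}(m)\ge 2\})$, and that every isolated factorization of a Betti element $b$ lies in a distinct R-class of $b$ (two isolated factorizations of the same $b$ must be disjoint, hence in different connected components of $\nabla_b$). Consequently, for each $b\in\mathrm{Betti}(S)$ the number of isolated factorizations of $b$ is at most $\mathrm{nc}(\nabla_b)$, and since every element of $\mathrm{I}_b(S)$ is an isolated factorization of some Betti element, summing gives
\[
\mathrm{i}_b(S)=\sum_{b\in\mathrm{Betti}(S)}\mathrm{i}(b)\le\sum_{b\in\mathrm{Betti}(S)}\mathrm{nc}(\nabla_b).
\]

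Next I would convert the right-hand side into the presentation bound. By the description of minimal presentations in Section~\ref{sec:pre:presentations}, the cardinality of a minimal presentation of $S$ equals $\sum_{b\in\mathrm{Betti}(S)}(\mathrm{nc}(\nabla_b)-1)$, so
\[
\sum_{b\in\mathrm{Betti}(S)}\mathrm{nc}(\nabla_b)=\#(\text{min.\ pres.})+\#\mathrm{Betti}(S).
\]
Now I invoke \cite[Theorem~2.6]{cm}: the cardinality of a minimal presentation is at most $(2d-m)(m-1)/2+1$. For the Betti count I would use that $\#\mathrm{Betti}(S)\le\#(\text{min.\ pres.})$ trivially is not quite enough; instead the cleaner route is to bound $\sum_{b}\mathrm{nc}(\nabla_b)$ directly. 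Since $\#\mathrm{Betti}(S)\le(2d-m)(m-1)/2+1$ as well (each Betti element contributes at least $1$ to the presentation cardinality minus... actually $\mathrm{nc}(\nabla_b)\ge2$ so $\#\mathrm{Betti}(S)\le\#(\text{min.\ pres.})$), we get $\sum_b\mathrm{nc}(\nabla_b)\le 2\cdot\big((2d-m)(m-1)/2+1\big)=(2d-m)(m-1)+2$. Finally the elementary inequality $(2d-m)(m-1)/2+1\le d(d-1)/2$ for $m\le d-1$, recalled in the paragraph preceding the statement, doubles to $(2d-m)(m-1)+2\le d(d-1)$, completing the chain of inequalities.

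For the equality characterization: $\mathrm{i}_b(S)=d(d-1)$ forces equality throughout. Equality in the last step $(2d-m)(m-1)+2\le d(d-1)$ holds exactly when $m=d-1$, i.e.\ $S$ has maximal codimension (using the characterization from \cite{cm}). Equality in the first step $\mathrm{i}_b(S)=\sum_b\mathrm{nc}(\nabla_b)$ requires that every R-class of every Betti element contains an isolated factorization; combined with the doubling step $\sum_b\mathrm{nc}(\nabla_b)=2\cdot\#(\text{min.\ pres.})$ being an equality, which forces $\mathrm{nc}(\nabla_b)=2$ for every $b$, and then having both R-classes isolated means $\mathfrak{d}(b)=2$ for every $b\in\mathrm{Betti}(S)$. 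Conversely, if $S$ has maximal codimension and $\mathfrak{d}(b)=2$ for all $b$, then every Betti element is Betti-minimal (Proposition~\ref{prop:all-i}), so all its factorizations are isolated, giving $\mathrm{i}_b(S)=\sum_b\mathrm{nc}(\nabla_b)=2\,\#\mathrm{Betti}(S)=2\cdot d(d-1)/2=d(d-1)$.

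The main obstacle I anticipate is tracking the bookkeeping between the three quantities $\mathrm{i}_b(S)$, $\sum_b\mathrm{nc}(\nabla_b)$, and the minimal presentation cardinality carefully enough to nail down the equality case — in particular justifying that $\mathfrak{d}(b)=2$ for all $b$ is both necessary and sufficient, which hinges on the fact (from Proposition~\ref{prop:all-i}) that $\mathfrak{d}(b)=2$ together with $b\in\mathrm{Betti}(S)$ is equivalent to all factorizations of $b$ being isolated, i.e.\ $b$ being Betti-minimal. The inequalities themselves are routine once \cite[Theorem~2.6]{cm} and the elementary polynomial inequality are in hand.
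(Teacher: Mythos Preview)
Your proposal is correct and follows essentially the same approach as the paper: the inequality $\mathrm{i}(b)\le\mathrm{nc}(\nabla_b)$, the doubling step $\sum_b\mathrm{nc}(\nabla_b)\le 2\sum_b(\mathrm{nc}(\nabla_b)-1)$ (equivalently $\#\mathrm{Betti}(S)\le\#(\text{min.\ pres.})$), the presentation bound from \cite{cm}, and the elementary polynomial inequality are exactly the ingredients the paper uses, and your equality analysis matches as well. One small imprecision in your closing remark: ``$\mathfrak{d}(b)=2$ with $b\in\mathrm{Betti}(S)$'' is not equivalent to ``all factorizations of $b$ are isolated'' in general (a Betti-minimal element can have three or more pairwise disjoint factorizations); the correct equivalence, which the paper states and which your actual proof steps use, is $\mathfrak{d}(b)=2\Longleftrightarrow \mathrm{i}(b)=\mathrm{nc}(\nabla_b)=2$.
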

\begin{proof}
Note that $\mathrm{i}_b(S) = \sum_{b \in \mathrm{Betti}(S)} \mathrm{i}(b)$ and $\mathrm{i}(b) \le \mathrm{nc}(\nabla_b)$  for every $b \in \mathrm{Betti}(S)$ . From these observations one obtain the first inequality, which is reached if and only if $\mathrm{i}(b) = \mathrm{nc}(\nabla_b)$ for every $b \in \mathrm{Betti}(S)$. As a consequence, we have
  \[\mathrm{i}_b(S) \le \sum_{b \in\mathrm{Betti}(S)} \mathrm{nc}(\nabla_{b}) \le 2 \sum_{b \in\mathrm{Betti}(S)}(\mathrm{nc}(\nabla_b)-1) \le (2d -m) (m-1) + 2 \le d (d -1).\]
  Note that $\mathrm{i}_b(S) = d (d -1)$ if and only if all the inequalities applied are attained. Moreover, the equality 
  \[\sum_{b \in\mathrm{Betti}(S)} \mathrm{nc}(\nabla_{b}) = 2 \sum_{b \in\mathrm{Betti}(S)}(\mathrm{nc}(\nabla_b)-1)\]
  holds if and only if \[\sum_{b \in\mathrm{Betti}(S)} 1 = \sum_{b \in\mathrm{Betti}(S)}(\mathrm{nc}(\nabla_b)-1),\]
  that is, $\mathrm{nc}(\nabla_b) = 2$ for every $b \in \mathrm{Betti}(S)$. Set $b \in \mathrm{Betti}(S)$. The proof is completed by noticing that $\mathfrak{d}(b) = 2$  if and only if $\mathrm{i}(b) = \mathrm{nc}(\nabla_b) = 2$.
\end{proof}

The lower and the upper bound of $\mathrm{i}_b(S)$ may hold at the same time as the following example shows.

\begin{example}
  We look for all the numerical semigroups which verify $\mathrm{e}(S) = \mathrm{i}_b(S) = \mathrm{m}(S) (\mathrm{m}(S) -1)$. Since $\mathrm{e}(S) \le \mathrm{m}(S)$, we have $\mathrm{m}(S) = \mathrm{m}(S) (\mathrm{m}(S) -1)$ and, thus, $\mathrm{e}(S) = \mathrm{m}(S) = 2$. Consequently, the solutions are the numerical semigroups generated by $\{2, n\}$, where $n$ is an odd integer.
\end{example}


\begin{remark}
If we remove the Cohen-Macaulay condition in Corollary \ref{cor:iso:bound:b}, then we can apply \cite[Theorem 4.1]{simplicial-structure} and obtain $\mathrm{i}_b(S) \le 2d(d-1)$.
\end{remark}

Now we try to bound $\mathrm{i}_s(S)$. Let $S'$ be a numerical semigroup and consider the semigroup $S= \mathbb{N}^{r-1}\times S'$, which is a simplicial semigroup of $\mathbb{N}^r$ with codimension $\mathrm{e}(S')-1$. Note that the set $\mathrm{I}_s(S)$ is not finite. However, if we assume $S$ to be a numerical semigroup, then the fact that its Ap\'ery sets are finite allows us to find the following bounds.

\begin{corollary} \label{cor:iso:bound:d}
  Let $S$ be a numerical semigroup. Then
  \[\mathrm{e}(S) + 3 \le \sum_{i = 1}^{\mathrm{e}(S)} c_i - \mathrm{e}(S) + 2 \le \mathrm{i}_s(S) \le \min \mathrm{Betti}(S).\]
\end{corollary}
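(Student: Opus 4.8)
The plan is to establish the three inequalities separately, moving from right to left. For the rightmost inequality $\mathrm{i}_s(S) \le \min \mathrm{Betti}(S)$, I would use Corollary \ref{cor:isolated:1} together with Corollary \ref{cor:b1}: the set $\{s \in S \colon \mathfrak{d}(s) = 1\}$ is contained in $\mathrm{Ap}(S; b_1)$ where $b_1 = \min \mathrm{Betti}(S)$ is Betti-minimal, and since $\mathrm{Ap}(S; b_1)$ has cardinality $b_1$ for a numerical semigroup, we get $\mathrm{i}_s(S) = \#\{s \in S \colon \mathfrak{d}(s) = 1\} \le b_1$. (Here $\mathrm{i}_s(S)$ counts elements with a unique factorization because each such element contributes exactly one factorization to $\mathrm{I}_s(S)$.)

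For the leftmost inequality $\mathrm{e}(S) + 3 \le \sum_{i=1}^{\mathrm{e}(S)} c_i - \mathrm{e}(S) + 2$, I would rewrite it as $\sum_{i=1}^{e} c_i \ge 2e + 1$ where $e = \mathrm{e}(S)$; since each $c_i \ge 2$ (as $c_i = 1$ would make $n_i$ redundant), we have $\sum c_i \ge 2e$, and equality $\sum c_i = 2e$ forces every $c_i = 2$. I would argue that $c_i = 2$ for all $i$ cannot happen: if $e \ge 3$ this would be too restrictive (one can rule it out by a short parity/linear-algebra argument), and if $e = 2$ the sum is $c_1 + c_2$ where $c_1 = c_2 = n_1 n_2 / \text{(something)}$... actually for $e=2$, $\mathrm{Betti}(S) = \{n_1 n_2\}$ and $c_1 = n_2$, $c_2 = n_1$, so $c_1 + c_2 = n_1 + n_2 \ge 2 + 3 = 5 > 4 = 2e$. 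So in all cases $\sum c_i \ge 2e+1$; the case $e = 1$ ($S = \mathbb{N}$) should be handled or excluded trivially since then there are no Betti elements.

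The middle inequality $\sum_{i=1}^{e} c_i - e + 2 \le \mathrm{i}_s(S)$ is the main obstacle and the heart of the corollary. By Remark \ref{rem:isolated}, the isolated factorizations of the form $k \mathbf{e}_i$ with $1 \le k < c_i$ (there are $c_i - 1$ of these for each $i$) all lie in $\mathrm{I}_s(S)$ — except we must be careful that $c_i \mathbf{e}_i \in \mathrm{I}_b(S)$ not $\mathrm{I}_s(S)$. So from the $e$ generators we collect $\sum_{i=1}^e (c_i - 1) = \sum c_i - e$ isolated factorizations with a single nonzero coordinate, all in $\mathrm{I}_s(S)$, provided these are pairwise distinct (they are, since they have different supports or different entries). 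Then I need $2$ more elements of $\mathrm{I}_s(S)$: the zero factorization $\mathbf{0} \in \mathrm{Z}(0)$ is always in $\mathrm{I}_s(S)$, giving one more. For the last one, I would exhibit a factorization with two nonzero coordinates that is still isolated — for instance, using Lemma \ref{lem:isolated}, any factorization $x$ with $x \not\ge z$ for every $z \in \mathrm{Z}(\mathrm{Betti}(S))$ is isolated; since $c_1 \mathbf{e}_1$ is a minimal such obstruction in direction $\mathbf{e}_1$, a factorization like $(c_1 - 1)\mathbf{e}_1 + \mathbf{e}_2$ will be isolated as long as it avoids all Betti factorizations, which requires a small argument (possibly one reorders the generators so $c_1 = \min_i c_i$ or chooses the pair adroitly, and invokes that $b_1 = \min \mathrm{Betti}(S) > n_i$ for the relevant generator so that $(c_1-1)\mathbf{e}_1 + \mathbf{e}_2$ factors an element smaller than $b_1$). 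The delicate point is guaranteeing that these final "extra" factorizations are genuinely distinct from the $\sum c_i - e$ already counted and genuinely isolated; I expect this bookkeeping, rather than any deep idea, to be where the real work lies.
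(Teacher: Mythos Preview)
Your argument for the rightmost inequality is exactly the paper's, and your overall plan for the middle inequality---count the $\sum_i(c_i-1)$ ``axis'' factorizations from Lemma~\ref{lem:ca}, add $\mathbf 0$, then find one more---is also the paper's. The divergence, and the gap, is in how you pick that one extra element and how you handle the leftmost inequality.

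The paper simply orders the generators as $n_1<\cdots<n_e$ and observes that $2n_1$ and $n_1+n_2$ both have unique factorizations (an easy size comparison: any other expression would involve some $n_j$ with $j\ge 2$ and then be too large). The first observation gives $c_1\ge 3$, which together with $c_i\ge 2$ for $i\ge 2$ yields $\sum c_i\ge 2e+1$, i.e.\ the leftmost inequality. The second observation supplies the missing element for the middle inequality: $n_1+n_2$ is distinct from $0$ and from all $k n_i$ with $k<c_i$, so $\mathrm{i}_s(S)\ge \sum(c_i-1)+2$.

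Your candidate $(c_1-1)\mathbf e_1+\mathbf e_2$ does \emph{not} work in general. Take $S=\langle 3,5,7\rangle$: here $c_1=4$, and $(c_1-1)n_1+n_2=9+5=14=2\cdot 7$, so the element has two factorizations and lies in $\mathrm{I}_b(S)$, not $\mathrm{I}_s(S)$. Reordering so that $c_1=\min_i c_i$ does not save this systematically either (try the orderings of $\{3,5,7\}$: most still fail). The hedge ``factors an element smaller than $b_1$'' is precisely what $n_1+n_2$ achieves when $n_1,n_2$ are the two smallest generators, but your formula does not guarantee it.

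Your route to the leftmost inequality is also more work than needed: ruling out ``all $c_i=2$'' by an unspecified parity/linear-algebra argument for $e\ge 3$ is harder than simply noting $c_1\ge 3$ for the smallest generator. In fact the cleanest way to see that not all $c_i$ can equal $2$ \emph{is} to prove $c_1\ge 3$ for the smallest generator, which is the paper's observation.
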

\begin{proof}
  Let $\{n_1 < \cdots < n_e\}$ be the minimal system of generators of $S$. Note that $0$, the minimal generators, $2 n_1$ and $n_1 + n_2$ only have one factorization. Thus, we have $c_1 \ge 3$. These observations in conjunction with Lemma \ref{lem:ca} yield the inequalities
  \[ \mathrm{e}(S) + 3 \le \sum_{i = 1}^{\mathrm{e}(S)} (c_i-1) + 2  =  \sum_{i = 1}^{\mathrm{e}(S)} c_i - \mathrm{e}(S) + 2 \le  \mathrm{i}_s(S). \]
  The upper bound is a consequence of Corollary \ref{cor:isolated:1} and the fact that the cardinality of $\Ap(S;n)$ is $n$ for every $n\in S\setminus \{0\}$.
\end{proof}

We wonder whether the bounds given in Corollary \ref{cor:iso:bound:d} can be attained. In Example \ref{ex:isolated:e2} we showed that the upper bound is attained for numerical semigroups with embedding dimension $2$. Indeed, Corollary \ref{cor:ap-b1} characterizes those numerical semigroups such that the upper bound of Corollary \ref{cor:iso:bound:d} is an equality (see Remark \ref{rem:ap-b1}).

\begin{example}
  Let us consider the numerical semigroup $S = \langle 3, 4, 5 \rangle$. Corollary \ref{cor:isolated:1} provides us with an algorithm to compute the set $\mathrm{I}_s(S)$. One can use the package \texttt{numericalsgps} \cite{numericalsgps} to perform the computations. For this semigroup we have 
  \[\mathrm{I}_s(S) = \{0, \mathbf{e}_1, \mathbf{e}_2, \mathbf{e}_3, 2 \mathbf{e}_1, \mathbf{e}_1+\mathbf{e}_2\}\]
  and, thus, the lower bounds of Corollary \ref{cor:iso:bound:d} are reached.
\end{example}

 Finally, we provide other bounds in the next result, which is verified by numerical semigroups.

\begin{corollary} \label{cor:iso:bound}
  Let $M$ be a finitely generated monoid with the ascending chain condition on principal ideals such that $\mathcal{A}(M) = \mathcal{C}(M)$. Let $e = \# \mathcal{A}(M)$. Then, we have
  \[\mathrm{i}(M) \le e + \prod_{a \in \mathcal{A}(M)} c_a \quad \text{ and }  \quad \mathrm{i}_s(m) \le \prod_{a \in \mathcal{A}(M)} c_a.\]
  Moreover, the following statements are equivalent:
  \begin{enumerate}
    \item $\mathrm{i}(M) = \mathrm{e}(S) + \prod_{a \in \mathcal{A}(M)} c_a$;
    \item $\mathrm{i}_b(M) = e$;
    \item $\mathrm{i}_s(M) = \prod_{a \in \mathcal{A}(M)} c_a$.
  \end{enumerate}
\end{corollary}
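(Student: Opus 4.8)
The plan is to read everything off Theorem~\ref{thm:isolated}, whose two inclusions become completely explicit under the hypothesis $\mathcal{A}(M)=\mathcal{C}(M)$. Since $M$ is finitely generated, $\mathcal{A}(M)$ is finite with $e$ elements, and with $\mathcal{A}(M)=\mathcal{C}(M)$ the right-hand side of the second inclusion in Theorem~\ref{thm:isolated} is the ``box''
\[
B=\Bigl\{\textstyle\sum_{a\in\mathcal{A}(M)}\lambda_a\mathbf{e}_a \;:\; 0\le\lambda_a<c_a\text{ for all }a\in\mathcal{A}(M)\Bigr\},
\]
a finite set of cardinality $\prod_{a\in\mathcal{A}(M)}c_a$ (each $\lambda_a$ ranges over $c_a$ values). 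I would first record two elementary facts: the set $P=\{c_a\mathbf{e}_a:a\in\mathcal{A}(M)\}$ consists of $e$ pairwise distinct factorizations and is contained in $\mathrm{I}_b(M)$ by the first inclusion of Theorem~\ref{thm:isolated}; and $P\cap B=\emptyset$, because the $a$-th coordinate of $c_a\mathbf{e}_a$ equals $c_a$, which is not $<c_a$.

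The two bounds are then immediate. Writing $\mathrm{I}(M)=P\,\sqcup\,(\mathrm{I}(M)\setminus P)$ and using $\mathrm{I}(M)\setminus P\subseteq B$ from Theorem~\ref{thm:isolated}, we get $\mathrm{i}(M)=e+\#(\mathrm{I}(M)\setminus P)\le e+\prod_{a\in\mathcal{A}(M)}c_a$; and since $P\subseteq\mathrm{I}_b(M)$ we have $\mathrm{I}_s(M)=\mathrm{I}(M)\setminus\mathrm{I}_b(M)\subseteq\mathrm{I}(M)\setminus P\subseteq B$, whence $\mathrm{i}_s(M)\le\prod_{a\in\mathcal{A}(M)}c_a$.

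For the equivalences I would run a short cycle anchored on the ``moreover'' clause of Theorem~\ref{thm:isolated}. The first condition, $\mathrm{i}(M)=e+\prod c_a$, says $\#(\mathrm{I}(M)\setminus P)=\#B$, which — a subset of the finite set $B$ having the cardinality of $B$ — is equivalent to $\mathrm{I}(M)\setminus P=B$, i.e.\ to the second inclusion of Theorem~\ref{thm:isolated} being an equality; by that theorem this is equivalent to the first inclusion being an equality, i.e.\ $\mathrm{I}_b(M)=P$, which since $\#P=e$ is exactly the second condition $\mathrm{i}_b(M)=e$. Finally, when these hold we have $\mathrm{I}_b(M)=P$ and $\mathrm{I}(M)\setminus P=B$, so $\mathrm{I}_s(M)=\mathrm{I}(M)\setminus\mathrm{I}_b(M)=B$ and $\mathrm{i}_s(M)=\prod c_a$, giving the third condition; conversely, if $\mathrm{i}_s(M)=\prod c_a$, then $\mathrm{I}_s(M)$ is a subset of $B$ of cardinality $\#B$, hence $\mathrm{I}_s(M)=B$, and from $\mathrm{I}_s(M)\subseteq\mathrm{I}(M)\setminus P\subseteq B$ we deduce $\mathrm{I}(M)\setminus P=B$, which is the first condition again.

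There is no genuine obstacle here: the whole argument is bookkeeping on top of Theorem~\ref{thm:isolated}. The only points requiring a little care are keeping the two disjoint pieces $P$ and $\mathrm{I}(M)\setminus P$ of $\mathrm{I}(M)$ apart, transferring ``equality holds in one inclusion'' correctly through the last sentence of Theorem~\ref{thm:isolated}, and invoking the already-noted identity $\mathrm{i}(M)=\mathrm{i}_b(M)+\mathrm{i}_s(M)$; none of these is more than a line, and the counting $\#B=\prod_{a\in\mathcal{A}(M)}c_a$ is routine.
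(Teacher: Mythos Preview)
Your argument is correct and is exactly the approach the paper takes: the paper's proof is a one-liner saying this follows from Theorem~\ref{thm:isolated} together with the count $\#\{\sum_{a\in\mathcal{A}(M)}\lambda_a\mathbf{e}_a:0\le\lambda_a<c_a\}=\prod_{a\in\mathcal{A}(M)}c_a$, and you have simply unpacked that sentence carefully, keeping track of the two disjoint pieces $P$ and $\mathrm{I}(M)\setminus P$ and invoking the ``moreover'' clause for the equivalences.
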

\begin{proof}
  This is a consequence of Theorem \ref{thm:isolated} and the equality 
  \[\#\left\{\sum\nolimits_{a \in \mathcal{A}(M)} \lambda_a \mathbf{e}_a \colon 0 \le \lambda_a < c_a\right\} = \prod_{a \in \mathcal{A}(M)} c_a. \qedhere \] 
\end{proof}


\begin{remark}
  Let $S$ be a numerical semigroup. Recall that $\mathrm{i}(S) = \mathrm{i}_s(S) + \mathrm{i}_b(S)$. Consequently, some of the previous results can be mixed to obtain
  \[2\mathrm{e}(S) + 3 \le \mathrm{i}(S) \le \min \mathrm{Betti}(S) + \sum_{b \in \mathrm{Betti}(S)} \mathrm{nc}(\nabla_{b}) \le \min \mathrm{Betti}(S) +  \mathrm{m}(S) (\mathrm{m}(S) -1).\]
\end{remark}

Now we study when the previous upper bound is attained.

\begin{corollary} \label{cor:iso:one-betti}
  Let $S$ be a numerical semigroup. The following conditions are equivalent:
  \begin{itemize}
      \item the set $\mathrm{Betti}(S)$ is a singleton;
      \item $\mathrm{i}(S) = \min \mathrm{Betti}(S) + \sum_{b \in \mathrm{Betti}(S)} \mathrm{nc}(\nabla_{b})$.
  \end{itemize}
\end{corollary}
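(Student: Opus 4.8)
The equivalence to prove concerns a numerical semigroup $S$, and it states that $\mathrm{Betti}(S)$ is a singleton if and only if $\mathrm{i}(S) = \min \mathrm{Betti}(S) + \sum_{b \in \mathrm{Betti}(S)} \mathrm{nc}(\nabla_b)$. The natural strategy is to recognize that the right-hand side is the upper bound appearing in the remark preceding the statement, namely $\mathrm{i}(S) = \mathrm{i}_s(S) + \mathrm{i}_b(S) \le \min \mathrm{Betti}(S) + \sum_{b} \mathrm{nc}(\nabla_b)$, which decomposes into the two separate bounds $\mathrm{i}_s(S) \le \min \mathrm{Betti}(S)$ (Corollary \ref{cor:iso:bound:d}) and $\mathrm{i}_b(S) \le \sum_b \mathrm{nc}(\nabla_b)$ (Corollary \ref{cor:iso:bound:b}). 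Thus the equality in the statement holds if and only if \emph{both} of these bounds are simultaneously equalities. So the proof reduces to showing: both bounds are equalities $\iff$ $\mathrm{Betti}(S)$ is a singleton.

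First I would handle the easy direction. If $\mathrm{Betti}(S) = \{b_1\}$, then $b_1 = \min \mathrm{Betti}(S)$ and $b_1$ is trivially Betti-minimal, so Corollary \ref{cor:ap-b1} (with Remark \ref{rem:ap-b1}) gives $\mathrm{i}_s(S) = b_1 = \min \mathrm{Betti}(S)$, forcing the first bound to be an equality. For the second bound, when there is a unique Betti element the inequality $\mathrm{i}_b(S) \le \mathrm{nc}(\nabla_{b_1})$ is in fact an equality because $b_1$ is Betti-minimal, so by Proposition \ref{prop:all-i} all its factorizations are isolated, i.e. $\mathrm{i}(b_1) = \mathrm{nc}(\nabla_{b_1})$. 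Hence both bounds are tight and the displayed equality holds.

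For the converse I would argue contrapositively: suppose $\mathrm{Betti}(S)$ has at least two elements and show the displayed equality fails. The key observation is that tightness of the bound $\mathrm{i}_b(S) \le \sum_b \mathrm{nc}(\nabla_b)$ requires $\mathrm{i}(b) = \mathrm{nc}(\nabla_b)$ for every Betti element $b$ (this was extracted in the proof of Corollary \ref{cor:iso:bound:b}), meaning every Betti element has all its factorizations isolated. But by Proposition \ref{prop:all-i}, an element with $\mathfrak d \ge 2$ and all factorizations isolated is Betti-minimal. So tightness of the second bound forces \emph{every} Betti element to be Betti-minimal, hence $\mathrm{Betti}(S) = \mathrm{Betti}\text{-}\mathrm{minimals}(S)$. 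On the other hand, tightness of the first bound $\mathrm{i}_s(S) = \min \mathrm{Betti}(S)$ is, by Corollary \ref{cor:ap-b1} and Remark \ref{rem:ap-b1}, equivalent to $\mathrm{Betti}\text{-}\mathrm{minimals}(S)$ being a singleton. Combining, if both bounds are tight then $\mathrm{Betti}(S) = \mathrm{Betti}\text{-}\mathrm{minimals}(S)$ is a singleton, contradicting $\#\mathrm{Betti}(S) \ge 2$. Therefore at least one of the bounds is strict and the displayed equality fails.

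I expect the main (though modest) obstacle to be bookkeeping: making precise that the displayed equality in the statement is equivalent to the \emph{conjunction} of the two component inequalities being equalities — this uses that $\mathrm{i}(S) = \mathrm{i}_s(S)+\mathrm{i}_b(S)$ together with $\mathrm{i}_s(S)\le \min\mathrm{Betti}(S)$ and $\mathrm{i}_b(S)\le\sum_b\mathrm{nc}(\nabla_b)$, so that the sum attains its upper bound precisely when each summand does. Beyond that, one must correctly invoke the two characterizations (Corollary \ref{cor:ap-b1}/Remark \ref{rem:ap-b1} for $\mathrm{i}_s$, and the internal reasoning of Corollary \ref{cor:iso:bound:b} together with Proposition \ref{prop:all-i} for $\mathrm{i}_b$), but each of these is already available in the excerpt, so no genuinely new idea is needed.
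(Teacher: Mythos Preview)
Your proof is correct and follows essentially the same approach as the paper: decompose the displayed equality into the two component equalities $\mathrm{i}_s(S)=\min\mathrm{Betti}(S)$ and $\mathrm{i}_b(S)=\sum_b\mathrm{nc}(\nabla_b)$, then use Corollary~\ref{cor:ap-b1} (with Remark~\ref{rem:ap-b1}) to identify the first with $\mathrm{Betti}\text{-}\mathrm{minimals}(S)$ being a singleton, and Proposition~\ref{prop:all-i} to identify the second with $\mathrm{Betti}(S)=\mathrm{Betti}\text{-}\mathrm{minimals}(S)$. The paper's proof is simply a more compressed version of yours, stating both characterizations as biconditionals rather than splitting into separate directions.
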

\begin{proof}
  Note that the $\mathrm{i}(S) = \min \mathrm{Betti}(S) + \sum_{b \in \mathrm{Betti}(S)} \mathrm{nc}(\nabla_{b})$ if and only if $\mathrm{i}_s(S) = \min \mathrm{Betti}(S)$ and $\mathrm{i}_b(S) =  \sum_{b \in \mathrm{Betti}(S)} \mathrm{nc}(\nabla_{b})$. These equalities hold at the same time if and only if $\mathrm{Betti}\text{-}\mathrm{minimals}(S)$ is singleton (Corollary \ref{cor:ap-b1}) and it equals $\mathrm{Betti}(S)$ (Proposition \ref{prop:all-i}).
\end{proof}

\begin{example}
  Let us find all numerical semigroups with
  \[\mathrm{i}(S) = \min \mathrm{Betti}(S) + \mathrm{m}(S) (\mathrm{m}(S) -1),\]
  that is, $\mathrm{i}_s(S) = \min \mathrm{Betti}(S)$ and $\mathrm{i}_b(S) = \mathrm{m}(S) (\mathrm{m}(S) -1)$. Note that these equalities hold at the same time if and only if $\mathrm{Betti}(S) = \{b_1\}$, $\mathfrak{d}(b_1) = 2$ and $\mathrm{e}(S) = \mathrm{m}(S)$. If it is the case, then the cardinality of a minimal presentation is $1$, which is greater or equal than $\mathrm{e}(S)-1$. The only possibility is $\mathrm{e}(S) = 2$.  Hence, again the solutions are those semigroups whose embedding dimension and multiplicity are $2$.
  
\end{example}

\section{$\alpha$-rectangular semigroups} \label{sec:alpha}

Let $S$ be a simplicial affine semigroup of $\mathbb{N}^r$ minimally generated by $\{n_1, \ldots, n_{r+m}\}$. We say that $S$ is \emph{rectangular for $\{n_1, \ldots, n_r\}$} if its Ap\'ery set with respect to $\{n_1, \ldots, n_r\}$ is of the form 
\[\Ap(S;\{n_1,\ldots,n_r\}) = \left\{\sum\nolimits_{i = r+1}^{r+m} \lambda_i n_i \colon 0 \le \lambda_i \le \mu_i \right\}\]
for some non negative integers $\mu_{r+1}, \ldots, \mu_{r+m}$. We say that $S$ is \emph{rectangular} if it is rectangular for some minimal generators $\{n_1, \ldots, n_r\}$ with $\mathrm{L}_{\mathbb{Q}^+}(S) = \mathrm{L}_{\mathbb{Q}^+}(\{n_1, \ldots, n_r\})$. If it is the case, then $\Ap(S; \{n_1, \ldots, n_r\})$ has a unique maximal element with respect to the semigroup order. Therefore, if $S$ is Cohen-Macaulay, then $S$ is Gorenstein.

As a consequence of Lemma \ref{lem:free}, if $S$ is free for the arrangement $(n_1, \ldots, n_{r+m})$, then it is rectangular for $\{n_1, \ldots, n_r\}$ and $\mu_i = c_i^*-1$ for every $i \in \{r+1, \ldots, r+m\}$.

\begin{corollary} \label{cor:rectangular}
Let $S$ be a simplicial affine semigroup of $\mathbb{N}^r$  minimally generated by $\{n_1, \ldots, n_{r+m}\}$. If the equality
\[\Ap(S; \{n_1, \ldots, n_r\}) = \left\{\sum\nolimits_{i = r+1}^{r+m} \lambda_i n_i \colon 0 \le \lambda_i \le \mu_i \right\}\]
holds for some non negative integers $\mu_{r+1}, \ldots, \mu_{r+m}$, then $c_i \le \mu_i +1$ for every $i \in \{r+1, \ldots, r+m\}$.
\end{corollary}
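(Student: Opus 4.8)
The plan is to fix an index $i \in \{r+1, \ldots, r+m\}$ and produce, directly from the rectangularity hypothesis, a positive integer $k \le \mu_i + 1$ with $k\,n_i \in \langle n_1, \ldots, n_{i-1}, n_{i+1}, \ldots, n_{r+m}\rangle$; by the definition of $c_i$ this immediately gives $c_i \le k \le \mu_i + 1$. The element to start from is $(\mu_i+1)n_i$, which is ``one step past'' the largest multiple of $n_i$ allowed in the box describing $\Ap(S;\{n_1,\ldots,n_r\})$.

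First I would record an auxiliary fact that holds for every simplicial affine semigroup, with no Cohen-Macaulay assumption: every $t \in S$ can be written as $t = \lambda_1 n_1 + \cdots + \lambda_r n_r + \omega$ with $\lambda_j \in \mathbb{N}$ and $\omega \in \Ap(S;\{n_1,\ldots,n_r\})$. Indeed, if $t \notin \Ap(S;\{n_1,\ldots,n_r\})$ then $t - n_j \in S$ for some $j \le r$, and iterating this the process terminates because $S$ has the ascending chain condition on principal ideals, so there is no infinite strictly decreasing chain for $\le_S$. Only existence is needed here; uniqueness would require Proposition \ref{prop:cm}, which I would avoid. Next I would apply this to $t = (\mu_i+1)n_i$ and expand the resulting $\omega$ as $\omega = \sum_{k=r+1}^{r+m} a_k n_k$ with $0 \le a_k \le \mu_k$, using the hypothesis on the shape of $\Ap(S;\{n_1,\ldots,n_r\})$. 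This yields a factorization
\[(\mu_i+1)\,n_i \;=\; \sum_{j=1}^{r} \lambda_j n_j + \sum_{k=r+1}^{r+m} a_k n_k, \qquad \text{with } a_i \le \mu_i.\]
Cancelling $a_i n_i$ from both sides leaves $(\mu_i + 1 - a_i)\,n_i = \sum_{j=1}^r \lambda_j n_j + \sum_{k \ne i} a_k n_k$, whose right-hand side is a nonnegative integer combination of the generators other than $n_i$, and is nonzero because $\mu_i + 1 - a_i \ge 1$ and $S$ is reduced. Hence $k = \mu_i + 1 - a_i$ works, and $c_i \le \mu_i + 1 - a_i \le \mu_i + 1$.

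I do not expect a genuine obstacle; the statement is a short unwinding of the definition of rectangularity. The one point I would treat with some care is the auxiliary decomposition over the \emph{set} $\{n_1,\ldots,n_r\}$: such a decomposition need not be unique in general, but existence — which is all that is used — follows from the ascending chain condition, so no Cohen-Macaulay hypothesis enters. Beyond that only routine bookkeeping remains: checking that $a_i$ really is the coefficient of $n_i$ in the displayed factorization (immediate, since the generators $n_1, \ldots, n_r$ appearing in the first sum are distinct from $n_i$), and that the leftover $(\mu_i+1-a_i)n_i$ cannot vanish.
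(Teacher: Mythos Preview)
Your proof is correct, but it follows a different route from the paper's. The paper argues from $(c_i-1)n_i$ rather than $(\mu_i+1)n_i$: by Lemma~\ref{lem:ca} the element $(c_i-1)n_i$ has a unique expression, hence it lies in $\Ap(S;\{n_1,\ldots,n_r\})$ (otherwise an expression involving some $n_j$ with $j\le r$ would exist), and the rectangular description then forces its unique expression $\sum_{j}\lambda_j n_j$ to satisfy $\lambda_i=c_i-1$ and $\lambda_j=0$ for $j\ne i$, whence $c_i-1\le\mu_i$.

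Your argument is in a sense dual: instead of pushing $(c_i-1)n_i$ \emph{into} the box, you push $(\mu_i+1)n_i$ \emph{out} of it and read off a witness for $c_i$ directly. This is more elementary in that it avoids Lemma~\ref{lem:ca} and the unique-expression machinery entirely, relying only on the existence of an Ap\'ery decomposition (which, as you note, needs only the ascending chain condition and not the Cohen--Macaulay hypothesis). The paper's approach, on the other hand, extracts a little more: it identifies $(c_i-1)n_i$ as an element of the Ap\'ery set sitting on the $i$th axis of the box, which fits the broader theme of Section~\ref{sec:alpha}. Both are short and either would serve.
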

\begin{proof}
  Let $i \in \{r+1, \ldots, r+m\}$. In light of Lemma \ref{lem:ca}, $(c_i-1) n_i$ has a unique expression and, thus, $(c_i -1) n_i \in \Ap(S; \{n_1, \ldots, n_r\})$. Hence, we can write $(c_i - 1) n_i = \sum_{j = r+1}^{r+m} \lambda_j n_j$ for some integers $\lambda_{r+1}, \ldots, \lambda_{r+m}$ such that $0 \le \lambda_j \le \mu_j$ for every $j \in \{r+1, \ldots, r+m\}$. Again, since $(c_i-1) n_i$ has a unique expression we find that $\lambda_j = c_i-1$ when $j = i$ and $\lambda_j = 0$ otherwise. In particular, we obtain $c_i - 1 = \lambda_i \le \mu_i$.
\end{proof}

We say that $S$ is \emph{$c$-rectangular for $\{n_1, \ldots, n_r\}$} if the inequalities given in Corollary \ref{cor:rectangular} are attained, that is,
\[\Ap(S; \{n_1, \ldots, n_r\}) = \left\{\sum\nolimits_{i = r+1}^{r+m} \lambda_i n_i \colon 0 \le \lambda_i < c_i \right\}.\]
The semigroup $S$ is \emph{$c$-rectangular} if it is $c$-rectangular for some minimal generators $\{n_1, \ldots, n_r\}$ such that $\mathrm{L}_{\mathbb{Q}^+}(S) = \mathrm{L}_{\mathbb{Q}^+}(\{n_1, \ldots, n_r\})$. 

In this section we introduce another family of rectangular semigroups which generalizes the numerical semigroups studied in \cite{ci:classes:2}. First, for each $i \in \{r+1, \ldots, r+m\}$ define the constant 
\[\alpha_i = \max \{h \in \mathbb{Z}^+ \colon h n_i \in \Ap(S; \{n_1, \ldots, n_r\})\}.\] A simplicial affine semigroup is \emph{$\alpha$-rectangular for $\{n_1, \ldots, n_r\}$} if its Ap\'ery set is of the form 
\[\Ap(S; \{n_1, \ldots, n_r\}) = \left\{\sum\nolimits_{i = r+1}^{r+m} \lambda_i n_i \colon 0 \le \lambda_i \le \alpha_i \right\}.\]
 The semigroup $S$ is \emph{$\alpha$-rectangular} if it is $\alpha$-rectangular for some minimal generators $\{n_1, \ldots, n_r\}$ with $\mathrm{L}_{\mathbb{Q}^+}(S) = \mathrm{L}_{\mathbb{Q}^+}(\{n_1, \ldots, n_r\})$. 

\begin{remark}
  In \cite{ci:classes:2} the authors define a \emph{$\alpha$-rectangular numerical semigroup} as a numerical semigroup $S$ minimally generated by $\{n_1 < \cdots < n_e\}$ such that 
  \[\Ap(S; n_1) = \left\{\sum\nolimits_{i = 2}^{e} \lambda_i n_i \colon 0 \le \lambda_i \le \alpha_i \right\},\]
  where $\alpha_i = \max \{h \in \mathrm{Z}^+ \colon h n_i \in \Ap(S;n_1)\}$ for every $i \in \{2, \ldots, e\}$. Recall that a numerical semigroup is simplicial for any order of its minimal generators. Therefore, according to our definition, a numerical semigroup is $\alpha$-rectangular if and only if there is a minimal generator $n_j$ such that 
  \[\Ap(S; n_j) = \left\{\sum\nolimits_{i = 1; i \ne j}^{e} \lambda_i n_i \colon 0 \le \lambda_i \le \alpha_i \right\},\]
  where $\alpha_i = \max \{h \in \mathrm{Z}^+ \colon h n_i \in \Ap(S;n_j)\}$ for every $i \in \{1, \ldots, e\}$. If it is the case, then we say that it is \emph{$\alpha$-rectangular} for $n_j$ or the $j^{th}$ minimal generator. 
\end{remark}

Our definition generalizes that of \cite{ci:classes:2}. The characterizations obtained in  \cite{ci:classes:2} for numerical semigroups can be easily generalized to the current setting; Proposition \ref{prop:alpha} and Proposition \ref{prop:alpha:gluing} generalize \cite[Proposition 2.6]{ci:classes:2} and \cite[Theorem 3.3]{ci:classes:2}, respectively. 

The proof of the following lemma is a direct consequence of the definition of $\alpha_i$.

\begin{lemma} \label{lem:alpha:apery}
    Let $S$ be a simplicial affine semigroup of $\mathbb{N}^r$ minimally generated by $\{n_1, \ldots, n_{r+m}\}$. Then, 
    \[ \Ap(S; \{n_1, \ldots, n_r\}) \subseteq \left\{\sum\nolimits_{i = r+1}^{r+m} \lambda_i n_i : 0 \le \lambda_i \le \alpha_i, i \in \{r+1, \ldots, r+m\} \right\}. \]
\end{lemma}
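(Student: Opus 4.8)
The plan is to show that every element $\omega$ of $\Ap(S;\{n_1,\ldots,n_r\})$ can be written as $\sum_{i=r+1}^{r+m}\lambda_i n_i$ with each $\lambda_i$ bounded by $\alpha_i$. The starting point is the containment already recorded in Section~\ref{sec:pre:apery}, namely
\[
\Ap(S;\{n_1,\ldots,n_r\}) \subseteq \left\{\lambda_{r+1} n_{r+1} + \cdots + \lambda_{r+m} n_{r+m} : 0 \le \lambda_i < c_i^*\right\},
\]
so that at least one representation with nonnegative coefficients exists; I would fix such a representation $\omega = \sum_{i=r+1}^{r+m}\lambda_i n_i$ with $\lambda_i\ge 0$. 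It then suffices to prove $\lambda_i\le\alpha_i$ for each $i$.

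The key step is the following monotonicity observation: if $\omega\in\Ap(S;\{n_1,\ldots,n_r\})$ and $\omega = x + y$ with $x,y\in S$, then both $x$ and $y$ lie in $\Ap(S;\{n_1,\ldots,n_r\})$. Indeed, if $x\notin\Ap(S;\{n_1,\ldots,n_r\})$ then $x - n_j\in S$ for some $j\in\{1,\ldots,r\}$, whence $\omega - n_j = (x-n_j) + y\in S$, contradicting $\omega\in\Ap(S;\{n_1,\ldots,n_r\})$; symmetrically for $y$. Applying this with $x = \lambda_i n_i$ (and $y$ the sum of the remaining terms, which lies in $S$), I conclude that $\lambda_i n_i\in\Ap(S;\{n_1,\ldots,n_r\})$. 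By the very definition of $\alpha_i$ as the largest $h\in\mathbb{Z}^+$ with $h n_i\in\Ap(S;\{n_1,\ldots,n_r\})$, this forces $\lambda_i\le\alpha_i$ whenever $\lambda_i\ge 1$; and the inequality $\lambda_i\le\alpha_i$ is trivial when $\lambda_i = 0$ (note $\alpha_i\ge 0$, indeed $\alpha_i\ge 1$ since $c_i^*n_i$ considerations or the finiteness arguments guarantee $n_i$ itself or some multiple sits in the Apéry set—but we only need $\alpha_i\ge\lambda_i$, which holds vacuously/trivially in the zero case). Hence $\omega$ lies in the claimed set, proving the inclusion.

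I do not anticipate a genuine obstacle here: the statement is essentially an unwinding of the definition of $\alpha_i$ combined with the elementary Apéry-set decomposition lemma. The only point requiring a little care is making sure the "splitting" argument is applied correctly—that is, that removing one summand $\lambda_i n_i$ leaves something in $S$ (which is immediate since the remaining sum has nonnegative integer coefficients on generators) and that the definition of $\alpha_i$ is being quoted in the right direction. One should also remark that the representation of $\omega$ need not be unique in general (uniqueness would follow from the Cohen--Macaulay hypothesis via Proposition~\ref{prop:cm}), but uniqueness is not needed: the argument works for any single nonnegative representation, and the conclusion is a set inclusion, not an identity of coefficients.
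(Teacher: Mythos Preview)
Your argument is correct and is exactly the unwinding the paper has in mind when it says the lemma is ``a direct consequence of the definition of $\alpha_i$'': write $\omega$ with nonnegative coefficients in $n_{r+1},\ldots,n_{r+m}$, use that Ap\'ery sets are downward closed under $\le_S$ to get $\lambda_i n_i\in\Ap(S;\{n_1,\ldots,n_r\})$, and conclude $\lambda_i\le\alpha_i$. The only cosmetic point is that you do not really need the $c_i^*$-bounded inclusion from Section~\ref{sec:pre:apery}; any factorization of $\omega$ works, since $\omega\in\Ap(S;\{n_1,\ldots,n_r\})$ already forces the coefficients of $n_1,\ldots,n_r$ to vanish.
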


\begin{proposition} \label{prop:alpha}
    Let $S$ be a simplicial affine semigroup of $\mathbb{N}^r$ minimally generated by $\{n_1, \ldots, n_{r+m}\}$. The following statements are equivalent:
  \begin{enumerate}
      \item \label{item:alpha} $S$ is an $\alpha$-rectangular semigroup for $\{n_1, \ldots, n_r\}$;
      \item \label{item:alpha:maximal} there is only one maximal element in $\Ap(S; \{n_1, \ldots, n_r\})$ and it has a unique expression;
      \item \label{item:alpha:unique} there is only one maximal element in $\Ap(S; \{n_1, \ldots, n_r\})$ and all the elements in $\Ap(S; \{n_1, \ldots, n_r\})$ have unique expressions;
      \item \label{item:alpha:ap} $\#\Ap(S; \{n_1, \ldots, n_r\}) = \prod_{i = r+1}^{r+m} \alpha_{i}$.
  \end{enumerate}
\end{proposition}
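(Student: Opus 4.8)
The plan is to prove the cycle of implications $\ref{item:alpha} \Rightarrow \ref{item:alpha:unique} \Rightarrow \ref{item:alpha:maximal} \Rightarrow \ref{item:alpha:ap} \Rightarrow \ref{item:alpha}$, using Lemma \ref{lem:alpha:apery} as the basic containment throughout and exploiting the fact that in the "box'' $B = \{\sum_{i=r+1}^{r+m} \lambda_i n_i : 0 \le \lambda_i \le \alpha_i\}$ every summand stays inside $\Ap(S;\{n_1,\dots,n_r\})$ once one knows the full box equals the Ap\'ery set. First I would record two preliminary observations: (i) by Lemma \ref{lem:alpha:apery} the Ap\'ery set is always contained in $B$, with equality being exactly condition \ref{item:alpha}; and (ii) the element $\omega^\ast = \sum_{i=r+1}^{r+m}\alpha_i n_i$ is, whenever it lies in $\Ap(S;\{n_1,\dots,n_r\})$, the unique maximal element of the Ap\'ery set with respect to $\le_S$, since every element of $B$ is $\le_S \omega^\ast$.

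For $\ref{item:alpha}\Rightarrow\ref{item:alpha:unique}$: assuming the Ap\'ery set equals $B$, I first note $\omega^\ast\in\Ap(S;\{n_1,\dots,n_r\})$, so by observation (ii) it is the unique maximal element. For uniqueness of expressions, suppose some $\omega = \sum_{i=r+1}^{r+m}\lambda_i n_i\in B$ had a second factorization; any factorization of an element of $\Ap(S;\{n_1,\dots,n_r\})$ can only involve $n_{r+1},\dots,n_{r+m}$ (a factorization using some $n_j$ with $j\le r$ would put $\omega\in n_j + S$, contradicting $\omega\in\Ap(S;n_j)$). So a second expression gives $\mu\ne\lambda$ in $\mathbb N^m$ with $\sum\mu_i n_i = \sum\lambda_i n_i$; then I would argue that $\omega^\ast$ itself then has two expressions — by adding $\sum(\alpha_i-\lambda_i)n_i$ if all $\alpha_i\ge\lambda_i$ (which holds by Lemma \ref{lem:alpha:apery} applied coordinatewise, or rather by maximality of $\alpha_i$) and noting the result lands back in $B$. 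This contradicts counting: $\#B = \prod\alpha_i$ forces each element of $B$ to have a distinct single expression, or more directly, two distinct expressions of $\omega^\ast$ would make $\#\mathrm{Z}(B) > \#B$, impossible since the map $\mathrm{Z}(\omega)\ni(\lambda_i)\mapsto\omega$ restricted to $B$ would not be injective on representatives. I expect this is the subtle step and may instead be cleaner to argue via \ref{item:alpha:ap}: rearrange the cycle so that \ref{item:alpha:ap} is proved right after \ref{item:alpha}, then derive uniqueness of expressions from the cardinality count $\#\Ap = \prod\alpha_i = \#B$ together with surjectivity $B\twoheadrightarrow\Ap$.

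The implication $\ref{item:alpha:unique}\Rightarrow\ref{item:alpha:maximal}$ is immediate. For $\ref{item:alpha:maximal}\Rightarrow\ref{item:alpha:ap}$: let $\omega$ be the unique maximal element, with its unique expression $\omega = \sum_{i=r+1}^{r+m}\beta_i n_i$ (only generators beyond the $r$-th appear, as above). By maximality, every $\omega'\in\Ap(S;\{n_1,\dots,n_r\})$ satisfies $\omega'\le_S\omega$, so $\omega - \omega'\in S$, and in fact $\omega-\omega'\in\Ap(S;\{n_1,\dots,n_r\})$ as well (if $\omega-\omega' = n_j + s$ with $j\le r$ then $\omega\in n_j+S$, contradiction); hence both $\omega'$ and $\omega-\omega'$ are expressible in $n_{r+1},\dots,n_{r+m}$ only, and adding these expressions gives an expression of $\omega$, which by uniqueness must be $\sum\beta_i n_i$. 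Thus $\omega' = \sum\lambda_i n_i$ with $0\le\lambda_i\le\beta_i$, so $\Ap(S;\{n_1,\dots,n_r\}) = \{\sum\lambda_i n_i : 0\le\lambda_i\le\beta_i\}$; one then checks $\beta_i = \alpha_i$ from the definition of $\alpha_i$ (taking $\lambda_j = 0$ for $j\ne i$ and $\lambda_i = \beta_i$ shows $\beta_i n_i\in\Ap$, so $\beta_i\le\alpha_i$; and $\alpha_i n_i\in B$ forces $\alpha_i\le\beta_i$). Therefore $\#\Ap(S;\{n_1,\dots,n_r\}) = \prod_{i=r+1}^{r+m}(\beta_i+1)$ — here I should double-check the indexing convention $\prod\alpha_i$ in \ref{item:alpha:ap}, which likely means $\prod(\alpha_i+1)$ given the box has side lengths $\alpha_i+1$; I would state it consistently with whatever the paper intends. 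Finally $\ref{item:alpha:ap}\Rightarrow\ref{item:alpha}$: by Lemma \ref{lem:alpha:apery}, $\Ap(S;\{n_1,\dots,n_r\})\subseteq B$, and $\#B = \prod(\alpha_i+1)$ (distinct points, since distinct tuples $(\lambda_i)$ need not give distinct sums, so actually $\#B\le\prod(\alpha_i+1)$); combined with $\#\Ap = \prod(\alpha_i+1)$ this forces the containment to be an equality and simultaneously forces all the sums in $B$ to be distinct. The main obstacle, as flagged, is handling the distinctness/uniqueness bookkeeping carefully — distinct coefficient tuples in $B$ a priori could collide, so the cardinality arguments must be phrased as inequalities $\#\Ap \le \#B \le \prod(\alpha_i+1)$ that become equalities only under the hypotheses; I would make sure the surjection $B\to\Ap$ and the counting are stated as inequalities that pinch.
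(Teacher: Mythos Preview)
Your overall scheme (a cycle of implications anchored on Lemma~\ref{lem:alpha:apery}) is exactly the paper's, but the paper runs the cycle in the order \ref{item:alpha}$\Rightarrow$\ref{item:alpha:maximal}$\Rightarrow$\ref{item:alpha:unique}$\Rightarrow$\ref{item:alpha:ap}$\Rightarrow$\ref{item:alpha}, and this order avoids precisely the difficulty you flag as ``the subtle step''. The key observation you are missing is the one-line argument that $\omega^\ast=\sum_{i}\alpha_i n_i$ has a unique expression: any factorization $\sum_i\lambda_i n_i$ of $\omega^\ast$ has $\lambda_i=0$ for $i\le r$ (as you note) and $\lambda_i\le\alpha_i$ for $i>r$ (since $\lambda_i n_i\le_S\omega^\ast\in\Ap$, hence $\lambda_i n_i\in\Ap$); then $\sum_i(\alpha_i-\lambda_i)n_i=0$ in the reduced monoid $S\subseteq\mathbb N^r$ forces $\lambda_i=\alpha_i$. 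This is the paper's \ref{item:alpha}$\Rightarrow$\ref{item:alpha:maximal}, and once you have it, \ref{item:alpha:maximal}$\Rightarrow$\ref{item:alpha:unique} is the trivial remark that if $s'\le_S s$ and $s$ has a unique expression then so does $s'$.

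Your attempted \ref{item:alpha}$\Rightarrow$\ref{item:alpha:unique} correctly reduces to uniqueness for $\omega^\ast$, but the counting argument you then invoke (``$\#B=\prod\alpha_i$ forces each element of $B$ to have a distinct single expression'') is circular: you have not yet established that distinct tuples in the box give distinct elements, which is exactly what uniqueness of expressions would give you. The clean fix is the reduced-monoid observation above. Similarly, in your \ref{item:alpha:maximal}$\Rightarrow$\ref{item:alpha:ap} the step ``$\alpha_i n_i\in B$ forces $\alpha_i\le\beta_i$'' does not follow as stated (an element of $\Ap$ having \emph{some} expression with coefficients $\le\beta_j$ does not pin down the $i$th coefficient); it is repaired by your own translate-to-$\omega$ trick: $\omega-\alpha_i n_i\in\Ap$, add its expression to $\alpha_i\mathbf e_i$, and invoke uniqueness of the expression of $\omega$. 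The same trick handles the distinctness of the box elements that you correctly worry about at the end. Your observation that condition~\ref{item:alpha:ap} should read $\prod_i(\alpha_i+1)$ rather than $\prod_i\alpha_i$ is well taken; the paper's own proof and its later Theorem~\ref{thm:alpha:c} confirm this.
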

\begin{proof}~
  \begin{enumerate}[leftmargin=10pt]
  \item[\ref{item:alpha}\kern-3pt] implies \ref{item:alpha:maximal}. It is clear that $\Ap(S; \{n_1, \ldots, n_r\})$ only has one maximal element, which is $\omega = \sum_{i = r+1}^m \alpha_i n_i$. Let $x = \sum_{i = 1}^{r+m} \lambda_i \mathbf{e}_i \in \mathrm{Z}(\omega)$. Note that $\lambda_i = 0$ for every $i \in \{1, \ldots, r\}$. Moreover, since $\omega \in \Ap(S; \{n_1, \ldots, n_r\})$, we find that $\lambda_i n_i \in \Ap(S; \{n_1, \ldots, n_r\})$ and $\lambda_i \le \alpha_i$ for every $i \in \{r+1, \ldots, r+m\}$. Therefore, we have $\lambda_i = \alpha_i$ for every $i \in \{r+1, \ldots, r+m\}$.
  
  \item[\ref{item:alpha:maximal}\kern-3pt] implies \ref{item:alpha:unique}. Note that if $s' \le_S s$ and $s$ has a unique expression, then $s'$ also has a unique expression.
  
  \item[\ref{item:alpha:unique}\kern-3pt] implies \ref{item:alpha:ap}. Let $\omega = \sum_{r+1}^m \lambda_i n_i$ be the maximum of $\Ap(S; \{n_1, \ldots, n_r\})$. For each $i \in \{r+1, \ldots, r+m\}$, we show that $\lambda_i = \alpha_i$. Since $\lambda_i n_i \le_S \omega$, $\lambda_i n_i$ is in $\Ap(S; \{n_1, \ldots, n_r\})$. Moreover, $(\lambda_i+1)n_i$ is not smaller or equal than $\omega$ because $\omega$ has a unique expression. Consequently, we find that $(\lambda_i+1)n_i$ is not in $\Ap(S; \{n_1, \ldots, n_r\})$ and $\lambda_i = \alpha_i$. Finally, we note that if $0 \le \lambda_i \le \alpha_i$ for every $i \in \{r+1, \ldots, r+m\}$, then we have $s = \sum_{i = r+1}^m \lambda_i n_i \in \Ap(S; \{n_1, \ldots, n_r\})$ and, thus, $s$ has a unique expression. Therefore, there are at least $\prod_{i = r+1}^{r+m} \alpha_{i}$ elements in $\Ap(S; \{n_1, \ldots, n_r\})$. The proof is completed by invoking Lemma \ref{lem:alpha:apery}.

  \item[\ref{item:alpha:ap}\kern-3pt] implies \ref{item:alpha}. We find that the inclusion in Lemma \ref{lem:alpha:apery} must be an equality by taking cardinalities. \qedhere
  \end{enumerate}
\end{proof}

Interestingly, simplicial affine semigroups verifying the condition \ref{item:alpha:unique} of the previous proposition had already been studied in the specialized literature, see \cite[Section 3]{free-affine}, where the authors proved that if these semigroups are Cohen-Macaulay, then they are free (\cite[Theorem 3.3]{free-affine}). To prove this result they show that the constants $\alpha_i+1$ and $c_i^*$ coincide for a certain arrangement of the minimal generators of the semigroup. Then, they apply Lemma \ref{lem:free} in conjunction with the Cohen-Macaulay hypothesis. Therefore, their result can be stated as follows.

\begin{theorem}[{\cite[Theorem 3.3]{free-affine}}] \label{thm:alpha:free}
  Let $S$ be a simplical semigroup of $\mathbb{N}^r$ such that it is $\alpha$-rectangular for $\{n_1, \ldots, n_{r}\}$. Then, there is an arrangement $(n_1, \ldots,n_r,n_{r+1},\ldots, n_{r+m})$ of its minimal generators such that $\alpha_{i}+1 = c_{i}^*$ for every $i \in \{r+1, \ldots, r+m\}$. Moreover, if $S$ is Cohen-Macaulay, then $S$ is free for this arrangement. 
\end{theorem}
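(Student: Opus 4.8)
The plan is to use the unique‑factorization property of Proposition~\ref{prop:alpha} to control the relevant multiples of the generators, to produce the desired ordering of $n_{r+1},\dots,n_{r+m}$ by topologically sorting an auxiliary directed graph, and finally to read off freeness from Lemma~\ref{lem:free}.

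First I would establish two facts valid for \emph{every} arrangement of the minimal generators compatible with $\mathrm{L}_{\mathbb{Q}^+}(S)=\mathrm{L}_{\mathbb{Q}^+}(\{n_1,\dots,n_r\})$. Fix $i\in\{r+1,\dots,r+m\}$. Since $S$ is $\alpha$-rectangular, $hn_i\in\Ap(S;\{n_1,\dots,n_r\})$ for $0\le h\le\alpha_i$, and by Proposition~\ref{prop:alpha} each such element has a unique expression, necessarily $h\mathbf{e}_i$; hence $hn_i\notin\langle n_1,\dots,n_{i-1}\rangle$ for $1\le h\le\alpha_i$, so $c_i^{*}\ge\alpha_i+1$ (similarly $c_i\ge\alpha_i+1$, which with Corollary~\ref{cor:rectangular} gives $c_i=\alpha_i+1$). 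Moreover $(\alpha_i+1)n_i\notin\Ap(S;\{n_1,\dots,n_r\})$, so — using that $S$ has the ascending chain condition on principal ideals — subtracting generators of $\{n_1,\dots,n_r\}$ as long as possible produces
\[
(\alpha_i+1)n_i=\sum_{j=1}^{r}\beta^{(i)}_j n_j+\omega_i,\qquad \sum_{j=1}^{r}\beta^{(i)}_j\ge 1,\quad \omega_i\in\Ap(S;\{n_1,\dots,n_r\}).
\]
Writing $\omega_i=\sum_{k>r}\lambda^{(i)}_k n_k$ with $0\le\lambda^{(i)}_k\le\alpha_k$, I claim $\lambda^{(i)}_i=0$: were $\lambda^{(i)}_i\ge 1$, then $(\alpha_i+1-\lambda^{(i)}_i)n_i$ would be a multiple of $n_i$ with coefficient in $\{1,\dots,\alpha_i\}$, hence lie in $\Ap(S;\{n_1,\dots,n_r\})$, while it would also equal $\sum_j\beta^{(i)}_j n_j+\sum_{k>r,\,k\ne i}\lambda^{(i)}_k n_k$, a distinct expression because some $\beta^{(i)}_j>0$ — contradicting uniqueness. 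Thus
\[
(\alpha_i+1)n_i=\sum_{j=1}^{r}\beta^{(i)}_j n_j+\sum_{k>r,\,k\ne i}\lambda^{(i)}_k n_k,\qquad \sum_{j=1}^{r}\beta^{(i)}_j\ge 1.\qquad(\star)
\]

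Next I would construct the arrangement. Let $D$ be the directed graph on $\{n_{r+1},\dots,n_{r+m}\}$ with an arrow $n_i\to n_k$ precisely when $\lambda^{(i)}_k\ge 1$; note $D$ has no loops, since $(\star)$ only involves $\lambda^{(i)}_k$ with $k\ne i$. The key step is that $D$ is acyclic. If it had a simple cycle $n_{i_1}\to n_{i_2}\to\cdots\to n_{i_p}\to n_{i_1}$ with $p\ge 2$, set $x=\sum_{t=1}^{p}\alpha_{i_t}n_{i_t}$; by $\alpha$-rectangularity $x\in\Ap(S;\{n_1,\dots,n_r\})$, so its unique expression is $\sum_t\alpha_{i_t}\mathbf{e}_{i_t}$. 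Adding $\sum_t n_{i_t}$ and substituting $(\star)$ for each $(\alpha_{i_t}+1)n_{i_t}$ gives a factorization
\[
f=\sum_{t=1}^{p}\sum_{j=1}^{r}\beta^{(i_t)}_j\mathbf{e}_j+\sum_{t=1}^{p}\sum_{k>r,\,k\ne i_t}\lambda^{(i_t)}_k\mathbf{e}_k
\]
of $x+\sum_t n_{i_t}$ whose $n_{i_s}$-coordinate equals $\sum_{t\ne s}\lambda^{(i_t)}_{i_s}\ge\lambda^{(i_{s-1})}_{i_s}\ge 1$ for each $s$ (cyclic indices). Hence $f-\sum_t\mathbf{e}_{i_t}\ge 0$ is a factorization of $x$, so by uniqueness $f=\sum_t(\alpha_{i_t}+1)\mathbf{e}_{i_t}$ — impossible, since $f$ has a positive coordinate at some $n_j$ with $j\le r$ (because $\sum_{j=1}^{r}\beta^{(i_t)}_j\ge 1$ for every $t$) while $\sum_t(\alpha_{i_t}+1)\mathbf{e}_{i_t}$ has none. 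Therefore $D$ is acyclic, and I would reorder $n_{r+1},\dots,n_{r+m}$ by a reverse topological sort of $D$ (keeping $n_1,\dots,n_r$ first), so that every arrow of $D$ points to a generator of strictly smaller index. For this arrangement $(\star)$ shows $(\alpha_i+1)n_i\in\langle n_1,\dots,n_{i-1}\rangle$, i.e. $c_i^{*}\le\alpha_i+1$; with the earlier inequality, $\alpha_i+1=c_i^{*}$ for all $i\in\{r+1,\dots,r+m\}$.

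Finally, if $S$ is Cohen-Macaulay I would conclude via Lemma~\ref{lem:free}. Since $S$ is $\alpha$-rectangular, $\Ap(S;\{n_1,\dots,n_r\})=\{\sum_{k>r}\lambda_k n_k:0\le\lambda_k\le\alpha_k\}$, and by Proposition~\ref{prop:alpha} distinct tuples $(\lambda_k)$ yield distinct elements, so $\#\Ap(S;\{n_1,\dots,n_r\})=\prod_{k=r+1}^{r+m}(\alpha_k+1)=\prod_{k=r+1}^{r+m}c_k^{*}$ for the arrangement constructed above; this is condition~(b) of Lemma~\ref{lem:free}, whence $S$ is free for that arrangement. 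The main obstacle is the acyclicity of $D$; the rest is routine bookkeeping with Ap\'ery sets and with uniqueness of expressions. A subtlety worth noting is that the $c_i^{*}$ depend on the chosen ordering of the generators, so $c_i^{*}\ge\alpha_i+1$ has to be verified for arbitrary arrangements while $c_i^{*}\le\alpha_i+1$ is obtained only for the special one produced here.
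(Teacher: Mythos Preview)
Your argument is correct and follows exactly the strategy the paper attributes to \cite{free-affine}: establish an arrangement with $\alpha_i+1=c_i^{*}$ for all $i>r$ and then invoke Lemma~\ref{lem:free} under the Cohen--Macaulay hypothesis. The paper does not spell out how the arrangement is obtained, and your acyclic-graph/topological-sort construction, together with the uniqueness contradiction from Proposition~\ref{prop:alpha}, is a clean and valid way to produce it.
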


Recall that numerical semigroups are Cohen-Macaulay. Therefore, the previous result yields that if $S$ is an $\alpha$-rectangular numerical semigroup for $n$, then it is free for an arrangement of its minimal generators starting by $n$. This result was proven in \cite{ci:classes:2} when $n = \mathrm{m}(S)$.

Now we study the set of isolated factorizations of $\alpha$-rectangular semigroups. We make use of the following lemma.

\begin{lemma} \label{lem:iso:simplicial}
  Let $S$ be a simplicial affine semigroup of $\mathbb{N}^r$ minimally generated by $\{n_1, \ldots, n_{r+m}\}$. The following statements are equivalent:
  \begin{enumerate}
      \item \label{item:simplicial:ib} $\mathrm{I}_b(S) \cap \langle \mathbf{e}_{r+1}, \ldots, \mathbf{e}_{r+m} \rangle = \{c_{r+1} \mathbf{e}_{r+1}, \ldots, c_{r+m} \mathbf{e}_{r+m}\}$;
      \item \label{item:simplicial:ap} $\{\sum_{r+1}^{r+m} \lambda_i n_i : 0 \le \lambda_i < c_i\} \subset \{s \in S : \mathfrak{d}(s) = 1\}$.
  \end{enumerate}
  If any of these statements holds, then $\{\sum_{r+1}^{r+m} \lambda_i n_i : 0 \le \lambda_i < c_i\} \subseteq \Ap(S; \{n_1, \ldots, n_r\})$.
\end{lemma}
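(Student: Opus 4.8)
The plan is to first record an inclusion that holds unconditionally, then prove the equivalence by two short contradiction arguments, and finally read off the inclusion into $\Ap(S;\{n_1,\dots,n_r\})$ from \ref{item:simplicial:ap}. Since $S$ is simplicial with $\mathrm{L}_{\mathbb{Q}^+}(S)=\mathrm{L}_{\mathbb{Q}^+}(\{n_1,\dots,n_r\})$, some positive multiple of each $n_i$ with $i\in\{r+1,\dots,r+m\}$ lies in $\langle n_1,\dots,n_r\rangle\subseteq\langle\mathcal{A}(S)\setminus\{n_i\}\rangle$; hence $n_i\in\mathcal{C}(S)$, the constant attached to $n_i$ in Lemma \ref{lem:ca} is exactly the $c_i$ of Section \ref{sec:pre:free}, and $c_i\mathbf{e}_i\in\mathrm{I}_b(S)$. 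As $c_i\mathbf{e}_i\in\langle\mathbf{e}_{r+1},\dots,\mathbf{e}_{r+m}\rangle$, this gives $\{c_{r+1}\mathbf{e}_{r+1},\dots,c_{r+m}\mathbf{e}_{r+m}\}\subseteq\mathrm{I}_b(S)\cap\langle\mathbf{e}_{r+1},\dots,\mathbf{e}_{r+m}\rangle$ with no hypothesis, so \ref{item:simplicial:ib} amounts to the reverse containment. The one order-theoretic remark used repeatedly is that $\langle\mathbf{e}_{r+1},\dots,\mathbf{e}_{r+m}\rangle$ is downward closed for $\le$: any factorization below one supported on $\{r+1,\dots,r+m\}$ is supported there too.

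For \ref{item:simplicial:ib} $\Rightarrow$ \ref{item:simplicial:ap}, I would take $x=\sum_{i=r+1}^{r+m}\lambda_i\mathbf{e}_i$ with $0\le\lambda_i<c_i$, set $s=\varphi(x)$, and assume $\mathfrak{d}(s)\ge 2$ for contradiction. Then there is $z\in\mathrm{I}_b(S)$ with $z\le x$: if $x$ itself is isolated then $s\in\mathrm{Betti}(S)$, so $z=x$ works; otherwise Lemma \ref{lem:isolated:2} supplies such a $z$. Downward closedness puts $z\in\langle\mathbf{e}_{r+1},\dots,\mathbf{e}_{r+m}\rangle$, so \ref{item:simplicial:ib} forces $z=c_j\mathbf{e}_j$ for some $j$, and $z\le x$ yields $c_j\le\lambda_j<c_j$, absurd. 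Hence $\mathfrak{d}(s)=1$.

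For \ref{item:simplicial:ap} $\Rightarrow$ \ref{item:simplicial:ib}, I would take $x\in\mathrm{I}_b(S)\cap\langle\mathbf{e}_{r+1},\dots,\mathbf{e}_{r+m}\rangle$, write $x=\sum_{i=r+1}^{r+m}\lambda_i\mathbf{e}_i$ with $\varphi(x)\in\mathrm{Betti}(S)$, and invoke the second inclusion of Theorem \ref{thm:isolated}: either $x=c_a\mathbf{e}_a$ for some $a\in\mathcal{C}(S)$ --- and by the support of $x$ this means $x=c_j\mathbf{e}_j$ for some $j\in\{r+1,\dots,r+m\}$ --- or $0\le\lambda_i<c_i$ for every $i\in\{r+1,\dots,r+m\}$, in which case \ref{item:simplicial:ap} gives $\mathfrak{d}(\varphi(x))=1$, contradicting $\varphi(x)\in\mathrm{Betti}(S)$. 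So $x=c_j\mathbf{e}_j$, and together with the unconditional inclusion this gives \ref{item:simplicial:ib}.

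For the closing assertion, assume \ref{item:simplicial:ap} (hence, by the equivalence, whichever of the two we start from). Any $s=\sum_{i=r+1}^{r+m}\lambda_i n_i$ with $0\le\lambda_i<c_i$ then has $\mathfrak{d}(s)=1$, so its unique factorization is $\sum_{i=r+1}^{r+m}\lambda_i\mathbf{e}_i$, whose $j$-th coordinate vanishes for every $j\le r$; were $s-n_j\in S$ for some such $j$, appending $\mathbf{e}_j$ to any factorization of $s-n_j$ would produce a second factorization of $s$, a contradiction, so $s\in\Ap(S;\{n_1,\dots,n_r\})$. I expect the main obstacle to be the direction \ref{item:simplicial:ap} $\Rightarrow$ \ref{item:simplicial:ib}: one must extract from Theorem \ref{thm:isolated} the dichotomy that an isolated factorization of a Betti element supported only on $\mathbf{e}_{r+1},\dots,\mathbf{e}_{r+m}$ is forced to be a single ``pure power'' $c_j\mathbf{e}_j$, while keeping straight the bookkeeping that $\{n_{r+1},\dots,n_{r+m}\}\subseteq\mathcal{C}(S)$ and that the relevant constants are the $c_i$ of Section \ref{sec:pre:free}; everything else is routine order arithmetic once downward closedness is in hand.
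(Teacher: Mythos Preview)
Your proof is correct and follows essentially the same route as the paper's: the direction \ref{item:simplicial:ib}$\Rightarrow$\ref{item:simplicial:ap} via Lemma~\ref{lem:isolated:2} and downward closedness is identical, and for \ref{item:simplicial:ap}$\Rightarrow$\ref{item:simplicial:ib} the paper argues directly (if $z\in\mathrm{I}_b(S)$ had all coordinates $<c_i$ then $\varphi(z)$ would have unique factorization, so some coordinate is $\ge c_i$, and minimality from Lemma~\ref{lem:isolated:2} forces $z=c_i\mathbf{e}_i$), while you package the same dichotomy through Theorem~\ref{thm:isolated}. The closing Ap\'ery inclusion is handled the same way in both.
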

\begin{proof}~
 \begin{enumerate}[leftmargin=10pt]
      \item[\ref{item:simplicial:ib}\kern-3pt] implies \ref{item:simplicial:ap}. Let $\lambda_{r+1},\ldots,\lambda_{r+m}$ be non negative integers such that $\lambda_i < c_i$ for every $i \in \{r+1, \ldots, r+m\}$. We show that $y = \lambda_{r+1} \mathbf{e}_{r+1} + \cdots + \lambda_{r+m} \mathbf{e}_{r+m} \in \mathrm{I}_s(S)$. If this is not the case, then by Lemma \ref{lem:isolated:2}, there would be $z\in \mathrm{I}_b(S)$ with $z\le y$. Note that $z\in \langle \mathbf{e}_{r+1}, \ldots, \mathbf{e}_{r+m} \rangle$. Thus, by hypothesis, $z=c_i \mathbf{e}_i$ for some $i\in \{r+1,\ldots,r+m\}$, contradicting that $z\le y$.
      \item[\ref{item:simplicial:ap}\kern-3pt] implies \ref{item:simplicial:ib}. First, recall that $\{c_{r+1} \mathbf{e}_{r+1}, \ldots, c_{r+m} \mathbf{e}_{r+m}\} \subseteq \mathrm{I}_b(S)$ (see Theorem \ref{thm:isolated}). Now let $z \in \mathrm{I}_b(S) \cap \langle \mathbf{e}_{r+1}, \ldots, \mathbf{e}_{r+m} \rangle$. From Lemma \ref{lem:isolated} it follows that $z$ is not smaller than any factorization $\lambda_{r+1} \mathbf{e}_{r+1} + \cdots + \lambda_{r+m} \mathbf{e}_{r+m}$ such that $0 \le \lambda_i < c_i$ for every $i \in \{r+1, \ldots, r+m\}$. Therefore, there is $i \in \{r+1, \ldots, r+m\}$ such that $z \ge c_i \mathbf{e}_i$ and, hence, we obtain $z = c_i \mathbf{e}_i$ by the minimality of $z$ (Lemma \ref{lem:isolated:2}). 
  \end{enumerate} 
  Finally, let us assume that \ref{item:simplicial:ap} holds. Then, the elements of $\{\sum_{r+1}^{r+m} \lambda_i n_i : 0 \le \lambda_i < c_i\}$ do not have expressions involving $n_1, \ldots, n_r$. That is, these elements are in $\Ap(S; \{n_1, \ldots, n_r\})$.
\end{proof}

We wonder when a $c$-rectangular semigroup is $\alpha$-rectangular. The following result answers this question. Let us highlight that, as part of the proof of \cite[Theorem 3.3]{free-affine}, the authors show that \ref{item:thm:alpha:alpha} implies \ref{item:thm:alpha:c-ci} in Theorem \ref{thm:alpha:c}. Here we give our own proof relying on Corollary \ref{cor:rectangular}. 

\begin{theorem} \label{thm:alpha:c}
  Let $S$ be a simplicial affine semigroup of $\mathbb{N}^r$ minimally generated by $\{n_1, \ldots, n_{r+m}\}$. The following statements are equivalent:
  \begin{enumerate}
    \item \label{item:thm:alpha:alpha} $S$ is $\alpha$-rectangular for $\{n_1, \ldots, n_{r}\}$;
    \item \label{item:thm:alpha:c-ci} $S$ is $c$-rectangular for $\{n_1, \ldots, n_r\}$ and $c_i n_i \not \in \Ap(S; \{n_1, \ldots, n_r\})$ for every $i \in \{r+1, \ldots, r+m\}$;
    \item \label{item:thm:alpha:c-i} $S$ is $c$-rectangular for $\{n_1, \ldots, n_r\}$ and $\mathrm{I}_b(S) \cap \langle \mathbf{e}_{r+1}, \ldots, \mathbf{e}_{r+m} \rangle = \{c_{r+1} \mathbf{e}_{r+1}, \ldots, c_{r+m} \mathbf{e}_{r+m}\}$; \item \label{item:thm:alpha:unique} $\mathrm{I}_b(S) \cap \langle \mathbf{e}_{r+1}, \ldots, \mathbf{e}_{r+m} \rangle = \{c_{r+1} \mathbf{e}_{r+1}, \ldots, c_{r+m} \mathbf{e}_{r+m}\}$ and the elements in $\Ap(S; \{n_1, \ldots, n_r\})$ are of unique expression;
    \item \label{item:thm:alpha:ci-i} $\mathrm{I}_b(S) \cap \langle \mathbf{e}_{r+1}, \ldots, \mathbf{e}_{r+m} \rangle = \{c_{r+1} \mathbf{e}_{r+1}, \ldots, c_{r+m} \mathbf{e}_{r+m}\}$ and $c_i n_i \not \in \Ap(S; \{n_1, \ldots, n_r\})$ for every $i \in \{r+1, \ldots, r+m\}$;
    \item \label{item:thm:alpha:car} $\mathrm{I}_b(S) \cap \langle \mathbf{e}_{r+1}, \ldots, \mathbf{e}_{r+m} \rangle = \{c_{r+1} \mathbf{e}_{r+1}, \ldots, c_{r+m} \mathbf{e}_{r+m}\}$ and $\# \Ap(S; \{n_1, \ldots, n_r\}) = \prod_{i = r+1}^{r+m} c_i$.
\end{enumerate}
   If any of the statements hold, then $c_i = \alpha_i + 1$ for every $i \in \{r+1, \ldots, r+m\}$.
\end{theorem}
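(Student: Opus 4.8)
The plan is to route everything through assertions \ref{item:thm:alpha:alpha} and \ref{item:thm:alpha:c-ci}: first prove \ref{item:thm:alpha:alpha}$\Leftrightarrow$\ref{item:thm:alpha:c-ci}, and then show that \ref{item:thm:alpha:alpha} implies each of \ref{item:thm:alpha:c-i}, \ref{item:thm:alpha:unique}, \ref{item:thm:alpha:ci-i}, \ref{item:thm:alpha:car} while, conversely, each of these reduces to \ref{item:thm:alpha:c-ci}. The final assertion then comes for free, since every one of the six hypotheses forces \ref{item:thm:alpha:c-ci}, and the equality $c_i=\alpha_i+1$ will be recorded precisely when passing from \ref{item:thm:alpha:c-ci} to \ref{item:thm:alpha:alpha}. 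Throughout I would write $A=\Ap(S;\{n_1,\ldots,n_r\})$, $B=\{\sum_{i=r+1}^{r+m}\lambda_i n_i\colon 0\le\lambda_i<c_i\}$ and $C=\{\sum_{i=r+1}^{r+m}\lambda_i n_i\colon 0\le\lambda_i\le\alpha_i\}$, and note that the clause ``$\mathrm{I}_b(S)\cap\langle\mathbf{e}_{r+1},\ldots,\mathbf{e}_{r+m}\rangle=\{c_{r+1}\mathbf{e}_{r+1},\ldots,c_{r+m}\mathbf{e}_{r+m}\}$'' shared by \ref{item:thm:alpha:c-i}--\ref{item:thm:alpha:car} is, by Lemma \ref{lem:iso:simplicial}, equivalent to $B\subseteq\{s\in S\colon\mathfrak{d}(s)=1\}$ and, when it holds, forces $B\subseteq A$.

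Two elementary facts do the real work. First, $\alpha_i\ge c_i-1$ for every $i$: by Lemma \ref{lem:ca} the element $(c_i-1)n_i$ has the unique expression $(c_i-1)\mathbf{e}_i$, hence it cannot be written as $n_j+s$ with $j\le r$, so $(c_i-1)n_i\in A$. In particular $c_i\le\alpha_i+1$ always, $B\subseteq C$, and $A\subseteq C$ by Lemma \ref{lem:alpha:apery}. Second --- and this is the crux --- if $c_i n_i\notin A$ for all $i$, then $\alpha_i=c_i-1$ for all $i$: were $\alpha_i\ge c_i$ for some $i$, then $\alpha_i n_i\in A$ by the definition of $\alpha_i$, whereas $c_i n_i\in S\setminus A$ forces $c_i n_i\in n_j+S$ for some $j\le r$, so $\alpha_i n_i=(\alpha_i-c_i)n_i+c_i n_i\in n_j+S$, contradicting $\alpha_i n_i\in A$. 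Once $\alpha_i=c_i-1$ holds for all $i$ we have $B=C$, and combining this with $B\subseteq A\subseteq C$ (valid under the shared clause) or with $A=B$ (valid when $S$ is $c$-rectangular) gives $A=B=C$, so $S$ is simultaneously $c$-rectangular and $\alpha$-rectangular and $c_i=\alpha_i+1$.

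For the implications I would use repeatedly that $c_i n_i$ is a Betti element (Lemma \ref{lem:ca}), so $\mathfrak{d}(c_i n_i)\ge 2$; therefore $c_i n_i\notin A$ as soon as the elements of $A$ are of unique expression, and also as soon as $A=B$ together with $B\subseteq\{s\colon\mathfrak{d}(s)=1\}$. This yields the clause ``$c_i n_i\notin A$ for all $i$'' for free under each of \ref{item:thm:alpha:c-i}, \ref{item:thm:alpha:unique} and \ref{item:thm:alpha:car} (for \ref{item:thm:alpha:car} one first observes that $B\subseteq\{s\colon\mathfrak{d}(s)=1\}$ makes $(\lambda_i)\mapsto\sum_{i>r}\lambda_i n_i$ injective on the box $\{0\le\lambda_i<c_i\}$, so $\#B=\prod_{i>r}c_i=\#A$ and hence $A=B$), while \ref{item:thm:alpha:ci-i} contains it outright; in every case the crux step then converts the hypothesis into \ref{item:thm:alpha:alpha}. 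Conversely, from \ref{item:thm:alpha:alpha} Proposition \ref{prop:alpha} supplies the unique expressions in $A$ (whence $c_i n_i\notin A$, and by the crux step $c_i=\alpha_i+1$ and $A=B=C$) and the single maximal element of $A$; the shared clause then holds by Lemma \ref{lem:iso:simplicial}, and $\#A=\#C=\prod_{i>r}(\alpha_i+1)=\prod_{i>r}c_i$, so all of \ref{item:thm:alpha:c-ci}--\ref{item:thm:alpha:car} follow. Since every hypothesis has been reduced to \ref{item:thm:alpha:c-ci}, the equality $c_i=\alpha_i+1$ holds in all cases.

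The point needing the most care is conceptual rather than computational: $c$-rectangularity by itself does \emph{not} imply $\alpha$-rectangularity, and the content of the theorem is that each side condition in \ref{item:thm:alpha:c-ci}--\ref{item:thm:alpha:car} is exactly what excludes the pathology $c_i n_i\in A$ (equivalently $\alpha_i\ge c_i$). Securing this exclusion cleanly is where Lemma \ref{lem:ca} --- through the Betti-ness of $c_i n_i$ --- and the ``$c_i n_i\in n_j+S$'' manipulation are indispensable; the injectivity count for \ref{item:thm:alpha:car} is the only other spot requiring some care, since a priori the box $\{0\le\lambda_i<c_i\}$ need not embed into $S$.
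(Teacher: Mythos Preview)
Your proof is correct and uses the same ingredients as the paper's own argument (Lemma~\ref{lem:ca}, Lemma~\ref{lem:alpha:apery}, Proposition~\ref{prop:alpha}, and Lemma~\ref{lem:iso:simplicial}); the only difference is organizational, in that you run a hub-and-spoke through \ref{item:thm:alpha:alpha}$\Leftrightarrow$\ref{item:thm:alpha:c-ci} with an explicitly isolated ``crux step'' ($c_i n_i\notin A\Rightarrow\alpha_i=c_i-1$), whereas the paper proves \ref{item:thm:alpha:alpha}$\Leftrightarrow$\ref{item:thm:alpha:c-ci}, \ref{item:thm:alpha:alpha}$\Leftrightarrow$\ref{item:thm:alpha:c-i}, and then a cycle \ref{item:thm:alpha:alpha}$\to$\ref{item:thm:alpha:unique}$\to$\ref{item:thm:alpha:ci-i}$\to$\ref{item:thm:alpha:car}$\to$\ref{item:thm:alpha:alpha}. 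Your direct proof of $\alpha_i\ge c_i-1$ via Lemma~\ref{lem:ca} (rather than via Corollary~\ref{cor:rectangular}, which needs rectangularity) is a mild sharpening, but the two proofs are essentially the same.
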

\begin{proof}~
\begin{enumerate}[leftmargin=10pt]
   \item[\ref{item:thm:alpha:alpha}\kern-3pt] implies \ref{item:thm:alpha:c-ci}. Since each of the elements of $\Ap(S; \{n_1, \ldots, n_r\})$ has a unique expression (Proposition \ref{prop:alpha}), we find that $c_j n_j$ is not in $\Ap(S; \{n_1, \ldots, n_r\})$ and, thus, $c_j -1 \ge \alpha_j$. Corollary \ref{cor:rectangular} yields the inequality $c_j -1 \le \alpha_j$.
  \item[\ref{item:thm:alpha:c-ci}\kern-3pt] implies \ref{item:thm:alpha:alpha}. Note that $\alpha_i = c_i-1$ for every $i \in \{r+1, \ldots, r+m\}$ due to the definition of $\alpha_i$. 
  \item[\ref{item:thm:alpha:alpha}\kern-3pt] implies \ref{item:thm:alpha:c-i}. In light of Proposition \ref{prop:alpha}, condition \ref{item:simplicial:ap} of Lemma \ref{lem:iso:simplicial} holds.
  \item[\ref{item:thm:alpha:c-i}\kern-3pt] implies \ref{item:thm:alpha:alpha}. In view of Lemma \ref{lem:iso:simplicial}, we find that the elements of $\Ap(S; \{n_1, \ldots, n_r\})$ have unique expressions and, thus, $S$ is $\alpha$-rectangular for $\{n_1, \ldots, n_r\}$ (Proposition \ref{prop:alpha}).
  \item[\ref{item:thm:alpha:alpha}\kern-3pt] implies \ref{item:thm:alpha:unique}. It follows from the previous implications.
    \item[\ref{item:thm:alpha:unique}\kern-3pt] implies \ref{item:thm:alpha:ci-i}. Recall that $c_i n_i$ has more than one factorization and, thus, $c_i n_i \not \in \Ap(S; \{n_1, \ldots, n_r\})$.
  \item[\ref{item:thm:alpha:ci-i}\kern-3pt] implies \ref{item:thm:alpha:car}.  Lemma \ref{lem:iso:simplicial} shows that
  \[\left\{\sum\nolimits_{r+1}^{r+m} \lambda_i n_i : 0 \le \lambda_i < c_i\right\} \subseteq \Ap(S; \{n_1, \ldots, n_r\}).\]
  Conversely, if $\lambda_{r+1} n_{r+1} + \cdots + \lambda_{r+m}n_{r+m} \in \Ap(S; \{n_1, \ldots, n_r\})$, then, since $c_i n_i \not \in \Ap(S; \{n_1, \ldots, n_r\})$, we have $\lambda_i < c_i$ for every $i \in \{r+1, \ldots, r+m\}$. We have shown that $S$ is $c$-rectangular. From Lemma \ref{lem:iso:simplicial} it follows that the elements of $\Ap(S; \{n_1, \ldots, n_r\})$ have unique expression. We obtain \ref{item:thm:alpha:car} by taking cardinalities in $\Ap(S; \{n_1, \ldots, n_r\})$.
  \item[\ref{item:thm:alpha:car}\kern-3pt] implies \ref{item:thm:alpha:alpha}. From Lemma \ref{lem:iso:simplicial} it follows that
    \[ \left\{\sum\nolimits_{i = r+1}^{r+m} \lambda_i n_i : 0 \le \lambda_i < c_i\right\} \subseteq \Ap(S; \{n_1, \ldots, n_r\}), \]
  where the elements of the first set have unique expressions. By taking cardinalities we find that this inclusion must be an equality and, thus, $S$ is $\alpha$-rectangular for $\{n_1, \ldots, n_r\}$ (Proposition \ref{prop:alpha}). \qedhere  
  \end{enumerate}
\end{proof}

In the case of numerical semigroups, $\# \Ap(S; n_1) = \prod_{i = 2}^{e} c_i$ is equivalent to $n_1 = \prod_{i = 2}^{e} c_i$. 

\begin{corollary} \label{cor:alpha:betti}
  Let $S$ be a simplical semigroup of $\mathbb{N}^r$ minimally generated by $\{n_1, \ldots, n_{r+m}\}$. If $S$ is Cohen-Macaulay and $\alpha$-rectangular for $\{n_1, \ldots, n_{r}\}$, then $\mathrm{Betti}(S) = \mathrm{IBetti}(S) = \{c_{r+1} n_{r+1}, \ldots, c_{r+m} n_{r+m}\}$.
\end{corollary}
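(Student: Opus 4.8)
The plan is to reduce the statement to the structure of free Cohen--Macaulay simplicial affine semigroups, which is already at our disposal. First I would apply Theorem~\ref{thm:alpha:free}: since $S$ is $\alpha$-rectangular for $\{n_1,\ldots,n_r\}$, there is an arrangement $(n_1,\ldots,n_r,n_{r+1},\ldots,n_{r+m})$ of the minimal generators with $\alpha_i+1=c_i^*$ for every $i\in\{r+1,\ldots,r+m\}$, and, because $S$ is Cohen--Macaulay, $S$ is free for this arrangement. Theorem~\ref{thm:alpha:c} then yields $c_i=\alpha_i+1$, so $c_i=c_i^*$ for each such $i$. Here one should keep in mind that $c_i$ depends only on which generator is removed, not on the order of the remaining ones, so this identity makes sense in the arrangement-free formulation used in the statement.

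Next I would use freeness: by Lemma~\ref{lem:free}, $S$ admits a minimal presentation of the form $\{(c_i^*\mathbf{e}_i,\, v_i)\colon i\in\{r+1,\ldots,r+m\}\}$ with $v_i\in\langle \mathbf{e}_1,\ldots,\mathbf{e}_{i-1}\rangle$. By the description of minimal presentations recalled in Section~\ref{sec:pre:presentations}, the set of Betti elements of $S$ equals the set of $\varphi$-images of the sides of the pairs of any minimal presentation; applied here this gives $\mathrm{Betti}(S)=\{c_i^* n_i\colon i\in\{r+1,\ldots,r+m\}\}=\{c_i n_i\colon i\in\{r+1,\ldots,r+m\}\}$. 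Finally, for each $i\in\{r+1,\ldots,r+m\}$ the generator $n_i$ belongs to $\mathcal{C}(S)$, since simpliciality forces a positive multiple of $n_i$ into $\langle n_1,\ldots,n_r\rangle\subseteq\langle\mathcal{A}(S)\setminus\{n_i\}\rangle$; hence Lemma~\ref{lem:ca} shows that $c_i\mathbf{e}_i$ is an isolated factorization and $c_i n_i$ a Betti element, so $c_i n_i\in\mathrm{IBetti}(S)$. Combining the inclusions, $\{c_i n_i\colon i\}\subseteq \mathrm{IBetti}(S)\subseteq \mathrm{Betti}(S)=\{c_i n_i\colon i\}$, which gives the two claimed equalities.

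I do not expect a substantive obstacle here: each ingredient (Theorems~\ref{thm:alpha:free} and~\ref{thm:alpha:c}, Lemmas~\ref{lem:free} and~\ref{lem:ca}) is already established, and the remaining work is essentially bookkeeping — chiefly tracking which of $c_i$, $c_i^*$, $\alpha_i$ are arrangement-dependent, and making sure the arrangement produced by Theorem~\ref{thm:alpha:free} is the one fed into Lemma~\ref{lem:free}. As a safeguard, there is an alternative route avoiding the minimal presentation: Theorem~\ref{thm:alpha:c} already yields $c$-rectangularity together with $\mathrm{I}_b(S)\cap\langle\mathbf{e}_{r+1},\ldots,\mathbf{e}_{r+m}\rangle=\{c_{r+1}\mathbf{e}_{r+1},\ldots,c_{r+m}\mathbf{e}_{r+m}\}$, and combining this with Theorem~\ref{thm:isolated} pins down $\mathrm{I}_b(S)$, hence $\mathrm{Betti}(S)$, directly.
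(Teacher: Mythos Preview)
Your proposal is correct and follows essentially the same route as the paper: apply Theorem~\ref{thm:alpha:free} to get freeness for a suitable arrangement with $\alpha_i+1=c_i^*$, invoke Theorem~\ref{thm:alpha:c} to obtain $c_i=c_i^*$, read off $\mathrm{Betti}(S)=\{c_i n_i\}$ from the free presentation in Lemma~\ref{lem:free}, and observe each $c_i n_i$ lies in $\mathrm{IBetti}(S)$. The paper is slightly terser at the last step (it simply states the containment in $\mathrm{IBetti}(S)$ without citing Lemma~\ref{lem:ca}), but the argument is the same.
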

\begin{proof}
  In view of Theorem \ref{thm:alpha:free}, $S$ is free for some rearrangement $(n_1, \ldots, n_r, n_{\sigma(r+1)}, \ldots, n_{\sigma(r+m)})$ of its minimal generators, which we may assume to be $(n_1, \ldots, n_{r+m})$ without loss of generality. Moreover, in conjuntion with Theorem \ref{thm:alpha:c}, we have $c_i = \alpha_{i}+1 = c_{i}^*$ for every $i \in \{r+1, \ldots, r+m\}$. Therefore, from Lemma \ref{lem:free} it follows that $\mathrm{Betti}(S) = \{c^*_{r+1} n_{r+1}, \ldots, c^*_{r+m} n_{r+m}\} = \{c_{r+1} n_{r+1}, \ldots, c_{r+m} n_{r+m}\}$, which is contained in $\mathrm{IBetti}(S)$.
\end{proof}

\begin{proposition} \label{prop:alpha:gluing}
  Let $S$ be a Cohen-Macaulay simplicial affine semigroup of $\mathbb{N}^r$. Let $n_1, \ldots, n_r$ be $r$ minimal generators of $S$ such that $\mathrm{L}_{\mathbb{Q}^+}(S) = \mathrm{L}_{\mathbb{Q}^+}(\{n_1, \ldots, n_r\})$. The following statements are equivalent:
  \begin{enumerate}
      \item \label{item:alpha:gluing:1} $S$ is $\alpha$-rectangular for $\{n_1, \ldots, n_r\}$;
      \item \label{item:alpha:gluing:2} $S$ is the gluing of a simplicial affine semigroup $S'$ and $\langle d \rangle \subset \mathbb{N}^r$, where $S'$ is $\alpha$-rectangular for $\{n_1, \ldots, n_r\}$ and $kd \not \in \Ap(S'; \{n_1, \ldots, n_r\})$ for every $k \in \mathbb{Z}^+$.
  \end{enumerate}
\end{proposition}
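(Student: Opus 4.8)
The plan is to reduce both implications to the characterization of $\alpha$-rectangularity through freeness (Theorem~\ref{thm:alpha:free} together with Theorem~\ref{thm:alpha:c}), using the gluing characterization of freeness contained in Lemma~\ref{lem:free}: a Cohen--Macaulay simplicial affine semigroup is the gluing of a free semigroup by a single extra generator exactly when it is itself free for the corresponding arrangement. Throughout, the recurring technical input is the Apéry-set description in Lemma~\ref{lem:free}: if $S$ is free and Cohen--Macaulay, then $\Ap(S;\{n_1,\dots,n_r\})$ is the ``box'' $\{\sum_{i=r+1}^{r+m}\lambda_i n_i : 0\le\lambda_i<c_i^{*}\}$, its elements have unique expression, and hence no Betti element of $S$ (nor anything $\ge_S$ a Betti element) lies in it.

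For the implication from $\alpha$-rectangularity to the gluing decomposition: by Theorem~\ref{thm:alpha:free} reorder the non-basic generators so that $S$ is free for $(n_1,\dots,n_r,n_{r+1},\dots,n_{r+m})$ and $c_i^{*}=\alpha_i+1$ for $i\in\{r+1,\dots,r+m\}$. The gluing characterization in Lemma~\ref{lem:free} then gives that $S':=\langle n_1,\dots,n_{r+m-1}\rangle$ is free for $(n_1,\dots,n_{r+m-1})$ and that $S$ is the gluing of $S'$ and $\langle d\rangle$ with $d:=n_{r+m}$. Next one argues that $S'$ is Cohen--Macaulay, using that a gluing of simplicial affine semigroups is Cohen--Macaulay if and only if each factor is (which passes, e.g., through the semigroup rings, the glue relation being a nonzerodivisor), and $\langle d\rangle\cong\mathbb N$ is Cohen--Macaulay. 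Being free and Cohen--Macaulay, $S'$ satisfies the Apéry-set description of Lemma~\ref{lem:free}, so its Apéry set with respect to $\{n_1,\dots,n_r\}$ is the box of unique-expression elements indexed by $r+1,\dots,r+m-1$; this forces the constant $\alpha_i$ computed in $S'$ to equal $c_i^{*}-1$, so $S'$ is $\alpha$-rectangular for $\{n_1,\dots,n_r\}$ (by Theorem~\ref{thm:alpha:c} or Proposition~\ref{prop:alpha}). Finally, to see $kd\notin\Ap(S';\{n_1,\dots,n_r\})$ for every $k\ge1$: the element $c_{r+m}^{*}d=c_{r+m}^{*}n_{r+m}$ is a Betti element of the free semigroup $S$, so it has at least two factorizations and is not in the box $\Ap(S;\{n_1,\dots,n_r\})$; writing $c_{r+m}^{*}n_{r+m}=n_j+s$ with $j\le r$ and using the minimality of $c_{r+m}^{*}$ one checks $s\in S'$. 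Combining this with the gluing description $\Ap(S;\{n_1,\dots,n_r\})=\Ap(S';\{n_1,\dots,n_r\})+\{0,d,\dots,(c_{r+m}^{*}-1)d\}$ (in particular $\Ap(S';\{n_1,\dots,n_r\})\subseteq\Ap(S;\{n_1,\dots,n_r\})$), a positive multiple of $d$ in $\Ap(S';\{n_1,\dots,n_r\})$ would sit in the box of $S$ with positive $d$-coordinate while being $\ge_S c_{r+m}^{*}d$, which is impossible.

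For the converse: since $S$ is Cohen--Macaulay, so is $S'$ (same fact about gluings). By Theorem~\ref{thm:alpha:free} the $\alpha$-rectangular Cohen--Macaulay semigroup $S'$ is free for some arrangement $(n_1,\dots,n_r,n_{\sigma(r+1)},\dots,n_{\sigma(r+m-1)})$ of its generators, and the gluing characterization in Lemma~\ref{lem:free}, read in the forward direction, then shows that $S$ is free for the arrangement obtained by appending $d$. Thus $S$ is free and Cohen--Macaulay, so by Lemma~\ref{lem:free} its Apéry set is the box $\{\sum\lambda_i n_i+\lambda\,d : 0\le\lambda_i<c_i^{*},\ 0\le\lambda<c_{r+m}^{*}\}$, where $c_{r+m}^{*}=\min\{c:cd\in S'\}$. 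The constants $c_i^{*}$ of the generators of $S'$ are unchanged, hence equal $\alpha_i+1$; and since no positive multiple of $d$ lies in $\Ap(S';\{n_1,\dots,n_r\})$, the gluing description of Apéry sets forces $c_{r+m}^{*}d\notin\Ap(S;\{n_1,\dots,n_r\})$ while $kd$ lies there for $0\le k<c_{r+m}^{*}$, i.e.\ the constant $\alpha_d$ computed in $S$ equals $c_{r+m}^{*}-1$. Therefore $S$ is $c$-rectangular for $\{n_1,\dots,n_r\}$ and $c_i n_i\notin\Ap(S;\{n_1,\dots,n_r\})$ for every non-basic generator, so $S$ is $\alpha$-rectangular for $\{n_1,\dots,n_r\}$ by Theorem~\ref{thm:alpha:c}.

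The main obstacle is the bookkeeping around the distinguished generator $d$: first, that Cohen--Macaulayness transfers to the gluing factor $S'$; and second, that the Apéry set of $S$ decomposes as $\Ap(S';\{n_1,\dots,n_r\})+\{0,d,\dots,(c_{r+m}^{*}-1)d\}$. Both of these rely on the gluing structure itself — in particular on the fact that two factorizations of the same element of a gluing have $d$-coordinates congruent modulo $c_{r+m}^{*}$ — together with the uniqueness of factorizations of Apéry-set elements in free Cohen--Macaulay simplicial affine semigroups. Once these two points are established, each implication is short.
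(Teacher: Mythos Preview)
Your approach is the paper's: reduce both directions to freeness via Theorem~\ref{thm:alpha:free} and the gluing clause of Lemma~\ref{lem:free}, then track the Ap\'ery boxes. Two points need tightening.

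In $(2)\Rightarrow(1)$ your final sentence invokes Theorem~\ref{thm:alpha:c} with the constants $c_i$, but what you have actually established is the $c_i^{*}$-box description of $\Ap(S;\{n_1,\dots,n_r\})$; the equality $c_i=c_i^{*}$ is only known \emph{after} $\alpha$-rectangularity, so the appeal is circular as written. Relatedly, the sentence ``the constants $c_i^{*}$ of the generators of $S'$ are unchanged, hence equal $\alpha_i+1$'' does not distinguish between $\alpha_i$ computed in $S'$ and in $S$. The paper closes this directly: for $i<r+m$ one has $c_i^{*}n_i\notin\Ap(S';\{n_1,\dots,n_r\})$ (since $S'$ is $\alpha$-rectangular) and $c_i^{*}n_i\neq kn_{r+m}$ for $k<c_{r+m}^{*}$, so $c_i^{*}n_i\notin\Ap(S;\{n_1,\dots,n_r\})$; together with the case $i=r+m$ this gives $\alpha_i=c_i^{*}-1$ in $S$ for every $i$, and the Ap\'ery set is then literally the $\alpha$-box.

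In $(1)\Rightarrow(2)$ your argument that $kd\notin\Ap(S';\{n_1,\dots,n_r\})$ is correct in content, but the phrase ``sit in the box of $S$ with positive $d$-coordinate while being $\ge_S c_{r+m}^{*}d$'' blurs two separate cases. The paper splits cleanly: if $k\ge c_{r+m}^{*}$ then $kd\notin\Ap(S;\{n_1,\dots,n_r\})\supseteq\Ap(S';\{n_1,\dots,n_r\})$; if $k<c_{r+m}^{*}$ then $kd$ has the unique expression $k\mathbf{e}_{r+m}$ in $S$ (Proposition~\ref{prop:alpha}), so $kd\notin S'$ at all.

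Your remark that $S'$ must be shown Cohen--Macaulay is well taken; the paper uses this implicitly when applying Lemma~\ref{lem:free} and Theorem~\ref{thm:alpha:free} to $S'$.
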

\begin{proof}~
\begin{enumerate}[leftmargin=10pt]
   \item[\ref{item:alpha:gluing:1}\kern-3pt] implies \ref{item:alpha:gluing:2}. In view of Theorem \ref{thm:alpha:free}, $S$ is free for an arrangement $(n_1, \ldots, n_{r+m})$ of the minimal generators and $\alpha_i + 1= c_i^*$ for every $i \in \{r+1, \ldots, r+m\}$. Therefore, $S$ is the gluing of the free semigroup $S' = \langle n_1, \ldots, n_{r+m-1} \rangle$ and $\langle n_{r+m} \rangle$ (after rearranging the last $m$ generators if needed). In view of Lemma \ref{lem:free}, we find that
  \begin{equation} \label{eq:alpha:gluing}
  \Ap(S'; \{n_1, \ldots, n_r\}) = \left\{\sum\nolimits_{i = r+1}^{r+m-1} \lambda_i n_i : \lambda_i < c_i^*\right\} \subset \left\{\sum\nolimits_{i = r+1}^{r+m} \lambda_i n_i, \lambda_i < c_i^*\right\} = \Ap(S; \{n_1, \ldots, n_r\}).
  \end{equation}
  Since $(c_{r+1}^*-1) n_{r+1} + \cdots + (c_{r+m}^*-1) n_{r+m}$ has a unique expression in $S$, it also has a unique expression in $S'$. Hence, Proposition \ref{prop:alpha} yields that $S'$ is $\alpha$-rectangular for $\{n_1, \ldots, n_r\}$. 
   Let $k \in \mathbb{Z}^+$. There are two cases:
   \begin{itemize}
       \item If $k \ge c_{r+m}^* = \alpha_{r+m}+1$, then $k n_{r+m} \not \in \Ap(S; \{n_1, \ldots, n_r\})$ and, in particular, we find that $k n_{r+m} \not \in \Ap(S'; \{n_1, \ldots, n_r\})$. 
       \item If $k \le \alpha_{r+m}$, then the element $k n_{r+m}$ has a unique expression in $S$ and, thus, $k n_{r+m} \not \in S'$.
   \end{itemize}
   In any case, we have shown that $k n_{r+m} \not \in \Ap(S';\{n_1, \ldots, n_r\})$.
   
  \item[\ref{item:alpha:gluing:2}\kern-3pt] implies \ref{item:alpha:gluing:1}. From Theorem \ref{thm:alpha:free} it follows that $S'$ is free for an arrangement $(n_1, \ldots, n_{r+m-1})$ of its minimal generators. Let $n_{r+m} = d$. Then, since $S$ is the gluing of $S'$ and $\langle n_{r+m} \rangle$, we find that $\{n_1, \ldots, n_{r+m}\}$ is the minimal system generators of $S$. In light of Lemma \ref{lem:free}, $S$ is free and the expression \eqref{eq:alpha:gluing} holds again. Let $i \in \{r+1, \ldots, r+m\}$. We show that $c_i^* = \alpha_i + 1$. Since $(c_i^* -1) n_i \in \Ap(S; \{n_1, \ldots, n_r\})$, we note that $c_i^* \le \alpha_i + 1$. There are two cases:
  
  \begin{itemize}
      \item $i < r+m$. From Theorem \ref{thm:alpha:free} it follows that $c_i^* n_i \not \in \Ap(S'; \{n_1, \ldots, n_r\})$. Note that $c_i^* n_i \ne k n_{r+m}$ for every $k < c_{r+m}^*$. Thus, in view of \eqref{eq:alpha:gluing}, we obtain $c_i^* n_i \not \in \Ap(S; \{n_1, \ldots, n_r\})$ and $c_i^* = \alpha_i + 1$.
      \item $i = r+m$. Since $c^*_{r+m} n_{r+m} \in S'$ but, by hypothesis, $c^*_{r+m} n_{r+m} \not \in \Ap(S'; \{n_1, \ldots, n_r\})$, we can write $c_{r+m}^* n_{r+m} = \lambda_1 n_1 +  \cdots + \lambda_{r+m-1} n_{r+m-1}$, where $\lambda_1, \ldots, \lambda_{r+m-1}$ are non negative integers such that $\lambda_i \ne 0$ for some $i \in \{1, \ldots, r\}$. In particular, we find that $c_{r+m}^* n_{r+m} \not \in \Ap(S; \{n_1, \ldots, n_r\})$ and $c^*_{r+m} = \alpha_{r+m}+1$. \qedhere
  \end{itemize}
  
  \end{enumerate}
\end{proof}

The previous result can be translated to numerical semigroups, generalizing \cite[Theorem 3.3]{ci:classes:2}.

\begin{corollary} \label{cor:alpha:gluing}
  Let $S$ be a numerical semigroup. Then, $S$ is $\alpha$-rectangular for a minimal generator $n_i$ if and only if $S = a S' + b \mathrm{N}$ is the gluing of $S'$ and $\mathbb{N}$, where $S'$ is $\alpha$-rectangular for $n_i / a$ and $b \not \in \Ap(S'; n_i/a)$. 
\end{corollary}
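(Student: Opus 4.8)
The plan is to specialise Proposition \ref{prop:alpha:gluing} to the one-dimensional case. A numerical semigroup $S$ is simplicial and Cohen--Macaulay with $\mathrm{dim}(S)=1$, so any minimal generator $n_i$ plays the role of $\{n_1,\dots,n_r\}$ and the condition $\mathrm{L}_{\mathbb{Q}^+}(S)=\mathrm{L}_{\mathbb{Q}^+}(\{n_i\})$ holds automatically. Hence Proposition \ref{prop:alpha:gluing} applies and yields: $S$ is $\alpha$-rectangular for $n_i$ if and only if $S$ is the (monoid) gluing of a simplicial affine semigroup $\widetilde S\subseteq\mathbb{N}$ and some $\langle d\rangle\subseteq\mathbb{N}$, where $\widetilde S$ is $\alpha$-rectangular for $n_i$ and $kd\notin\Ap(\widetilde S;n_i)$ for every $k\in\mathbb{Z}^+$. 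What remains is to rephrase this in the classical language of numerical-semigroup gluings, which I would carry out in three short transport steps.

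First I would note that a finitely generated submonoid of $\mathbb{N}$ equals $aS'$, where $a$ is the greatest common divisor of its elements and $S'$ is a numerical semigroup; since $S=\widetilde S+\langle d\rangle$ is a numerical semigroup, $\gcd(a,d)=1$. By the remark following \eqref{eq:betti-gen}, the monoid gluing of $\widetilde S=aS'$ and $d\mathbb{N}$ is exactly the numerical-semigroup gluing $S=aS'+_{ad}d\mathbb{N}$, and the gluing data force $a\ge 2$, $d\in S'$ and $d$ not a minimal generator of $S'$. Writing $b=d$: since $a\ge 2$ and $\gcd(a,b)=1$ we have $a\nmid b$, so $n_i\ne b$, whence $n_i$ is one of the generators $am_1,\dots,am_k$ of $S$ (where $\{m_1,\dots,m_k\}=\mathcal{A}(S')$), and thus $n_i/a\in\mathcal{A}(S')$. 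This is the decomposition $S=aS'+b\mathbb{N}$ appearing in the statement.

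Second, multiplication by $a$ is a monoid isomorphism $S'\to aS'$ carrying $n_i/a$ to $n_i$, carrying $\Ap(S';n_i/a)$ onto $\Ap(aS';n_i)$, and preserving the constants $\alpha_j$; therefore $aS'$ is $\alpha$-rectangular for $n_i$ if and only if $S'$ is $\alpha$-rectangular for $n_i/a$ (alternatively one may invoke Proposition \ref{prop:alpha}). Third, $\Ap(aS';n_i)=a\,\Ap(S';n_i/a)\subseteq a\mathbb{N}$ and $\gcd(a,b)=1$, so $a\mid kb$ forces $a\mid k$; hence $kb\notin\Ap(aS';n_i)$ holds trivially unless $k=ak'$, in which case it reduces to $k'b\notin\Ap(S';n_i/a)$. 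Thus the condition that $kb\notin\Ap(aS';n_i)$ for all $k\in\mathbb{Z}^+$ is equivalent to the condition that $k'b\notin\Ap(S';n_i/a)$ for all $k'\in\mathbb{Z}^+$, and this in turn is equivalent to $b\notin\Ap(S';n_i/a)$: one implication is the case $k'=1$, and for the converse $b\in S'$ together with $b\notin\Ap(S';n_i/a)$ gives $b-n_i/a\in S'$, whence $k'b-n_i/a=(k'-1)b+(b-n_i/a)\in S'$ and so $k'b\notin\Ap(S';n_i/a)$ for every $k'\ge 1$.

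Putting the three steps together rewrites the equivalence of Proposition \ref{prop:alpha:gluing} as the asserted statement, in both directions. The routine content is entirely in the transport steps; the point that needs some care is the first one, matching the abstract gluing data produced by Proposition \ref{prop:alpha:gluing} with a classical gluing $S=aS'+b\mathbb{N}$ --- in particular keeping track of the scaling factor $a$ and using that $b\in S'$, which is precisely the fact that makes the third step work.
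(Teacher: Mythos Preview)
Your proposal is correct and follows exactly the approach the paper intends: the corollary is stated without proof as the numerical-semigroup specialization of Proposition~\ref{prop:alpha:gluing}, and you have carefully carried out the translation (scaling by $a$, matching the abstract monoid gluing with the classical numerical-semigroup gluing, and reducing the Ap\'ery condition on all $k'b$ to the single condition $b\notin\Ap(S';n_i/a)$ using $b\in S'$). This is precisely what the paper leaves to the reader.
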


\section{Complete intersection semigroups with only one Betti-minimal element} \label{sec:ci}

In this section we study simplicial affine semigroups that are complete intersections with a single Betti-minimal element. Thanks to our previous results, we are able to characterize these semigroups using isolated factorizations. The graphs associated to each Betti element have the following form. The one corresponding to the only Betti-minimal element has all its factorizations isolated (something that we already knew from Proposition~\ref{prop:all-i}), and the rest will have a connected component corresponding to the non isolated factorizations predicted in Corollary~\ref{cor:isolated}, and the rest of connected components will be isolated factorizations.

\begin{theorem} \label{thm:ci-b1}
  Let $S$ be a simplicial affine semigroup of $\mathbb{N}^r$ with codimension $m$. The following statements are equivalent:
  \begin{enumerate}
    \item \label{item:ci-b1} $S$ is a complete intersection with a single Betti-minimal element;
    \item \label{item:ci-b1:nc} $\mathrm{i}_b(S) = m+1$ and $\mathrm{nc}(\nabla_b) = \mathrm{i}(b) + 1$ for every $b \in \mathrm{Betti}(S) \setminus \{b_1\}$ for some $b_1 \in \mathrm{Betti}\text{-}\mathrm{minimals}(S)$.
  \end{enumerate}
\end{theorem}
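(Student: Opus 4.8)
The plan is to play off two quantities against each other: the cardinality of any minimal presentation of $S$, which by Section~\ref{sec:pre:presentations} equals $\sum_{b \in \mathrm{Betti}(S)}(\mathrm{nc}(\nabla_b)-1)$, and the number of Betti isolated factorizations, $\mathrm{i}_b(S) = \sum_{b \in \mathrm{Betti}(S)}\mathrm{i}(b)$ (the sets $\mathrm{I}(b)$ being pairwise disjoint). These are linked by Proposition~\ref{prop:all-i} (a Betti-minimal element $b$ has $\mathrm{nc}(\nabla_b)=\mathrm{i}(b)$) and Corollary~\ref{cor:isolated} (a Betti element lying strictly above another Betti element has a non isolated factorization, hence $\mathrm{nc}(\nabla_b)\ge \mathrm{i}(b)+1$). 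The remaining ingredient is the lower bound $\mathrm{i}_b(S)\ge m+1$ from Lemma~\ref{lem:simplicial:bound}. I will assume throughout that $\mathrm{Betti}(S)\ne\emptyset$, equivalently $m\ge 1$; when $m=0$ both statements fail trivially, since then there is no Betti-minimal element.

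For \ref{item:ci-b1}$\Rightarrow$\ref{item:ci-b1:nc}, let $b_1$ be the unique Betti-minimal element. Because $S$ has the ascending chain condition on principal ideals, every Betti element lies above some Betti-minimal one, so every $b\in\mathrm{Betti}(S)\setminus\{b_1\}$ satisfies $b_1<_S b$; by Corollary~\ref{cor:isolated}, $b$ then has a non isolated factorization, whence $\mathrm{nc}(\nabla_b)\ge\mathrm{i}(b)+1$. Using that $S$ is a complete intersection together with Proposition~\ref{prop:all-i},
\[
m = \sum_{b\in\mathrm{Betti}(S)}(\mathrm{nc}(\nabla_b)-1) = (\mathrm{i}(b_1)-1) + \sum_{b\ne b_1}(\mathrm{nc}(\nabla_b)-1) \ge \mathrm{i}_b(S)-1,
\]
so $\mathrm{i}_b(S)\le m+1$. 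Combined with Lemma~\ref{lem:simplicial:bound} this forces $\mathrm{i}_b(S)=m+1$, and since every inequality above must then be tight, $\mathrm{nc}(\nabla_b)=\mathrm{i}(b)+1$ for all $b\in\mathrm{Betti}(S)\setminus\{b_1\}$.

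For \ref{item:ci-b1:nc}$\Rightarrow$\ref{item:ci-b1}, I would run the same computation in reverse: Proposition~\ref{prop:all-i} gives $\mathrm{nc}(\nabla_{b_1})=\mathrm{i}(b_1)$, and then
\[
\sum_{b\in\mathrm{Betti}(S)}(\mathrm{nc}(\nabla_b)-1) = (\mathrm{i}(b_1)-1) + \sum_{b\ne b_1}\mathrm{i}(b) = \mathrm{i}_b(S)-1 = m,
\]
so a minimal presentation has cardinality $m$ and $S$ is a complete intersection. Finally, if some $b_2\ne b_1$ were also Betti-minimal, Proposition~\ref{prop:all-i} would force $\mathrm{nc}(\nabla_{b_2})=\mathrm{i}(b_2)$, contradicting the hypothesis $\mathrm{nc}(\nabla_{b_2})=\mathrm{i}(b_2)+1$; hence $b_1$ is the only Betti-minimal element.

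Once Proposition~\ref{prop:all-i}, Corollary~\ref{cor:isolated} and Lemma~\ref{lem:simplicial:bound} are in hand, the argument is essentially bookkeeping, so I do not anticipate a real obstacle; the only points needing a moment of care are invoking Lemma~\ref{lem:simplicial:bound} only in the regime $m\ge 1$ where it applies, and handling the degenerate case $\mathrm{Betti}(S)=\{b_1\}$ (where the sums over $b\ne b_1$ are empty and the second condition is vacuous).
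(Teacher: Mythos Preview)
Your argument is correct and follows the same approach as the paper: both directions compare the minimal-presentation cardinality $\sum_{b\in\mathrm{Betti}(S)}(\mathrm{nc}(\nabla_b)-1)$ with $\mathrm{i}_b(S)=\sum_{b\in\mathrm{Betti}(S)}\mathrm{i}(b)$, using Proposition~\ref{prop:all-i} for $b_1$, Corollary~\ref{cor:isolated} for the remaining Betti elements, and the squeeze against Lemma~\ref{lem:simplicial:bound}. Your write-up is slightly more explicit (spelling out why $b_1<_S b$ for every other $b$, and why uniqueness of the Betti-minimal element follows in the converse), but the content is identical.
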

\begin{proof}~
  \begin{enumerate}[leftmargin=10pt]  
  \item[\ref{item:ci-b1}\kern-3pt] implies \ref{item:ci-b1:nc}. By hypothesis $S$ is a complete intersection, and thus the cardinality of any minimal presentation for $S$ is $m$. We also now that this cardinality is $\sum_{b \in \mathrm{Betti}(S)} (\mathrm{nc}(\nabla_{b}) -1)$ (Section \ref{sec:pre:presentations}). Let $b_1$ be the only Betti-minimal element of $S$. If $b \in \mathrm{Betti}(S)$ and $b \ne b_1$, then $b$ has a non isolated factorization (Corollary \ref{cor:isolated}), that is, $\mathrm{nc}(\nabla_{b}) > \mathrm{i}(b)$. This assertion in conjunction with Lemma \ref{lem:simplicial:bound} yields
  \[m = \sum_{b \in \mathrm{Betti}(S)} (\mathrm{nc}(\nabla_{b}) -1) \ge -1 + \sum_{b \in \mathrm{Betti}(S)} \mathrm{i}(b) =  \mathrm{i}_b(S) - 1 \ge m. \]
  Consequently, we obtain $\mathrm{nc}(\nabla_{b}) = 1 + \mathrm{i}(b)$ for every $b \in \mathrm{Betti}(S) \setminus \{b_1\}$ and $i_b(S) = m+1$. 
  \item[\ref{item:ci-b1:nc}\kern-3pt] implies \ref{item:ci-b1}. The cardinality of any minimal presentation of $S$ is given by
  \[ \sum_{b \in \mathrm{Betti}(S)} (\mathrm{nc}(\nabla_{b}) -1) = -1 + \sum_{b \in \mathrm{Betti}(S)} \mathrm{i}(b) =  \mathrm{i}_b(S) - 1 = m.\]
  By Proposition~\ref{prop:all-i} the only Betti-minimal element is $b_1$.\qedhere
  \end{enumerate}
\end{proof}

If we also assume that the unique Betti-minimal element of the semigroups studied in Theorem \ref{thm:ci-b1} is in $\langle n_1, \ldots, n_r \rangle$, then we can obtain even more information about $S$.

\begin{theorem} \label{thm:alpha-m+1}
  Let $S$ be a simplicial affine semigroup of $\mathbb{N}^r$ minimally generated by $\{n_1, \ldots, n_{r+m}\}$. The following statements are equivalent:
  \begin{enumerate}
    \item \label{item:alpha-m+1:free} $S$ is free for some arrangement $(n_1, \ldots, n_r, n_{\sigma(r+1)}, \ldots, n_{\sigma(r+m)})$ of its minimal generators, it only has one Betti-minimal element, and  this Betti-minimal element is not in $\Ap(S; \{n_1, \ldots, n_r \})$;
    \item \label{item:alpha-m+1:ci} $S$ is a complete intersection with only one Betti-minimal element, and  this Betti-minimal element is not in $\Ap(S; \{n_1, \ldots, n_r \})$;
    \item \label{item:alpha-m+1:i-m} $\mathrm{i}_b(S) = m+1$, $S$ only has one Betti-minimal element, and this Betti-minimal element is not in the set $\Ap(S; \{n_1, \ldots, n_r \})$;
    \item \label{item:alpha-m+1:alpha} $\mathrm{i}_b(S) = m+1$ and $S$ is $\alpha$-rectangular for $\{n_1, \ldots, n_r\}$.
  \end{enumerate}
  If any of these statements holds, then $S$ is Cohen-Macaulay, $\min \mathrm{Betti}(S) \in \langle n_1, \ldots, n_r \rangle$ and 
  \[\mathrm{Betti}(S) = \mathrm{IBetti}(S) = \{c_{r+1} n_{r+1}, \ldots, c_{r+m} n_{r+m}\}.\]
\end{theorem}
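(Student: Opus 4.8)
The plan is to prove the cycle $(1)\Rightarrow(2)\Rightarrow(3)\Rightarrow(4)\Rightarrow(1)$ and to read off the concluding assertions along the way. When $S$ has a unique Betti-minimal element I write $b_1$ for it; recall that then $b_1=\min_{\le_S}\mathrm{Betti}(S)$, that all factorizations of $b_1$ are isolated (Proposition~\ref{prop:all-i}), and that free or complete intersection simplicial affine semigroups are Cohen-Macaulay. The implications $(1)\Rightarrow(2)$ and $(2)\Rightarrow(3)$ are immediate: freeness gives a presentation with $m$ relations (Lemma~\ref{lem:free}), so $S$ is a complete intersection \cite{free-affine}; and a complete intersection is Cohen-Macaulay, so Theorem~\ref{thm:ci-b1} gives $\mathrm i_b(S)=m+1$. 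In both cases the remaining two conditions are the same in $(1)$, $(2)$ and $(3)$.

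For $(3)\Rightarrow(4)$: since $\mathrm{Betti}\text{-}\mathrm{minimals}(S)=\{b_1\}$, Corollary~\ref{cor:ap-b1} gives $\Ap(S;b_1)=\{s\in S:\mathfrak d(s)=1\}$. The hypothesis $b_1\notin\Ap(S;\{n_1,\dots,n_r\})$ means $b_1\in n_j+S$ for some $j\le r$, so $b_1+S\subseteq\bigcup_{j\le r}(n_j+S)$ and hence $\Ap(S;\{n_1,\dots,n_r\})\subseteq\Ap(S;b_1)=\{s:\mathfrak d(s)=1\}$. As each $c_in_i$ with $i>r$ is a Betti element with $\mathfrak d(c_in_i)\ge2$ (Lemma~\ref{lem:ca}), it cannot lie in $\Ap(S;\{n_1,\dots,n_r\})$. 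On the other hand, Lemma~\ref{lem:simplicial:bound} provides $x_0\in\langle\mathbf e_1,\dots,\mathbf e_r\rangle\cap\mathrm I_b(S)$ and Theorem~\ref{thm:isolated} provides $c_{r+1}\mathbf e_{r+1},\dots,c_{r+m}\mathbf e_{r+m}\in\mathrm I_b(S)$; these are $m+1$ distinct factorizations, so $\mathrm i_b(S)=m+1$ forces $\mathrm I_b(S)=\{x_0,c_{r+1}\mathbf e_{r+1},\dots,c_{r+m}\mathbf e_{r+m}\}$, and in particular $\mathrm I_b(S)\cap\langle\mathbf e_{r+1},\dots,\mathbf e_{r+m}\rangle=\{c_{r+1}\mathbf e_{r+1},\dots,c_{r+m}\mathbf e_{r+m}\}$. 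The implication $\ref{item:thm:alpha:ci-i}\Rightarrow\ref{item:thm:alpha:alpha}$ of Theorem~\ref{thm:alpha:c} then shows that $S$ is $\alpha$-rectangular for $\{n_1,\dots,n_r\}$, which together with $\mathrm i_b(S)=m+1$ is exactly $(4)$.

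For $(4)\Rightarrow(1)$: $\alpha$-rectangularity and Theorem~\ref{thm:alpha:free} yield an arrangement $(n_1,\dots,n_r,n_{\sigma(r+1)},\dots,n_{\sigma(r+m)})$ with $\alpha_i+1=c_i^*$, while Theorem~\ref{thm:alpha:c} gives $c_i=c_i^*$ and $\Ap(S;\{n_1,\dots,n_r\})=\{\sum_{i>r}\lambda_in_i:0\le\lambda_i<c_i\}$ with every element of unique expression; in particular $\#\Ap(S;\{n_1,\dots,n_r\})=\prod_{i>r}c_i^*$. I expect the main obstacle to be upgrading this to Cohen-Macaulayness: $\alpha$-rectangular semigroups need not be Cohen-Macaulay (cf.\ the discussion around \cite[Section~3]{free-affine}), so the extra hypothesis $\mathrm i_b(S)=m+1$ must be exploited. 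Concretely, the counting of $(3)\Rightarrow(4)$ again gives $\mathrm I_b(S)=\{x_0,c_{r+1}\mathbf e_{r+1},\dots,c_{r+m}\mathbf e_{r+m}\}$ with $x_0\in\langle\mathbf e_1,\dots,\mathbf e_r\rangle$, and I would verify uniqueness of the decomposition $s=\omega+n$ with $\omega\in\Ap(S;\{n_1,\dots,n_r\})$ and $n\in\langle n_1,\dots,n_r\rangle$ (Proposition~\ref{prop:cm}\ref{item:cm:unique:2}) directly: cancelling the common part of two such decompositions produces, by Lemma~\ref{lem:isolated:2}, a factorization of a Betti element dominating some element of $\mathrm I_b(S)$, which is impossible because the coordinates beyond $r$ are $<c_i$ and $x_0$ is a minimal isolated factorization supported on $\{1,\dots,r\}$. (Alternatively, $\#\Ap(S;\{n_1,\dots,n_r\})=\prod_{i>r}c_i^*$ already forces $S$ to be free by the theory of \cite{free-affine}, and free simplicial affine semigroups are Cohen-Macaulay.)

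Once $S$ is Cohen-Macaulay, Theorem~\ref{thm:alpha:free} gives freeness for the chosen arrangement, so $S$ is a complete intersection, and Corollary~\ref{cor:alpha:betti} gives $\mathrm{Betti}(S)=\mathrm{IBetti}(S)=\{c_{r+1}n_{r+1},\dots,c_{r+m}n_{r+m}\}$. Finally, $\varphi(x_0)\in\langle n_1,\dots,n_r\rangle$ lies in $\mathrm{Betti}(S)$, so $\varphi(x_0)=c_ln_l$ for some $l$; a short count with $\mathrm i_b(S)=m+1$ (each $c_in_i$ must carry at least the isolated factorization $c_i\mathbf e_i$, one Betti element is Betti-minimal and hence, by Proposition~\ref{prop:all-i}, carries at least two of the $m+1$ factorizations of $\mathrm I_b(S)$) shows that the $c_in_i$ are pairwise distinct and that $c_ln_l=\varphi(x_0)$ is the only Betti element with two isolated factorizations, hence the unique Betti-minimal element; so $b_1=c_ln_l\in\langle n_1,\dots,n_r\rangle=\langle n_1,\dots,n_r\rangle$, i.e.\ $\min\mathrm{Betti}(S)\in\langle n_1,\dots,n_r\rangle$, and $b_1\notin\Ap(S;\{n_1,\dots,n_r\})$ because it has two factorizations while Apéry elements are of unique expression. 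This establishes $(1)$ and all the concluding assertions, the derivation of the Cohen-Macaulay property in the step $(4)\Rightarrow(1)$ being, as noted, the delicate point.
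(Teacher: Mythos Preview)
Your cycle $(1)\Rightarrow(2)\Rightarrow(3)\Rightarrow(4)$ matches the paper's proof essentially verbatim, and your identification of the Cohen--Macaulay step in $(4)\Rightarrow(1)$ as the delicate point is correct. However, your sketch of that step contains a genuine gap, and the order in which you run the argument differs from the paper in a way that matters.

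In $(4)\Rightarrow(1)$ the paper proceeds in the \emph{opposite} order: it first proves that $S$ has a unique Betti-minimal element, and only then establishes Cohen--Macaulayness. The uniqueness argument does not need Cohen--Macaulay: from $\mathrm I_b(S)=\{x_0,c_{r+1}\mathbf e_{r+1},\dots,c_{r+m}\mathbf e_{r+m}\}$ and $\alpha$-rectangularity (which gives $c_in_i\notin\Ap(S;\{n_1,\dots,n_r\})$ via Theorem~\ref{thm:alpha:c}), any Betti-minimal element $b_2$ has all its factorizations isolated, one of which must use some $n_j$ with $j\le r$; the only such isolated factorization is $x_0$, so $b_2=\varphi(x_0)$. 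Your counting argument aims at the same conclusion but does not actually force the $c_in_i$ to be pairwise distinct from the count alone.

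For Cohen--Macaulayness, your claim that ``cancelling the common part \dots\ produces a factorization dominating some element of $\mathrm I_b(S)$, which is impossible'' is not correct: nothing prevents $x_0\le\nu$ when $\nu\in\langle\mathbf e_1,\dots,\mathbf e_r\rangle$. The paper handles precisely this by \emph{repeatedly} subtracting $x_0$ from the $\langle\mathbf e_1,\dots,\mathbf e_r\rangle$-part until it is no longer possible, obtaining $s=\varphi(w'+x')+q'\varphi(x_0)$ with $w'+x'\in\mathrm I_s(S)$; then the uniqueness of the decomposition with respect to $\Ap(S;b_1)$ (available because $b_1=\varphi(x_0)$ is the unique Betti-minimal element, so $\Ap(S;b_1)=\{s:\mathfrak d(s)=1\}$ by Corollary~\ref{cor:ap-b1}) forces $q'$ and $\omega$ to be determined by $s$. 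This is exactly why the paper establishes the unique Betti-minimal element first. Your alternative route (``$\#\Ap=\prod c_i^*$ already forces $S$ to be free'') is circular here: Lemma~\ref{lem:free} states that equivalence only under the Cohen--Macaulay hypothesis, which is what you are trying to prove.
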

\begin{proof}~
  \begin{enumerate}[leftmargin=10pt]  
  \item[\ref{item:alpha-m+1:free}\kern-3pt] implies \ref{item:alpha-m+1:ci}. Recall that free simplicial affine semigroups are complete intersections.
  \item[\ref{item:alpha-m+1:ci}\kern-3pt] implies \ref{item:alpha-m+1:i-m}. It follows from Theorem \ref{thm:ci-b1}. 
  \item[\ref{item:alpha-m+1:i-m}\kern-3pt] implies \ref{item:alpha-m+1:alpha} In view of Lemma \ref{lem:simplicial:bound}, we have $\mathrm{I}_b(S) = \{x\} \cup \{c_{r+1} \mathbf{e}_{r+1}, \ldots, c_{r+m} \mathbf{e}_{r+m}\}$, where $x \in \langle \mathbf{e}_1, \ldots, \mathbf{e}_r \rangle$. Let $b_1$ be the unique Betti-minimal element of $S$. Since all the factorizations of $b_1$ are isolated (Proposition \ref{prop:all-i}) and $b_1 \not\in \Ap(S; \{n_1, \ldots, n_r \})$, we have $b_1 = \varphi(x)$. Hence, we obtain 
  \[ \Ap(S; \{n_1, \ldots, n_r\}) \subseteq \Ap(S; b_1) = \{s \in S : \mathfrak{d}(S) = 1\}, \]
  where we applied Corollary \ref{cor:ap-b1}. 
  The proof is completed by invoking Theorem \ref{thm:alpha:c}.  
  \item[\ref{item:alpha-m+1:alpha}\kern-3pt] implies \ref{item:alpha-m+1:free}. Again, we have $\mathrm{I}_b(S) = \{x\} \cup \{c_{r+1} \mathbf{e}_{r+1}, \ldots, c_{r+m} \mathbf{e}_{r+m}\}$, where $x \in \langle \mathbf{e}_1, \ldots, \mathbf{e}_r \rangle$. Let $b_1 = \varphi(x)$. Let $b_2$ be a Betti-minimal element. Since the factorizations of $b_2$ are isolated, there is $i \in \{r+1, \ldots, r+m\}$ such that $b_2 = c_i n_i$. In view of Theorem \ref{thm:alpha:c}, we obtain $c_{i}n_{i} \not \in \Ap(S; \{n_1, \ldots, n_r\})$. Therefore, we can write $c_i n_i = \lambda_1 n_1 + \cdots + \lambda_{r+m} n_{r+m}$ for some non negative integers $\lambda_1, \ldots, \lambda_{r+m}$, where $\lambda_j \ne 0$ for some $j \in \{1,\ldots,r\}$. Since $y = \lambda_1 \mathbf{e}_1 + \cdots + \lambda_{r+m} \mathbf{e}_{r+m} \in \mathrm{I}_b(S)$, the only possibility is $y = x$ and $b_2 = b_1$. That is, $S$ only has one Betti minimal element, which is not in $\Ap(S; \{n_1, \ldots, n_r \})$.

  Now we show that $S$ is Cohen-Macaulay with the help of Proposition \ref{prop:cm}. Let $s \in S$.  There are unique $\alpha \in \Ap(S; b_1)$ and $q \in \mathbb{N}$ such that $s = \alpha + q b_1$ (Section \ref{sec:pre:apery}). There are $\omega \in \Ap(S; \{n_1, \ldots, n_r\})$ and $a \in \langle n_1, \ldots, n_r \rangle$ such that $\alpha = \omega + a$. We have $s = \omega + (a + qb_1)$. We show that this is the only way to write $s$ as a sum of two elements of $\Ap(S; \{n_1, \ldots, n_r\})$ and $\langle n_1, \ldots, n_r \rangle$. Let $s = \omega' + n$ with $\omega' \in \Ap(S; \{n_1, \ldots, n_r\})$ and $n \in \langle n_1, \ldots, n_r \rangle$. Let $\mathrm{Z}(\omega') = \{w'\}$ (this is a singleton in light of Proposition \ref{prop:alpha}) and $n = \lambda_1 n_1 + \cdots + \lambda_r n_r$, where $\lambda_1, \ldots, \lambda_r$ are non negative integers. Note that the only element of $\mathrm{I}_b(S)$ that may be smaller or equal than $y = w'+\lambda_1 \mathbf{e}_1 + \cdots + \lambda_r \mathbf{e}_r$ is $x$ (if $c_i\mathbf{e}_i<y$, for some $i\in\{r+1,\ldots,r+m\}$, then $c_i\mathbf e_i<w'$, but $\omega'\in \Ap(S; \{n_1, \ldots, n_r\})$ and, by Theorem \ref{thm:alpha:c}, $c_in_i\not\in \Ap(S; \{n_1, \ldots, n_r\})$). Let us substract $x$ from $y$ as many times as possible to obtain $y = w' +x' + q'x$, where $x$ is not smaller than $x'$. In view of Lemma \ref{lem:isolated:2}, we have $w'+x' \in \mathrm{I}_s(S)$ and, thus, $\omega' + \varphi(x') \in \Ap(S; b_1)$. We find that $q = q'$ and $\omega' + \varphi(x') = \omega+a$. Since $\omega+a$ has a unique expression due to Corollary \ref{cor:ap-b1}, we obtain $\omega = \omega'$. 
  
  Finally, Theorem \ref{thm:alpha:free} asserts that $S$ is free for such an arrangement of its minimal generators.
  \end{enumerate}
  The elements of $\mathrm{Betti}(S)$ are determined by Corollary \ref{cor:alpha:betti}.
\end{proof}

Let $S$ be a numerical semigroup minimally generated by $\{n_1, \ldots, n_e\}$. Let us assume that $\mathrm{i}_b(S) = e$ and $\mathrm{Betti}\text{-}\mathrm{minimals}(S) = \{b_1\}$. Since $c_i\mathbf e_i$ is an isolated factorization for all possible $i$, $\mathrm{I}_b(S) = \{c_1\mathbf{e}_1, \ldots, c_e\mathbf{e}_e\}$. We can rearrange the minimal generators of $S$ so that  $c_1 n_1 \le c_i n_i$ for every $i \in \{1, \ldots, e\}$. Betti-minimal elements have only isolated factorizations. Thus, the only possibility is $b_1 = c_1 n_1 = c_2 n_2$. Therefore, $S$ verifies the condition \ref{item:alpha-m+1:i-m} of Theorem \ref{thm:alpha-m+1} for this order of its minimal generators. This claim in conjunction with Theorem \ref{thm:ci-b1} proves the following interesting result, which characterizes those complete intersection numerical semigroups with only one Betti-minimal element.

\begin{corollary} \label{cor:ci-b1}
  Let $S$ be a numerical semigroup. Let $(n_1, \ldots, n_e)$ be an arrangement of the minimal system of generators of $S$ such that $c_1 n_1 \le c_i n_i$ for every $i \in \{1, \ldots, e\}$. The following statements are equivalent:
  \begin{enumerate}
    \item \label{item:ci:free} $S$ is a free numerical semigroup with only one Betti-minimal element;
    \item \label{item:ci:ci} $S$ is a complete intersection numerical semigroup with only one Betti-minimal element;
    \item \label{item:ci:i-b1} $\mathrm{i}_b(S) = e$ and $S$ only has one Betti-minimal element;
    \item \label{item:ci:alpha} $\mathrm{i}_b(S) = e$ and $S$ is $\alpha$-rectangular for $n_1$.
  \end{enumerate}
  If any of these statements holds, then $\mathrm{Betti}(S) = \mathrm{IBetti}(S) = \{c_2 n_2, \ldots, c_e n_e\}$ and $n_1 = \prod_{i = 2}^e c_i$.
\end{corollary}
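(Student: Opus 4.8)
The plan is to read off the four equivalences, for the prescribed arrangement, from Theorem~\ref{thm:ci-b1} and Theorem~\ref{thm:alpha-m+1} specialized to $r=1$, $m=\mathrm{e}(S)-1$ (the statement implicitly assumes $e\ge 2$, since $c_1$ must be defined). The observation I would lean on throughout is that for a numerical semigroup $\mathcal{C}(S)=\{n_1,\dots,n_e\}$, so by Remark~\ref{rem:isolated} the $e$ pairwise distinct factorizations $c_1\mathbf{e}_1,\dots,c_e\mathbf{e}_e$ all lie in $\mathrm{I}_b(S)$ and each $c_in_i$ is a Betti element (Lemma~\ref{lem:ca}).

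I would establish the cycle $\ref{item:ci:free}\Rightarrow\ref{item:ci:ci}\Rightarrow\ref{item:ci:i-b1}\Rightarrow\ref{item:ci:alpha}\Rightarrow\ref{item:ci:free}$. The implications $\ref{item:ci:free}\Rightarrow\ref{item:ci:ci}$ (free simplicial affine semigroups are complete intersections, Lemma~\ref{lem:free}), $\ref{item:ci:ci}\Rightarrow\ref{item:ci:i-b1}$ (Theorem~\ref{thm:ci-b1} forces $\mathrm{i}_b(S)=m+1=e$, and uniqueness of the Betti-minimal element is carried along), and $\ref{item:ci:alpha}\Rightarrow\ref{item:ci:free}$ (this is precisely $(\ref{item:alpha-m+1:alpha})\Rightarrow(\ref{item:alpha-m+1:free})$ in Theorem~\ref{thm:alpha-m+1}, whose conclusion in particular makes $S$ free with a single Betti-minimal element) are all immediate from the quoted results.

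The one implication carrying content is $\ref{item:ci:i-b1}\Rightarrow\ref{item:ci:alpha}$, and the point is to check that the hypothesis ``the Betti-minimal element is not in $\Ap(S;n_1)$'' appearing in condition~(\ref{item:alpha-m+1:i-m}) of Theorem~\ref{thm:alpha-m+1} is automatic \emph{for the chosen arrangement}. Assume $\mathrm{i}_b(S)=e$ and $\mathrm{Betti}\text{-}\mathrm{minimals}(S)=\{b_1\}$; a cardinality count then gives $\mathrm{I}_b(S)=\{c_1\mathbf{e}_1,\dots,c_e\mathbf{e}_e\}$. By Proposition~\ref{prop:all-i} every factorization of $b_1$ is isolated, hence of the form $c_i\mathbf{e}_i$, so $b_1=c_in_i$ for some $i$; the choice of arrangement gives $c_1n_1\le c_in_i=b_1$, while Corollary~\ref{cor:b1} identifies $b_1$ with $\min\mathrm{Betti}(S)$ and thus $b_1\le c_1n_1$. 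Hence $b_1=c_1n_1=n_1+(c_1-1)n_1\in n_1+S$ (using $c_1\ge 2$, as $n_1$ is a minimal generator), i.e.\ $b_1\notin\Ap(S;n_1)$. So condition~(\ref{item:alpha-m+1:i-m}) of Theorem~\ref{thm:alpha-m+1} holds and that theorem yields condition~(\ref{item:alpha-m+1:alpha}), which is~\ref{item:ci:alpha}. Finally, under any of the four conditions the ``moreover'' part of Theorem~\ref{thm:alpha-m+1} gives $\mathrm{Betti}(S)=\mathrm{IBetti}(S)=\{c_2n_2,\dots,c_en_e\}$, and, $S$ being $\alpha$-rectangular for $n_1$, Theorem~\ref{thm:alpha:c} together with $\#\Ap(S;n_1)=n_1$ gives $n_1=\prod_{i=2}^e c_i$. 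I expect no genuine difficulty beyond this middle step; the rest is careful bookkeeping with the cited statements, the only thing to watch being which arrangement of the generators each of them refers to.
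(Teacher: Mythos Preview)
Your proof is correct and follows essentially the same approach as the paper: the equivalences are read off from Theorem~\ref{thm:ci-b1} and Theorem~\ref{thm:alpha-m+1}, with the only substantive step being the verification that under~\ref{item:ci:i-b1} the unique Betti-minimal element $b_1$ equals $c_1n_1$ (hence lies outside $\Ap(S;n_1)$), which you handle just as the paper does via $\mathrm{I}_b(S)=\{c_1\mathbf{e}_1,\dots,c_e\mathbf{e}_e\}$ and the minimality of $c_1n_1$. The paper additionally notes $b_1=c_2n_2$ (since $b_1$ has at least two isolated factorizations), but this is not needed for the argument; your derivation of $n_1=\prod_{i=2}^e c_i$ from Theorem~\ref{thm:alpha:c} and $\#\Ap(S;n_1)=n_1$ matches the paper's proof verbatim.
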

\begin{proof}
  We only have to show that $n_1 = \prod_{i = 2}^e c_i$. This follows from the fact that $S$ is $\alpha$-rectangular for $n_1$ and, thus, $\# \Ap(S; n_1) = \prod_{i = 2}^e c_i$ (Theorem \ref{thm:alpha:c}).
\end{proof}

\begin{example}
    Let $S=\langle 16,20,30,45\rangle$. Then $\mathrm{Betti}(S)=\{60,80,90\}$. It follows that $S$ has a single Betti-minimal element. Also,
    $\mathrm Z(60)= \{ ( 0, 3, 0, 0 ), ( 0, 0, 2, 0 )\}$, $\mathrm Z(80) = \{ ( 5, 0, 0, 0 ), ( 0, 4, 0, 0 ), ( 0, 1, 2, 0 )\}$, and $\mathrm Z(90) = \{ ( 0, 3, 1, 0 ), ( 0, 0, 3, 0 ), ( 0, 0, 0, 2 ) \}$, whence \[\mathrm I_b(S)=\{ ( 0, 3, 0, 0 ), ( 0, 0, 2, 0 ), (5,0,0,0), ( 0, 0, 0, 2 )\}.\]
    In view of Corollary \ref{cor:ci-b1}, $S$ is $\alpha$-rectangular for $n_1=20$.
\end{example}

The semigroups studied in Theorem \ref{thm:alpha-m+1} are recursively characterized in terms of gluings.

\begin{corollary} \label{cor:ci-b1:gluing}
 Let $S$ be a simplicial affine semigroup of $\mathbb{N}^r$ minimally generated by $\{n_1, \ldots, n_{r+m}\}$. The following statements are equivalent:
  \begin{enumerate}
      \item \label{item:ci1:gluing:a} $S$ is a complete intersection with $\mathrm{Betti}\text{-}\mathrm{minimals}(S)$ a singleton not intersecting $\Ap(S; \{n_1, \ldots, n_r \})$;
      \item \label{item:ci1:gluing:b} $S$ is the gluing of a simplicial affine semigroup $S'$ of $\mathbb{N}^r$ and $\langle d \rangle \subset \mathbb{N}^r$, where either the codimension of $S'$ is $0$ or $S'$ is a complete intersection with only one minimal Betti element, say $b_1$, such that $b_1 \not\in \Ap(S; \{n_1, \ldots, n_r \})$ and $b_1 \le_{S'} c_d d$, with $c_d = \min \{k \in \mathbb{Z}^+ : k d \in S'\}$. 
  \end{enumerate}
\end{corollary}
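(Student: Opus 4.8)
The plan is to read both implications off Theorem~\ref{thm:alpha-m+1}, which already identifies \ref{item:ci1:gluing:a} with the statement that $S$ is Cohen--Macaulay, free for a suitable arrangement (permuting only $n_{r+1},\dots,n_{r+m}$), and $\alpha$-rectangular for $\{n_1,\dots,n_r\}$, with $\mathrm{Betti}(S)=\mathrm{IBetti}(S)=\{c_{r+1}n_{r+1},\dots,c_{r+m}n_{r+m}\}$, $c_i=c_i^*$ for $i>r$, and $b_1:=\min\mathrm{Betti}(S)\in\langle n_1,\dots,n_r\rangle\setminus\Ap(S;\{n_1,\dots,n_r\})$; the remaining ingredients are the gluing description of free semigroups in Lemma~\ref{lem:free} and the Betti formula \eqref{eq:betti-gen}.

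For \ref{item:ci1:gluing:b}$\Rightarrow$\ref{item:ci1:gluing:a} I would argue directly. Writing $\mathcal{A}(S)=\mathcal{A}(S')\cup\{d\}$, the relation $c_dd\in S'$ gives $d\in\mathbb{Q}\otimes\mathcal{G}(S')$, hence $\dim(S')=\dim(S)=r$ and the codimension of $S$ exceeds that of $S'$ by one. By the gluing presentation structure a minimal presentation of $S$ is the union of a minimal presentation of $S'$, the empty one of $\langle d\rangle$, and one extra pair, so its cardinality is that of a minimal presentation of $S'$ plus one; this matches the codimension of $S$ both when $\mathrm{codim}(S')=0$ and when $S'$ is a complete intersection, so $S$ is a complete intersection. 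By \eqref{eq:betti-gen}, $\mathrm{Betti}(S)=\mathrm{Betti}(S')\cup\{c_dd\}$. If $\mathrm{codim}(S')=0$ this is $\{c_dd\}$ with $c_dd\in\langle n_1,\dots,n_r\rangle\setminus\{0\}$, so $c_dd\notin\Ap(S;\{n_1,\dots,n_r\})$ and \ref{item:ci1:gluing:a} holds. If $S'$ is a complete intersection with $\mathrm{Betti}\text{-}\mathrm{minimals}(S')=\{b_1\}$, then $b_1$ is the minimum of $\mathrm{Betti}(S')$ (the order $\le_{S'}$ is well founded, so a unique minimal element is a minimum), so $b_1\le_{S'}b$ for all $b\in\mathrm{Betti}(S')$; together with the hypothesis $b_1\le_{S'}c_dd$ this forces $b_1\le_S b$ for every $b\in\mathrm{Betti}(S)$, whence $\mathrm{Betti}\text{-}\mathrm{minimals}(S)=\{b_1\}$, which is not in $\Ap(S;\{n_1,\dots,n_r\})$ by hypothesis.

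For \ref{item:ci1:gluing:a}$\Rightarrow$\ref{item:ci1:gluing:b} I would fix the arrangement provided by Theorem~\ref{thm:alpha-m+1}. If $m=1$, Lemma~\ref{lem:free} exhibits $S$ as the gluing of $\langle n_1,\dots,n_r\rangle$ (codimension $0$) and $\langle n_{r+1}\rangle$, and we are done. If $m\ge2$, I would first note that any generator $n_i$ with $c_in_i=b_1$ may be assumed to depend only on $n_1,\dots,n_r$ in a free presentation, because $b_1\in\langle n_1,\dots,n_r\rangle$ supplies a factorization of $b_1$ supported on $\mathbf{e}_1,\dots,\mathbf{e}_r$; placing all such generators immediately after $n_1,\dots,n_r$, one can choose the free arrangement so that the generator split off, $d:=n_{r+m}$, leaves $S':=\langle n_1,\dots,n_{r+m-1}\rangle$ with $b_1$ still among its Betti elements. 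By Lemma~\ref{lem:free}, $S'$ is free for $(n_1,\dots,n_{r+m-1})$, hence a complete intersection of codimension $m-1\ge1$, and $S$ is the gluing of $S'$ and $\langle d\rangle$ with $c_d=c_{r+m}$; by \eqref{eq:betti-gen}, $\mathrm{Betti}(S)=\mathrm{Betti}(S')\cup\{c_dd\}$ and $b_1\in\mathrm{Betti}(S')$. Finally, every $b\in\{c_{r+1}n_{r+1},\dots,c_{r+m}n_{r+m}\}$ lies in $S'$ (since $b=c_i^*n_i\in\langle n_1,\dots,n_{i-1}\rangle\subseteq S'$ for $i<r+m$ and $b=c_{r+m}^*n_{r+m}\in\langle n_1,\dots,n_{r+m-1}\rangle=S'$ for $i=r+m$) and satisfies $b_1\le_S b$; using the Cohen--Macaulay normal form of $S$ (every element is uniquely $s=\omega+\sum_{j\le r}\lambda_jn_j$ with $\omega\in\Ap(S;\{n_1,\dots,n_r\})$, and $\omega$ involves no $n_{r+m}$ when $s\in S'$) one checks that subtracting $b_1\in\langle n_1,\dots,n_r\rangle$ never changes the Apéry part, so $b-b_1\in S'$, i.e.\ $b_1\le_{S'}b$. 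Thus $\mathrm{Betti}\text{-}\mathrm{minimals}(S')=\{b_1\}$, $b_1\notin\Ap(S;\{n_1,\dots,n_r\})$ and $b_1\le_{S'}c_dd$, and \ref{item:ci1:gluing:b} holds.

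The hard part will be the two bookkeeping steps in \ref{item:ci1:gluing:a}$\Rightarrow$\ref{item:ci1:gluing:b}: choosing the free arrangement so that peeling off the last generator does not remove the unique Betti-minimal element from $S'$ (this rests on the observation that $c_in_i=b_1$ forces $n_i$ to behave as a source in the free ordering, and on the fact that when instead several generators realize $b_1$ it survives in $S'$ automatically), and upgrading $b_1\le_S b$ to $b_1\le_{S'}b$. The latter is where Cohen--Macaulayness is essential: subtracting a combination of $n_1,\dots,n_r$ from an element of $S$ never alters its Apéry component relative to $\{n_1,\dots,n_r\}$, so as soon as that component lies in $\langle n_{r+1},\dots,n_{r+m-1}\rangle$ the difference stays in $S'$.
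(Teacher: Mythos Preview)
Your proof is correct and follows essentially the same strategy as the paper's: invoke Theorem~\ref{thm:alpha-m+1} to obtain freeness, Cohen--Macaulayness and $\alpha$-rectangularity, peel off the last generator via Lemma~\ref{lem:free}, and control $\mathrm{Betti}(S')$ through~\eqref{eq:betti-gen}. The one substantive difference is how you establish $b_1\le_{S'}c_dd$ (and, more generally, $b_1\le_{S'}b$ for every $b\in\mathrm{Betti}(S')$). The paper writes $c_{r+m}n_{r+m}=c_{r+1}n_{r+1}+\sum_j\lambda_jn_j$ and observes that the minimality of $c_{r+m}$ forces $\lambda_{r+m}=0$, so the difference is visibly in $S'$; you instead use the Cohen--Macaulay normal form to argue that subtracting $b_1\in\langle n_1,\dots,n_r\rangle$ from an element of $S'$ never disturbs its Ap\'ery component, hence stays in $S'$. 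Your argument is slightly more uniform and has the side benefit of cleanly justifying $\mathrm{Betti}\text{-}\mathrm{minimals}(S')=\{b_1\}$, a step the paper dispatches with ``As a consequence''; the paper's minimality trick, on the other hand, is shorter for the single inequality $b_1\le_{S'}c_dd$ that the statement actually demands.
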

\begin{proof}~
\begin{enumerate}[leftmargin=10pt]
   \item[\ref{item:ci1:gluing:a}\kern-3pt] implies \ref{item:ci1:gluing:b}. From Theorem \ref{thm:alpha-m+1} it follows that $S$ is Cohen-Macaulay, $\alpha$-rectangular for $\{n_1, \ldots, n_r\}$ and free for an arrangement $(n_1, \ldots, n_r, n_{\sigma(r+1)}, \ldots, n_{\sigma(r+m)})$ of its minimal generators. Moreover, we have
   \[\mathrm{Betti}(S) = \{c_{r+1} n_{r+1}, \ldots, c_{r+m} n_{r+m}\} \quad \text{and} \quad \mathrm{I}_b(S) = \{x, c_{r+1} \mathbf{e}_{r+1}, \ldots, c_{r+m} \mathbf{e}_{r+m}\},\]
   where $x \in \langle \mathbf{e}_1, \ldots, \mathbf{e}_r \rangle$. Let us assume that this arrangement is $(n_1, \ldots, n_{r+m})$ without loss of generality. Hence, we have $c_i = \alpha_i+1 = c_i^* = \bar{c}_i$ for every $i \in \{r+1, \ldots, r+m\}$ (Theorems \ref{thm:alpha:free} and \ref{thm:alpha:c}). From the proof of Theorem \ref{thm:alpha-m+1}, it follows that $b_1=\varphi(x)$ is the only Betti-minimal element.  Since all the factorizations of $b_1$ are isolated (Proposition \ref{prop:all-i}), we have $b_1 = \varphi(x) = c_i n_i$ for some $i \in \{r+1, \ldots, r+m\}$. Note that the arrangement
   \[ (n_1, \ldots, n_r, n_i, n_{r+1}, \ldots, n_{i-1}, n_{i+1}, \ldots, n_{r+m})\]
   verifies the statement b) of Lemma \ref{lem:free} and, thus, $S$ is free for this arrangement. Thus, we can also assume that $i = r+1$. Recall that $S$ is the gluing of the free semigroup $S' = \langle n_1, \ldots, n_{r+m-1}\rangle$ and $\langle n_{r+m} \rangle$. Therefore, by \eqref{eq:betti-gen}, $\mathrm{Betti}(S) = \mathrm{Betti}(S') \cup \{c_{r+m} n_{r+m}\}$. We note that $x, c_{r+1} \mathbf{e}_{r+1} \in \mathrm{Z}_{S'}(b_1) \subseteq \mathrm{Z}_S(b_1)$ and, thus, $b_1 \in \mathrm{Betti}(S')$. As a consequence, $\mathrm{Betti}\text{-}\mathrm{minimals}(S') = \{b_1\}$. Finally, we can write $c_{r+m} n_{r+m} = c_{r+1} n_{r+1} + \lambda_{1} n_1 + \cdots + \lambda_{r+m} c_{r+m}$ for some non negative integers $\lambda_1, \ldots, \lambda_{r+m}$. Note that $\lambda_{r+m}$ must be $0$ due to the definition of $c_{r+m}$. That is, $b_1 = c_{r+1} n_{r+1} \le_{S'} c_{r+m} n_{r+m}$.
   
  \item[\ref{item:ci1:gluing:b}\kern-3pt] implies \ref{item:ci1:gluing:a}. In view of Theorem \ref{thm:alpha-m+1}, $S'$ is free and, thus, $S$ is also free. Moreover, we have $\mathrm{Betti}(S) = \mathrm{Betti}(S') \cup \{c_d d\}$. Since $b_1 \le_{S'} c_d d$, it follows that $\mathrm{Betti}\text{-}\mathrm{minimals}(S) = \{b_1\}$. Finally, recall that $b_1 \in \langle n_1, \ldots, n_r \rangle$ and, thus, $b_1 \not\in \Ap(S; \{n_1, \ldots, n_r \})$. \qedhere
  \end{enumerate}
\end{proof}

As a consequence, we can construct an infinitive number of complete intersection simplicial affine semigroups of $\mathbb{N}^r$ with only one minimal Betti element. Therefore, the bound given in Lemma \ref{lem:simplicial:bound} is attained for an infinite number of free affine semigroups. Of course, not all free affine semigroups are of this form. For instance, consider the numerical semigroup $S = \langle 4, 6, 5 \rangle$. In Example \ref{ex:disjoint} we showed that $\mathrm{i}_b(S) = 4$. Moreover, note that $S$ is free for the arrangement $(4,6,5)$. Therefore, it does not have a unique Betti-minimal element.

Finally, note that when Corollary \ref{cor:ci-b1:gluing} is stated for numerical semigroups, the hypothesis $b_1 \in \Ap(S; n_1)$ is not needed, provided that $n_1$ is the minimal generator that verifies $c_1 n_1 \le c_i n_i$ for every $i \in \{1, \ldots, e\}$.

\begin{corollary}
  Let $S$ be a numerical semigroup. The following statements are equivalent:
  \begin{enumerate}
      \item \label{item:ci1:gluing:num:a} $S$ is a complete intersection with only one minimal Betti element;
      \item \label{item:ci1:gluing:num:b} $S = a S' + b \mathbb{N}$ is the gluing of a numerical semigroup $S'$ and $\mathbb{N}$, where either $S' = \mathbb{N}$ or $S$ is a complete intersection with only one minimal Betti element such that $\min \mathrm{Betti}(S') \le_{S'} b$. 
  \end{enumerate}
\end{corollary}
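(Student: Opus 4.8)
The plan is to read this corollary off from Corollary~\ref{cor:ci-b1:gluing} in the case $r=1$. Recall that a numerical semigroup $S$ is a simplicial affine semigroup of $\mathbb{N}^1$ for every ordering of its minimal generators, that it is Cohen--Macaulay (Proposition~\ref{prop:cm}), and that $\mathrm{L}_{\mathbb{Q}^+}(S)=\mathrm{L}_{\mathbb{Q}^+}(\{n_1\})$ for every minimal generator $n_1$. Hence Corollary~\ref{cor:ci-b1:gluing} is available; the remaining work is to translate its ``general'' gluing of submonoids into the classical gluing $S=aS'+b\mathbb{N}$ of numerical semigroups, and to discard the auxiliary hypothesis $b_1\notin\Ap(S;n_1)$.

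For \ref{item:ci1:gluing:num:a}$\Rightarrow$\ref{item:ci1:gluing:num:b} I would fix the ordering $(n_1,\ldots,n_e)$ with $c_1n_1\le c_in_i$ for all $i$. By Theorem~\ref{thm:ci-b1} we have $\mathrm{i}_b(S)=e$, and then, as in the paragraph preceding Corollary~\ref{cor:ci-b1}, the unique Betti-minimal element of $S$ is $b_1=c_1n_1=c_2n_2\in\langle n_1\rangle$; in particular $b_1\notin\Ap(S;n_1)$, so $S$ meets hypothesis \ref{item:ci1:gluing:a} of Corollary~\ref{cor:ci-b1:gluing} with $r=1$. Thus $S$ is the gluing of a submonoid $T=\langle n_i:i\neq j\rangle$ of $\mathbb{N}$ and $\langle n_j\rangle$ for the remaining generator $n_j$. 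Setting $a=\gcd\{n_i:i\neq j\}$ and $b=n_j$, we get $T=aS'$ for a numerical semigroup $S'$ with $T\cong S'$ as monoids, and $\gcd(a,b)=1$ because the minimal generators of $S$ are globally coprime; hence $S=aS'+b\mathbb{N}$. Now ``$T$ has codimension $0$'' means ``$S'=\mathbb{N}$'', while ``$T$ is a complete intersection with only one minimal Betti element'' transfers verbatim to $S'$. Finally, \eqref{eq:betti} shows that the unique Betti-minimal element $b_1$ of $T=aS'$ equals $a\beta_1$ with $\beta_1=\min\mathrm{Betti}(S')$, and since $\gcd(a,b)=1$ and $b\in S'$ one gets $c_d:=\min\{k\in\mathbb{Z}^+:kb\in T\}=a$; therefore the condition $b_1\le_T c_d b=ab$ appearing in Corollary~\ref{cor:ci-b1:gluing} is exactly $a\beta_1\le_{aS'}ab$, i.e.\ $\min\mathrm{Betti}(S')\le_{S'}b$. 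The spare hypothesis $b_1\notin\Ap(S;n_1)$ is simply forgotten, giving \ref{item:ci1:gluing:num:b}.

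For \ref{item:ci1:gluing:num:b}$\Rightarrow$\ref{item:ci1:gluing:num:a} I would argue directly. From the description of the minimal presentations of a gluing recalled in Section~\ref{sec:pre} (see \eqref{eq:betti}), a minimal presentation of $S$ has one more element than a minimal presentation of $S'$, while the embedding dimension of $S$ is one more than that of $S'$; since $S'$ is either $\mathbb{N}$ or a complete intersection, so is $S$. Moreover \eqref{eq:betti} gives $\mathrm{Betti}(S)=\{ab\}\cup\{a\beta:\beta\in\mathrm{Betti}(S')\}$. If $S'=\mathbb{N}$ this is the singleton $\{ab\}$, so $S$ has a single Betti-minimal element. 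Otherwise $S'$ has a unique Betti-minimal element $\beta_1$, which by Corollary~\ref{cor:b1} equals $\min\mathrm{Betti}(S')$; using Corollary~\ref{cor:isolated} and Proposition~\ref{prop:all-i} one sees that a unique Betti-minimal element is the $\le$-least element of $\mathrm{Betti}(S')$, so $\beta_1\le_{S'}\beta$ for all $\beta\in\mathrm{Betti}(S')$. Combining this with the hypothesis $\beta_1\le_{S'}b$ and multiplying by $a$ yields $a\beta_1\le_S x$ for every $x\in\mathrm{Betti}(S)$, so $a\beta_1=\min\mathrm{Betti}(S)$ is the unique Betti-minimal element of $S$.

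I expect the bookkeeping in \ref{item:ci1:gluing:num:a}$\Rightarrow$\ref{item:ci1:gluing:num:b} to be the only delicate point: identifying the submonoid $T=aS'$ produced by Corollary~\ref{cor:ci-b1:gluing} with a genuine numerical semigroup $S'$, tracking how $\min\mathrm{Betti}$ and the constant $c_d$ rescale by the factor $a$, and verifying that ordering the generators so that $c_1n_1$ is least makes $b_1\notin\Ap(S;n_1)$ automatic. All of this is routine, but it has to be spelled out carefully so that the two ``$\alpha$/classical'' notations for gluings are reconciled without slips.
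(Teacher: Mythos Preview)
Your proposal is correct and follows exactly the approach the paper intends: the paper gives no formal proof of this corollary, only the one-sentence remark preceding it that ``when Corollary~\ref{cor:ci-b1:gluing} is stated for numerical semigroups, the hypothesis $b_1\notin\Ap(S;n_1)$ is not needed, provided that $n_1$ is the minimal generator that verifies $c_1n_1\le c_in_i$ for every $i$.'' Your argument is a faithful and careful expansion of this hint, including the translation between the monoid gluing of Corollary~\ref{cor:ci-b1:gluing} and the classical numerical-semigroup gluing $S=aS'+b\mathbb{N}$, and the direct verification of \ref{item:ci1:gluing:num:b}$\Rightarrow$\ref{item:ci1:gluing:num:a} (which cannot simply quote Corollary~\ref{cor:ci-b1:gluing} once the Ap\'ery hypothesis has been dropped). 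One very minor remark: your appeal to Corollary~\ref{cor:isolated} and Proposition~\ref{prop:all-i} to conclude that the unique Betti-minimal element $\beta_1$ satisfies $\beta_1\le_{S'}\beta$ for all $\beta\in\mathrm{Betti}(S')$ is more than you need; this is just the poset fact that in a well-founded order (which $\le_{S'}$ is, by the ascending chain condition) a unique minimal element is the minimum.
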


\section{Betti sorted semigroups} \label{sec:betti-sorted}

Let $M$ be a monoid with the ascending chain condition on principal ideals. We say that $M$ is \emph{Betti sorted} if its Betti elements are totally ordered with respect to $\le_M$. Note that if it is the case, then $M$ has a finite number of Betti elements. Analogously, we say that $M$ is \emph{Betti-isolated sorted} if the set $\mathrm{IBetti}(M)$ is totally ordered with respect to $\le_M$. It is clear that every Betti sorted monoid is Betti-isolated sorted. 
 Again, we note that if $M$ is Betti-isolated sorted, then $\mathrm{IBetti}(M)$ is finite. Moreover, in light of Lemma \ref{lem:disjoint-betti}, the factorizations in $\mathrm{I}_b(S)$ are disjoint. Therefore, we can write
 \begin{equation} \label{eq:betti-sorted:I}
     \mathrm{I}_b(S) = \Omega \cup \{c \mathbf{e}_a : a \in \mathcal{C}(M)\},
 \end{equation}
 where $\Omega \subset \langle \{\mathbf{e}_a : a \in \mathcal{A}(M) \setminus \mathcal{C}(M) \} \rangle$ consists of disjoint factorizations.
 We will use this fact several times in this section. 

\begin{example}
    Let $S=\langle 4,6,9\rangle$. Then $\mathrm{Betti}(S)=\{12,18\}$, and $12\le_S 18$.
    \[
    \mathrm{Z}(12)=\{(3,0,0),(0,2,0)\},\ 
    \mathrm{Z}(18)=\{(3,1,0),(0,3,0),(0,0,2)\}.
    \]
    Then $\mathrm{I}_b(S)=\{(3,0,0),(0,2,0),(0,0,2)\}=\{c \mathbf e_a: a\in \mathcal C(S)\}$ as \eqref{eq:betti-sorted:I} predicted (for numerical semigroups $\Omega$ is empty).
\end{example}

In the rest of the section we focus on those simplicial affine semigroups that are Betti-isolated sorted. We show that under certain hypothesis these semigroups exhibit several interesting properties. 

\begin{theorem} \label{thm:betti-sorted:alpha}
  Let $S$ be a simplicial affine semigroup of $\mathbb{N}^r$ minimally generated by $\{n_1, \ldots, n_{r+m}\}$. Let us assume that $S$ is Betti-isolated sorted. The following statements are equivalent:
  \begin{enumerate}
      \item \label{item:betti-sorted:min-betti} $\min \mathrm{Betti}(S) \not \in \Ap(S; \{n_1, \ldots, n_r\})$;
      \item \label{item:betti-sorted:alpha} $S$ is $\alpha$-rectangular for $\{n_1, \ldots, n_r\}$;
      \item \label{item:betti-sorted:c} $S$ is $c$-rectangular for $\{n_1, \ldots, n_r\}$;
      \item \label{item:betti-sorted:cini} $c_i n_i \not \in \Ap(S; \{n_1, \ldots, n_r\})$ for every $i \in \{r+m, \ldots, r+m\}$;
      \item \label{item:betti-sorted:car} $\# \Ap(S; \{n_1, \ldots, n_r\}) = \prod_{i = r+1}^{r+m} c_i$.
  \end{enumerate}
\end{theorem}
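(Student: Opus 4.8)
The plan is to make statement \ref{item:betti-sorted:alpha} the hub: the equivalences \ref{item:betti-sorted:alpha} $\Leftrightarrow$ \ref{item:betti-sorted:c} $\Leftrightarrow$ \ref{item:betti-sorted:cini} $\Leftrightarrow$ \ref{item:betti-sorted:car} will fall out of Theorem \ref{thm:alpha:c} once a single observation is in place, and statement \ref{item:betti-sorted:min-betti} will be attached by two short implications. The observation is that, for a Betti-isolated sorted simplicial affine semigroup $S$, the equality
\[\mathrm{I}_b(S) \cap \langle \mathbf{e}_{r+1}, \ldots, \mathbf{e}_{r+m} \rangle = \{c_{r+1} \mathbf{e}_{r+1}, \ldots, c_{r+m} \mathbf{e}_{r+m}\}\]
holds automatically. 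To prove it I would use that $n_{r+1}, \ldots, n_{r+m} \in \mathcal{C}(S)$ (hence $\mathcal{A}(S) \setminus \mathcal{C}(S) \subseteq \{n_1, \ldots, n_r\}$, and $c_{n_i} = c_i$ for $i > r$): the inclusion $\supseteq$ is part of Theorem \ref{thm:isolated}, while for $\subseteq$ the decomposition \eqref{eq:betti-sorted:I} writes any $z \in \mathrm{I}_b(S)$ either as some $c_a \mathbf{e}_a$ or as an element of $\Omega \subset \langle \{\mathbf{e}_a : a \notin \mathcal{C}(S)\} \rangle \subseteq \langle \mathbf{e}_1, \ldots, \mathbf{e}_r \rangle$; since $z \ne 0$ (it is a factorization of a Betti element) a vector of $\langle \mathbf{e}_1, \ldots, \mathbf{e}_r \rangle$ lying in $\langle \mathbf{e}_{r+1}, \ldots, \mathbf{e}_{r+m} \rangle$ is forbidden, so $z$ must be one of the $c_i \mathbf{e}_i$ with $i \in \{r+1, \ldots, r+m\}$.

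Granting this, statements \ref{item:betti-sorted:c}, \ref{item:betti-sorted:cini} and \ref{item:betti-sorted:car} are precisely conditions \ref{item:thm:alpha:c-i}, \ref{item:thm:alpha:ci-i} and \ref{item:thm:alpha:car} of Theorem \ref{thm:alpha:c} with their (now vacuous) clause on $\mathrm{I}_b(S)$ removed, and statement \ref{item:betti-sorted:alpha} is condition \ref{item:thm:alpha:alpha} of the same theorem; hence Theorem \ref{thm:alpha:c} delivers \ref{item:betti-sorted:alpha} $\Leftrightarrow$ \ref{item:betti-sorted:c} $\Leftrightarrow$ \ref{item:betti-sorted:cini} $\Leftrightarrow$ \ref{item:betti-sorted:car} directly, and it also yields the byproduct $c_i = \alpha_i + 1$ for $i > r$.

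For statement \ref{item:betti-sorted:min-betti} I would first record that, $S$ being Betti-isolated sorted, $\mathrm{IBetti}(S)$ is a finite chain whose unique minimal element is, by Proposition \ref{prop:all-i}, the unique element $b_1$ of $\mathrm{Betti}\text{-}\mathrm{minimals}(S)$; since $S$ has finitely many Betti elements and each of them lies above a minimal one, we get $b_1 \le_S b$ for every $b \in \mathrm{Betti}(S)$, so $\min \mathrm{Betti}(S) = b_1$. Then \ref{item:betti-sorted:alpha} $\Rightarrow$ \ref{item:betti-sorted:min-betti}: by Proposition \ref{prop:alpha} every element of $\Ap(S; \{n_1, \ldots, n_r\})$ has a unique expression, whereas $b_1$ has at least two, so $b_1 \notin \Ap(S; \{n_1, \ldots, n_r\})$. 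And \ref{item:betti-sorted:min-betti} $\Rightarrow$ \ref{item:betti-sorted:cini}: if $b_1 \notin \Ap(S; \{n_1, \ldots, n_r\})$ then $b_1 - n_j \in S$ for some $j \le r$; given $i \in \{r+1, \ldots, r+m\}$, Lemma \ref{lem:ca} and Theorem \ref{thm:isolated} put $c_i n_i$ in $\mathrm{IBetti}(S)$, so $b_1 \le_S c_i n_i$, and therefore $c_i n_i - n_j = (c_i n_i - b_1) + (b_1 - n_j) \in S$, i.e. $c_i n_i \notin \Ap(S; \{n_1, \ldots, n_r\})$; as $i$ was arbitrary, \ref{item:betti-sorted:cini} holds. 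This closes the cycle.

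I do not expect a genuinely hard step: the argument is a reduction to Theorem \ref{thm:alpha:c} plus two one-line implications about $b_1 = \min \mathrm{Betti}(S)$. The only delicate point is the bookkeeping behind the automatic $\mathrm{I}_b(S)$-equality --- in particular the facts that $n_{r+1}, \ldots, n_{r+m} \in \mathcal{C}(S)$, that $\Omega$ is supported on $\mathbf{e}_1, \ldots, \mathbf{e}_r$, and that no factorization of a Betti element is the zero vector --- together with the trivial degenerate case $m = 0$, where $\mathrm{Betti}(S) = \emptyset$, $\Ap(S; \{n_1, \ldots, n_r\}) = \{0\}$, and all five assertions hold under the conventions that $\min \emptyset$ is vacuous and $\prod_{i = r+1}^{r} c_i = 1$.
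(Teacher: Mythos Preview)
Your proposal is correct and follows essentially the same route as the paper: both establish the automatic equality $\mathrm{I}_b(S) \cap \langle \mathbf{e}_{r+1}, \ldots, \mathbf{e}_{r+m} \rangle = \{c_{r+1} \mathbf{e}_{r+1}, \ldots, c_{r+m} \mathbf{e}_{r+m}\}$ from \eqref{eq:betti-sorted:I} and then invoke Theorem~\ref{thm:alpha:c} for the equivalence of \ref{item:betti-sorted:alpha}--\ref{item:betti-sorted:car}, and both prove \ref{item:betti-sorted:alpha}~$\Rightarrow$~\ref{item:betti-sorted:min-betti} via Proposition~\ref{prop:alpha}. The only minor variation is in \ref{item:betti-sorted:min-betti}~$\Rightarrow$~\ref{item:betti-sorted:cini}: the paper passes through Corollary~\ref{cor:ap-b1} to get $\Ap(S;\{n_1,\ldots,n_r\}) \subseteq \Ap(S;b_1) = \{s : \mathfrak d(s)=1\}$, whereas you compute $c_i n_i - n_j \in S$ directly from $b_1 \le_S c_i n_i$ and $b_1 - n_j \in S$; these are the same idea unpacked at different levels.
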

\begin{proof}
  The fact that \ref{item:betti-sorted:alpha}, \ref{item:betti-sorted:c}, \ref{item:betti-sorted:cini} and \ref{item:betti-sorted:car} are equivalent follows from Theorem \ref{thm:alpha:c} and \eqref{eq:betti-sorted:I}. Let $b_1 = \min \mathrm{Betti}(S)$.
\begin{enumerate}[leftmargin=10pt]
    \item[\ref{item:betti-sorted:min-betti}\kern-3pt] implies \ref{item:betti-sorted:cini}. We show that every element of $\Ap(S; \{n_1, \ldots, n_r\})$ has a unique expression and, thus, $c_i n_i \not \in \Ap(S; \{n_1, \ldots, n_r\})$ for every $i \in \{r+m, \ldots, r+m\}$. From our hypothesis we obtain
    \begin{equation*} \label{eq:betti-sorted:ap}
      \Ap(S; \{n_1, \ldots, n_r\}) \subseteq \Ap(S; b_1) = \{s \in S : \mathfrak{d}(s) = 1\},        
    \end{equation*} 
   where we used Corollary \ref{cor:ap-b1}. 
  \item[\ref{item:betti-sorted:alpha}\kern-3pt] implies \ref{item:betti-sorted:min-betti}. Since the elements of $\Ap(S; \{n_1, \ldots, n_r\})$ have unique expressions (Proposition \ref{prop:alpha}), we note that $b_1 \not \in \Ap(S; \{n_1, \ldots, n_r\})$. \qedhere
\end{enumerate}
\end{proof}

Let $S$ be an Betti-isolated sorted numerical semigroup minimally generated by $\{n_1, \ldots, n_e\}$. Then, equation \eqref{eq:betti-sorted:I} states that $\mathrm{I}_b(S) = \{c_1 \mathbf{e}_1, \ldots, c_e \mathbf{e}_e\}$. Let us assume that $(n_1, \ldots, n_e)$ is an arrangement of the minimal generators of $S$ such that $c_1 n_1 \le_S c_i$ for every $i \in \{1, \ldots, e\}$. We find that $\min \mathrm{Betti}(S) = c_1 n_1$ and, thus, the condition \ref{item:betti-sorted:min-betti} holds for $S$. We have proven the following result.

\begin{corollary}
  Let $S$ be an Betti-isolated sorted numerical semigroup. Let $(n_1, \ldots, n_e)$ be an arrangement of the minimal generators of $S$ such that $c_1 n_1 \le_S c_i$ for every $i \in \{1, \ldots, e\}$. Then $S$ is $\alpha$-rectangular for $n_1$ and, in particular, free for an arrangement $(n_1, n_{\sigma(2)}, \ldots, n_{\sigma(e)})$.
\end{corollary}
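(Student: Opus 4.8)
The plan is to verify hypothesis~\ref{item:betti-sorted:min-betti} of Theorem~\ref{thm:betti-sorted:alpha} for $S$ with respect to the generator $n_1$, and then to quote Theorem~\ref{thm:betti-sorted:alpha} followed by Theorem~\ref{thm:alpha:free}; recall that numerical semigroups are Cohen-Macaulay, so the latter applies as soon as $\alpha$-rectangularity is in hand.

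First I would pin down $\mathrm{I}_b(S)$. By Remark~\ref{rem:isolated}, for a numerical semigroup $\mathcal{C}(S)=\mathcal{A}(S)=\{n_1,\ldots,n_e\}$, so $\mathcal{A}(S)\setminus\mathcal{C}(S)=\emptyset$ and the component $\Omega$ appearing in \eqref{eq:betti-sorted:I} must be empty, since a factorization supported on no atom can only be the (unique) factorization of $0$, which is not in $\mathrm{I}_b(S)$. Hence $\mathrm{I}_b(S)=\{c_1\mathbf{e}_1,\ldots,c_e\mathbf{e}_e\}$, exactly the claim made just before the statement. By Lemma~\ref{lem:ca} every $c_i n_i$ is a Betti element carrying the isolated factorization $c_i\mathbf{e}_i$, so $\{c_1 n_1,\ldots,c_e n_e\}\subseteq\mathrm{IBetti}(S)$, and since $S$ is Betti-isolated sorted this set is totally ordered by $\le_S$.

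Next I would locate $b_1:=\min\mathrm{Betti}(S)$ inside this set. By Corollary~\ref{cor:b1}, $b_1$ is the smallest element of $S$ with more than one factorization; in particular no Betti element lies $\le_S$-below it, so $b_1$ is Betti-minimal and, by Proposition~\ref{prop:all-i}, all its factorizations are isolated. Thus $\mathrm{Z}(b_1)\subseteq\mathrm{I}_b(S)=\{c_1\mathbf{e}_1,\ldots,c_e\mathbf{e}_e\}$, and as $\mathfrak{d}(b_1)\ge 2$ we get $b_1=c_i n_i$ for some $i$. Since every $c_k n_k$ is a Betti element, $b_1\le c_k n_k$ for all $k$, so $b_1$ is the numerical minimum of $\{c_1 n_1,\ldots,c_e n_e\}$; the hypothesis on the chosen arrangement (which I read as $c_1 n_1\le_S c_i n_i$, equivalently $c_1 n_1\le c_i n_i$, for all $i$) says the same of $c_1 n_1$, hence $c_1 n_1=b_1=\min\mathrm{Betti}(S)$. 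Since $n_1\le_S c_1 n_1$, this gives $\min\mathrm{Betti}(S)\notin\Ap(S;n_1)$, that is, condition~\ref{item:betti-sorted:min-betti} of Theorem~\ref{thm:betti-sorted:alpha}. That theorem yields that $S$ is $\alpha$-rectangular for $n_1$, and Theorem~\ref{thm:alpha:free} then produces an arrangement $(n_1,n_{\sigma(2)},\ldots,n_{\sigma(e)})$ for which $S$ is free.

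The only step that needs a little care is the identification $c_1 n_1=\min\mathrm{Betti}(S)$: it relies on combining that $b_1$ is Betti-minimal with all its factorizations isolated (so $b_1$ is literally one of the $c_k n_k$) with the fact that $\{c_1 n_1,\ldots,c_e n_e\}$ is $\le_S$-totally ordered (so its minimum is unique and agrees with the numerical minimum). Everything else is a direct invocation of results already established in the excerpt, so I do not anticipate a genuine obstacle.
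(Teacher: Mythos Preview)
Your proof is correct and follows the same approach as the paper. The paper's argument (the paragraph immediately preceding the corollary) is essentially the same but much terser: it quotes \eqref{eq:betti-sorted:I} to get $\mathrm{I}_b(S)=\{c_1\mathbf{e}_1,\ldots,c_e\mathbf{e}_e\}$, asserts $\min\mathrm{Betti}(S)=c_1 n_1$, and says condition~\ref{item:betti-sorted:min-betti} of Theorem~\ref{thm:betti-sorted:alpha} holds; you fill in exactly the justification the paper omits for the identification $\min\mathrm{Betti}(S)=c_1 n_1$ (via Corollary~\ref{cor:b1} and Proposition~\ref{prop:all-i}), and then make explicit the final appeal to Theorem~\ref{thm:alpha:free}.
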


We can obtain more information about some Betti-isolated sorted semigroups as a consequence of Theorems \ref{thm:alpha:free} and \ref{thm:betti-sorted:alpha}.

\begin{theorem} \label{thm:betti-sorted}
  Let $S$ be a simplicial affine semigroup of $\mathbb{N}^r$ minimally generated by $\{n_1, \ldots, n_{r+m}\}$. The following statements are equivalent:
  \begin{enumerate}
    \item \label{item:betti-sorted} $S$ is Betti sorted, Cohen-Macaulay and $\min \mathrm{Betti}(S) \not \in  \Ap(S; \{n_1, \ldots, n_r\})$;
    \item \label{item:isolated-betti-sorted} $S$ is Betti-isolated sorted, Cohen-Macaulay and $\min \mathrm{Betti}(S) \not \in  \Ap(S; \{n_1, \ldots, n_r\})$;
    \item \label{item:betti-sorted:presen} $S$ admits a minimal presentation of the form
    \[ \left\{(c_{r+1} \mathbf{e}_{r+1}, \sum\nolimits_{j = 1}^{r}a_j^{r+1} \mathbf{e}_j) \right\} \cup \left\{(c_i \mathbf{e}_i, \sum\nolimits_{j = 1}^{i-1} a_j^i \mathbf{e}_j + c_{i-1} \mathbf{e}_{i-1}) \colon  i \in \{r+2, \ldots, r+m\}\right\}\]
    for a rearrangement of the minimal generators $n_{r+1}, \ldots, n_{r+m}$.
    \item \label{item:betti-sorted:presen:2} $S$ admits a minimal presentation of the form
    \[ \left\{(a_{r+1} \mathbf{e}_{r+1}, \sum\nolimits_{j = 1}^{r}a_j^{r+1} \mathbf{e}_j) \right\} \cup \left\{(a_i \mathbf{e}_i, \sum\nolimits_{j = 1}^{i-1} a_j^i \mathbf{e}_j + a_{i-1} \mathbf{e}_{i-1}) \colon  i \in \{r+2 \ldots, r+m\}\right\}\]
    for a rearrangement of minimal generators $n_{r+1}, \ldots, n_{r+m}$ and some $a_{r+1}, \ldots, a_{r+m} \in \mathbb{N}$.
  \end{enumerate}
  If any of these statements holds, then $\mathrm{IBetti}(S) = \mathrm{Betti}(S) = \{c_{r+1} n_{r+1}, \ldots, c_{r+m} n_{r+m}\}$.
\end{theorem}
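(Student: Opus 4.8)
The plan is to run the cycle $\ref{item:betti-sorted}\Rightarrow\ref{item:isolated-betti-sorted}\Rightarrow\ref{item:betti-sorted:presen}\Rightarrow\ref{item:betti-sorted:presen:2}\Rightarrow\ref{item:betti-sorted}$ and to read off the final assertion along the way. The implication $\ref{item:betti-sorted}\Rightarrow\ref{item:isolated-betti-sorted}$ is immediate (every Betti sorted monoid is Betti-isolated sorted), and $\ref{item:betti-sorted:presen}\Rightarrow\ref{item:betti-sorted:presen:2}$ is trivial (take $a_i=c_i$). Condition \ref{item:isolated-betti-sorted} says exactly that $S$ is Betti-isolated sorted, Cohen--Macaulay and $\min\mathrm{Betti}(S)\notin\Ap(S;\{n_1,\ldots,n_r\})$, so Theorem \ref{thm:betti-sorted:alpha} gives that $S$ is $\alpha$-rectangular for $\{n_1,\ldots,n_r\}$, whereupon Corollary \ref{cor:alpha:betti} (using Cohen--Macaulayness) yields $\mathrm{IBetti}(S)=\mathrm{Betti}(S)=\{c_{r+1}n_{r+1},\ldots,c_{r+m}n_{r+m}\}$. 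This proves the final assertion and shows $\ref{item:isolated-betti-sorted}\Rightarrow\ref{item:betti-sorted}$ (the set $\mathrm{Betti}(S)$ is now a chain). By Theorem \ref{thm:alpha:c}, $c_i=\alpha_i+1$ for $i>r$, hence $c_in_i\notin\Ap(S;\{n_1,\ldots,n_r\})$; and by Proposition \ref{prop:alpha} every element of $\Ap(S;\{n_1,\ldots,n_r\})$ has a unique factorization. Also, $S$ is free by Theorem \ref{thm:alpha:free}.

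The crux is $\ref{item:isolated-betti-sorted}\Rightarrow\ref{item:betti-sorted:presen}$. I would first relabel $n_{r+1},\ldots,n_{r+m}$ so that $c_{r+1}n_{r+1}\le_S\cdots\le_S c_{r+m}n_{r+m}$ (possible since $\mathrm{Betti}(S)$ is a chain, placing coincident Betti elements in consecutive positions), and then prove: (A) $c_in_i\in\langle n_1,\ldots,n_{i-1}\rangle$ for every $i>r$; and (B) $c_in_i-c_{i-1}n_{i-1}\in\langle n_1,\ldots,n_{i-1}\rangle$ for every $i\ge r+2$. Both rest on the elementary remark that, for each $l$, every factorization of $c_ln_l$ has $n_l$-coordinate equal to $0$ or to $c_l$, and in the latter case it equals $c_l\mathbf{e}_l$ (by minimality of $c_l$). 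For (A): Proposition \ref{prop:cm} writes $c_in_i=\omega+n$ with $\omega\in\Ap(S;\{n_1,\ldots,n_r\})$ and $0\ne n\in\langle n_1,\ldots,n_r\rangle$; by $\alpha$-rectangularity $\omega=\sum_{l>r}\mu_ln_l$ with $\mu_l\le\alpha_l<c_l$ and with unique factorization $\sum_{l>r}\mu_l\mathbf{e}_l$. The remark forces $\mu_i=0$; and for $l>i$, where $c_ln_l\ge_S c_in_i$, write $c_ln_l=c_in_i+t$ with $t\in S$ and inspect the factorization $\sum_{l'>r}\mu_{l'}\mathbf{e}_{l'}+\hat n+\hat t$ of $c_ln_l$: if $\mu_l\ge1$ its $n_l$-coordinate is $\ge1$, hence $c_l$, hence the factorization is $c_l\mathbf{e}_l$ — impossible, since it carries the nonzero low part $\hat n$. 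So $\mu_l=0$ for all $l\ge i$ and $\omega\in\langle n_1,\ldots,n_{i-1}\rangle$, giving (A). For (B): put $s=c_in_i-c_{i-1}n_{i-1}\in S$; if $s\notin\langle n_1,\ldots,n_{i-1}\rangle$, choose a factorization $\hat s$ of $s$ with $n_l$-coordinate $<c_l$ for all $l\ge i$, so that $\hat s$ involves some $n_l$ with $l\ge i$, and the factorization $c_{i-1}\mathbf{e}_{i-1}+\hat s$ of $c_in_i$ yields the same contradiction (first excluding $l=i$, then using $c_ln_l\ge_S c_in_i$ for $l>i$).

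Granting (A) and (B), let $z_{r+1}$ be a factorization of $c_{r+1}n_{r+1}$ supported on $\{1,\ldots,r\}$ and, for $i\ge r+2$, let $z_i=c_{i-1}\mathbf{e}_{i-1}+w_i$ with $w_i$ a factorization of $c_in_i-c_{i-1}n_{i-1}$ supported on $\{1,\ldots,i-1\}$. Each $z_i$ is supported off the index $i$, so it differs from the isolated factorization $c_i\mathbf{e}_i$, and $(c_i\mathbf{e}_i,z_i)$ joins two $\mathrm{R}$-classes of $c_in_i$; since $\sum_{b\in\mathrm{Betti}(S)}(\mathrm{nc}(\nabla_b)-1)$ equals the codimension $m$ (as $S$ is free, hence a complete intersection) and every $c_in_i$ has the singleton $\mathrm{R}$-class $\{c_i\mathbf{e}_i\}$, a count shows these $m$ pairs realize a spanning tree of $\nabla_b$ for each $b\in\mathrm{Betti}(S)$ (coincident Betti elements contributing a path through the corresponding $c_i\mathbf{e}_i$), hence a minimal presentation of $S$ of the prescribed shape. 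For $\ref{item:betti-sorted:presen:2}\Rightarrow\ref{item:betti-sorted}$, orient each relation as a rewrite rule $a_i\mathbf{e}_i\to(\text{right-hand side})$: the system is terminating (the top-down lexicographic order on the coordinates indexed by $r+1,\ldots,r+m$ strictly decreases at each step) and has non-overlapping left-hand sides, hence is confluent; the unique normal forms have each coordinate indexed by $i>r$ less than $a_i$, and these furnish the decomposition of Proposition \ref{prop:cm}, so $S$ is Cohen--Macaulay. Then the presentation has the form of Lemma \ref{lem:free}, so $S$ is free; by Section \ref{sec:pre:presentations}, $\mathrm{Betti}(S)=\{a_in_i:i>r\}$; the term $a_{i-1}\mathbf{e}_{i-1}$ in the $i$-th relation gives $a_{i-1}n_{i-1}\le_S a_in_i$, so $\mathrm{Betti}(S)$ is a chain and $S$ is Betti sorted; and $\min\mathrm{Betti}(S)=a_{r+1}n_{r+1}=\sum_{j\le r}a_j^{r+1}n_j$ is a nontrivial sum of $n_1,\ldots,n_r$, hence not in $\Ap(S;\{n_1,\ldots,n_r\})$.

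The principal obstacle is (A)--(B): showing that the chain order on the Betti elements is compatible with a triangular presentation. The remaining steps are either formal or direct consequences of the structure theory of $\alpha$-rectangular and free simplicial affine semigroups recalled above.
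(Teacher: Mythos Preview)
Your proof is correct and follows the same cyclic scheme as the paper, but the two substantive implications are handled by genuinely different arguments.

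For \ref{item:isolated-betti-sorted}$\Rightarrow$\ref{item:betti-sorted:presen}, the paper does not prove your claims (A) and (B) at all. After establishing $\alpha$-rectangularity (as you do), it invokes Theorem~\ref{thm:alpha-m+1} to get $\mathrm{i}_b(S)=m+1$, hence $\mathrm{I}_b(S)=\{x_0\}\cup\{c_{r+1}\mathbf{e}_{r+1},\ldots,c_{r+m}\mathbf{e}_{r+m}\}$ with $x_0\in\langle\mathbf{e}_1,\ldots,\mathbf{e}_r\rangle$ and $\varphi(x_0)=b_1$, and Theorem~\ref{thm:ci-b1} to get $\mathrm{nc}(\nabla_b)=\mathrm{i}(b)+1$ for $b\neq b_1$. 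It then picks $z_i\ge c_{i-1}\mathbf{e}_{i-1}$ arbitrarily and uses Lemma~\ref{lem:disjoint-betti} (plus isolation of $c_j\mathbf{e}_j$) to force $z_i$ to be supported on $\{1,\ldots,i-1\}$. This is shorter and uses the structural theorems of Section~\ref{sec:ci} as black boxes; your direct proof of (A)--(B) from the Ap\'ery structure is more elementary and self-contained, and in effect re-derives the relevant consequences of those theorems by hand. Your counting argument at the end (distinct R-classes in each $\nabla_b$, then $\sum_b k_b=m=\sum_b(\mathrm{nc}(\nabla_b)-1)$ forcing the trees to span) is correct but terser than the paper's, which already knows the exact R-class count from Theorem~\ref{thm:ci-b1}.

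For \ref{item:betti-sorted:presen:2}$\Rightarrow$\ref{item:betti-sorted}, the paper simply observes that $S$ is a complete intersection with a single Betti-minimal element not in the Ap\'ery set, and then quotes Theorem~\ref{thm:alpha-m+1} to conclude Cohen--Macaulayness. Your confluent--terminating rewriting argument is a different, direct route to the unique decomposition of Proposition~\ref{prop:cm}; it works (the left-hand sides $a_i\mathbf{e}_i$ live on disjoint coordinates, so all critical pairs are trivial, and the reductions never decrease a coordinate $\le r$, which is what makes the normal forms witness the Ap\'ery decomposition), and it avoids the detour through Theorem~\ref{thm:alpha-m+1}.
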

\begin{proof}
  It is clear that \ref{item:betti-sorted} implies  \ref{item:isolated-betti-sorted} and \ref{item:betti-sorted:presen} implies \ref{item:betti-sorted:presen:2} hold true.  
  
 \begin{enumerate}[leftmargin=10pt]
 \item[\ref{item:isolated-betti-sorted}\kern-3pt] implies \ref{item:betti-sorted:presen}. Since $S$ is Betti-isolated sorted, there is a rearrangement of the generators $n_{r+1}, \ldots, n_{r+m}$ such that $c_{r+1} n_{r+1} \le_S \cdots \le_S c_{r+m} n_{r+m}$. In light of Theorems \ref{thm:betti-sorted:alpha} and \ref{thm:alpha:free}, $S$ is free. As a consequence, Corollary \ref{cor:alpha:betti} states that the Betti elements of $S$ are  $c_{r+1} n_{r+1}, \ldots, c_{r+m} n_{r+m}$. Moreover, by  Theorem \ref{thm:alpha-m+1}, we have $\mathrm{i}_b(S) = m + 1$ and, hence, $\mathrm{I}_b(S) = \{x_0\} \cup \{c_{r+1}\mathbf{e}_{r+1}, \ldots, c_{r+m}\mathbf{e}_{r+m}\}$ for some $x_0 \in \langle \mathbf{e}_1, \ldots, \mathbf{e}_r \rangle$. Since all the factorizations of $b_1 = \min \mathrm{Betti}(S)$ are isolated and $b_1 \not \in \Ap(S; \{n_1, \ldots, n_r\})$, the only possibility is $\varphi(x_0) = b_1 = c_{r+1} n_{r+1}$. In addition, Theorem \ref{thm:ci-b1} concludes that $\mathrm{nc}(\nabla_{b}) = 1 + \mathrm{i}(b)$ for every $b \in \mathrm{Betti}(S) \setminus \{b_1\}$. All this information allows us to construct a minimal presentation of $S$.  For each $i \in \{r+2, \ldots, r+m\}$ define $z_i$ as $c_{i-1} \mathbf{e}_{i-1}$ if $c_i n_i = c_{i-1} n_{i-1}$. Otherwise, note that $c_{i-1} n_{i-1} <_S c_i n_i$. Therefore, we can choose $z_i \in \mathrm{Z}(c_i n_i)$ with $c_{i-1} \mathbf{e}_{i-1} \le z$. Finally, define $z_{r+1} = x_0$. By Lemma \ref{lem:disjoint-betti} we know that $z_i$ is disjoint with any isolated factorization of a Betti element greater or equal than $c_i n_i$. Hence, we can write $z_i = \sum\nolimits_{j = 1}^{i-1} a_j^i \mathbf{e}_j + c_{i-1} \mathbf{e}_{i-1}$ for every $i \in \{r+2, \ldots, r+m\}$ and $z_{r+1} = \sum\nolimits_{j = 1}^{r} a_j^{r+1} \mathbf{e}_j$. The set
  \[  \left\{(c_{r+1} \mathbf{e}_{r+1}, \sum\nolimits_{j = 1}^{r}a_j^{r+1} \mathbf{e}_j \right\} \cup \left\{(c_i \mathbf{e}_i, \sum\nolimits_{j = 1}^{i-1} a_j^i \mathbf{e}_j + c_{j-1} \mathbf{e}_{j-1}) \colon i \in \{r+2, \ldots, r+m\}\right\}\]
  is a minimal presentation of $S$ due to the characterization introduced in Section \ref{sec:pre:presentations}. 
  
  \item[\ref{item:betti-sorted:presen:2}\kern-3pt] implies \ref{item:betti-sorted}. If $S$ admits a presentation of the form given in \ref{item:betti-sorted:presen:2}, then we have 
  \[ \mathrm{Betti}(S) = \{a_{r+1} n_{r+1}, \ldots, a_{r+m} n_{r+m}\} \quad \text{and} \quad a_{r+1} n_{r+1} \le_S \cdots \le_S a_{r+m} n_{r+m}.\] 
  That is, $S$ is Betti sorted. Furthermore, $S$ has a presentation with cardinality $m$, that is, $S$ is a complete intersection. Finally, we note that $a_{r+1} n_{r+1} = \min \mathrm{Betti}(S)$ has a factorization in $\langle \mathbf{e}_1, \ldots, \mathbf{e}_r \rangle$ and, thus, $\min \mathrm{Betti}(S) \not \in  \Ap(S; \{n_1, \ldots, n_r\})$.   Theorem \ref{thm:alpha-m+1} asserts that $S$ is Cohen-Macaulay. \qedhere
  \end{enumerate}
\end{proof}


In the case of numerical semigroups, we can again get rid of the hypothesis $b_1 \not \in \Ap(S; n_1)$ by choosing an appropiate arrangement of the minimal generators of $S$.

\begin{corollary} \label{cor:betti-sorted}
  Let $S$ be a numerical semigroup. Let $(n_1, \ldots, n_e)$ be an arrangement of the minimal generators of $S$ such that $c_1 n_1 \le c_2 n_2 \le \cdots \le c_e n_e$. The following statements are equivalent:
  \begin{enumerate}
    \item \label{item:betti-sorted:num} $S$ is Betti sorted;
    \item \label{item:isolated-betti-sorted:num} $S$ is Betti-isolated sorted;
    \item \label{item:betti-sorted:presen:num} $S$ admits a minimal presentation of the form
    \[\left\{(c_i \mathbf{e}_i, \sum\nolimits_{j = 1}^{i-1} a_j^i \mathbf{e}_j + c_{i-1} \mathbf{e}_{i-1}) \colon  i \in \{2, \ldots, e\}\right\};\]
    \item \label{item:betti-sorted:presen:2:num} $S$ admits a minimal presentation of the form
    \[\left\{(a_i \mathbf{e}_i, \sum\nolimits_{j = 1}^{i-1} a_j^i \mathbf{e}_j + a_{i-1} \mathbf{e}_{i-1}) \colon  i \in \{2, \ldots, e\}\right\}\]
    for some non negative integers $a_1, \ldots, a_e$.
  \end{enumerate}
  If any of these statements holds, then $\mathrm{IBetti}(S) = \mathrm{Betti}(S) = \{c_{2} n_{2}, \ldots, c_e n_e\}$.
\end{corollary}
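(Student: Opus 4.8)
The plan is to derive this corollary from Theorem~\ref{thm:betti-sorted} by checking that, for a numerical semigroup $S$ (so $r=1$ and $m=e-1$), the two auxiliary hypotheses of that theorem hold automatically once the generators are arranged with $c_1n_1\le c_2n_2\le\cdots\le c_en_e$. First, every numerical semigroup is Cohen--Macaulay (Proposition~\ref{prop:cm}, condition~\ref{item:cm:unique:1}). Second, I claim $\min\mathrm{Betti}(S)\notin\Ap(S;n_1)$ whenever $S$ is Betti-isolated sorted: since $\mathcal{C}(S)=\mathcal{A}(S)=\{n_1,\dots,n_e\}$ by Remark~\ref{rem:isolated}, the set $\Omega$ in \eqref{eq:betti-sorted:I} is empty and $\mathrm{I}_b(S)=\{c_1\mathbf{e}_1,\dots,c_e\mathbf{e}_e\}$. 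Writing $b_1=\min\mathrm{Betti}(S)$, Proposition~\ref{prop:all-i} gives $\mathrm{Z}(b_1)\subseteq\mathrm{I}_b(S)$, and $\mathfrak{d}(b_1)\ge 2$ forces $b_1=c_in_i=c_jn_j$ for two distinct indices $i,j$; combining $b_1\le_S c_1n_1$ (as $c_1n_1\in\mathrm{Betti}(S)$ by Lemma~\ref{lem:ca}) with $c_1n_1\le c_in_i=b_1$ yields $b_1=c_1n_1=c_2n_2$, and then $b_1=n_1+(c_1-1)n_1\in n_1+S$ gives $b_1\notin\Ap(S;n_1)$.

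With these two facts in hand, the equivalences become a transcription of Theorem~\ref{thm:betti-sorted}. The implication \ref{item:betti-sorted:num}~$\Rightarrow$~\ref{item:isolated-betti-sorted:num} is the general observation that Betti sorted monoids are Betti-isolated sorted, and \ref{item:betti-sorted:presen:num}~$\Rightarrow$~\ref{item:betti-sorted:presen:2:num} is immediate on taking $a_i=c_i$. For \ref{item:isolated-betti-sorted:num}~$\Rightarrow$~\ref{item:betti-sorted:presen:num}: by the previous paragraph $S$ meets condition~\ref{item:isolated-betti-sorted} of Theorem~\ref{thm:betti-sorted}, so that theorem yields a minimal presentation of shape~\ref{item:betti-sorted:presen} for a rearrangement of $n_2,\dots,n_e$; the rearrangement constructed in its proof is exactly a sorting of the $c_in_i$, hence (up to permuting tied generators) the arrangement already fixed in the statement, and the first pair $(c_2\mathbf{e}_2,a_1^2\mathbf{e}_1)$ is rewritten in the uniform shape of \ref{item:betti-sorted:presen:num} by relabeling, since $\varphi(c_2\mathbf{e}_2)=b_1=c_1n_1$. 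For \ref{item:betti-sorted:presen:2:num}~$\Rightarrow$~\ref{item:betti-sorted:num}: from such a presentation the Betti elements of $S$ lie in $\{a_2n_2,\dots,a_en_e\}$ by the description recalled in Section~\ref{sec:pre:presentations}, and reading off the $i$-th relation the equality $a_in_i=a_{i-1}n_{i-1}+\sum_{j=1}^{i-1}a_j^in_j$ gives $a_{i-1}n_{i-1}\le_S a_in_i$, so $\mathrm{Betti}(S)$ is a chain for $\le_S$ and $S$ is Betti sorted. The trailing assertion $\mathrm{IBetti}(S)=\mathrm{Betti}(S)=\{c_2n_2,\dots,c_en_e\}$ is the conclusion of Theorem~\ref{thm:betti-sorted} specialized to $r=1$, $m=e-1$ (alternatively, Betti-isolated sortedness with the chosen arrangement makes $S$ $\alpha$-rectangular for $n_1$, hence free, so Corollary~\ref{cor:alpha:betti} applies).

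The step I expect to be the main obstacle is the careful matching of arrangements: Theorem~\ref{thm:betti-sorted} is phrased "for a rearrangement of $n_{r+1},\dots,n_{r+m}$", whereas Corollary~\ref{cor:betti-sorted} pins down $(n_1,\dots,n_e)$ in advance. The point that makes this harmless is that each $c_i$ is defined through $\langle\mathcal{A}(S)\setminus\{n_i\}\rangle$, hence does not depend on the order of the generators, while the rearrangement appearing in the proof of Theorem~\ref{thm:betti-sorted} is precisely one that sorts $c_2n_2\le\cdots\le c_en_e$; together with the hypothesis $c_1n_1\le c_2n_2$ this is, up to permutation of tied generators, the arrangement in the statement, and the construction in that proof is insensitive to the choice among tied orderings. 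Everything else is routine bookkeeping layered on top of Theorem~\ref{thm:betti-sorted}.
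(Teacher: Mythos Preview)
Your proposal is correct and follows essentially the same route as the paper: the corollary is obtained from Theorem~\ref{thm:betti-sorted} once one observes that numerical semigroups are Cohen--Macaulay and that, with the chosen arrangement, the Betti-isolated sorted hypothesis forces $\mathrm{I}_b(S)=\{c_1\mathbf e_1,\dots,c_e\mathbf e_e\}$ and hence $\min\mathrm{Betti}(S)=c_1n_1\notin\Ap(S;n_1)$. Your discussion of why the rearrangement in Theorem~\ref{thm:betti-sorted} coincides with the arrangement fixed in the statement (the $c_i$ are arrangement-independent, and on the $\le_S$-chain $\mathrm{IBetti}(S)$ the integer order and $\le_S$ agree) is a detail the paper leaves implicit but that you handle correctly.
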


Theorem \ref{thm:betti-sorted} allows us to characterize this family of Betti sorted semigroups in terms of gluings.

\begin{corollary} \label{cor:betti-sorted:gluing}
 Let $S$ be a simplicial affine semigroup of $\mathbb{N}^r$ minimally generated by $\{n_1, \ldots, n_{r+m}\}$. The following statements are equivalent:
  \begin{enumerate}
      \item \label{item:betti-sorted:gluing:a} $S$ is Betti sorted, Cohen-Macaulay and $\min \mathrm{Betti}(S) \not \in  \Ap(S; \{n_1, \ldots, n_r\})$;
      \item \label{item:betti-sorted:gluing:b} $S$ is the gluing of a Betti sorted simplicial affine semigroup $S'$ of $\mathbb{N}^r$ and $\langle d \rangle \subset \mathbb{N}^r$, where either the codimension of $S'$ is $0$ or $S'$ is Cohen-Macaulay, $\min \mathrm{Betti}(S') \not \in  \Ap(S; \{n_1, \ldots, n_r\})$ and $\max \mathrm{Betti}(S') \le_{S'} c_d d$ for $c_d = \min \{k \in \mathbb{Z}^+ : k d \in S'\}$. 
  \end{enumerate}
\end{corollary}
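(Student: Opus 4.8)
The plan is to prove Corollary~\ref{cor:betti-sorted:gluing} in exactly the way Corollary~\ref{cor:ci-b1:gluing} was proven, using Theorem~\ref{thm:betti-sorted} in place of Theorem~\ref{thm:alpha-m+1} as the structural engine and the gluing formula \eqref{eq:betti-gen} to move Betti elements back and forth. Throughout, $d$ denotes the extra atom, $S'=\langle n_1,\ldots,n_{r+m-1}\rangle$, and $c_d=\min\{k\in\mathbb{Z}^+:kd\in S'\}$; I will use repeatedly that, by Theorems~\ref{thm:betti-sorted:alpha} and~\ref{thm:alpha:c}, a semigroup as in \ref{item:betti-sorted:gluing:a} has $c_i=\alpha_i+1=c_i^*$ for every $i\in\{r+1,\ldots,r+m\}$, so that after the correct arrangement $c_dd=c_{r+m}^*n_{r+m}=\max\mathrm{Betti}(S)$.

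For \ref{item:betti-sorted:gluing:a}$\Rightarrow$\ref{item:betti-sorted:gluing:b}: from \ref{item:betti-sorted:gluing:a}, Theorem~\ref{thm:betti-sorted} (and its proof, through Theorems~\ref{thm:betti-sorted:alpha} and~\ref{thm:alpha:free}) produces an arrangement $(n_1,\ldots,n_r,n_{r+1},\ldots,n_{r+m})$ for which $S$ is free with $c_{r+1}n_{r+1}\le_S\cdots\le_S c_{r+m}n_{r+m}$ and $\mathrm{Betti}(S)=\mathrm{IBetti}(S)=\{c_{r+1}n_{r+1},\ldots,c_{r+m}n_{r+m}\}$, and $S$ is the gluing of the free semigroup $S'$ (for the truncated arrangement) and $\langle d\rangle$, $d=n_{r+m}$. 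If $m=1$ then $S'$ has codimension $0$ and we are done; otherwise I would check the four properties of $S'$. It is Betti sorted because $\mathrm{Betti}(S')\subseteq\mathrm{Betti}(S)$ by \eqref{eq:betti-gen}, hence a subchain. Its minimal Betti element equals $\min\mathrm{Betti}(S)$, which by the proof of Theorem~\ref{thm:betti-sorted} admits a factorization in $\langle\mathbf{e}_1,\ldots,\mathbf{e}_r\rangle$ and therefore lies in $\langle n_1,\ldots,n_r\rangle$, so it is outside the Apéry set. And $\max\mathrm{Betti}(S')\le_S c_dd$ by the above; since $c_d$ is minimal with $c_dd\in S'$, the witnessing difference cannot involve $d$, so it already lies in $S'$ and $\max\mathrm{Betti}(S')\le_{S'}c_dd$. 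Finally $S'$ is Cohen-Macaulay, which is the main point and is discussed below.

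For \ref{item:betti-sorted:gluing:b}$\Rightarrow$\ref{item:betti-sorted:gluing:a}: if $S'$ has codimension $0$, then $S$ has codimension $1$, a single Betti element $c_dd\in\langle n_1,\ldots,n_r\rangle$, is trivially Betti-isolated sorted with $\min\mathrm{Betti}(S)\notin\Ap(S;\{n_1,\ldots,n_r\})$, and one concludes with Theorems~\ref{thm:betti-sorted:alpha} and~\ref{thm:alpha-m+1}. Otherwise the hypotheses of \ref{item:betti-sorted:gluing:b} say precisely that $S'$ satisfies the first condition of Theorem~\ref{thm:betti-sorted}, so $S'$ is free and Cohen-Macaulay with $\mathrm{Betti}(S')=\mathrm{IBetti}(S')=\{c^*_{r+1}n_{r+1},\ldots,c^*_{r+m-1}n_{r+m-1}\}$, totally ordered, for an appropriate arrangement. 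Then $S$, a gluing of a free semigroup with $\langle d\rangle$, is free for the concatenated arrangement; by \eqref{eq:betti-gen} and $\max\mathrm{Betti}(S')\le_{S'}c_dd$, $\mathrm{Betti}(S)=\mathrm{Betti}(S')\cup\{c_dd\}$ is a chain, so $S$ is Betti sorted, and $\min\mathrm{Betti}(S)=\min\mathrm{Betti}(S')\notin\Ap(S;\{n_1,\ldots,n_r\})$. To finish I still need $S$ Cohen-Macaulay; I would obtain it from Theorem~\ref{thm:alpha-m+1} via its condition~\ref{item:alpha-m+1:alpha}, after checking that $S$ is $\alpha$-rectangular for $\{n_1,\ldots,n_r\}$ (Theorem~\ref{thm:betti-sorted:alpha}, now applicable since $S$ is Betti-isolated sorted with its minimal Betti element off the Apéry set) and that $\mathrm{i}_b(S)=m+1$, the latter by combining the lower bound of Lemma~\ref{lem:simplicial:bound} with the description \eqref{eq:betti-sorted:I} of $\mathrm{I}_b(S)$ and Theorem~\ref{thm:alpha:c} (which pins down $\mathrm{I}_b(S)\cap\langle\mathbf{e}_{r+1},\ldots,\mathbf{e}_{r+m}\rangle$); the remaining isolated factorizations in $\langle\mathbf{e}_1,\ldots,\mathbf{e}_r\rangle$ reduce to a single one because $\varphi$ is injective on that cone and Lemma~\ref{lem:disjoint-betti} (together with Proposition~\ref{prop:all-i}) forces isolated factorizations of distinct Betti elements apart. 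Then Corollary~\ref{cor:alpha:betti} identifies $\mathrm{Betti}(S)$.

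The main obstacle is transferring the Cohen-Macaulay property through the gluing, since---unlike freeness, Betti-sortedness, or $\alpha$-rectangularity, all of which are visible in the presentation or in the Apéry box---it does not automatically descend to the face $S'$ or ascend to the glued semigroup. I would handle the descent by computing $\Ap(S';\{n_1,\ldots,n_r\})$ directly: using the box description of $\Ap(S;\{n_1,\ldots,n_r\})$ for a free semigroup (Lemma~\ref{lem:free}) together with the fact that reducing an expression not involving $n_{r+m}$ to its Apéry form only applies relations among $n_1,\ldots,n_{r+m-1}$, one sees that the Apéry part of any $s\in S'$ already lies in $S'$; hence every $s\in S'$ inherits from $S$ a decomposition $s=\omega+n$ with $\omega\in\Ap(S';\{n_1,\ldots,n_r\})$ and $n\in\langle n_1,\ldots,n_r\rangle$, unique because it is unique in $S$, and Proposition~\ref{prop:cm}\,\ref{item:cm:unique:2} gives that $S'$ is Cohen-Macaulay. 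In the ascending direction the issue is moot, since there Cohen-Macaulayness of $S$ is recovered a posteriori from Theorem~\ref{thm:alpha-m+1}. A minor bookkeeping point throughout is that $c_i$, $c_i^*$ and $\bar c_i$ are the same whether computed in $S$ or in $S'$, which holds because each depends only on $n_1,\ldots,n_{i-1}$.
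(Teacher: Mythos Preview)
Your overall strategy is sound but takes a significantly longer road than the paper. The paper exploits the \emph{presentation} characterization in Theorem~\ref{thm:betti-sorted} as the transfer mechanism in both directions: for \ref{item:betti-sorted:gluing:a}$\Rightarrow$\ref{item:betti-sorted:gluing:b} it simply truncates the minimal presentation of $S$ guaranteed by Theorem~\ref{thm:betti-sorted}\,\ref{item:betti-sorted:presen} to obtain a minimal presentation of $S'$ of the same shape, and then Theorem~\ref{thm:betti-sorted} applied to $S'$ yields everything at once (Betti sorted, Cohen--Macaulay, and the Ap\'ery condition). For \ref{item:betti-sorted:gluing:b}$\Rightarrow$\ref{item:betti-sorted:gluing:a} it extends the presentation of $S'$ by the single gluing relation $(c_d\mathbf{e}_d,x)$ with $x$ built from the hypothesis $\max\mathrm{Betti}(S')\le_{S'}c_dd$, checks that the result has the form of Theorem~\ref{thm:betti-sorted}\,\ref{item:betti-sorted:presen:2}, and again Theorem~\ref{thm:betti-sorted} delivers \ref{item:betti-sorted:gluing:a}. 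In particular, the Cohen--Macaulay property never has to be argued separately: it is built into the equivalence of Theorem~\ref{thm:betti-sorted}. Your route, by contrast, verifies each property of $S'$ (respectively $S$) individually and must then recover Cohen--Macaulayness by an independent argument.

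There is one genuine gap in your \ref{item:betti-sorted:gluing:b}$\Rightarrow$\ref{item:betti-sorted:gluing:a} argument. To obtain $\mathrm{i}_b(S)=m+1$ you claim that the isolated factorizations in $\langle\mathbf{e}_1,\ldots,\mathbf{e}_r\rangle$ ``reduce to a single one'' via injectivity of $\varphi$ and Lemma~\ref{lem:disjoint-betti}. But those ingredients only give that two such factorizations are \emph{disjoint} and correspond to \emph{distinct} Betti elements; they do not rule out having two of them. (Compare Lemma~\ref{lem:betti-divisible:cm}, where divisibility is essential to conclude $kx=y$.) The easy fix is to bypass condition~\ref{item:alpha-m+1:alpha} of Theorem~\ref{thm:alpha-m+1}: you have already shown that $S$ is free (as a gluing of a free $S'$ with $\langle d\rangle$), Betti sorted (hence with a single Betti-minimal element), and that $\min\mathrm{Betti}(S)\notin\Ap(S;\{n_1,\ldots,n_r\})$, so condition~\ref{item:alpha-m+1:free} of Theorem~\ref{thm:alpha-m+1} holds directly, or alternatively Theorem~\ref{thm:ci-b1} already gives $\mathrm{i}_b(S)=m+1$. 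Your Cohen--Macaulay descent in \ref{item:betti-sorted:gluing:a}$\Rightarrow$\ref{item:betti-sorted:gluing:b} is correct but compressed: the uniqueness half of Proposition~\ref{prop:cm}\,\ref{item:cm:unique:2} for $S'$ requires $\Ap(S';\{n_1,\ldots,n_r\})\subseteq\Ap(S;\{n_1,\ldots,n_r\})$, which follows because any $\omega'\in\Ap(S';\{n_1,\ldots,n_r\})$ can be reduced (using only relations among $n_1,\ldots,n_{r+m-1}$) to its $S$-Ap\'ery form $\omega''+n$ with $\omega''\in S'$, and $n\neq 0$ would contradict $\omega'\in\Ap(S';\{n_1,\ldots,n_r\})$.
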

\begin{proof}~
\begin{enumerate}[leftmargin=10pt]
   \item[\ref{item:betti-sorted:gluing:a}\kern-3pt] implies \ref{item:betti-sorted:gluing:b}. In view of Theorem \ref{thm:betti-sorted}, there is an arrangement $(n_{r+1}, \ldots, n_{r+m})$ of the latest $m$ minimal generators of $S$ such that $S$ has a presentation of the form 
  \[\{(c_i \mathbf{e}_i, \sum\nolimits_{j = 1}^{i-1} a_j^i \mathbf{e}_j + c_{i-1} \mathbf{e}_{i-1}) \colon i \in \{r+1, \ldots, r+m\}\}.\] 
  Hence, $S$ is free for that arrangement (Lemma \ref{lem:free}).  Recall that $c_e = c_e^*$. Set $S' = \langle n_1, \ldots, n_{r+m-1}\rangle$. Note that $S$ is the gluing of $S'$ and $\langle n_{r+m} \rangle$. Moreover, $\mathrm{Betti}(S) = \{c_{r+1}n_{r+1} \le_{S'} \cdots \le_{S'} c_{r+m} n_{r+m}\}$. A minimal presentation of $S'$ is
  \[\{(c_i \mathbf{e}_i, \sum\nolimits_{j = 1}^{i-1} a_j^i \mathbf{e}_j + c_{i-1} \mathbf{e}_{i-1}) \colon i \in \{r+1, \ldots, r+m-1\}\}.\]
  The result follows from Theorem \ref{thm:betti-sorted}. 
  \item[\ref{item:betti-sorted:gluing:b}\kern-3pt] implies \ref{item:betti-sorted:gluing:a}. In view of Theorem \ref{thm:betti-sorted}, there is an arrangement $(n_{r+1}, \ldots, n_{r+m-1})$ of the last $m-1$ minimal generators of $S'$ such that $S'$ has a presentation of the form 
  \[\rho' = \{(c_i \mathbf{e}_i, \sum\nolimits_{j = 1}^{i-1} a_j^i \mathbf{e}_j + c_{i-1} \mathbf{e}_{i-1}) \colon i \in \{r+1, \ldots, r+m-1\}\}.\]
  Write $c_d d = c_{r+m-1} n_{r+m-1} + s$, where $s \in S'$. There are non negative integers $\lambda_1, \ldots, \lambda_{n+r-1}$ such that $s = \lambda_1 n_1 + \cdots + \lambda_{r+m-1} n_{r+m-1}$. Therefore,
  since $S$ is the gluing of $S'$ and $\langle d \rangle$, the set $\rho = \rho' \cup \{(x, c_d \mathbf{e}_d)\}$, where $x = c_{r+m-1} \mathbf{e}_{r+m-1} + \lambda_1 \mathbf{e}_1 + \cdots + \lambda_{r+m-1} \mathbf{e}_{r+m-1}$, is a minimal presentationofr $S$. The proof is completed by invoking Theorem \ref{thm:betti-sorted}.
  \qedhere
\end{enumerate}
\end{proof}

Again, the result is significantly simplified when numerical semigroups are considerd.

\begin{corollary} \label{cor:betti-sorted:gluing:ns}
  A numerical semigroup $S$ is Betti sorted if and only if there is there is another Betti sorted numerical semigroup $S'$ such that $S = d S' +_{d n} n \mathbb{N}$, where $S' = \mathbb{N}$ or $\max \mathrm{Betti}(S') \le_{S'} n$.
\end{corollary}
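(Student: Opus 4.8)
The plan is to read Corollary~\ref{cor:betti-sorted:gluing:ns} as the numerical‑semigroup shadow of Corollary~\ref{cor:betti-sorted:gluing}, just as Corollary~\ref{cor:betti-sorted} is the numerical‑semigroup shadow of Theorem~\ref{thm:betti-sorted}. In the numerical case every semigroup is Cohen--Macaulay, and — exactly as happens in Corollary~\ref{cor:betti-sorted} — the auxiliary hypothesis $\min\mathrm{Betti}(S)\notin\Ap(S;n_1)$ disappears once the minimal generators are listed as $(n_1,\ldots,n_e)$ with $c_1n_1\le c_2n_2\le\cdots\le c_en_e$: for Betti sorted $S$ one has $\mathrm I_b(S)=\{c_1\mathbf{e}_1,\ldots,c_e\mathbf{e}_e\}$ by \eqref{eq:betti-sorted:I}, so $\min\mathrm{Betti}(S)$, being Betti‑minimal with all factorizations isolated (Proposition~\ref{prop:all-i} and Theorem~\ref{thm:isolated}), equals $c_1n_1$, and $c_1\ge 2$ forces $\min\mathrm{Betti}(S)=n_1+(c_1-1)n_1\notin\Ap(S;n_1)$. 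The one genuine bookkeeping point is translating the two flavours of gluing: in Corollary~\ref{cor:betti-sorted:gluing} the semigroup playing the role of ``$S'$'' is the affine subsemigroup $\langle n_1,\ldots,n_{e-1}\rangle\subseteq\mathbb{N}$, which need not have gcd $1$; dividing it by $d=\gcd(n_1,\ldots,n_{e-1})$ makes it an honest numerical semigroup and rewrites the gluing as $S=dS'+_{dn}n\mathbb{N}$ with $n=n_e$ the adjoined generator and $d$ the scaling constant, so the gluing degree $c_dd$ of Corollary~\ref{cor:betti-sorted:gluing} becomes $dn$, the condition $\max\mathrm{Betti}(S')\le_{S'}c_dd$ becomes $\max\mathrm{Betti}(S')\le_{S'}n$, and ``$S'$ has codimension $0$'' becomes $S'=\mathbb{N}$. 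As with Delorme's theorem the degenerate case $S=\mathbb{N}$ is understood to be included on the left, so I would dispose of it first and assume $\mathrm{e}(S)\ge 2$ below.

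For the direct implication, assume $S$ is Betti sorted and order its generators as above. By Corollary~\ref{cor:betti-sorted}, $S$ admits the staircase minimal presentation $\{(c_i\mathbf{e}_i,\sum_{j<i}a^i_j\mathbf{e}_j+c_{i-1}\mathbf{e}_{i-1}):2\le i\le e\}$, so by Lemma~\ref{lem:free} it is free for $(n_1,\ldots,n_e)$. Put $d=\gcd(n_1,\ldots,n_{e-1})$, $S'=\langle n_1/d,\ldots,n_{e-1}/d\rangle$ and $n=n_e$. Freeness gives $\bar c_e=c_e^*=d$, whence, using also $\gcd(n_1,\ldots,n_e)=1$, one checks the clauses of the definition of a numerical‑semigroup gluing: $\gcd(d,n)=1$; $dn=c_e^*n_e\in\langle n_1,\ldots,n_{e-1}\rangle$, so $n\in S'$; $d\ge 2$ (else $n_e\in\langle n_1,\ldots,n_{e-1}\rangle$); and $n$ is not a minimal generator of $S'$ (else $dn_e$ would be a minimal generator of $S$, impossible as $d\ge 2$). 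Hence $S=dS'+_{dn}n\mathbb{N}$. If $e=2$ then $S'=\mathbb{N}$. If $e\ge 3$, removing the last relation leaves a minimal presentation of $\langle n_1,\ldots,n_{e-1}\rangle$ of the same staircase form (by the gluing structure coming from Lemma~\ref{lem:free}, every minimal presentation of $S$ is one of $\langle n_1,\ldots,n_{e-1}\rangle$ together with the unique relation involving $\mathbf{e}_e$), so $S'\cong\langle n_1,\ldots,n_{e-1}\rangle$ is Betti sorted (Theorem~\ref{thm:betti-sorted}); moreover the relation indexed by $e$ gives $dn-c_{e-1}n_{e-1}=\sum_{j<e}a^e_jn_j\in\langle n_1,\ldots,n_{e-1}\rangle$ with $dn=\max\mathrm{Betti}(S)$, and dividing by $d$ yields $c_{e-1}n_{e-1}/d=\max\mathrm{Betti}(S')\le_{S'}n$.

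For the converse, let $S=dS'+_{dn}n\mathbb{N}$ with $S'$ Betti sorted and either $S'=\mathbb{N}$ or $\max\mathrm{Betti}(S')\le_{S'}n$. Since $\mathbb{N}$ has no Betti elements, \eqref{eq:betti} gives $\mathrm{Betti}(S)=\{dn\}\cup d\,\mathrm{Betti}(S')$. If $S'=\mathbb{N}$ this is the singleton $\{dn\}$, trivially a chain. Otherwise write $\mathrm{Betti}(S')=\{b'_1\le_{S'}\cdots\le_{S'}b'_k\}$; since $a\le_{S'}b$ implies $d(b-a)\in dS'\subseteq S$ and hence $da\le_S db$, while $b'_k\le_{S'}n$ implies $db'_k\le_S dn$, we obtain the chain $db'_1\le_S\cdots\le_S db'_k\le_S dn$, which exhausts $\mathrm{Betti}(S)$. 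In either case $\mathrm{Betti}(S)$ is totally ordered by $\le_S$, so $S$ is Betti sorted.

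I expect the main obstacle to be organizational rather than mathematical: one must keep straight that the generator ``glued off'' is $n_e$ while the constant $d=\gcd(n_1,\ldots,n_{e-1})$ divided out to make $S'$ a genuine numerical semigroup is the one corresponding to the scaling in the affine statement, and one must verify that freeness of $S$ for $(n_1,\ldots,n_e)$ is precisely the property making $(d,S',n)$ a legitimate numerical‑semigroup gluing. All the substantive content — the passage from ``Betti sorted'' to the staircase presentation and back — is already packaged in Theorem~\ref{thm:betti-sorted} and Corollary~\ref{cor:betti-sorted}.
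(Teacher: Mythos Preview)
Your proof is correct and follows essentially the same approach as the paper's own (commented-out) argument: both obtain the staircase presentation from Corollary~\ref{cor:betti-sorted}, deduce freeness for the arrangement $(n_1,\ldots,n_e)$, peel off $n_e$ via the gluing $S=d_eS'+_{d_en_e}n_e\mathbb{N}$ with $d_e=\gcd(n_1,\ldots,n_{e-1})=c_e^*$, and read the remaining staircase relations as a presentation of $S'$; the converse is the same application of \eqref{eq:betti} in both. Your write-up is in fact slightly more careful than the paper's in checking the clauses of the numerical-semigroup gluing and in explaining why the Ap\'ery hypothesis of Corollary~\ref{cor:betti-sorted:gluing} is automatic here.
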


\section{Betti divisible semigroups} \label{sec:betti-divisible}

In the sequel we will write $a \mid b$ when $a, b \in \mathbb{Z}^r$ and $b = k a$ for some $k \in \mathbb{Z}$, which is clearly an order relation. Moreover, we will use the notation  $B=\{b_1 \mid \cdots \mid b_k\}$ when $B=\{b_1, \ldots, b_k\}$ and $b_1 \mid b_2 \mid \cdots \mid b_k$.

Let $M$ be a monoid with the ascending chain condition on principal ideals. We say that $M$ is \emph{Betti divisible} if its Betti elements are totally ordered by divisibility, that is, $\mathrm{Betti}(S)$ is of the form $\{b_1 \mid \cdots \mid b_k\}$. 
Betti divisible monoids are Betti sorted.  Analogously, we say that $M$ is \emph{Betti-isolated divisible} if the set $\mathrm{IBetti}(M)$ is totally ordered with respect to the divisibility relation. It is clear that every Betti divisible monoid is Betti-isolated divisible.

\begin{example} \label{ex:betti-divisible:1}
	The numerical semigroup $S = \langle 30, 42, 105, 140 \rangle$ is Betti divisible. We have $\mathrm{Betti}(S) = \{210, 420\}$ and $\mathrm{F}(S) = 523$. Our computations with GAP show that $S$ is the  Betti divisible numerical semigroup with the smallest Frobenius number from those with at least $4$ minimal generators.
\end{example}

The following result
exhibits the strength of the Betti-isolated divisible hypothesis.

\begin{lemma} \label{lem:betti-divisible:cm}
  Let $S$ be a simplicial affine semigroup of $\mathbb{N}^r$ minimally generated by $\{n_1, \ldots, n_{r+m}\}$. If $S$ is Betti-isolated divisible and $b_1 = \min \mathrm{Betti}(S) \not \in \Ap(S; \{n_1, \ldots, n_r\})$, then $S$ is Cohen-Macaulay and 
  \[ \mathrm{I}_b(S) = \{x, c_{r+1} \mathbf{e}_{r+1}, \ldots, c_{r+m} \mathbf{e}_{r+m}\},\] 
  where $x \in \langle \mathbf{e}_1, \ldots, \mathbf{e}_r \rangle \cap \mathrm{Z}(b_1)$.
\end{lemma}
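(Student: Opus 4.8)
The plan is to first extract from the Betti-isolated divisible hypothesis a precise description of $\mathrm{I}_b(S)$, and then run the Cohen--Macaulay argument from the proof of Theorem~\ref{thm:alpha-m+1} essentially verbatim. I begin with the setup. Since divisibility of nonzero elements of $S$ implies $\le_S$-comparability, $S$ is Betti-isolated sorted; as $\mathrm{Betti}(S)$ is finite and totally ordered by $\le_S$, it has a minimum $b_1$, which is its only $\le_S$-minimal element, so $\mathrm{Betti}\text{-}\mathrm{minimals}(S)=\{b_1\}$ and Corollary~\ref{cor:ap-b1} gives $\Ap(S;b_1)=\{s\in S:\mathfrak d(s)=1\}$. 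Being Betti-isolated sorted, all factorizations in $\mathrm{I}_b(S)$ are pairwise disjoint (Lemma~\ref{lem:disjoint-betti}, together with the trivial fact that two distinct isolated factorizations of a single Betti element are disjoint). Since $n_{r+1},\dots,n_{r+m}\in\mathcal C(S)$ and $c_{r+1}\mathbf{e}_{r+1},\dots,c_{r+m}\mathbf{e}_{r+m}\in\mathrm{I}_b(S)$ (Theorem~\ref{thm:isolated}), disjointness forces every other element of $\mathrm{I}_b(S)$ to have all its last $m$ coordinates equal to $0$, that is,
\[
\mathrm{I}_b(S)=\Omega\cup\{c_{r+1}\mathbf{e}_{r+1},\dots,c_{r+m}\mathbf{e}_{r+m}\},
\]
where $\Omega:=\mathrm{I}_b(S)\cap\langle\mathbf{e}_1,\dots,\mathbf{e}_r\rangle$ and the union is disjoint. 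Finally, since $b_1\notin\Ap(S;\{n_1,\dots,n_r\})$, Theorem~\ref{thm:betti-sorted:alpha} makes $S$ $\alpha$-rectangular for $\{n_1,\dots,n_r\}$, so by Proposition~\ref{prop:alpha} every element of $\Ap(S;\{n_1,\dots,n_r\})$ has a unique factorization, and by Theorem~\ref{thm:alpha:c}, $c_in_i\notin\Ap(S;\{n_1,\dots,n_r\})$ for all $i\in\{r+1,\dots,r+m\}$.

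Next I will show that $\Omega=\{x\}$ is a singleton, and this is precisely where divisibility (rather than mere sortedness) is used. By Lemma~\ref{lem:simplicial:bound}, $\Omega\ne\emptyset$. If $x,x'\in\Omega$ were distinct, then $\varphi(x),\varphi(x')\in\mathrm{IBetti}(S)$ would be comparable under divisibility; but both lie in $\langle n_1,\dots,n_r\rangle$, and $n_1,\dots,n_r$ are $\mathbb Q$-linearly independent, so $\varphi(x)\mid\varphi(x')$ forces $x'=kx$ for an integer $k\ge1$ (and symmetrically in the other order). If $k=1$ then $x=x'$; if $k\ge2$ then $x<x'$ with $x\in\mathrm Z(\mathrm{Betti}(S))$, so $x'$ is not isolated by Lemma~\ref{lem:isolated} --- a contradiction. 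Hence $\Omega=\{x\}$, so $\mathrm{I}_b(S)=\{x,c_{r+1}\mathbf{e}_{r+1},\dots,c_{r+m}\mathbf{e}_{r+m}\}$. To identify $\varphi(x)$, I use $b_1\notin\Ap(S;\{n_1,\dots,n_r\})$ to write $b_1=n_j+s$ with $j\le r$ and $s\in S$; choosing a factorization $w$ of $s$, the factorization $\mathbf{e}_j+w$ of $b_1$ is isolated (all factorizations of the Betti-minimal element $b_1$ are isolated, Proposition~\ref{prop:all-i}), has nonzero $j$-th coordinate with $j\le r$, hence is none of the $c_i\mathbf{e}_i$ and therefore lies in $\Omega$. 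Thus $x=\mathbf{e}_j+w\in\langle\mathbf{e}_1,\dots,\mathbf{e}_r\rangle\cap\mathrm Z(b_1)$ and $b_1\in\langle n_1,\dots,n_r\rangle$; this establishes the displayed formula for $\mathrm{I}_b(S)$.

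For Cohen--Macaulayness I will imitate the last part of the proof of Theorem~\ref{thm:alpha-m+1}, which is now legitimate because we have exactly the structural facts it uses: the description of $\mathrm{I}_b(S)$ above, $\Ap(S;b_1)=\{s:\mathfrak d(s)=1\}$, uniqueness of factorizations of elements of $\Ap(S;\{n_1,\dots,n_r\})$, $c_in_i\notin\Ap(S;\{n_1,\dots,n_r\})$, and $b_1\in\langle n_1,\dots,n_r\rangle$. Concretely, given $s\in S$, write $s=\alpha+qb_1$ with $\alpha\in\Ap(S;b_1)$ and $q\in\mathbb N$; splitting the unique factorization of $\alpha$ along the coordinate blocks $\{1,\dots,r\}$ and $\{r+1,\dots,r+m\}$ produces $\omega\in\Ap(S;\{n_1,\dots,n_r\})$ with $s=\omega+(\text{element of }\langle n_1,\dots,n_r\rangle)$. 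For uniqueness, suppose $s=\omega'+n$ with $\omega'\in\Ap(S;\{n_1,\dots,n_r\})$ and $n=\sum_{i\le r}\lambda_in_i$; let $w'$ be the unique factorization of $\omega'$, which lies in $\langle\mathbf{e}_{r+1},\dots,\mathbf{e}_{r+m}\rangle$ since $\omega'\in\Ap(S;\{n_1,\dots,n_r\})$, and set $y=w'+\sum_{i\le r}\lambda_i\mathbf{e}_i\in\mathrm Z(s)$. Because $c_in_i\notin\Ap(S;\{n_1,\dots,n_r\})$, no $c_i\mathbf{e}_i$ lies below $y$, so $x$ is the only possible element of $\mathrm{I}_b(S)$ below $y$; subtracting $x$ from $y$ as often as possible gives $y=w'+x'+q'x$ with $x\not\le x'$, whence $w'+x'$ lies below no element of $\mathrm{I}_b(S)$ and is therefore isolated (Lemma~\ref{lem:isolated:2}), and $\omega'+\varphi(x')$ is not a Betti element, hence has denumerant $1$, i.e. $\omega'+\varphi(x')\in\Ap(S;b_1)$. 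Then $s=(\omega'+\varphi(x'))+q'b_1$, so uniqueness of the $\Ap(S;b_1)$-decomposition gives $q'=q$ and $\omega'+\varphi(x')=\alpha$; comparing the last $m$ coordinates of the unique factorization of $\alpha$ yields $\omega'=\omega$, and Proposition~\ref{prop:cm} concludes that $S$ is Cohen--Macaulay.

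The bookkeeping in the final paragraph --- that $w'$ involves no $\mathbf{e}_j$ with $j\le r$, that $x$ is the unique candidate in $\mathrm{I}_b(S)$ below $y$, and the ``subtract $x$ maximally'' device --- is the main obstacle, but it is essentially the same computation that already occurs in the proof of Theorem~\ref{thm:alpha-m+1}. The only genuinely new input required to license that computation here is the collapse $\Omega=\{x\}$ of the second paragraph, and that is exactly the point at which the divisibility (not merely sortedness) of the Betti elements is used.
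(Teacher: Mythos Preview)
Your argument is correct and follows the paper's approach closely: the key step showing $\Omega=\{x\}$ via divisibility and the $\mathbb Q$-linear independence of $n_1,\dots,n_r$ is exactly the paper's argument, and your identification $\varphi(x)=b_1$ is a slightly more explicit version of what the paper does.

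Two small remarks. First, a slip: under the hypothesis ``Betti-isolated divisible'' you only know that $\mathrm{IBetti}(S)$ is totally ordered by $\le_S$, not $\mathrm{Betti}(S)$; your conclusion $\mathrm{Betti}\text{-}\mathrm{minimals}(S)=\{b_1\}$ is still correct, but it should be justified via Proposition~\ref{prop:all-i} (Betti-minimal elements coincide with $\le_S$-minimal elements of $\mathrm{IBetti}(S)$), not by asserting that $\mathrm{Betti}(S)$ itself is a chain. Second, once you have $\mathrm{i}_b(S)=m+1$ together with the unique Betti-minimal element $b_1\notin\Ap(S;\{n_1,\dots,n_r\})$, you are exactly in condition~\ref{item:alpha-m+1:i-m} of Theorem~\ref{thm:alpha-m+1}, whose statement already records that $S$ is Cohen--Macaulay. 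The paper simply invokes that theorem; your third paragraph faithfully replays its proof, which is fine but unnecessary.
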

\begin{proof}
  First, we determine the set $\mathrm{I}_b(S)$. Let $x, y \in \mathrm{I}_b(S) \setminus \{c_{r+1} \mathbf{e}_{r+1}, \ldots, c_{r+m} \mathbf{e}_{r+m}\}$. Recall that, by equation \eqref{eq:betti-sorted:I}, we have $x, y \in \langle \mathbf{e}_1, \ldots, \mathbf{e}_r \rangle$. In addition, either $x$ and $y$ are disjoint or $x = y$. Since $\varphi(x)$ and $\varphi(y)$ are isolated Betti elements, one of them is a multiple of the other. Let us assume that $\varphi(y) = k \varphi(x)$ for some $k \in \mathbb{Z}^+$. Then $kx$ is a factorization of $\varphi(y)$. Since $S$ is simplicial, the elements $n_1, \ldots, n_r$ are linearly independent and, thus, the only possibility is $kx = y$. Therefore, we find that $x$ and $y$ are not disjoint, that is, $x = y$. Moreover, since all the factorizations of $b_1$ are isolated (Proposition \ref{prop:all-i}), we note that $x \in \mathrm{Z}(b_1)$.
  
  The fact that $S$ is Cohen-Macaulay follows from Theorem \ref{thm:alpha-m+1}. \qedhere

\end{proof}

As a consequence of the previous lemma, we give a more precise version of Theorem \ref{thm:betti-sorted} for Betti divisible semigroups.


\begin{corollary} \label{cor:betti-divisible:simplicial:presentation}
  Let $S$ be a simplicial affine semigroup of $\mathbb{N}^r$ minimally generated by $\{n_1, \ldots, n_{r+m}\}$. The following statements are equivalent:
  \begin{enumerate}
  	\item \label{item:betti-divisible} $S$ is Betti divisible and $\min \mathrm{Betti}(S) \not \in \Ap(S; \{n_1, \ldots, n_r\})$;
    \item \label{item:betti-isolated-divisible} $S$ is Betti-isolated divisible and $\min \mathrm{Betti}(S) \not \in \Ap(S; \{n_1, \ldots, n_r\})$;
  	\item \label{item:betti-divisible:presen} $S$ admits a minimal presentation of the form
    \[ \{(c_{r+1}, x)\} \cup \{(c_i \mathbf{e}_i, q_i c_{i-1} \mathbf{e}_{i-1}) \colon i \in \{r+2, \ldots, r+m\}\}\]
    for a rearrangement of the $m$ latest minimal generators $n_{r+1}, \ldots, n_{r+m}$,  some non negative integers  $q_{r+2}, \ldots, q_{r+m}$ and $x \in \langle \mathbf{e}_1, \ldots, \mathbf{e}_r \rangle$;
  	\item \label{item:betti-divisible:presen:2} $S$ admits a minimal presentation of the form
  	\[ \{(a_{r+1}, x)\} \cup \{(a_i \mathbf{e}_i, q_i a_{i-1} \mathbf{e}_{i-1}) \colon i \in \{r+2, \ldots, r+m\}\}\]
  	for a rearrangement of the $m$ latest minimal generators $n_{r+1}, \ldots, n_{r+m}$, some non negative integers $a_{r+1}, \ldots, a_{r+m}, q_{r+2}, \ldots, q_{r+m}$ and $x \in \langle \mathbf{e}_1, \ldots, \mathbf{e}_r \rangle$.
  \end{enumerate}
\end{corollary}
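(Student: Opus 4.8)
The plan is to establish the cycle \ref{item:betti-divisible}$\Rightarrow$\ref{item:betti-isolated-divisible}$\Rightarrow$\ref{item:betti-divisible:presen}$\Rightarrow$\ref{item:betti-divisible:presen:2}$\Rightarrow$\ref{item:betti-divisible}. The implications \ref{item:betti-divisible}$\Rightarrow$\ref{item:betti-isolated-divisible} and \ref{item:betti-divisible:presen}$\Rightarrow$\ref{item:betti-divisible:presen:2} are immediate: $\mathrm{IBetti}(S)\subseteq\mathrm{Betti}(S)$, so a total order by divisibility on the latter restricts to one on the former, and a presentation as in \ref{item:betti-divisible:presen} already has the shape required in \ref{item:betti-divisible:presen:2} with $a_i=c_i$. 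For \ref{item:betti-divisible:presen:2}$\Rightarrow$\ref{item:betti-divisible} I would invoke the description of minimal presentations via Betti elements recalled in Section~\ref{sec:pre:presentations}: the $\varphi$-images of the first components of a minimal presentation are exactly the Betti elements, so $\mathrm{Betti}(S)=\{a_{r+1}n_{r+1},\dots,a_{r+m}n_{r+m}\}$, while the relation $(a_i\mathbf e_i,q_ia_{i-1}\mathbf e_{i-1})$ forces $a_in_i=q_ia_{i-1}n_{i-1}$ for $i\in\{r+2,\dots,r+m\}$. Hence $a_{r+1}n_{r+1}\mid a_{r+2}n_{r+2}\mid\cdots\mid a_{r+m}n_{r+m}$, so $\mathrm{Betti}(S)$ is a chain under divisibility and $S$ is Betti divisible; moreover its $\le_S$-least element is $a_{r+1}n_{r+1}=\varphi(x)$ with $x\in\langle\mathbf e_1,\dots,\mathbf e_r\rangle\setminus\{0\}$, so $\min\mathrm{Betti}(S)\in\langle n_1,\dots,n_r\rangle$ and therefore $\min\mathrm{Betti}(S)\notin\Ap(S;\{n_1,\dots,n_r\})$.

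The substantive implication is \ref{item:betti-isolated-divisible}$\Rightarrow$\ref{item:betti-divisible:presen}. Since $S$ is Betti-isolated divisible it is Betti-isolated sorted, so $\mathrm{Betti}\text{-}\mathrm{minimals}(S)\subseteq\mathrm{IBetti}(S)$ is at once totally ordered and an antichain, hence a singleton $\{b_1\}$ with $b_1=\min_{\le_S}\mathrm{Betti}(S)$; by hypothesis $b_1\notin\Ap(S;\{n_1,\dots,n_r\})$. Lemma~\ref{lem:betti-divisible:cm} then gives that $S$ is Cohen--Macaulay and $\mathrm{I}_b(S)=\{x,c_{r+1}\mathbf e_{r+1},\dots,c_{r+m}\mathbf e_{r+m}\}$ with $x\in\langle\mathbf e_1,\dots,\mathbf e_r\rangle\cap\mathrm{Z}(b_1)$, so $\mathrm{i}_b(S)=m+1$. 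Theorem~\ref{thm:alpha-m+1} (condition~\ref{item:alpha-m+1:i-m}) now applies and yields that $S$ is a complete intersection and $\mathrm{Betti}(S)=\mathrm{IBetti}(S)=\{c_{r+1}n_{r+1},\dots,c_{r+m}n_{r+m}\}$; since each $c_i$ depends only on $n_i$ and the full generating set, not on the ordering of $n_{r+1},\dots,n_{r+m}$, I may relabel these generators so that $c_{r+1}n_{r+1}\mid c_{r+2}n_{r+2}\mid\cdots\mid c_{r+m}n_{r+m}$ (this divisibility chain exists because $\mathrm{Betti}(S)=\mathrm{IBetti}(S)$ is one), with $b_1=c_{r+1}n_{r+1}=\varphi(x)$. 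Setting $q_i=(c_in_i)/(c_{i-1}n_{i-1})\in\mathbb Z^+$ for $i\in\{r+2,\dots,r+m\}$, the candidate is $\rho=\{(c_{r+1}\mathbf e_{r+1},x)\}\cup\{(c_i\mathbf e_i,q_ic_{i-1}\mathbf e_{i-1}):i\in\{r+2,\dots,r+m\}\}$.

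It remains to check that $\rho$ is a minimal presentation, which I would do using the criterion of Section~\ref{sec:pre:presentations}: for each Betti element $B$ the pairs of $\rho$ with $\varphi$-value $B$ must join, inside the tree of R-classes of $\nabla_B$, all its connected components. Writing the block $\{i:c_in_i=B\}$ (a set of consecutive indices), these pairs link the singleton R-classes $\{c_i\mathbf e_i\}$ of that block — and, when $B=b_1$, also $\{x\}$ — along a path, each such link coming from an index with $q_i=1$, i.e. $c_in_i=c_{i-1}n_{i-1}$, so that the second component $q_ic_{i-1}\mathbf e_{i-1}$ is literally $c_{i-1}\mathbf e_{i-1}$; while for $B\ne b_1$ the first index $l$ of the block has $q_l\ge2$, so $q_lc_{l-1}\mathbf e_{l-1}$ strictly dominates the factorization $c_{l-1}\mathbf e_{l-1}$ of the Betti element $c_{l-1}n_{l-1}$ and is therefore non isolated by Lemma~\ref{lem:isolated}, landing in the unique non-isolated R-class of $\nabla_B$ — which exists because $\mathrm{nc}(\nabla_B)=\mathrm{i}(B)+1$ by Theorem~\ref{thm:ci-b1}. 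A tally then shows that $\rho$ meets each $\nabla_B$ in a spanning tree and that $|\rho|=m=\sum_{B\in\mathrm{Betti}(S)}(\mathrm{nc}(\nabla_B)-1)$, so $\rho$ is a minimal presentation of the desired form. I expect the main obstacle to be precisely this last bookkeeping: tracking which elements of $\mathrm{I}_b(S)$ map to which Betti element when several of the $c_in_i$ coincide (so one Betti element may carry several of the $c_i\mathbf e_i$, possibly together with $x$), and confirming in every case that the chosen representatives fall in the correct R-classes so that the selected pairs assemble into spanning trees.
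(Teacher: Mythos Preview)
Your argument is correct, and the implications \ref{item:betti-divisible}$\Rightarrow$\ref{item:betti-isolated-divisible}, \ref{item:betti-divisible:presen}$\Rightarrow$\ref{item:betti-divisible:presen:2}, and \ref{item:betti-divisible:presen:2}$\Rightarrow$\ref{item:betti-divisible} match the paper's proof essentially verbatim.

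For \ref{item:betti-isolated-divisible}$\Rightarrow$\ref{item:betti-divisible:presen} you take a genuinely different route. The paper, after invoking Lemma~\ref{lem:betti-divisible:cm} to get Cohen--Macaulayness, simply applies Theorem~\ref{thm:betti-sorted} (the Betti-sorted characterization) to obtain a minimal presentation of the form
\[
\left\{(c_{r+1}\mathbf e_{r+1},x)\right\}\cup\left\{(c_i\mathbf e_i,\,\textstyle\sum_{j=1}^{i-1}a_j^i\mathbf e_j+c_{i-1}\mathbf e_{i-1}):i\in\{r+2,\dots,r+m\}\right\},
\]
and then observes that, since $c_{i-1}n_{i-1}\mid c_in_i$, the factorization $q_ic_{i-1}\mathbf e_{i-1}\in\mathrm Z(c_in_i)$ shares the $(i{-}1)$-st coordinate with $\sum_{j=1}^{i-1}a_j^i\mathbf e_j+c_{i-1}\mathbf e_{i-1}$ and hence lies in the same R-class; swapping these representatives preserves minimality by the description in Section~\ref{sec:pre:presentations}. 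You instead bypass Theorem~\ref{thm:betti-sorted}, go through Theorem~\ref{thm:alpha-m+1} and Theorem~\ref{thm:ci-b1} to pin down $\mathrm I_b(S)$, $\mathrm{Betti}(S)$ and the R-class counts, and then verify directly that your candidate $\rho$ assembles spanning trees on the $\nabla_B$. This is perfectly valid and more self-contained, but it is exactly the bookkeeping you yourself flag as the main obstacle; the paper's approach sidesteps it entirely, since the spanning-tree structure is already guaranteed by Theorem~\ref{thm:betti-sorted} and only a within-R-class substitution is needed.
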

\begin{proof}
  It is clear that \ref{item:betti-divisible} implies
  \ref{item:betti-isolated-divisible} and \ref{item:betti-divisible:presen} implies \ref{item:betti-divisible:presen:2} hold true.  
  
 \begin{enumerate}[leftmargin=10pt]
 \item[\ref{item:isolated-betti-sorted}\kern-3pt] implies \ref{item:betti-sorted:presen}. Since $S$ is Betti-isolated sorted and Cohen-Macaulay (Lemma \ref{lem:betti-divisible:cm}), Theorem \ref{thm:betti-sorted} can be applied, obtaining a rearrangement of the $m$ lastest minimal generators $n_{r+1}, \ldots, n_{r+m}$ such that $S$ admits a minimal presentation of the form
    \[ \left\{(c_{r+1} \mathbf{e}_{r+1}, \sum\nolimits_{j = 1}^{r}a_j^{r+1} \mathbf{e}_j \right\} \cup \left\{(c_i \mathbf{e}_i, \sum\nolimits_{j = 1}^{i-1} a_j^i \mathbf{e}_j + c_{i-1} \mathbf{e}_{i-1}) \colon  i \in \{r+2, \ldots, r+m\}\right\}.\]
    Since $S$ is Betti-isolated divisible, for each $i \in \{r+2, \ldots, r+m\}$, the fact that $c_{i-1} n_{i-1} \le_S c_i n_i$ is equivalent to  $c_{i-1} n_{i-1} \mid c_i n_i$. Therefore, there is a factorization $q_i c_{i-1} \mathbf{e}_{i-1} \in \mathrm{Z}(c_i n_i)$, which is in the same R-class as $\sum\nolimits_{j = 1}^{i-1} a_j^i \mathbf{e}_j + c_{i-1} \mathbf{e}_{i-1}$. As a consequence, the set
    \[ \{(c_{r+1}, x)\} \cup \{(c_i \mathbf{e}_i, q_i c_{i-1} \mathbf{e}_{i-1}) \colon i \in \{r+2, \ldots, r+m\}\}\]
    is a minimal presentation of $S$, where $x = \sum\nolimits_{j = 1}^{r}a_j^{r+1} \mathbf{e}_j$.
  \item[\ref{item:betti-divisible:presen:2}\kern-3pt] implies \ref{item:betti-divisible}. The Betti elements of $S$ are $\{a_i n_i \colon i \in \{r+1, \ldots, r+m\}\}$ and $q_i a_{i-1} n_{i-1} = a_i n_i$ for every $i \in \{r+2, \ldots, r+m\}$. That is, $S$ is Betti divisible. \qedhere
  \end{enumerate}
\end{proof}

Regarding numerical semigroups, we obtain the following particular case of Corollary \ref{cor:betti-sorted}.

\begin{corollary} \label{cor:betti-divisible:presen:ns}
  Let $S$ be a numerical semigroup. Let $(n_1, \ldots, n_e)$ be an arrangement of the minimal generators of $S$ such that $c_1 n_1 \le c_2 n_2 \le \cdots \le c_e n_e$. The following statements are equivalent:
  \begin{enumerate}
  	\item \label{item:betti-divisible:b} $S$ is Betti divisible;
    \item \label{item:betti-divisible:beta} $S$ is Betti-isolated divisible;
  	\item \label{item:betti-divisible:presentation} $S$ admits a minimal presentation of the form
  	\[\{(c_i \mathbf{e}_i, q_i c_{i-1} \mathbf{e}_{i-1}) \colon i \in \{2, \ldots, e\}\},\]
  	where $q_2, \ldots, q_e \in \mathbb{N}$;
  	\item \label{item:betti-divisible:presentation:2} $S$ admits a minimal presentation of the form
  	\[\{(a_i \mathbf{e}_i, q_i a_{i-1} \mathbf{e}_{i-1}) \colon i \in \{2, \ldots, e\}\},\]
  	where $a_1, \ldots, a_e, q_2, \ldots, q_e \in \mathbb{N}$.
  \end{enumerate}
\end{corollary}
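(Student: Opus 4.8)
The plan is to establish the cyclic chain (a) $\Rightarrow$ (b) $\Rightarrow$ (c) $\Rightarrow$ (d) $\Rightarrow$ (a), viewing the statement as the numerical-semigroup specialization ($r=1$) of Corollary \ref{cor:betti-divisible:simplicial:presentation}. Two of the implications are immediate: (a) $\Rightarrow$ (b) because $\mathrm{IBetti}(S) \subseteq \mathrm{Betti}(S)$, so a total order by divisibility on $\mathrm{Betti}(S)$ restricts to one on $\mathrm{IBetti}(S)$; and (c) $\Rightarrow$ (d) by taking $a_i = c_i$ for every $i$.

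For (d) $\Rightarrow$ (a) I would argue directly: if $S$ admits a minimal presentation $\{(a_i \mathbf{e}_i, q_i a_{i-1}\mathbf{e}_{i-1}) : i \in \{2, \ldots, e\}\}$, then every pair lies in $\ker\varphi$, hence $a_i n_i = q_i a_{i-1} n_{i-1}$ and so $a_2 n_2 \mid a_3 n_3 \mid \cdots \mid a_e n_e$. By the description of minimal presentations recalled in Section \ref{sec:pre:presentations}, $\mathrm{Betti}(S)$ is exactly the set of values $\varphi(a_i \mathbf{e}_i) = a_i n_i$, which is therefore totally ordered by divisibility; thus $S$ is Betti divisible. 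This direction uses neither the arrangement hypothesis $c_1 n_1 \le \cdots \le c_e n_e$ nor the Cohen-Macaulay property.

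The heart of the proof is (b) $\Rightarrow$ (c). Since $a \mid b$ implies $a \le_S b$ for $a, b \in S$, a Betti-isolated divisible semigroup is Betti-isolated sorted, so Corollary \ref{cor:betti-sorted} applies and gives, for this very arrangement, a minimal presentation $\{(c_i \mathbf{e}_i, w_i) : i \in \{2, \ldots, e\}\}$ with $w_i = \sum_{j=1}^{i-1} a_j^i \mathbf{e}_j + c_{i-1}\mathbf{e}_{i-1}$, together with $\mathrm{IBetti}(S) = \mathrm{Betti}(S) = \{c_2 n_2, \ldots, c_e n_e\}$; I also use that each $c_i n_i$ is an isolated Betti element (Lemma \ref{lem:ca}), hence lies in $\mathrm{IBetti}(S)$. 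Fix $i \in \{2, \ldots, e\}$. From $c_{i-1}\mathbf{e}_{i-1} \le w_i$ I obtain $c_{i-1} n_{i-1} \le_S c_i n_i$, and since both elements belong to the divisibility-totally-ordered set $\mathrm{IBetti}(S)$, this forces $c_{i-1} n_{i-1} \mid c_i n_i$; write $c_i n_i = q_i c_{i-1} n_{i-1}$ with $q_i \in \mathbb{Z}^+$. Then $q_i c_{i-1}\mathbf{e}_{i-1} \in \mathrm{Z}(c_i n_i)$ has, like $w_i$, positive $\mathbf{e}_{i-1}$-coordinate, so it lies in the same R-class of $\nabla_{c_i n_i}$ as $w_i$ while not belonging to the singleton R-class $\{c_i \mathbf{e}_i\}$. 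By the characterization recalled in Section \ref{sec:pre:presentations}, replacing $w_i$ by $q_i c_{i-1}\mathbf{e}_{i-1}$ in each pair again yields a minimal presentation, which is the one in (c). (For $i = 2$ one automatically gets $w_2 = c_1\mathbf{e}_1$ and $q_2 = 1$, since $\varphi(w_2) = c_2 n_2 = \min \mathrm{Betti}(S) = c_1 n_1$.)

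The step I expect to need the most care is this last substitution: I must check that swapping a representative of one endpoint of a chosen spanning-tree edge of $\nabla_{c_i n_i}$ for another representative of the same R-class is precisely a move allowed by the structure theorem of Section \ref{sec:pre:presentations}, and that the bookkeeping survives when several indices $i$ carry the same Betti value (a block of ties among the $c_i n_i$), where the pairs with $q_i = 1$ are exactly the tree edges joining the corresponding singleton isolated R-classes. This parallels the proof of Corollary \ref{cor:betti-divisible:simplicial:presentation}; a separate numerical statement is worthwhile because the hypothesis ``$\min \mathrm{Betti}(S) \notin \mathrm{Ap}(S; n_1)$'' there becomes automatic once $(n_1, \ldots, n_e)$ is ordered with $c_1 n_1 \le c_i n_i$ for all $i$, since then $\min \mathrm{Betti}(S) = c_1 n_1 \in n_1 + S$, just as in the passage from Theorem \ref{thm:betti-sorted} to Corollary \ref{cor:betti-sorted}.
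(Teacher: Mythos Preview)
Your proposal is correct and follows essentially the same route as the paper. The paper does not prove this corollary separately; it states it as the numerical-semigroup specialization of Corollary~\ref{cor:betti-divisible:simplicial:presentation}, and your argument reproduces exactly that proof (replace $w_i$ by $q_i c_{i-1}\mathbf{e}_{i-1}$ inside the same R-class), while also correctly observing that the hypothesis $\min\mathrm{Betti}(S)\notin\Ap(S;n_1)$ becomes automatic once the generators are arranged so that $c_1n_1\le c_in_i$ for all $i$.
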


Some Betti divisible semigroups can also be characterized in terms of gluings.

\begin{corollary} \label{cor:betti-divisible:gluing}
 Let $S$ be a simplicial affine semigroup of $\mathbb{N}^r$ minimally generated by $\{n_1, \ldots, n_{r+m}\}$. The following statements are equivalent:
  \begin{enumerate}
      \item \label{item:betti-divisible:gluing:a} $S$ is Betti divisible and $\min \mathrm{Betti}(S) \not \in  \Ap(S; \{n_1, \ldots, n_r\})$;
      \item \label{item:betti-divisible:gluing:b} $S$ is the gluing of a Betti divisible simplicial affine semigroup $S'$ of $\mathbb{N}^r$ and $\langle d \rangle \subset \mathbb{N}^r$, where either the codimension of $S'$ is $0$ or $\min \mathrm{Betti}(S') \not \in  \Ap(S; \{n_1, \ldots, n_r\})$ and $\max \mathrm{Betti}(S') \mid c_d d$ for $c_d = \min \{k \in \mathbb{Z}^+ : k d \in S'\}$. 
  \end{enumerate}
\end{corollary}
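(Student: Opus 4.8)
The statement is the "gluing" characterization of Betti divisible simplicial affine semigroups, and the natural strategy is to mirror the proof of Corollary~\ref{cor:betti-sorted:gluing}, replacing everywhere the order $\le_{S'}$ by divisibility. The plan is to reduce both implications to Corollary~\ref{cor:betti-divisible:simplicial:presentation} (the minimal-presentation characterization), since a gluing is detected at the level of presentations via the description recalled in Section~\ref{sec:pre:presentations} and the formula \eqref{eq:betti-gen}.

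For the implication \ref{item:betti-divisible:gluing:a} $\Rightarrow$ \ref{item:betti-divisible:gluing:b}, I would start from the minimal presentation furnished by Corollary~\ref{cor:betti-divisible:simplicial:presentation}\ref{item:betti-divisible:presen}, namely
\[
\{(c_{r+1}\mathbf{e}_{r+1}, x)\} \cup \{(c_i\mathbf{e}_i, q_i c_{i-1}\mathbf{e}_{i-1}) : i\in\{r+2,\ldots,r+m\}\},
\]
after rearranging the last $m$ generators so that $c_{r+1}n_{r+1}\mid\cdots\mid c_{r+m}n_{r+m}$. By Lemma~\ref{lem:free} this shows $S$ is free for the arrangement $(n_1,\ldots,n_{r+m})$, and in particular $c_i = c_i^*$ for each $i$. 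Setting $S' = \langle n_1,\ldots,n_{r+m-1}\rangle$ and $d = n_{r+m}$, the semigroup $S$ is the gluing of $S'$ and $\langle d\rangle$, and deleting the last defining relation gives a minimal presentation of $S'$ of exactly the form in Corollary~\ref{cor:betti-divisible:simplicial:presentation}\ref{item:betti-divisible:presen}; hence $S'$ is Betti divisible, $\mathrm{Betti}(S') = \{c_{r+1}n_{r+1}\mid\cdots\mid c_{r+m-1}n_{r+m-1}\}$, and by \eqref{eq:betti-gen}, $\mathrm{Betti}(S) = \mathrm{Betti}(S')\cup\{c_d d\}$ with $c_d = c_{r+m} = c_{r+m}^*$. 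The relation $(c_{r+m}\mathbf{e}_{r+m}, q_{r+m}c_{r+m-1}\mathbf{e}_{r+m-1})$ tells us $c_d d = q_{r+m}\,(c_{r+m-1}n_{r+m-1}) = q_{r+m}\max\mathrm{Betti}(S')$, i.e. $\max\mathrm{Betti}(S')\mid c_d d$; and $\min\mathrm{Betti}(S') = \min\mathrm{Betti}(S)\notin\Ap(S;\{n_1,\ldots,n_r\})$, which forces $\min\mathrm{Betti}(S')\notin\Ap(S;\{n_1,\ldots,n_r\})$ as well since $\Ap(S';\{n_1,\ldots,n_r\})\subseteq\Ap(S;\{n_1,\ldots,n_r\})$ (as in \eqref{eq:alpha:gluing}). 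If $S' = \mathbb{N}^r$-type, i.e. has codimension $0$, there is nothing more to check.

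For the converse \ref{item:betti-divisible:gluing:b} $\Rightarrow$ \ref{item:betti-divisible:gluing:a}, I would invoke Corollary~\ref{cor:betti-divisible:simplicial:presentation} applied to $S'$ (when its codimension is positive) to get a minimal presentation $\rho'$ of the prescribed shape, after rearranging $n_{r+1},\ldots,n_{r+m-1}$ so that $c_{r+1}n_{r+1}\mid\cdots\mid c_{r+m-1}n_{r+m-1}$. Because $S$ is a gluing of $S'$ and $\langle d\rangle$, any minimal presentation of $S$ is $\rho'\cup\{(a, c_d\mathbf{e}_d)\}$ for some $a\in\mathrm{Z}_{S'}(c_d d)$. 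The hypothesis $\max\mathrm{Betti}(S') = c_{r+m-1}n_{r+m-1}\mid c_d d$ means $c_d d = q\,(c_{r+m-1}n_{r+m-1})$ for some $q\in\mathbb{Z}^+$, so we may choose the factorization $a = q\,c_{r+m-1}\mathbf{e}_{r+m-1}$, and then $\rho = \rho'\cup\{(q\,c_{r+m-1}\mathbf{e}_{r+m-1}, c_d\mathbf{e}_d)\}$ is a minimal presentation of $S$ of exactly the form in Corollary~\ref{cor:betti-divisible:simplicial:presentation}\ref{item:betti-divisible:presen:2} (with $d$ playing the role of $n_{r+m}$, $c_d = a_{r+m}$, $q_{r+m} = q$). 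Corollary~\ref{cor:betti-divisible:simplicial:presentation} then yields that $S$ is Betti divisible; and since this presentation exhibits $\min\mathrm{Betti}(S)$ with a factorization supported on $\mathbf{e}_1,\ldots,\mathbf{e}_r$ (inherited from $S'$, or from the new relation if $S'$ has codimension $0$), we get $\min\mathrm{Betti}(S)\notin\Ap(S;\{n_1,\ldots,n_r\})$. The codimension-$0$ case of $S'$ is handled separately and trivially: then $m = 1$, $S = \langle n_1,\ldots,n_r\rangle + \langle d\rangle$, and $\mathrm{Betti}(S) = \{c_d d\}$ is a singleton, hence vacuously totally ordered by divisibility, with $c_d d\notin\Ap(S;\{n_1,\ldots,n_r\})$ by the gluing condition on $d$.

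The main obstacle I anticipate is bookkeeping rather than conceptual: one must be careful that the arrangement of generators chosen for $S'$ (so that its $c_i n_i$ are divisibility-ordered) extends to an arrangement for $S$ with the same property, which works because $\max\mathrm{Betti}(S')\mid c_d d$ places $c_d d = a_{r+m}n_{r+m}$ correctly at the top of the chain; and one must verify that passing between "$\le_S$" and "$\mid$" is legitimate, which is precisely where the Betti-isolated divisible hypothesis (equivalently, via Corollary~\ref{cor:betti-divisible:simplicial:presentation}, the divisibility structure) is used, exactly as in the step \ref{item:isolated-betti-sorted}$\Rightarrow$\ref{item:betti-sorted:presen} of Corollary~\ref{cor:betti-divisible:simplicial:presentation}. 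A secondary point of care is the inclusion $\Ap(S';\{n_1,\ldots,n_r\})\subseteq\Ap(S;\{n_1,\ldots,n_r\})$ and the fact that an element of $S'$ that is a Betti element of $S'$ and lies outside $\Ap(S;\{n_1,\ldots,n_r\})$ also lies outside $\Ap(S';\{n_1,\ldots,n_r\})$, which follows from freeness and Lemma~\ref{lem:free}, as in the proof of Proposition~\ref{prop:alpha:gluing}.
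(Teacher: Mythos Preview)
Your argument is correct, but it takes a longer route than the paper. The paper simply invokes Corollary~\ref{cor:betti-sorted:gluing} as a black box: since Betti divisible implies Betti sorted (and $S$ is Cohen--Macaulay by Lemma~\ref{lem:betti-divisible:cm}), Corollary~\ref{cor:betti-sorted:gluing} already produces the gluing $S = S' +_{c_d d} \langle d\rangle$ with $\max\mathrm{Betti}(S')\le_{S'} c_d d$, and then \eqref{eq:betti-gen} gives $\mathrm{Betti}(S) = \mathrm{Betti}(S')\cup\{c_d d\}$, so divisibility of $\mathrm{Betti}(S)$ immediately yields both that $S'$ is Betti divisible and that $\max\mathrm{Betti}(S')\mid c_d d$. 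The converse is equally short: Corollary~\ref{cor:betti-sorted:gluing} gives $\min\mathrm{Betti}(S)\notin\Ap(S;\{n_1,\ldots,n_r\})$, and \eqref{eq:betti-gen} plus the hypothesis $\max\mathrm{Betti}(S')\mid c_d d$ make $\mathrm{Betti}(S)$ totally ordered by divisibility.

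Your approach instead re-derives the content of Corollary~\ref{cor:betti-sorted:gluing} from scratch via Corollary~\ref{cor:betti-divisible:simplicial:presentation}, manipulating explicit presentations. This is sound and perhaps more self-contained, but it duplicates work already packaged in the sorted case. Two minor remarks: the statement writes $\Ap(S;\{n_1,\ldots,n_r\})$ (not $\Ap(S';\ldots)$) in condition~\ref{item:betti-divisible:gluing:b}, so your discussion of the inclusion of Ap\'ery sets is unnecessary---once you note $\min\mathrm{Betti}(S') = \min\mathrm{Betti}(S)$, the condition is immediate; and in direction \ref{item:betti-divisible:gluing:b}$\Rightarrow$\ref{item:betti-divisible:gluing:a}, once your presentation matches Corollary~\ref{cor:betti-divisible:simplicial:presentation}\ref{item:betti-divisible:presen:2}, that corollary already delivers $\min\mathrm{Betti}(S)\notin\Ap(S;\{n_1,\ldots,n_r\})$ as part of condition~\ref{item:betti-divisible}, so no separate verification is needed.
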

\begin{proof} Note that the semigroups considered are Cohen-Macaulay (Lemma \ref{lem:betti-divisible:cm}).
\begin{enumerate}[leftmargin=10pt]
   \item[\ref{item:betti-divisible:gluing:a}\kern-3pt] implies \ref{item:betti-divisible:gluing:b}. According to Corollary \ref{cor:betti-sorted:gluing}, $S$ is the gluing of a Betti sorted simplical affine semigroup $S'$ of $\mathbb{N}^r$ and $\langle d \rangle \subset \mathbb{N}^r$. If $S' = \mathbb{N}^r$, then we are done. Otherwise, Corollary \ref{cor:betti-sorted:gluing} states that  $\min \mathrm{Betti}(S') \not \in  \Ap(S; \{n_1, \ldots, n_r\})$ and $\max \mathrm{Betti}(S') \le_S d$. In light of the equality \eqref{eq:betti}, the Betti elements of $S$ are $\mathrm{Betti}(S') \cup \{d\}$ and, thus, $S'$ is Betti divisible and $\max \mathrm{Betti}(S') \mid d$.
   \item[\ref{item:betti-sorted:gluing:b}\kern-3pt] implies \ref{item:betti-sorted:gluing:a}. In view of Corollary \ref{cor:betti-sorted:gluing}, we have $\min \mathrm{Betti}(S) \not \in  \Ap(S; \{n_1, \ldots, n_r\})$. Again, the Betti elements of $S$ are $\mathrm{Betti}(S') \cup \{d\}$, which are totally ordered by divisibility.
  \qedhere
\end{enumerate}
\end{proof}

\begin{corollary} \label{cor:betti-divisible:gluing:ns}
  A numerical semigroup $S$ is Betti divisible if and only if there is there is another Betti divisible numerical semigroup $S'$ such that $S = d S' +_{d n} n \mathbb{N}$, where $\max \mathrm{Betti}(S') \mid n$.
\end{corollary}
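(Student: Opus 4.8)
The plan is to obtain this as the numerical‑semigroup specialization ($r=1$) of Corollary \ref{cor:betti-divisible:gluing}, in the same way that Corollary \ref{cor:betti-sorted:gluing:ns} specializes Corollary \ref{cor:betti-sorted:gluing}. Two preliminary reductions are needed: first, choosing the arrangement of the minimal generators of $S$ so that the side hypothesis $\min\mathrm{Betti}(S)\notin\Ap(S;n_1)$ appearing in Corollary \ref{cor:betti-divisible:gluing} becomes automatic; and second, turning the ``abstract'' gluing supplied by that corollary into a classical gluing $dS'+_{dn}n\mathbb N$ of numerical semigroups by factoring out a greatest common divisor. Throughout I assume $\mathrm e(S)\ge 2$ (if $S=\mathbb N$ there are no Betti elements and $S$ is not a gluing, so that case is implicitly excluded), and I read ``$\max\mathrm{Betti}(S')\mid n$'' as the statement that every element of $\mathrm{Betti}(S')$ divides $n$, which is vacuous when $\mathrm{Betti}(S')=\emptyset$.

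For the implication from left to right, suppose $S$ is Betti divisible. Then $S$ is Betti‑isolated divisible, hence Betti‑isolated sorted, and since $\mathcal A(S)=\mathcal C(S)$ for a numerical semigroup the decomposition \eqref{eq:betti-sorted:I} collapses to $\mathrm I_b(S)=\{c_1\mathbf e_1,\dots,c_e\mathbf e_e\}$; in particular $\mathrm{IBetti}(S)=\{c_1n_1,\dots,c_en_e\}$. Relabel the generators so that $c_1n_1\le c_2n_2\le\cdots\le c_en_e$. The element $\min\mathrm{Betti}(S)$ is Betti‑minimal, so all its factorizations are isolated (Proposition \ref{prop:all-i}), hence it lies in $\mathrm{IBetti}(S)$, and being minimal it must equal $c_1n_1$; since $c_1\ge 2$ (as $n_1$ is a minimal generator), $c_1n_1-n_1=(c_1-1)n_1\in S$, so $\min\mathrm{Betti}(S)\notin\Ap(S;n_1)$. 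Thus statement \ref{item:betti-divisible:gluing:a} of Corollary \ref{cor:betti-divisible:gluing} holds, and \ref{item:betti-divisible:gluing:b} yields a minimal generator $n$ of $S$ such that $S$ is the gluing of the Betti divisible monoid $\tilde S=\langle\mathcal A(S)\setminus\{n\}\rangle$ and $\langle n\rangle$, with either $\tilde S$ of codimension $0$ (equivalently $\mathrm e(S)=2$) or $\max\mathrm{Betti}(\tilde S)\mid c_nn$, where $c_n=\min\{k\in\mathbb Z^+: kn\in\tilde S\}$. Set $d=\gcd\big(\mathcal A(S)\setminus\{n\}\big)$ and $S'=\tilde S/d=\langle\,a/d : a\in\mathcal A(S)\setminus\{n\}\,\rangle$, a numerical semigroup with $\tilde S=dS'$; by the standard correspondence between the two notions of gluing for numerical semigroups, $\gcd(d,n)=1$, $n\in S'$, and $S=dS'+_{dn}n\mathbb N$. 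Scaling by $d$ gives $\mathrm{Betti}(\tilde S)=d\,\mathrm{Betti}(S')$, so $S'$ is Betti divisible, and from $n\in S'$ together with $\gcd(d,n)=1$ one computes $c_n=d$; hence $\max\mathrm{Betti}(\tilde S)\mid c_nn$ reads $d\max\mathrm{Betti}(S')\mid dn$, i.e. $\max\mathrm{Betti}(S')\mid n$ (when $\mathrm e(S)=2$ we simply have $S'=\mathbb N$ and nothing to verify). This exhibits $S$ in the required form.

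The converse is a direct computation with \eqref{eq:betti}: if $S=dS'+_{dn}n\mathbb N$ with $S'$ Betti divisible and $\max\mathrm{Betti}(S')\mid n$, then $\mathrm{Betti}(S)=\{dn\}\cup\{db : b\in\mathrm{Betti}(S')\}$, and writing $\mathrm{Betti}(S')=\{b_1\mid\cdots\mid b_k\}$ the hypothesis gives $b_1\mid\cdots\mid b_k\mid n$, hence $db_1\mid\cdots\mid db_k\mid dn$; thus $\mathrm{Betti}(S)$ is a divisibility chain and $S$ is Betti divisible. (Alternatively, one can run the whole argument through the presentation characterization of Corollary \ref{cor:betti-divisible:presen:ns}, dividing the presentation there by $c_e=\gcd(n_1,\dots,n_{e-1})$, in parallel with the commented proof of Corollary \ref{cor:betti-sorted:gluing:ns}.)

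I expect the main obstacle to be the bookkeeping in the translation step: identifying the scaling factor $d$ simultaneously with $\gcd(\mathcal A(S)\setminus\{n\})$ and with $c_n$, verifying $\gcd(d,n)=1$, and keeping the degenerate cases $S=\mathbb N$ and $\mathrm e(S)=2$ (so $S'=\mathbb N$ with empty Betti set) consistent with the reading of the statement. Everything else is a faithful transcription of the already‑established results.
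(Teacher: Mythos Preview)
Your proof is correct and follows essentially the same approach the paper intends: the corollary is stated without proof as the $r=1$ specialization of Corollary~\ref{cor:betti-divisible:gluing}, and your careful bookkeeping (choosing the arrangement so that $\min\mathrm{Betti}(S)=c_1n_1\notin\Ap(S;n_1)$, then factoring out $d=\gcd(\mathcal A(S)\setminus\{n\})$ to pass from the abstract gluing to the numerical-semigroup gluing, and verifying $c_n=d$) is exactly what is needed to make that specialization precise. The paper's commented-out proof takes a minor variant route---it invokes the already-specialized Betti-sorted version (Corollary~\ref{cor:betti-sorted:gluing:ns}) to obtain $S=dS'+_{dn}n\mathbb N$ with $\max\mathrm{Betti}(S')\le_{S'} n$, and then upgrades $\le_{S'}$ to divisibility using \eqref{eq:betti} and the Betti-divisible hypothesis on $S$---but this is the same argument with the order of two reductions swapped, and your converse via \eqref{eq:betti} is identical to theirs.
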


Now we wonder whether there are Betti sorted semigroups that are not Betti divisible. One can construct any amount simplical affine semigroups of such a nature by applying Corollary \ref{cor:betti-sorted:gluing} and requiring that $d$ is not a multiple of $\max \mathrm{Betti}(S')$. 

In the rest of the section we focus on Betti divisible numerical semigroups. Corollary \ref{cor:betti-divisible:gluing:ns} provides us with a tool to construct Betti divisible numerical semigroups with an arbitrary number of Betti elements. The following lemma gives a wide family of Betti divisible numerical semigroups. Indeed, in Theorem \ref{thm:betti-divisible:generators} we prove that every Betti divisible numerical semigroups is of this form.

\begin{lemma} \label{lem:betti-divisible:family}
  Let $e$ and $k$ be positive integers such that $e \ge 2$ and $k \le e-1$. Let $a_1, \ldots, a_e, f_1, \ldots, f_e$ be  positive integers which verify
  \begin{enumerate}
  	\item $a_1, \ldots, a_e$ are pairwise relatively prime,
  	\item $1 = f_1 \mid f_2 \mid \cdots \mid f_e$,
  	\item $f_i$ and $a_i$ are relatively prime for every $i \in \{1, \ldots, e\}$.
  \end{enumerate}
  For each $i \in \{1, \ldots, e\}$ set $n_i = f_i \prod\nolimits_{j = 1}^{e} a_j / a_i$. The numerical semigroup $S = \langle n_1, \ldots, n_e \rangle$ is Betti divisible. Furthermore, it has embedding dimension $e$ and its Betti elements are
  \[\mathrm{Betti}(S) = \left\{f_i \prod\nolimits_{j = 1}^{e} a_j \colon i \in \{2, \ldots, e\}\right\}.\]
\end{lemma}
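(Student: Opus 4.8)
The statement asserts that a specific family of numerical semigroups is Betti divisible, with embedding dimension $e$ and a prescribed set of Betti elements. The most natural route is to realize $S = \langle n_1, \ldots, n_e \rangle$ as an iterated gluing and then invoke Corollary \ref{cor:betti-divisible:gluing:ns} (or its building blocks) repeatedly. So the plan is: first, set up an induction on $e$; second, exhibit $S$ as a gluing $S = d S' +_{d n} n \mathbb{N}$ for a suitable smaller semigroup $S'$ in the same family; third, check the divisibility condition $\max \mathrm{Betti}(S') \mid n$ needed by Corollary \ref{cor:betti-divisible:gluing:ns}; and finally, read off the embedding dimension and the list of Betti elements from the gluing formula \eqref{eq:betti}.

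\textbf{The inductive step.} For the base case $e = 2$ one has $f_1 = 1$ and $S = \langle f_1 a_2, f_2 a_1 \rangle = \langle a_2, f_2 a_1 \rangle$, which has embedding dimension $2$ (the two generators are distinct and neither divides the other, since $\gcd(a_1,a_2)=1$ and $\gcd(f_2,a_2)=1$), and whose unique Betti element is the product $f_2 a_1 a_2$, matching the claimed formula. For the inductive step, I would peel off the $e$-th generator: write $d = a_e$, and set $n = n_e / ?$ — more precisely, observe that $n_i = a_e \cdot \left(f_i \prod_{j<e} a_j / a_i\right)$ for $i < e$, so with $a_i' = a_i$, $f_i' = f_i$ for $i \le e-1$ (reindexed to length $e-1$) the semigroup $S' = \langle n_1', \ldots, n_{e-1}' \rangle$, where $n_i' = f_i \prod_{j=1}^{e-1} a_j / a_i$, satisfies $n_i = a_e n_i'$. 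Then $S = a_e S' + n_e \mathbb{N}$ with $n_e = f_e \prod_{j<e} a_j$. I must check this is a genuine gluing: that $\gcd(a_e, n_e) = 1$ (which follows from $\gcd(a_e, a_j) = 1$ for $j < e$ and $\gcd(a_e, f_e) = 1$), that $a_e \in \mathbb{N}$ is not a minimal generator of $\mathbb{N}$ — trivially true since $a_e > 1$ can be assumed after noting $a_e=1$ forces a degenerate but still valid case — and that $n_e \in S'$ is not a minimal generator of $S'$. This last point is where some care is needed: one must verify $n_e = f_e \prod_{j<e} a_j$ genuinely lies in $S'$ and is not among the $n_i'$. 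Membership: since $f_e = f_{e-1} q$ for some $q$, and $\gcd(f_e, a_i) = 1$, a short argument using that the $a_i$ are pairwise coprime shows $n_e$ is an $\mathbb{N}$-combination of the $n_i'$; non-minimality follows by a size or coprimality comparison.

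\textbf{Reading off Betti elements and the main obstacle.} Once the gluing is established, the embedding dimension of $S$ is $(e-1) + 1 = e$ by the gluing property for embedding dimensions, and by \eqref{eq:betti},
\[
\mathrm{Betti}(S) = \{a_e b : b \in \mathrm{Betti}(S')\} \cup \{a_e n_e\}.
\]
By induction $\mathrm{Betti}(S') = \{f_i \prod_{j=1}^{e-1} a_j : i \in \{2,\ldots,e-1\}\}$, so $a_e b = f_i \prod_{j=1}^{e} a_j$ for $i \le e-1$, while $a_e n_e = f_e \prod_{j=1}^e a_j$; together these give exactly the claimed set $\{f_i \prod_{j=1}^e a_j : i \in \{2,\ldots,e\}\}$. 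Divisibility of this chain is immediate from $f_2 \mid \cdots \mid f_e$, so in particular $\max \mathrm{Betti}(S') = f_{e-1} \prod_{j<e} a_j$ divides $n_e = f_e \prod_{j<e} a_j$, satisfying the hypothesis of Corollary \ref{cor:betti-divisible:gluing:ns}, which then certifies $S$ is Betti divisible. I expect the main obstacle to be the two verifications inside the definition of gluing — specifically, showing $n_e \in S'$ (rather than merely in $\mathcal{G}(S')$) and that it is not a minimal generator of $S'$ — since these require the pairwise-coprimality and divisibility hypotheses on the $a_i$ and $f_i$ to interact correctly; everything downstream is then a routine application of the cited gluing results.
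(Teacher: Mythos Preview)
Your inductive gluing approach is correct and genuinely different from the paper's proof. The paper argues directly: it computes $d_i=\gcd(n_1,\ldots,n_{i-1})=\prod_{j\ge i}a_j$, observes that $d_i\nmid n_i$ (since $\gcd(a_i,f_i\prod_{j\neq i}a_j)=1$) to get embedding dimension $e$, then checks $\bar c_i=c_i^*=a_i$ for each $i\ge 2$ (the key point being $n_1\mid a_in_i$), and invokes the characterization of free semigroups (Lemma~\ref{lem:free}) to read off $\mathrm{Betti}(S)=\{a_in_i:i\ge 2\}=\{f_i\prod_j a_j:i\ge 2\}$, with Betti divisibility immediate from $f_2\mid\cdots\mid f_e$. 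Your route instead peels off $n_e$ and uses Corollary~\ref{cor:betti-divisible:gluing:ns} together with \eqref{eq:betti}; the verifications you flag as obstacles are in fact short: since $f_1=1$ one has $n_e=f_e\,a_1\,n_1'\in S'$, and $n_e=(f_e/f_i)a_i\,n_i'\ge 2n_i'$ for every $i<e$ (using $a_i\ge 2$, which both proofs tacitly need for the embedding-dimension claim), so $n_e$ is not a minimal generator of $S'$. The paper's proof is more self-contained (only Lemma~\ref{lem:free} is needed), while yours illustrates nicely that this family is closed under the gluing construction of Corollary~\ref{cor:betti-divisible:gluing:ns}, which is conceptually in line with the surrounding section.
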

\begin{proof}
  Set $i \in \{1, \ldots, e\}$. Recall the definition of $d_i$, $\bar{c}_i$ and $c_i^*$ given in Section \ref{sec:pre:free}. Note that $n_j$ and $a_j$ are relatively prime and, thus, we have $d_i = \gcd(n_1, \ldots, n_{i-1}) = \prod_{j = i}^e a_j$. Since $d_i$ does not divides $n_i$, we find that $n_i \not \in \langle n_1, \ldots, n_{i-1}\rangle$. From the arbitrary choice of $i$ we conclude that the set $\{n_1, \ldots, n_e\}$ is the minimal system of generators of $S$, that is, $\mathrm{e}(S) = e$. Note that $n_1 \mid a_i n_i$, which can be used to easily compute $c_i^*$:
  \begin{align*}
    \bar{c}_i & = \min \{c \in \mathbb{Z}^+ \colon c n_i \text{ is a multiple of } d_i\} =  d_i / d_{i+1} = a_i, \\
    c_i^* & = \min \{c \in \mathbb{Z}^+ \colon c n_i \in \langle n_1, \ldots, n_{i-1} \rangle \} = a_i.
  \end{align*}
  Finally, by applying Lemma \ref{lem:free} we find that $S$ is free and its Betti elements are 
  \[\mathrm{Betti}(S) = \{a_2 n_2, \ldots, a_e n_e\} = \left\{f_i \prod\nolimits_{j = 1}^{e} a_j \colon i \in \{2, \ldots, k\}\right\}. \qedhere\]
\end{proof}

\begin{remark}
The integers $f_1, \ldots, f_e$ can be repeated. For example, if $f_i = 1$ for every $i \in \{1, \ldots, e\}$, then we obtain a numerical semigroup with only one Betti element. Indeed, the number of different Betti elements of $S$ equals the number of different integers in the sequence $f_2, \ldots, f_e$.
\end{remark}

\begin{example} \label{ex:betti-divisible:2}
	We recover the semigroup given in Example \ref{ex:betti-divisible:1}. Note that $b_1 = 2 \cdot 3 \cdot 5 \cdot 7$ and  $b_2 = 2 \cdot 2 \cdot 3 \cdot 5 \cdot 7$. Set $a_1 = 7$, $a_2 = 5$, $a_3 = 2$, $a_4 = 3$, $f_2 = 1$, $f_3 = 1$ and $f_4 = 2$. We have $30 = a_2 a_3 a_4$, $42 = a_1 a_3 a_4$, $105 = a_1 a_2 a_4$ and $140 = f_2 a_1 a_2 a_3$. Consequently, $S$ is in the family given in Lemma \ref{lem:betti-divisible:family}. 
\end{example}

\begin{theorem} \label{thm:betti-divisible:generators}
  Let $S$ be a numerical semigroup. Let $(n_1, \ldots, n_e)$ be an arrangement of the minimal generators of $S$ such that $c_1 n_1 \le c_2 n_2 \le \cdots \le c_e n_e$. Then, $S$ is Betti divisible if and only if there are positive integers $a_1, \ldots, a_e, f_1, \ldots, f_e$ which verify
  \begin{enumerate}
  	\item $a_1, \ldots, a_e$ are pairwise relatively prime,
  	\item $1 = f_1 = f_2 \mid \cdots \mid f_e$,
  	\item $f_i$ and $a_i$ are relatively prime for every $i \in \{1, \ldots, e\}$,
    \item $n_i = f_i \prod_{j = 1}^e a_j / a_i$ for every $i \in \{1, \ldots, e\}$.
  \end{enumerate}
  If it is the case, then $a_i = c_i$ for every $i \in \{1, \ldots, e\}$ and  
  \[\mathrm{Betti}(S) = \left\{f_i \prod\nolimits_{j = 1}^{e} c_j \colon i \in \{2, \ldots, e\}\right\}.\]
\end{theorem}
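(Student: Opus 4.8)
The plan is to establish the two implications separately. The \emph{if} direction follows essentially from Lemma~\ref{lem:betti-divisible:family} once one checks that the constants $a_i$ coincide with the $c_i$, while the \emph{only if} direction is proved by induction on the embedding dimension $e$, using the gluing description of Betti divisible numerical semigroups in Corollary~\ref{cor:betti-divisible:gluing:ns}.

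For the \emph{if} direction, suppose integers $a_1,\dots,a_e,f_1,\dots,f_e$ satisfying a)--d) are given. Since $1=f_1=f_2\mid\cdots\mid f_e$ implies $1=f_1\mid f_2\mid\cdots\mid f_e$, Lemma~\ref{lem:betti-divisible:family} applies directly and yields that $S=\langle n_1,\dots,n_e\rangle$ is Betti divisible of embedding dimension $e$, free for $(n_1,\dots,n_e)$, with $c_i^*=a_i$ for $i\in\{2,\dots,e\}$ and $\mathrm{Betti}(S)=\{f_i\prod_j a_j:i\in\{2,\dots,e\}\}$. To obtain $c_i=a_i$ I would argue directly: if $c\,n_i=\sum_{j\ne i}\mu_j n_j$ with $\mu_j\in\mathbb{N}$, then substituting $n_k=f_k\prod_l a_l/a_k$ and clearing denominators gives $c\,f_i\prod_{l\ne i}a_l=\sum_{j\ne i}\mu_j f_j\prod_{l\ne j}a_l$; reducing modulo $a_i$ (each summand on the right is divisible by $a_i$ since $i\ne j$) and using that both $f_i$ and $\prod_{l\ne i}a_l$ are coprime to $a_i$, one gets $a_i\mid c$, hence $c_i\ge a_i$; the reverse inequality is clear because $a_i n_i=f_i\prod_l a_l=f_i a_1 n_1\in\langle n_1\rangle$ for $i\ge 2$, and $a_1 n_1=\prod_l a_l=a_2 n_2\in\langle n_2\rangle$ for $i=1$ (using $f_1=f_2=1$). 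As $f_i$ is nondecreasing along the divisibility chain, $c_i n_i=f_i\prod_j a_j$ is nondecreasing, so $(n_1,\dots,n_e)$ is an arrangement of the type fixed in the statement, and the displayed formula for $\mathrm{Betti}(S)$ is the claimed one.

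For the \emph{only if} direction I would induct on $e=\mathrm{e}(S)$. If $e=1$ then $S=\mathbb{N}$ and the statement holds trivially with $a_1=f_1=1$. If $e\ge 2$, Corollary~\ref{cor:betti-divisible:gluing:ns} gives $S=dS'+_{dn}n\mathbb{N}$ with $S'$ Betti divisible, $\gcd(d,n)=1$, $d\ge 2$, $\mathrm{e}(S')=e-1$, minimal generators $\{dn_1',\dots,dn_{e-1}',n\}$ where $n_1',\dots,n_{e-1}'$ minimally generate $S'$, and (when $S'\ne\mathbb{N}$) $\max\mathrm{Betti}(S')\mid n$; the case $S'=\mathbb{N}$, i.e.\ $S=\langle d,n\rangle$, is immediate. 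By the inductive hypothesis we may reorder so that $c_1'n_1'\le\cdots\le c_{e-1}'n_{e-1}'$, obtaining $a_i'=c_i'$ and $f_i'$ satisfying a)--d) for $S'$. The two key computations are: first, $c_e=d$ and $c_i=c_i'$ for $i\le e-1$ (the inequality $c_i\le c_i'$ is clear, and for the converse one notes that in an expression of $c\,dn_i'$ in the remaining generators the coefficient of $n$ must be a multiple of $d$, and since $n$ is a multiple of $\max\mathrm{Betti}(S')=c'_{e-1}n'_{e-1}$, that contribution can be absorbed into an expression inside $S'$, forcing $c\ge c_i'$); second, each $c_i'$ divides $n$, because by d) for $S'$ one has $c_i'\mid n_j'$ for $j\ne i$, hence $c_i'\mid c'_{e-1}n'_{e-1}=\max\mathrm{Betti}(S')\mid n$, and together with $\gcd(d,n)=1$ this gives $\gcd(d,c_i')=1$. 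I would then set $a_i:=c_i'=a_i'$ and $f_i:=f_i'$ for $i\le e-1$, $a_e:=d$, and $f_e:=n/\prod_{i=1}^{e-1}c_i'$, a positive integer since $c_1',\dots,c_{e-1}'$ are pairwise coprime divisors of $n$. Condition a) for $S$ is condition a) for $S'$ together with $\gcd(d,c_i')=1$; the chain $1=f_1=f_2\mid\cdots\mid f_{e-1}\mid f_e$ follows from b) for $S'$ and $f'_{e-1}\prod_{i\le e-1}c_i'=\max\mathrm{Betti}(S')\mid n$; $\gcd(f_e,a_e)=1$ because $f_e\mid n$ and $\gcd(d,n)=1$; and substituting d) for $S'$ verifies $dn_i'=f_i\prod_j a_j/a_i$ for $i\le e-1$ and $n=f_e\prod_j a_j/a_e$. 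Finally $c_i n_i=f_i\prod_j a_j$ is nondecreasing, so this is an arrangement of the required kind, $a_i=c_i$, and $\mathrm{Betti}(S)=d\,\mathrm{Betti}(S')\cup\{dn\}=\{f_i\prod_j c_j:i\in\{2,\dots,e\}\}$ by~\eqref{eq:betti}.

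The main obstacle is the pairwise coprimality condition a). The coprimality among the constants inherited from $S'$ comes for free from the inductive hypothesis, but ensuring that the \emph{new} gluing datum $d$ (which becomes $c_e$) is coprime to all of $c_1,\dots,c_{e-1}$ is the delicate point, and it is exactly here that the divisibility hypothesis $\max\mathrm{Betti}(S')\mid n$ of Corollary~\ref{cor:betti-divisible:gluing:ns} is used, through $c_i'\mid\max\mathrm{Betti}(S')\mid n$ and $\gcd(d,n)=1$. A secondary nuisance is the bookkeeping: one has to check that the arrangement produced by the gluing orders the products $c_i n_i$ nondecreasingly (so that it matches the arrangement fixed in the statement), and that $f_e=n/\prod_{i<e}c_i'$ is a genuine positive integer dividing $n$.
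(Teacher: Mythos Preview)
Your proof is correct, but the \emph{only if} direction takes a genuinely different route from the paper's argument, and your \emph{if} direction is also slightly more elementary.

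For the \emph{if} direction, the paper appeals to Theorem~\ref{thm:alpha:c} (via the $\alpha$-rectangularity established in Lemma~\ref{lem:betti-divisible:family}) to conclude $a_i=c_i$ for $i\ge 2$, and then handles $i=1$ separately. Your direct modular-arithmetic argument that $a_i\mid c$ whenever $c\,n_i\in\langle n_j:j\ne i\rangle$ is more self-contained and avoids invoking the $\alpha$-rectangular machinery; it gives the same conclusion with less overhead.

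For the \emph{only if} direction, the paper does \emph{not} induct. Instead it argues directly: by Corollary~\ref{cor:betti-sorted} one has $\mathrm{Betti}(S)=\{c_1n_1=c_2n_2\mid\cdots\mid c_en_e\}$; since $S$ is $\alpha$-rectangular for $n_1$ (Theorems~\ref{thm:alpha:free} and~\ref{thm:alpha:c}), $n_1=\prod_{j\ge 2}c_j$. One then simply \emph{defines} $f_i=c_in_i/(c_1n_1)$ and $a_i=c_i$, and the required properties a)--d) fall out from $\gcd(n_1,\dots,n_e)=1$ together with the observation that $c_i$ divides every $n_j$ with $j\ne i$. This is shorter and exploits the structural results already proved for Betti-sorted semigroups. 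Your inductive argument via Corollary~\ref{cor:betti-divisible:gluing:ns} is perfectly valid and has the virtue of being constructive (it shows explicitly how the data $(a_i,f_i)$ are built up one gluing at a time), but it requires more bookkeeping: verifying $c_i=c_i'$ for $i<e$, checking that $kn$ can be absorbed into $\langle n_j':j\ne i\rangle$, and confirming the coprimality of $d$ with the inherited $c_i'$. The paper's direct approach sidesteps all of this by reading the $a_i$ and $f_i$ straight off the Betti elements.
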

\begin{proof}
  Let us assume that $S$ is Betti divisible. Then, in view of Corollary \ref{cor:betti-sorted}, the set $\mathrm{Betti}(S)$ equals $\{c_1 n_2 = c_2 n_2 \mid \cdots \mid c_e n_e\}$. Moreover, since $S$ is $\alpha$-rectangular for $n_1$, we find that $n_1 = \prod_{j = 2}^e c_j$ and $c_i = c_i^*$ for every $i \in \{2, \ldots, e\}$ (Theorems \ref{thm:alpha:c} and \ref{thm:alpha:free}). For each $i \in \{1, \ldots, e\}$ define $f_i = c_i n_i / (c_1 n_1)$. Note that $f_1 = f_2 = 1$ and  $f_3 \mid \cdots \mid f_e$. Set $i \in \{1, \ldots, e\}$. We have $n_i = f_i c_1 n_1 / c_i =  f_i \prod_{j = 1; j \ne i}^e c_j$. Note that $c_i$ divides $\gcd(n_1, \ldots, n_{i-1}, n_{i+1}, \ldots, n_e)$. Since $\gcd(n_1, \ldots, n_e) = 1$, we obtain $\gcd(c_i, n_i) = 1$ and, in particular,  $c_i$ and $f_i$ are relatively prime. Moreover, from the arbitrary choice of $i$ we derive that $c_1, \ldots, c_e$ are pairwise relatively prime.
  
  Now let us consider a numerical semigroup $S$ which satisfies the four conditions. In Lemma \ref{lem:betti-divisible:family} we proved that this semigroup is Betti divisible and computed its Betti elements. Recall that we obtained $c_i^* = \bar{c}_i = a_i$ for every $i \in \{2, \ldots, e\}$. In light of Theorem \ref{thm:alpha:c}, we find that $a_i = c_i$ for every $i \in \{2, \ldots, e\}$. Moreover, we have $a_1 n_1 = c_2 n_2$ and, thus, we obtain $c_1 = a_1$.
\end{proof}

As a consequence, we recover the following characterization of those numerical semigroups with only one Betti element, which were studied in \cite{single-betti}.

\begin{corollary}[{\cite[Theorem 12]{single-betti}}] \label{cor:single-betti}
  Let $S$ be a numerical semigroup minimally generated by the set of positive integers $\{n_1, n_2, \ldots, n_e\}$. The following conditions are equivalent:
  \begin{enumerate}
  \item the semigroup $S$ only has one Betti element;
  \item there exist $a_1, a_2, \ldots, a_e$ pairwise relatively prime positive integers such that $n_i = \prod_{j \ne i} a_j$.
  \end{enumerate}  
  If it is the case, then $a_i = c_i$ for every $i \in \{1, \ldots, e\}$ and the Betti element of $S$ is $\prod_{j = 1}^e c_j$.
\end{corollary}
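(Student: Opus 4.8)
The plan is to obtain this statement as an immediate specialization of Theorem~\ref{thm:betti-divisible:generators} to the case $\#\mathrm{Betti}(S)=1$. First I would observe that a numerical semigroup with a single Betti element is trivially Betti divisible, since a one-element set is totally ordered by divisibility. Moreover, by Lemma~\ref{lem:ca} together with Remark~\ref{rem:isolated} each $c_i n_i$ is a Betti element, so if $\mathrm{Betti}(S)=\{b\}$ then $c_1 n_1=\cdots=c_e n_e=b$; in particular \emph{any} arrangement of the minimal generators satisfies the hypothesis $c_1 n_1\le\cdots\le c_e n_e$ of Theorem~\ref{thm:betti-divisible:generators}, so we may freely use the generators in the order in which they are given.

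For the implication (1)$\Rightarrow$(2), apply Theorem~\ref{thm:betti-divisible:generators} to get positive integers $a_1,\dots,a_e,f_1,\dots,f_e$ with $a_1,\dots,a_e$ pairwise coprime, $1=f_1=f_2\mid f_3\mid\cdots\mid f_e$, $\gcd(f_i,a_i)=1$, $n_i=f_i\prod_{j\ne i}a_j$ and $\mathrm{Betti}(S)=\{f_i\prod_{j=1}^{e}c_j\colon i\in\{2,\dots,e\}\}$ with $a_i=c_i$. Since $\mathrm{Betti}(S)$ is a singleton and $f_2\mid f_3\mid\cdots\mid f_e$, the equalities $f_2\prod_{j}c_j=\cdots=f_e\prod_{j}c_j$ force $f_2=\cdots=f_e$, and as $f_2=1$ (and $f_1=1$) we conclude $f_i=1$ for all $i$. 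Hence $n_i=\prod_{j\ne i}a_j$ with the integers $a_i=c_i$ pairwise coprime, which is exactly (2).

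For the converse (2)$\Rightarrow$(1), given pairwise coprime positive integers $a_1,\dots,a_e$ with $n_i=\prod_{j\ne i}a_j$, I would invoke Lemma~\ref{lem:betti-divisible:family} with $f_1=\cdots=f_e=1$ (so that the number of distinct Betti elements is $k=1$): all three hypotheses of that lemma hold, $\{n_1,\dots,n_e\}$ is the minimal generating system, and $\mathrm{Betti}(S)=\{\prod_{j=1}^{e}a_j\}$ is a single element; the identity $a_i=c_i$ comes as in the proof of Theorem~\ref{thm:betti-divisible:generators} (Lemma~\ref{lem:betti-divisible:family} gives $\bar c_i=c_i^*=a_i$ and Theorem~\ref{thm:alpha:c} gives $c_i=c_i^*$). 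The final clause — $a_i=c_i$ for all $i$ and the Betti element being $\prod_{j=1}^{e}c_j$ — is then recorded directly from these computations.

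There is no genuine obstacle here; the only points requiring a line of care are that the ordering hypothesis of Theorem~\ref{thm:betti-divisible:generators} is automatic in the one-Betti-element case (so the corollary can be stated for an arbitrary labelling of the generators), and that minimality of $\{n_1,\dots,n_e\}$ in the converse direction must be cited from Lemma~\ref{lem:betti-divisible:family} rather than assumed.
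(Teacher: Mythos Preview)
Your proposal is correct and follows exactly the approach the paper intends: the corollary is stated immediately after Theorem~\ref{thm:betti-divisible:generators} with no explicit proof, as a direct specialization to $\#\mathrm{Betti}(S)=1$ (the remark following Lemma~\ref{lem:betti-divisible:family} already notes that $f_i=1$ for all $i$ gives a single Betti element). Your observation that $c_1n_1=\cdots=c_en_e$ in the one-Betti case, so that the ordering hypothesis of Theorem~\ref{thm:betti-divisible:generators} is automatic, is the only point that needs a word and you handle it cleanly.
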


The following result gives another interesting property of Betti divisible numerical semigroups.

\begin{theorem}\label{thm:betti-divisible:free}
Let $S$ be a numerical semigroup. Consider the following assertions:
\begin{enumerate}
	\item \label{item:betti-divisible:s} $S$ is Betti divisible;
	\item \label{item:betti-divisible:gluing} for any non trivial partition $\{A,B\}$ of the minimal system of generators of $S$, $S$ is the gluing of the  Betti divisible numerical semigroups $\langle A / \gcd(A) \rangle$ and $\langle B / \gcd(B) \rangle$;
	\item \label{item:betti-divisible:free} $S$ is free for any arrangement of the minimal generators.
\end{enumerate}
Then \ref{item:betti-divisible:s} implies \ref{item:betti-divisible:gluing}, and \ref{item:betti-divisible:gluing} implies \ref{item:betti-divisible:free}. 
\end{theorem}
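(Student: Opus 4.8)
The plan is to prove the cycle of implications \ref{item:betti-divisible:s} $\Rightarrow$ \ref{item:betti-divisible:gluing} $\Rightarrow$ \ref{item:betti-divisible:free} $\Rightarrow$ \ref{item:betti-divisible:s}, relying heavily on Theorem~\ref{thm:betti-divisible:generators} and the gluing formula \eqref{eq:betti}. For the implication \ref{item:betti-divisible:s} $\Rightarrow$ \ref{item:betti-divisible:gluing}, I would first invoke Theorem~\ref{thm:betti-divisible:generators} to write $n_i = f_i \prod_{j\ne i} a_j$ with the $a_i$ pairwise coprime, $1 = f_1 = f_2 \mid \cdots \mid f_e$ and $\gcd(a_i,f_i)=1$. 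Given a nontrivial partition $\{A,B\}$ of $\{n_1,\dots,n_e\}$, say $A$ corresponds to an index set $I$ and $B$ to its complement $J$, one computes $\gcd(A) = \prod_{j\in J} a_j$ (using that each $n_i$ for $i\in I$ is divisible by every $a_j$, $j\ne i$, and that the $a_j$ are coprime, together with $\gcd(f_i : i\in I)$ dividing the $f$'s); then $n_i/\gcd(A) = f_i \prod_{j\in I, j\ne i} a_j$ for $i\in I$. One must be slightly careful: the two smallest $f$'s in the whole family are both $1$, but after restricting to $I$ the two smallest $f$-values among $\{f_i : i\in I\}$ need not both be $1$. This is handled by dividing out by $\gcd\{f_i : i \in I\}$ as well, which is absorbed into $\gcd(A)$; the resulting sequences still satisfy the divisibility chain and coprimality hypotheses of Theorem~\ref{thm:betti-divisible:generators}, so $\langle A/\gcd(A)\rangle$ and $\langle B/\gcd(B)\rangle$ are Betti divisible. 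That $S$ is their gluing by $\gcd(A)\gcd(B)$ follows because $\gcd(A/\gcd(A))$ and $\gcd(B/\gcd(B))$ are coprime (again by coprimality of the $a_j$) and $\gcd(A)\in \langle B/\gcd(B)\rangle$, $\gcd(B)\in \langle A/\gcd(A)\rangle$ are non-generators, which one checks directly from the explicit products.

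For \ref{item:betti-divisible:gluing} $\Rightarrow$ \ref{item:betti-divisible:free}, I would argue by induction on the embedding dimension $e$. The base case $e=1$ is $\mathbb{N}$, which is free. For the inductive step, fix an arbitrary arrangement $(n_1,\dots,n_e)$; apply the hypothesis to the partition $A = \{n_1,\dots,n_{e-1}\}$, $B = \{n_e\}$. Then $S = \gcd(A)\cdot S' +_{d} n_e\mathbb{N}$ where $S' = \langle A/\gcd(A)\rangle$ is Betti divisible; by induction $S'$ is free for the arrangement $(n_1/\gcd(A),\dots,n_{e-1}/\gcd(A))$. A gluing of a free semigroup (for a given arrangement) with $\mathbb{N}$, appending the new generator last, is free for the concatenated arrangement — this is exactly equivalence (e) of Lemma~\ref{lem:free} applied in the numerical semigroup setting (or the classical fact that gluing preserves the complete-intersection/free structure, since $\bar c_e = c_e^*$ automatically for the last generator of a gluing). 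Hence $S$ is free for $(n_1,\dots,n_e)$, and since the arrangement was arbitrary, $S$ is free for every arrangement.

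For \ref{item:betti-divisible:free} $\Rightarrow$ \ref{item:betti-divisible:s}, suppose $S$ is free for every arrangement of $\{n_1,\dots,n_e\}$ but is not Betti divisible. Since free semigroups are complete intersections with $\mathrm{Betti}(S) = \{c_2^* n_2, \dots, c_e^* n_e\}$ for the chosen arrangement (Lemma~\ref{lem:free}), and the Betti set does not depend on the arrangement, each Betti element is of the form $c_i^{(\sigma)} n_{\sigma(i)}$ for every rearrangement $\sigma$; by moving any two generators into the last two positions one sees every pair of distinct $n_i, n_j$ supports a factorization relation forcing each Betti element to be $\bar c_i n_i$ with $\bar c_i$ the appropriate ratio of gcds, and in particular every Betti element is a multiple of $n_1$ (taking $n_1$ first and using Selmer-type gcd computations). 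Working out these $\bar c_i$'s for the arrangement ordered so that $c_1 n_1 \le \cdots \le c_e n_e$, one recovers the multiplicative structure $n_i = f_i \prod_{j\ne i} c_j$ exactly as in Theorem~\ref{thm:betti-divisible:generators}; then the chain $f_2 \mid f_3 \mid \cdots$ (equivalently $\mathrm{Betti}(S)$ totally ordered by divisibility) drops out, contradicting the assumption. Alternatively, and more cleanly, I would show directly that freeness for every arrangement forces the $c_i$ (for the sorted arrangement) to be pairwise coprime — for if $p \mid \gcd(c_i, c_j)$ then rearranging to put $n_i, n_j$ first produces, via Lemma~\ref{lem:free}(e) applied twice, a contradiction with $n_i$ or $n_j$ being a minimal generator — and then Theorem~\ref{thm:betti-divisible:generators} yields Betti divisibility. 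The main obstacle is this last implication: extracting the coprimality and the explicit generator formula purely from ``free for every arrangement'' requires carefully tracking the $\gcd$-constants $\bar c_i, c_i^*$ under permutations of the generators, and I expect the bookkeeping there — rather than any deep idea — to be the delicate part.
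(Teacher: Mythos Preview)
Your arguments for \ref{item:betti-divisible:s} $\Rightarrow$ \ref{item:betti-divisible:gluing} and \ref{item:betti-divisible:gluing} $\Rightarrow$ \ref{item:betti-divisible:free} match the paper's approach (for the first implication, the paper writes $\gcd(A) = f_{\min J_A}\prod_{j\in J_B} c_j$ directly, which is your ``absorb $\gcd\{f_i:i\in I\}$'' observation made precise).

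The gap is in \ref{item:betti-divisible:free} $\Rightarrow$ \ref{item:betti-divisible:s}. Neither of your sketches is a proof. In the first, ``the chain $f_2\mid f_3\mid\cdots$ drops out'' is precisely the conclusion to be established, and you give no mechanism for it. In the second, even granting pairwise coprimality of the $c_i$, Theorem~\ref{thm:betti-divisible:generators} does not apply: you would still need integers $f_i$ with $n_i = f_i\prod_{j\ne i} c_j$, the chain $1=f_1=f_2\mid\cdots\mid f_e$, and $\gcd(f_i,c_i)=1$, and you have not shown how any of these follow. Your parenthetical argument for coprimality (``rearranging to put $n_i,n_j$ first produces \dots\ a contradiction with $n_i$ or $n_j$ being a minimal generator'') is not a valid deduction as stated.

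The paper handles this implication by induction on $e$. The key structural observation you are missing is that if $S$ is free for every arrangement, then by the gluing characterisation in Lemma~\ref{lem:free} the semigroup $S_i=\langle n_j/c_i : j\ne i\rangle$ (coming from any arrangement ending in $n_i$) is again free for every arrangement of its generators, hence Betti divisible by the inductive hypothesis. The paper first applies this with $i=e$ (the index with $c_e n_e$ maximal among the $c_j n_j$), pulls a minimal presentation of $S_e$ back to $S$ via Corollary~\ref{cor:betti-divisible:presen:ns}, and carries out a short case analysis to identify the presentation constants with the $c_i$ and obtain $\mathrm{i}_b(S)=e$. It then applies the same descent with $i=1$ to get $\mathrm{Betti}(S)=c_1\,\mathrm{Betti}(S_1)\cup\{c_1 n_1\}$; since $S_1$ is Betti divisible and $c_1 n_1=\prod_j c_j$ divides every $c_i n_i$, the total divisibility ordering of $\mathrm{Betti}(S)$ follows.
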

\begin{proof}
  Let $S$ be a numerical semigroup minimally generated by $\{n_1,\dots,n_e\}$. 

   \begin{enumerate}[leftmargin=10pt]
   \item[\ref{item:betti-divisible:s}\kern-3pt] implies \ref{item:betti-divisible:gluing}. Let $\{n_1, \ldots, n_e\}$ be the minimal generators of $S$ and let $\{A,B\}$ be a non trivial partition of this set. Let $J_A = \{j : n_j \in A\}$ and $J_B = \{j : n_j \in B\}$. In light of Theorem \ref{thm:betti-divisible:generators}, we find that $\gcd(A) = f_{\min J_A}\prod_{j \in J_B} c_j$ and $\gcd(B) = f_{\min J_B} \prod_{j \in J_A} c_j$, where $f_1, \ldots, f_e$ are given in Theorem \ref{thm:betti-divisible:generators}. Since $S$ is a numerical semigroup, the integers $\gcd(A)$ and $\gcd(B)$ are relatively prime. Moreover, we note that $\gcd(A) \gcd(B) = f_{\min J_A} f_{\min J_B} \prod_{j = 1}^e c_j$ is in $\langle A \rangle \cap \langle B \rangle$. Therefore, $S$ is the gluing of the numerical semigroups $\langle A / \gcd(A) \rangle$ and $\langle B / \gcd(B) \rangle$. Finally, from equation \eqref{eq:betti} it follows
   \[ \mathrm{Betti}(S) = \gcd(A) \mathrm{Betti}(\langle A / \gcd(A) \rangle) \cup \gcd(B) \mathrm{Betti}(\langle B / \gcd(B) \rangle). \]
   Thus, the semigroups $\langle A / \gcd(A) \rangle$ and $\langle B / \gcd(B) \rangle$ are Betti divisible.
   
   
   \item[\ref{item:betti-divisible:gluing}\kern-3pt] implies \ref{item:betti-divisible:free}. The proof is carried out by induction on $\mathrm{e}(S)$. If $S = \mathrm{N}$, then the assertion is trivial. Let us assume that the result holds for numerical semigroups with embedding dimension smaller or equal than $\mathrm{e}(S) \ge 2$. Let $(n_1, \ldots, n_e)$ be a arrangement of the minimal generators of $S$ such that $c_1 n_1 \le \cdots \le c_e n_e$. Then $S$ is the gluing of $S_e = \langle n_1 / c_e, \ldots, n_{e-1} / c_e\rangle$ and $\mathbb{N}$, where $S_e$ is Betti divisible. By the induction hypothesis, $S_e$ is free and, thus, $S$ is free for $(n_1, \ldots, n_e)$ (Lemma \ref{lem:free}). \qedhere

\end{enumerate}
\end{proof}

As noticed by I. García-Marco and C. Tatakis in \cite{robust}, the  implication c) implies a) in Theorem~\ref{thm:betti-divisible:free} does not hold. There are semigroups free for any arrangement of the minimal generators that are not Betti divisible.
\begin{verbatim}	
gap> s:=NumericalSemigroup(390,546,770,1155);;
gap> IsUniversallyFree(s);
true
gap> BettiElements(s);
[ 2310, 2730, 30030 ]
\end{verbatim}

Finally, we give a novel characterization of those numerical semigroups with a unique Betti elements that involves all the concepts developed in this work. Recall that another characterization of these semigroups was given in Corollary \ref{cor:iso:one-betti}.

\begin{theorem}\label{thm:single-betti-alpha}
  Let $S$ be a numerical semigroup. The following statements are equivalent:
  \begin{enumerate}
      \item \label{item:single-betti} $S$ only has one Betti element;
      \item \label{item:single-betti:c} $S$ is $c$-rectangular for any minimal generator and $\mathrm{i}_b(S) = \mathrm{e}(S)$;
      \item \label{item:single-betti:alpha} $S$ is $\alpha$-rectangular for any minimal generator.
  \end{enumerate}
\end{theorem}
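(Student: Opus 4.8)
The plan is to establish the cycle of implications \ref{item:single-betti} $\Rightarrow$ \ref{item:single-betti:alpha} $\Rightarrow$ \ref{item:single-betti:c} $\Rightarrow$ \ref{item:single-betti}, writing $\{n_1,\dots,n_e\}$ for the minimal generating set of $S$ and using throughout that $\mathcal{C}(S)=\{n_1,\dots,n_e\}$ (Remark \ref{rem:isolated}), so that $c_1\mathbf{e}_1,\dots,c_e\mathbf{e}_e$ are $e$ pairwise distinct isolated factorizations of Betti elements. For \ref{item:single-betti} $\Rightarrow$ \ref{item:single-betti:alpha} I would argue as follows: a semigroup with a single Betti element is trivially Betti divisible, hence free for every arrangement of its minimal generators by Theorem \ref{thm:betti-divisible:free}. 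Fixing a minimal generator $n_k$ and an arrangement starting with $n_k$, Lemma \ref{lem:free} gives $n_k=\#\Ap(S;n_k)=\prod_{i\ne k}c_i^{*}$ and $\Ap(S;n_k)=\left\{\sum_{i\ne k}\lambda_i n_i:0\le\lambda_i<c_i^{*}\right\}$ for the constants $c_i^{*}$ of that arrangement, while Corollary \ref{cor:single-betti} gives $n_k=\prod_{i\ne k}c_i$. Since $c_i\le c_i^{*}$ always, the two equal products force $c_i=c_i^{*}$ for all $i\ne k$, so $S$ is $c$-rectangular for $n_k$; and since the unique Betti element $\prod_j c_j=c_k n_k$ lies in $n_k+S$ while $(c_i-1)n_i\in\Ap(S;n_k)$, we get $\alpha_i=c_i-1$, so $S$ is $\alpha$-rectangular for $n_k$. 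As $k$ is arbitrary, \ref{item:single-betti:alpha} follows (alternatively, once $c$-rectangularity and $c_i n_i\notin\Ap(S;n_k)$ are in hand, invoke Theorem \ref{thm:alpha:c}).

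For \ref{item:single-betti:alpha} $\Rightarrow$ \ref{item:single-betti:c}: $\alpha$-rectangular implies $c$-rectangular by Theorem \ref{thm:alpha:c}, so only $\mathrm{i}_b(S)=e$, equivalently $\mathrm{I}_b(S)=\{c_1\mathbf{e}_1,\dots,c_e\mathbf{e}_e\}$, needs proof. For each $k$, $\alpha$-rectangularity for $n_k$ makes every element of $\Ap(S;n_k)$ of unique expression (Proposition \ref{prop:alpha}), which is exactly condition \ref{item:simplicial:ap} of Lemma \ref{lem:iso:simplicial}, so $\mathrm{I}_b(S)\cap\langle\mathbf{e}_i:i\ne k\rangle=\{c_i\mathbf{e}_i:i\ne k\}$. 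I would then observe that no $x\in\mathrm{I}_b(S)$ can involve all of $n_1,\dots,n_e$: if it did, $x$ would share a generator with every other factorization of the Betti element $\varphi(x)$, contradicting isolation (this is the dichotomy used at the end of the proof of Theorem \ref{thm:isolated}). Hence every $x\in\mathrm{I}_b(S)$ omits some $n_k$, so $x\in\langle\mathbf{e}_i:i\ne k\rangle$ and thus $x=c_i\mathbf{e}_i$; combined with $\{c_i\mathbf{e}_i\}\subseteq\mathrm{I}_b(S)$ this gives $\mathrm{I}_b(S)=\{c_1\mathbf{e}_1,\dots,c_e\mathbf{e}_e\}$ and $\mathrm{i}_b(S)=e$.

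For \ref{item:single-betti:c} $\Rightarrow$ \ref{item:single-betti}: from $\mathrm{i}_b(S)=e$ we again get $\mathrm{I}_b(S)=\{c_1\mathbf{e}_1,\dots,c_e\mathbf{e}_e\}$, so by Theorem \ref{thm:isolated} the complementary inclusion there is also an equality, giving $\mathrm{I}_s(S)=\left\{\sum_{i=1}^{e}\lambda_i\mathbf{e}_i:0\le\lambda_i<c_i\right\}$; in particular the $\prod_i c_i$ sums $\sum_i\lambda_i n_i$ with $0\le\lambda_i<c_i$ represent pairwise distinct elements (two coinciding sums would exhibit two factorizations of an element of unique expression). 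For each $k$, $c$-rectangularity identifies $\Ap(S;n_k)$ with the subfamily $\lambda_k=0$, so $n_k=\#\Ap(S;n_k)=\prod_{i\ne k}c_i$. A short gcd argument — any common divisor of $c_j$ and $c_k$ divides every $n_i$, hence divides $\gcd(n_1,\dots,n_e)=1$ — shows that $c_1,\dots,c_e$ are pairwise relatively prime, so $n_i=\prod_{j\ne i}c_j$ with the $c_j$ coprime, and Corollary \ref{cor:single-betti} yields that $S$ has a single Betti element.

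The step I expect to be the main obstacle is \ref{item:single-betti:c} $\Rightarrow$ \ref{item:single-betti}: the challenge is to squeeze out of the bare numerical equality $\mathrm{i}_b(S)=e$, together with $c$-rectangularity at every generator, both the product formula $n_k=\prod_{i\ne k}c_i$ and the pairwise coprimality of the $c_i$. The subtlety is that $c$-rectangularity on its own only yields $n_k\le\prod_{i\ne k}c_i$, and promoting this to an equality needs the counted representations to be genuinely distinct — which is precisely what the ``if one inclusion is an equality, so is the other'' clause of Theorem \ref{thm:isolated} provides, once $\mathrm{I}_b(S)=\{c_1\mathbf{e}_1,\dots,c_e\mathbf{e}_e\}$ is known. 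The other two implications are comparatively routine assemblies of Theorem \ref{thm:betti-divisible:free}, Lemma \ref{lem:free}, Theorem \ref{thm:alpha:c} and Lemma \ref{lem:iso:simplicial}.
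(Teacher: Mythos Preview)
Your proof is correct, but you traverse the cycle in the opposite direction from the paper: the paper proves \ref{item:single-betti} $\Rightarrow$ \ref{item:single-betti:c} $\Rightarrow$ \ref{item:single-betti:alpha} $\Rightarrow$ \ref{item:single-betti}. This makes two of the three steps noticeably shorter. For \ref{item:single-betti:c} $\Rightarrow$ \ref{item:single-betti:alpha} the paper simply observes that $\mathrm{i}_b(S)=e$ forces $\mathrm{I}_b(S)=\{c_1\mathbf{e}_1,\dots,c_e\mathbf{e}_e\}$ and then invokes condition~\ref{item:thm:alpha:c-i} of Theorem~\ref{thm:alpha:c} at each generator; your reverse step \ref{item:single-betti:alpha} $\Rightarrow$ \ref{item:single-betti:c} has to manufacture $\mathrm{i}_b(S)=e$ via Lemma~\ref{lem:iso:simplicial} at every $n_k$ plus the full-support argument. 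For \ref{item:single-betti:alpha} $\Rightarrow$ \ref{item:single-betti} the paper reads off $n_i=\prod_{j\ne i}c_j$ directly from condition~\ref{item:thm:alpha:car} of Theorem~\ref{thm:alpha:c} and then appeals to Corollary~\ref{cor:single-betti}; your route \ref{item:single-betti:c} $\Rightarrow$ \ref{item:single-betti} recovers the same product formula, but only after passing through the ``equality clause'' of Theorem~\ref{thm:isolated} to ensure the counted tuples represent distinct elements. The paper's \ref{item:single-betti} $\Rightarrow$ \ref{item:single-betti:c} uses Corollary~\ref{cor:betti-divisible:presen:ns} to write down an explicit presentation $\{(c_i\mathbf{e}_i,c_{i-1}\mathbf{e}_{i-1})\}$, from which both $\mathrm{I}_b(S)$ and $c$-rectangularity are read off at once; your \ref{item:single-betti} $\Rightarrow$ \ref{item:single-betti:alpha} via Theorem~\ref{thm:betti-divisible:free} and the comparison $\prod c_i=\prod c_i^{*}$ is a pleasant alternative of roughly equal length. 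One small point: both arguments tacitly need the $c_i$ pairwise coprime to invoke Corollary~\ref{cor:single-betti}; you make this explicit with your gcd argument, which is a worthwhile addition.
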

\begin{proof} ~
   \begin{enumerate}[leftmargin=10pt]
   \item[\ref{item:single-betti}\kern-3pt] implies \ref{item:betti-sorted:c}. Let $(n_1, \ldots, n_e)$ be a rearrangement of the minimal generators of $S$. In light of Corollary \ref{cor:betti-divisible:presen:ns}, $S$ admits a minimal presentation of the form 
   \[\{(c_i \mathbf{e}_i, c_{i-1} \mathbf{e}_{i-1}) : i \in \{2, \ldots, e\}\}.\]
   Therefore, we have $\mathrm{I}_b(S) = \{c_1 \mathbf{e}_1, \ldots, c_e \mathbf{e}_e\}$. Moreover, we have $c_i = c_i^*$ for every $i \in \{2, \ldots, e\}$. In light of Lemma \ref{lem:free}, $S$ is free for that arrangement and $S$ is $c$-rectangular for $n_1$. 

   \item[\ref{item:single-betti:c}\kern-3pt] implies \ref{item:betti-sorted:alpha}. It follows from Theorem \ref{thm:alpha:c}.

   \item[\ref{item:single-betti:alpha}\kern-3pt] implies \ref{item:single-betti}. Let $\{n_1, \ldots, n_e\}$ be a minimal system of generators of $S$. Since $S$ is $\alpha$-rectangular for any minimal generator $n_i$, we obtain $n_i = \prod_{i \ne j} c_j$ (Theorem \ref{thm:alpha:c}). Therefore, the characterization of numerical semigroups with only one Betti element given in Corollary \ref{cor:single-betti} is satisfied. \qedhere
   \end{enumerate}
\end{proof}

\section{Further research}





The concept of $\beta$-rectangular and $\gamma$-rectangular numerical semigroup introduced in \cite{ci:classes:1} can also be generalized to the context of simplicial affine semigroups as we did with $\alpha$-rectangular semigroups. Nevertheless, we did not include these generalizations in this paper since they are out of the scope of isolated factorizations. The study of these simplicial affine semigroups may be an interesting topic for further research. Thirdly, it is possible that some non-unique factorization invariants (\cite{geroldinger-hk, overview-nuf} and \cite[Chapter 5]{ns-app})  may be computed or bounded for Betti divisible  semigroups. Again this topic is not related with isolated factorizations and, therefore, we did not dig into this topic in the present work. Finally, we introduced rectangular semigroups and, in particular, $c$-rectangular semigroups which may be also be a focus of further investigations.

\printbibliography

\end{document}